\newtheoremstyle{mytheorem}{3pt}% ⟨Space above⟩
{3pt}% ⟨Space below⟩
{\itshape}% ⟨Body font⟩
{}% ⟨Indent amount⟩
{\bf}% ⟨Theorem head font⟩
{.}% ⟨Punctuation after theorem head⟩
{.5em}% ⟨Space after theorem head⟩
{}% ⟨Theorem head spec (can be left empty, meaning ‘normal’)⟩
\theoremstyle{mytheorem}
\newtheorem{theorem}{Theorem}[section]
\newtheorem{lemma}[theorem]{Lemma}
\newtheorem{proposition}[theorem]{Proposition}
\newtheorem{claim}[theorem]{Claim}
\newtheoremstyle{mydefinition}{3pt}% ⟨Space above⟩
{3pt}% ⟨Space below⟩
{}% ⟨Body font⟩
{}% ⟨Indent amount⟩
{\bf}% ⟨Theorem head font⟩
{.}% ⟨Punctuation after theorem head⟩
{.5em}% ⟨Space after theorem head⟩
{}% ⟨Theorem head spec (can be left empty, meaning ‘normal’)⟩
\theoremstyle{mydefinition}
\newtheorem{definition}[theorem]{Definition}
\newtheorem{remark}[theorem]{Remark}
\numberwithin{section}{chapter}
\numberwithin{equation}{section}
\newcommand{\I}{\mathbb{I}}
\newcommand{\be}{\begin{equation}\nn}
\newcommand{\ee}{\end{equation}}               
\newcommand{\nn}{\nonumber}     
\newcommand{\bea}{\begin{eqnarray}}         
\newcommand{\eea}{\end{eqnarray}}
\patchcmd{\DOTI}{\vskip 40\p@}{\vskip 5\p@}{}{}
\patchcmd{\DOTIS}{\vskip 40\p@}{\vskip 5\p@}{}{}% for unnumbered chapters
\begin{document}
    \setcounter{secnumdepth}{5} % om die een of ander rede laat hierdie
    \setcounter{tocdepth}{5} %command toe dat die subsections in die inhoudsopgewe verskyn
\onehalfspacing
\setcounter{chapter}{-1}
\frontmatter

\title[A Noncommutative Sigma Model]{A Noncommutative Sigma Model}

%    Remove any unused author tags.

%    author one information
\author{Mauritz van den Worm}
%\address{Univesity of Pretoria}
%\email{mauritzvdworm@gmail.com}
%\thanks{Dr. R. Duvenhage}

%\subjclass[2000]{Primary }
%    For books to be published after 1 January 2010, you may use
%    the following version:
%\subjclass[2010]{Primary }

\keywords{}

\date{}
\maketitle

\renewenvironment{abstract}{%
\begin{center}%
{\bfseries Abstract\vspace{-.5em}}%
\end{center}%
\quotation}{%
\endquotation}

\newenvironment{uittreksel}{%
\otherlanguage{afrikaans}%
\begin{center}%
{\bfseries Uittreksel\vspace{-.5em}}%
\end{center}%
\quotation}{%
\endquotation
\endotherlanguage}

\newenvironment{abstracto}{%
\otherlanguage{spanish}%
\begin{center}%
{\bfseries Abstracto\vspace{-.5em}}%
\end{center}%
\quotation}{%
\endquotation
\endotherlanguage}

\thispagestyle{empty}
\vspace*{5cm}
\begin{center}
\begin{center}
\textbf{DECLARATION} 
\end{center}
\vspace{1cm}
I, the undersigned, declare that the dissertation, which I hereby submit for the degree Magister Scientiae at the University of Pretoria is my own work and has not previously been submitted by me for any degree at this or any other tertiary institution.
\vspace{5cm}
\begin{center}
\begin{tabular}{ll}
Signature: & ........................................\\
&\\
Name: & Mauritz van den Worm\\
&\\
Date: & ........................................
\end{tabular}
\end{center}
   
\end{center}
\cleardoublepage
\newpage
%\vspace*{\fill}
\thispagestyle{empty}
\begin{uittreksel}\vspace{0.8cm}
Ons vervang die klasieke stringteorie begrippe van die parameterruimte en w\^ereldtyd met nie-kommutatiewe torusse en beskou dan afbeeldings tussen hierdie ruimtes.  Die dinamika van hierdie afbeeldings is bestudeer en `n nie-kommutatiewe veralgemening van die Polyakov-aksie is afgelei.  In besonder is die kwantumtorus in al sy wiskundige besonderhede bestudeer asook afbeeldings tussen verskillende kwantumtorusse.  `n Eindig dimensionele vootstelling van die kwantum torus is ondersoek en spesifieke waardes is verkry vir die partisiefunksie sowel as ander padintegrale.  Laastens is bestaanstellings vir afbeeldings tussen kwantumtorusse bewys.
\end{uittreksel}

%\newpage
%\thispagestyle{empty}
%\vspace*{\fill}
\selectlanguage{english}%
\begin{abstract}\vspace{0.8cm} 
We replaced the classical string theory notions of parameter space and world-time with noncommutative tori and consider maps between these spaces.  The dynamics of mappings between different noncommutative tori were studied and a noncommutative generalization of the Polyakov action was derived.  The quantum torus was studied in detail as well as *-homomorphisms between different quantum tori.  A finite dimensional representation of the quantum torus was studied and the partition function and other path integrals were calculated.  At the end we proved existence theorems for mappings between different noncommutative tori.
\end{abstract}
%\vspace*{\fill}

%\newpage
%\thispagestyle{empty}
%\vspace*{\fill}
%\selectlanguage{spanish}%
\begin{abstracto}\vspace{0.8cm}
Nosotros sustituimos el concepto de cadena cl\'asica teoria del espacio de par\'ametros as\'i como espacio-tiempo con tori no conmutativo.  Asignaciones entre los diferentes tori se estudiaron y versiones no conmutativo de la acci\'on de Polyakov se derivaran.  Una representac\'ion de dimensi\'on finita del tori no conmutativo se construido.  En este caso que pudimos determinar la funci\'on partici\'on as\'i como otros integrales camino.  Finalmente hemos demostrado la existencia de asignaciones entre diferentes tories no conmutativo.
\end{abstracto}
\vspace*{\fill}

%    Dedication.  If the dedication is longer than a line or two,
%    remove the centering instructions and the line break.
\clearpage

\thispagestyle{empty}
\null\vspace{\stretch{1}}
\begin{flushright}
Por Lerinza, \textsl{solo pienso en ti}
\end{flushright}
\vspace{\stretch{2}}\null

\clearpage
\thispagestyle{empty}

\vspace*{\fill}
\begin{center}
 \begin{quote}
  \textsl{``In the past decades theoretical physicists have been using ever more sophisticated mathematics to model the universe and its fundamental forces. Quantum Theory and Geometry are the two main ingredients but there are different schools of thought on how to fuse them together. Einstein, with his success in General Relativity, argued for the primacy of Geometry and Dirac said we should be guided by beauty.  I belong to this camp and am tentatively exploring some new ideas." \newline - Sir Michael Atiyah \newline(1 February 2011, Salle 5 of College de France)}
 \end{quote}
\end{center}
\vspace*{\fill}

\newpage
\chapter*{Acknowledgments}
\begin{center}
\begin{quote}
 \textsl{``If you attack a mathematical problem directly, very often you come to a dead end, nothing you do seems to work and you feel that if only you could peer round the corner there might be an easy solution. There is nothing like having somebody else beside you, because he can usually peer round the corner.''\newline- Sir Micheal Atiyah}
\end{quote}
\end{center}
\vspace*{\fill}
There are numerous people who helped me peer around the corner, but the person to whom I owe the most gratitude with respect to this thesis is Dr. Rocco Duvenhage.  Patience is without a doubt one of his virtues.  I would like to thank the physics department for not ostracizing the small assembly of mathematicians (Rocco and myself) with whom you share these humble corridors.  Discussions with Danie van Wyk and Gusti van Zyl proved valuable.  Needless to say that without a steady supply of coffee from the lab this thesis would not have been possible.  I would like to express my sincere acknowledgements to NITheP for their financial assistance regarding my studies and travel expenses.  Last but not least I would like to thank Lerinza who spent a countably infinite number of evenings with me while working on this thesis.
\vspace*{\fill}

\clearpage

%    Change page number to 6 if a dedication is present.
\setcounter{page}{7}
    {\pagestyle{plain}
\tableofcontents
\listoffigures
\cleardoublepage}

\chapter{Introduction}\noindent
\section{String Theory}\noindent
In string theory we are interested in mappings from a two dimensional parameter space which we call $\Sigma$ to our spacetime, $X$.  The parameter space has coordinates $\sigma$ and $\tau$, which are roughly related to position along the string and the time on the string respectively.  If we assume that spacetime has $d$ spatial coordinates and one time coordinate we can write any spacetime coordinate as
\begin{equation*}
 x=(x^0,x^1,\cdots,x^d).
\end{equation*}
Any spacetime surface can then be described using the mapping functions
\begin{equation*}
 \varphi^{\mu}(\tau,\sigma)
\end{equation*}
which take some region in the $(\tau,\sigma)$ parameter space and map it into spacetime.  If we take any fixed point in the parameter space and map it to spacetime, we obtain the point
\begin{equation*}
 \varphi(\tau,\sigma)=\left(\varphi^0(\tau,\sigma),\varphi^1(\tau,\sigma),\cdots,\varphi^d(\tau,\sigma)\right).
\end{equation*}
A string in spacetime at time $\tau$ can then be seen as the one dimensional collection of points joining $\varphi(\tau,0)$ to $\varphi(\tau,\sigma_1)$ through the mapping functions.  Here we took $\sigma_1$ to be the upper limit of the $\sigma$-coordinate in parameter space.  The world lines of the string endpoints have constant values of $\sigma$ and we can then parametrize the string by $\tau$.

In studying the dynamics that arise in string theory one usually starts out with the Nambu-Goto action.  In Lagrangian mechanics the action of a free point particle is proportional to its proper time which we can also see as the length of its world-line in spacetime.  The Nambu-Goto action generalizes this idea to a one dimensional object which sweeps out a two dimensional surface in spacetime.  The Nambu-Goto action is then defined to be the proper area of the surface swept out by the one dimensional object.  The Nambu-Goto action can be written as:
\begin{equation*}
 S_{NG}=-\frac{T_0}{c}\int^{\tau_{f}}_{\tau_i}\int^{\sigma_1}_{0}\sqrt{\left(\frac{\partial\varphi}{\partial \tau}\frac{\partial\varphi}{\partial\sigma}\right)^2-\left(\frac{\partial\varphi}{\partial\tau}\right)^2\left(\frac{\partial\varphi}{\partial\sigma}\right)^2}d\tau d\sigma
\end{equation*}
where $T_0$ is the tension in the string and $c$ is the speed of light.  Here we are integrating time from some initial time $\tau_i$ to a later final time $\tau_f$.  The proper area in spacetime is
\begin{equation*}
 A_{\text{proper}}=\int^{\tau_{f}}_{\tau_i}\int^{\sigma_1}_{0}\sqrt{\left(\frac{\partial\varphi}{\partial \tau}\frac{\partial\varphi}{\partial\sigma}\right)^2-\left(\frac{\partial\varphi}{\partial\tau}\right)^2\left(\frac{\partial\varphi}{\partial\sigma}\right)^2}d\tau d\sigma.
\end{equation*}

\begin{figure}[h!]
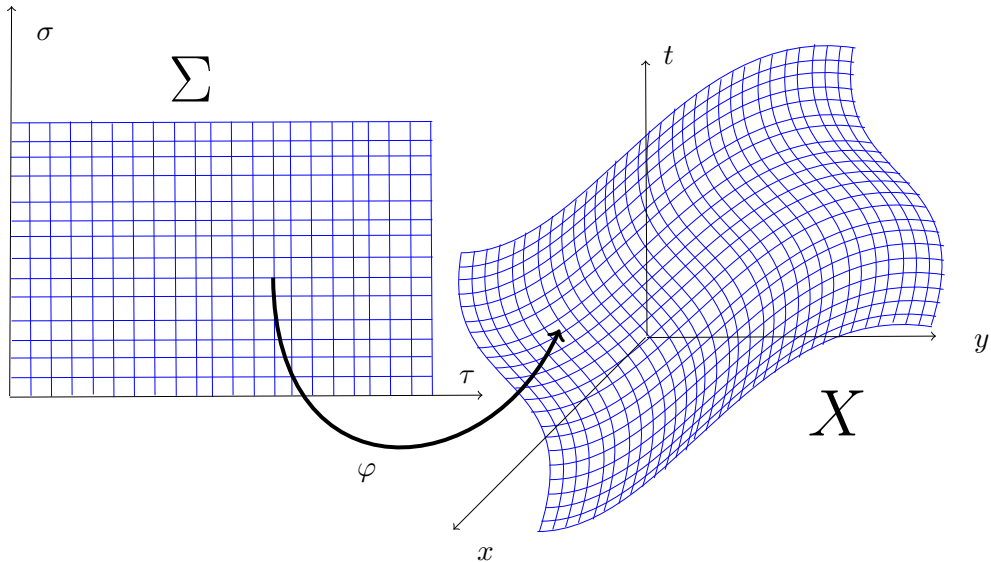

\ifx\du\undefined
  \newlength{\du}
\fi
\setlength{\du}{15\unitlength}
% [inline block 0: 1 envs, 44897 chars -> data_tex | \begin{tikzpicture} \pgftransformxscale{1.000000}...]

\caption{Mapping from parameter space $\Sigma$ to world-time $X$.}
\end{figure}

The action we will mostly be concerned with is known as the Polyakov action.  The Polyakov and Nambu-Goto actions give the same equations of motion for the relativistic string and are in fact equivalent.  However in the framework of modern string theory as well as in this thesis the Polyakov action will be a more natural choice to work with.  For completeness sake we state the Polyakov action here:
\begin{equation*}
 S=\frac{-1}{4\pi\alpha^{\prime}}\int \sqrt{-h}h^{\alpha\beta}\partial_{\alpha}\varphi^{\mu}\partial_{\beta}\varphi^{\nu}\eta_{\mu\nu}d\tau d\sigma.
\end{equation*}
Notice that this equation makes use of Einsteinian notation where repeated indices in the subscript and superscript imply summation.  $h^{\alpha\beta}$ and $\eta_{\mu\nu}$ are the metrics of repectively the parameter space and world-time.  The reader should not be concerned with these metrics as we will only consider the case of flat Euclidean space and make the necessary simplifying assumptions in which case the Polyakov action reduces to
\begin{equation*}
 S=\int\partial_{\alpha}X_{\mu}\partial^{\alpha}X^{\mu}d\sigma d\tau.
\end{equation*}

The notion of compactification the parameter space also plays a major role in string theory and will also help in visualizing the process which we are going to follow in this thesis.
\begin{figure}[h!]
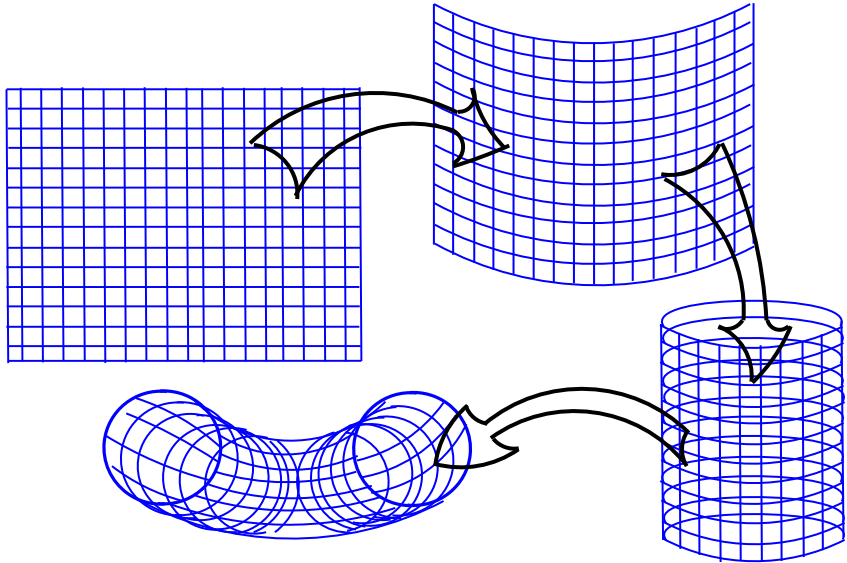

\ifx\du\undefined
  \newlength{\du}
\fi
\setlength{\du}{15\unitlength}
% [inline block 1: 1 envs, 63391 chars -> data_tex | \begin{tikzpicture} \pgftransformxscale{1.000000}...]
 
\caption{Showing the compactifying of parameter space.}
\end{figure}\noindent
By identifying $\tau_i$ and $\tau_f$ lines in parameter space we obtain a cylinder.  We can then identify the $\sigma=0$ and $\sigma=\sigma_1$ lines which correspond the to top and bottom boundaries of the cylinder to form a torus.  The reason we are mentioning compactifying of the parameter space is because we would like to replace the notions of classical string theory regarding parameter space and spacetime with noncommutative spaces, in particular noncommutative $C^*$-algebras.  

The quintessential example of a noncommutative space is the \textsl{quantum torus}.  So it makes sense to visualise the parameter space as being compactified into a torus and to generalize the results of classical string theory on these classical spaces to more general $C^*$-algebras.  In this thesis we replace both parameter space and spacetime by different noncommutative tori and consider *-homomorphisms between these noncommutative $C^*$-algebras which play the role of the mapping functions.  In this way we develop the theory of a noncommutative $\sigma$-model.  Noncommutative versions of the mapping functions and the Polyakov action are also derived.

\section{Structure of this Dissertation}\noindent
Starting off in Chapter 1 we consider the $C^*$-algebra generated by a collection of operators.  Following these ideas we introduce the \textsl{quantum torus} and study some of its more interesting properties, such as the fact that it can be written as a \textsl{crossed product} which will greatly aid us in determining its K-theory and the \textsl{unique trace} on
the quantum torus.  The final section of Chapter 1 deals with a \textsl{finite dimensional} representation of the quantum torus.  We consider the special case of two $n$ by $n$ unitary matrices satisfying the commutation relation that defines the quantum torus.  In Chapter 2 we introduce the concept of a $\sigma$-model and show that we can generalize this classical notion to the noncommutative world.  This is primarily done by showing that the Polyakov action can be written in a noncommutative manner where integration is replaced by taking the trace and norms squared are replaced by taking the trace of the product of an operator with its adjoint.  We explore 
a finite dimensional $\sigma$-model based on the construction of the finite dimensional representation of the quantum torus.  In this case we are able to determine an explicit \textsl{partition function} of our finite dimensional representation and calculate \textsl{minima} and \textsl{maxima} for different parameters.  If we make certain assumptions regarding our system we are also able to calculate thermodynamic quantities such as the expectation value of the energy as well as the entropy of the finite dimensional $\sigma$-model.  Up until this point we have assumed the existence of such *-homomorphisms between different quantum tori.  To be able to prove the existence of such *-homomorphisms we require some knowledge regarding the \textsl{K-theory} of $C^*$-algebras and \textsl{Morita Equivalence}, an overview of these topics can be found in Chapter 3.  In order to calculate the K-groups of the quantum torus in Chapter 3 we require an extremely deep result from K-theory known as the Pimsner-Voiculescu-
sequence \cite[Theorem 2.4]{PV1980}.  We state the Pimsner-Voiculescu-sequence without proof and only use it to determine the K-theory of the quantum torus.  The most technical section of this thesis is found in Chapter 4 where we prove the existence of *-homomorphisms between different quantum tori.  In Chapter 4 we make use of a fundamental paper by Rieffel \cite{Rieffel1981} and use his results without proof.  In the appendix the reader can find some information regarding short exact sequences.
\mainmatter

\chapter{The Quantum Torus}\label{chapter_2}\noindent
\begin{center}
 \begin{quotation}
  \textsl{``Young man, in mathematics you don't understand things. You just get used to them.''\newline
-John von Neumann's reply to Felix T. Smith who had said ``I'm afraid I don't understand the method of characteristics.''}
 \end{quotation}
\end{center}
In order to construct a noncommutative $\sigma$-model we require a noncommutative space onto which our world sheet will be mapped.  The quintessential example of a noncommutative space in the irrational rotation algebra or quantum torus.  Here we present a detailed account of the quantum torus.
\section{$C^*$-Algebra Generated by a set of operators}\noindent
We start off this section with some general background which will serve as preparation in the construction of the quantum torus.  Consider the set $\mathcal{O}\subset B$ with $B$ a $C^*$-algebra.  Let $A_{0}$ be the set of all finite linear combinations of finite products of elements in $\mathcal{O}\cup\mathcal{O}^*$.  

We now proceed to show that $A_0$ is a subspace of $B$.  Consider the elements $a,b\in A_{0}$ which we can write as
\begin{eqnarray*}
a=\sum_i\alpha_i\prod_{j_i}a_{j_i},\quad
b=\sum_i\beta_i\prod_{j_i}b_{j_i}
\end{eqnarray*}
where $a_{j_i},b_{j_i}\in\mathcal{O}\cup\mathcal{O}^*$ and $\alpha_i,\beta_i\in\mathbb{C}$.  When we consider the sum of any two elements of such form we find
\begin{equation*}
a+b=\sum_i\alpha_i\prod_{j_i}a_{j_i}+\sum_i\beta_i\prod_{j_i}b_{j_i}
\end{equation*}
which is once again a finite linear combination of finite products of elements in $\mathcal{O}\cup\mathcal{O}^*$.  Similarly when considering scalar multiplication we find that
\begin{equation*}
\gamma a=\gamma\sum_i\alpha_i\prod_{j_i}a_{j_i}=\sum_i\gamma\alpha_i\prod_{j_i}a_{j_i}
\end{equation*}
which is of course a finite linear combination of finite products of elements in $\mathcal{O}\cup\mathcal{O}^*$.  So $A_0$ is a vector subspace of $B$.  

Let $A$ denote the closure of $A_0$.  We show that $A$ forms a subspace of $B$.  Consider any $a$ and $b$ in $A$ and let $\{a_n\}$ and $\{b_{n}\}$ be two sequences in $A_0$ which respectively converge to $a$ and $b$ in $A$.  We can now also consider the sequence $\{a_n+b_n\}$ and show that it converges to $a+b$
\begin{equation*}
\lim_{n\rightarrow\infty}\left(a_n+b_n\right)=\lim_{n\rightarrow\infty}a_n+\lim_{n\rightarrow\infty}b_n=a+b
\end{equation*}
So $a+b\in A$.  Next consider the sequence $\{\alpha a_n\}$ with $\alpha\in\mathbb{C}$
\begin{equation*}
\lim_{n\rightarrow\infty}\alpha a_n=\alpha\lim_{n\rightarrow\infty}a_n=\alpha a
\end{equation*}
So $\alpha a_{n}\in A$.  Since we have closure under vector addition and scalar multiplication $A$ is a vector subspace of $B$.  

Next we show that $A_0$ is an algebra.  Consider the elements $a$ and $b$ as defined earlier.  Then
\begin{eqnarray*}
ab&=&\left(\sum_i\alpha_i\prod_{j_i}a_{j_i}\right)\left(\sum_k\beta_k\prod_{j_k}b_{j_k}\right)\\
&=&\sum_{i,k}\alpha_i\beta_k\prod_{j_i,j_k}a_{j_i}b_{j_k}.
\end{eqnarray*}
which according to the definition is again in $A_0$ since it is a finite linear combination of finite products of elements if $\mathcal{O}\cup\mathcal{O}^*$.  

Now we show that $A$ also forms an algebra.  Consider any $a$ and $b$ in $A$ and any two sequences $\{a_n\}$ and $\{b_n\}$ in $A_0$ that converge respectively to $a$ and $b$ in $A$.  So given $\epsilon>0$, there is some $N$ such that
\begin{eqnarray*}
\|a_n-a\|<\frac{\epsilon}{2M},\quad
\|b_n-b\|<\frac{\epsilon}{2M}
\end{eqnarray*}
for $n\geq N$, where we define $M:=\max\{\|a_n\|,\|b\|\}$.  We will show that the sequence $\{a_nb_n\}$ converges to $ab\in A$:
\begin{eqnarray*}
\|a_nb_n-ab\|&=&\|a_nb_n-a_nb+a_nb-ab\|\\
&\leq&\|a_n\|\|b_n-b\|+\|b\|\|a_n-a\|\\
&\leq&M\frac{\epsilon}{2M}+M\frac{\epsilon}{2M}\\
&=&\epsilon
\end{eqnarray*}
So $ab\in A$ and we once again have the bilinear map with $A$ in the place of $A_0$.  This implies that $A$ is an algebra.  

We can define the involution on $A$ as that inherited from $B$.  For any $a$ in $A$ there is a sequence $\{a_n\}$ in $A_{0}$ converging to $a$ in $A$.  Recalling that $A_0$ is defined as the set of all finite linear combinations of finite products of elements in $\mathcal{O}\cup\mathcal{O}^*$.  This implies that for each $n$, $\{a^*_n\}$ is in $A_0$.  Now, since $A$ is the closure of $A_0$ it follows that $a^*$ is in $A$.

Now since $A$ is a closed, self-adjoint *-subalgebra of the $C^*$-algebra $B$, $A$ is a $C^*$-algebra in its own right.  We will refer to $A$ as the $C^*$-subalgebra of $B$ generated by $\mathcal{O}$.  Sometimes we will make use of the notation $C^*(\mathcal{O})$ to emphasize that we are considering the $C^*$-algebra generated by $\mathcal{O}$.

\section{The Quantum Torus}\label{qt_section}\noindent
Topologically we can define the classical two-torus, $\mathbb{T}^2$ by the quotient space
\begin{equation*}
\mathbb{T}^2:=\mathbb{R}^2/2\pi \mathbb{Z}^2
\end{equation*}
The underlying Hilbert space we will be dealing with here is $L^2(\mathbb{T}^2)$ and we define the following operators
\begin{eqnarray*}
Uf(x,y)=e^{2\pi ix}f\left(x,y+\frac{\theta}{2}\right)\\
Vf(x,y)=e^{2\pi iy}f\left(x-\frac{\theta}{2},y\right)
\end{eqnarray*}
for any $f\in L^2(\mathbb{T}^2)$.
%Moet nog die maat op hierdie ruimte spesifiseer.  Dit moet die Haar maat wees, maar weet nie veel van hierdie goed nie.
\begin{lemma}
 The operators $U$ and $V$ as defined above are unitary. 
\end{lemma}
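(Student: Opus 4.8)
The plan is to show directly that each operator possesses an adjoint which is a two-sided inverse, i.e.\ that $U^*U = UU^* = I$ and $V^*V = VV^* = I$; this is the cleanest route to unitarity, since it simultaneously establishes that the operators are isometric and surjective without a separate argument for the latter. The single analytic fact I will lean on is that the inner product on $L^2(\mathbb{T}^2)$ is taken against a translation-invariant (Haar) measure, so that shifting either coordinate by $\theta/2$ leaves integrals over $\mathbb{T}^2$ unchanged. This same invariance guarantees that $Uf$ and $Vf$ are again square-integrable, because multiplication by the unimodular factors $e^{2\pi i x}$ and $e^{2\pi i y}$ does not alter the modulus of a function pointwise.

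First I would compute the adjoint of $U$. Writing $\langle Uf, g\rangle$ as an integral over $\mathbb{T}^2$ and performing the measure-preserving substitution $y \mapsto y - \tfrac{\theta}{2}$, the phase $e^{2\pi i x}$ transfers onto $g$ with its sign conjugated, giving
$$U^*g(x,y) = e^{-2\pi i x}\, g\!\left(x,\, y - \tfrac{\theta}{2}\right).$$
The analogous computation for $V$, using instead the substitution $x \mapsto x + \tfrac{\theta}{2}$, yields
$$V^*g(x,y) = e^{-2\pi i y}\, g\!\left(x + \tfrac{\theta}{2},\, y\right).$$

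Then I would verify the two-sided inverse relations by direct composition of these formulas. Applying $U$ and $U^*$ in either order, the phases cancel via $e^{2\pi i x}e^{-2\pi i x}=1$ while the two equal and opposite shifts in the $y$-argument cancel as well, so that $U^*Uf = UU^*f = f$ for every $f \in L^2(\mathbb{T}^2)$; hence $U$ is unitary. The argument for $V$ is word-for-word identical with the roles of the two coordinates interchanged and the sign of the shift reversed, so $V^*V = VV^* = I$ as well.

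I expect no genuine obstacle: the whole argument reduces to translation-invariance of the measure on $\mathbb{T}^2$, after which unitarity is a purely formal cancellation. The only point meriting a little care is to confirm that translation by $\tfrac{\theta}{2}$ descends to a well-defined, measure-preserving map on the quotient $\mathbb{R}^2/2\pi\mathbb{Z}^2$, so that the change of variables inside the inner product is legitimate and the adjoints above are correctly identified.
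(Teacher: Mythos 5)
Your proposal is correct and follows essentially the same route as the paper: the operator you identify as $U^*$, namely $g\mapsto e^{-2\pi ix}g(x,y-\tfrac{\theta}{2})$, is exactly the operator the paper introduces as the inverse $A$ (and likewise your $V^*$ is its $B$), and both arguments rest on the translation invariance of the Haar measure on $\mathbb{T}^2$. The only cosmetic difference is that you compute the adjoint first and then check it is a two-sided inverse, whereas the paper exhibits the inverse first and separately verifies $\langle Uf,Ug\rangle=\langle f,g\rangle$.
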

\begin{proof}
 Consider the operators defined by
\begin{eqnarray*}
Af(x,y)&=&e^{-2\pi ix}f\left(x,y-\frac{\theta}{2}\right):=g(x,y)\\
Bf(x,y)&=&e^{-2\pi iy}f\left(x+\frac{\theta}{2},y\right)
\end{eqnarray*}
We show that these operators are the inverse operators of $U$ and $V$ respectively:
\begin{eqnarray*}
\left[UAf\right](x,y)&=&Ug(x,y)\\
&=&e^{2\pi ix}g\left(x,y+\frac{\theta}{2}\right)\\
&=&e^{2\pi ix}e^{-2\pi ix}f\left(x,y-\frac{\theta}{2}+\frac{\theta}{2}\right)\\
&=&f(x,y)
\end{eqnarray*} 
Let $h(x,y):=Uf(x,y)$.  Then
\begin{eqnarray*}
\left[AUf\right](x,y)&=&Ah(x,y)\\
&=&e^{-2\pi ix}h\left(x,y-\frac{\theta}{2}\right)\\
&=&e^{-2\pi ix}e^{2\pi ix}f\left(x,y+\frac{\theta}{2}-\frac{\theta}{2}\right)\\
&=&f(x,y)
\end{eqnarray*}
So $AU=UA=\I$ and using the same arguments we can also show that $BV=VB=\I$.  Now consider the inner product of the functions $f,g\in L^2(\mathbb{T}^2)$
\begin{eqnarray*}
\langle Uf,Ug\rangle&=&\int^{2\pi}_{0}\int^{2\pi}_{0}\overline{e^{2\pi ix}f\left(x,y+\frac{\theta}{2}\right)}e^{2\pi ix}g\left(x,y+\frac{\theta}{2}\right)dxdy\\
&=&\int^{2\pi}_{0}\int^{2\pi}_{0}\overline{f\left(x,y+\frac{\theta}{2}\right)}g\left(x,y+\frac{\theta}{2}\right)dxdy\\
&=&\langle f ,g\rangle
\end{eqnarray*}
So $U^*U=\I$.  So $U$ is unitary, in the same way we can show that $V$ is also unitary.
\end{proof}\noindent
We will now find the commutation relation of the operators $U$ and $V$.  Let $g(x,y):=Uf(x,y)$ and $h(x,y):=Vf(x,y)$ and consider
\begin{eqnarray*}
 \left[UVf\right](x,y)&=&Uh(x,y)\\
&=&e^{2\pi ix}h\left(x,y+\frac{\theta}{2}\right)\\
&=&e^{2\pi ix}e^{2\pi i\left(y+\frac{\theta}{2}\right)}f\left(x-\frac{\theta}{2},y+\frac{\theta}{2}\right)\\
&=&e^{2\pi i(x+y)}e^{\pi i\theta}f\left(x-\frac{\theta}{2},y+\frac{\theta}{2}\right).
\end{eqnarray*}
Similarly we calculate $VU$
\begin{eqnarray*}
\left[VUf\right](x,y)&=&Vg(x,y)\\
&=&e^{2\pi iy}g\left(x-\frac{\theta}{2},y\right)\\
&=&e^{2\pi iy}e^{2\pi i\left(x-\frac{\theta}{2}\right)}f\left(x-\frac{\theta}{2},y+\frac{\theta}{2}\right)\\
&=&e^{2\pi i\left(x+y\right)}e^{-\pi i\theta}f\left(x-\frac{\theta}{2},y+\frac{\theta}{2}\right).
\end{eqnarray*}
Hence if we define $k:=e^{2\pi i\theta}$ then we can write the commutation relation of the operators $U$ and $V$ as
\begin{equation*}
 UV=kVU.
\end{equation*}

\begin{definition}(The Quantum Torus)\label{QT}\cite[p. 109]{Weaver}\index{$C^{\hbar}(T^2)$}\newline
Let $\theta\in [0,1)$.  Let $A_{\theta}$ be the $C^*$-subalgebra of $B(L^2(\mathbb{T}^2))$ generated by two unitary operators $\{U,V\}$, satisfying the commutation relation:
\begin{eqnarray}\label{commutation_relation}
 UV=e^{2\pi i\theta}VU.
\end{eqnarray}
\end{definition}

\begin{proposition}\label{kontinu}
In the case where $\theta=0$ the quantum torus $A_{0}\cong C\left(\mathbb{T}^2\right)$.
\end{proposition}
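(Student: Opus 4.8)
The plan is to exhibit an explicit $*$-isomorphism realised by the multiplication representation of $C(\mathbb{T}^2)$ on $L^2(\mathbb{T}^2)$. First I would record what happens to the generators at $\theta=0$: since $e^{2\pi i\cdot 0}=1$, the commutation relation \eqref{commutation_relation} collapses to $UV=VU$, and the defining formulas become $[Uf](x,y)=e^{2\pi ix}f(x,y)$ and $[Vf](x,y)=e^{2\pi iy}f(x,y)$. Thus $U$ and $V$ are precisely the operators of multiplication by the coordinate characters $u(x,y):=e^{2\pi ix}$ and $v(x,y):=e^{2\pi iy}$.

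This suggests defining $\Phi:C(\mathbb{T}^2)\to B(L^2(\mathbb{T}^2))$ by $\Phi(g)=M_g$, the operator of pointwise multiplication by $g$. A direct check shows $M_{gh}=M_gM_h$, $M_{\bar g}=M_g^*$ and $M_1=\I$, so $\Phi$ is a unital $*$-homomorphism; moreover $\Phi(u)=U$ and $\Phi(v)=V$ by the previous paragraph. The remaining task is to identify the image $\Phi(C(\mathbb{T}^2))$ with the quantum torus $A_0=C^*(\{U,V\})$ and to verify that $\Phi$ is injective.

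For injectivity I would use that a continuous function $g$ on the compact space $\mathbb{T}^2$ with $M_g=0$ must vanish almost everywhere with respect to Lebesgue measure, hence everywhere by continuity; so $\ker\Phi=0$. An injective $*$-homomorphism between $C^*$-algebras is automatically isometric, so $\Phi$ has closed range and is an isometric isomorphism onto $\Phi(C(\mathbb{T}^2))$. To see this range equals $A_0$, I would invoke the Stone--Weierstrass theorem: the family $\{u,v,\bar u,\bar v\}$ is self-adjoint, separates points of $\mathbb{T}^2$, and contains the constants (for instance $1=u\bar u$), so the $*$-algebra $P$ it generates --- the trigonometric polynomials --- is dense in $C(\mathbb{T}^2)$. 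Since $\Phi$ is isometric, $\Phi(C(\mathbb{T}^2))=\overline{\Phi(P)}$, and $\Phi(P)$ is exactly the $*$-algebra generated by $U$ and $V$, whose closure is $A_0$ by definition. Hence $\Phi$ is a $*$-isomorphism of $C(\mathbb{T}^2)$ onto $A_0$.

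The main obstacle is the bookkeeping around the word \emph{generated}: one must match the $*$-algebra generated by $U,V$ inside $B(L^2(\mathbb{T}^2))$ (whose closure is $A_0$) with the image under $\Phi$ of the polynomial algebra $P$, and this matching is only legitimate because $\Phi$ is isometric, so that closures are carried to closures. The one genuine input beyond routine $C^*$-algebra facts is Stone--Weierstrass, which guarantees that $P$ is dense; everything else --- the multiplicativity of $\Phi$, its faithfulness, and the automatic isometry --- is standard.
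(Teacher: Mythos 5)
Your proof is correct and follows essentially the same route as the paper: observe that at $\theta=0$ the generators become multiplication by the coordinate characters, and then apply Stone--Weierstrass. The only difference is that you make explicit the multiplication representation $\Phi$ and its faithfulness/isometry, which the paper's proof glosses over when it silently identifies the operator algebra $A_0$ with a subalgebra of $C(\mathbb{T}^2)$ --- a worthwhile tightening, but not a different argument.
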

\begin{proof}
In this limiting case where $\theta=0$ the operators $U$ and $V$ reduce to
\begin{eqnarray*}
Uf(x,y)=e^{2\pi ix}f\left(x,y\right),\quad
Vf(x,y)=e^{2\pi iy}f\left(x,y\right).
\end{eqnarray*}
So we can think of them simply as multiplication by an exponential function.  Now $A_{0}$ is the commutative unital $C^*$-algebra generated by $U$ and $V$.  $A_0$ clearly separates the points of $\mathbb{T}^2$ and by the Stone-Weierstrass Theorem we can conclude that $A_{0}$ is dense in $C\left(\mathbb{T}^2\right)$.  However $C\left(\mathbb{T}^2\right)=\overline{A_0}=A_0$ which concludes the proof. 
\end{proof}

\noindent
The reader will notice that equation (\ref{commutation_relation}) has the exact form of the Weyl commutation relations.

\section{The n-Dimensional Quantum Torus}\label{highT}\noindent
For the remainder of the thesis we will use the \textsl{quantum torus} as defined in the previous section, however we can extend the results to a more general case.  In this section we extend the quantum torus generated by two unitary operators with the commutation relation (\ref{commutation_relation}) to a ``higher dimensional'' version.  A similar construction used to describe the normal \textsl{quantum torus} can now be followed to construct any quantum n-torus.  We define the generating operators as follows
\begin{equation*}
U_{j}f(x_1,\dots,x_n)=e^{ix_j}f(x_1+z_{j,1},\dots,x_n+z_{j,n})
\end{equation*}
for every $j=1,\dots,n$ and $f\in L^2(\mathbb{T}^n)$ where $\mathbb{T}^n$ is the n-torus which we define topologically
\begin{equation*}
\mathbb{T}^n:=\mathbb{R}^n/2\pi\mathbb{Z}^n
\end{equation*}
and where each $z_{j,i}$ is some constant.
\begin{lemma}
 The operators $U_j$ defined above are unitary for every $j=1,\dots,n$
\end{lemma}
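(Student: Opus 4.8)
The plan is to imitate the two-dimensional lemma verbatim: for each fixed $j$ I would produce a two-sided inverse of $U_j$ and then check that $U_j$ preserves the inner product on $L^2(\mathbb{T}^n)$; these two facts together force $U_j$ to be unitary. Throughout I would abbreviate the shift vector as $z_j:=(z_{j,1},\dots,z_{j,n})$, so that $U_jf(x)=e^{ix_j}f(x+z_j)$ for $x=(x_1,\dots,x_n)$.

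First I would exhibit the candidate inverse
\begin{equation*}
A_jf(x)=e^{-i(x_j-z_{j,j})}f(x-z_j).
\end{equation*}
Substituting --- taking care to evaluate the phase of $A_j$ at the shifted argument --- one finds $U_jA_jf(x)=e^{ix_j}e^{-ix_j}f(x)=f(x)$ and symmetrically $A_jU_jf(x)=f(x)$, so that $A_jU_j=U_jA_j=\I$ and $U_j$ is invertible.

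Next, for $f,g\in L^2(\mathbb{T}^n)$ I would compute the inner product
\begin{align*}
\langle U_jf,U_jg\rangle
&=\int_{\mathbb{T}^n}\overline{e^{ix_j}f(x+z_j)}\,e^{ix_j}g(x+z_j)\,dx\\
&=\int_{\mathbb{T}^n}\overline{f(x+z_j)}\,g(x+z_j)\,dx\\
&=\langle f,g\rangle,
\end{align*}
the modulus-one phases cancelling and the final equality being the translation invariance of the measure on $\mathbb{T}^n$ under the substitution $y=x+z_j$. This yields $U_j^*U_j=\I$; combined with invertibility it gives $U_j^*=U_j^{-1}$, hence $U_j$ is unitary. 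Since $j$ was arbitrary this covers all the generators.

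The argument is entirely routine and there is no genuine obstacle. The only points deserving a moment's care are the bookkeeping of the shift in the inverse (the phase must be read off at $x+z_j$, not at $x$) and the fact that translation invariance is being invoked for Lebesgue measure on the compact quotient $\mathbb{R}^n/2\pi\mathbb{Z}^n$ rather than on $\mathbb{R}^n$; both are immediate, but these are the two spots where a careless version of the proof could slip.
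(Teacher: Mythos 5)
Your proof is correct and follows the same route as the paper: exhibit a two-sided inverse $A_j$ and then verify that $U_j$ preserves the inner product on $L^2(\mathbb{T}^n)$, using translation invariance of the Haar measure. In fact your choice $A_jf(x)=e^{-i(x_j-z_{j,j})}f(x-z_j)$ is the more careful one --- the paper takes $A_jf(x)=e^{-ix_j}f(x-z_j)$ and asserts ``clearly $A_jU_j=U_jA_j=\I$'', but with that phase one actually gets $U_jA_jf=e^{-iz_{j,j}}f$, so the paper's formula is a genuine inverse only when $z_{j,j}=0$ (as in the two-dimensional case that motivated it); your evaluation of the phase at the shifted argument repairs this.
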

\begin{proof}
 Just as in the case of the two-torus we define the operators $A_{j}$ in the following way
\begin{equation*}
A_jf(x_1,\dots,x_n):=e^{-ix_j}f(x_1-z_{j,1},\dots,x_n-z_{j,n})
\end{equation*}
Clearly $A_jU_j=U_jA_j=\I$ so $A_j=U^{-1}_j$.  We consider the inner product with $f,g\in L^2(\mathbb{T}^n)$
\begin{eqnarray*}
\langle U_jf,U_jf\rangle\
&=&\int^{2\pi}_{0}\dots\int^{2\pi}_{0}\overline{e^{ix_j}f\left(x+z_j\right)}e^{ix_j}g\left(x+z_j\right)dx_1\dots dx_n\\
&=&\int^{2\pi}_{0}\dots\int^{2\pi}_{0}\overline{f\left(x+z_j\right)}g\left(x+z_j\right)dx_1\dots dx_n\\
&=&\langle f ,g\rangle.
\end{eqnarray*}
Here $x$ denotes the n-dimensional vector on the n-torus and $z_j$ the constant we add to each component of $x$ under the action of the $U$ operator. This shows that $U^*_jU_j=\I$.  This shows that $U_j$ is unitary for every $j=1,\dots,n$.
\end{proof}
We can also determine the commutation relation between any two of these $U_j$ operators.  Let 
\begin{equation*}
U_kf(x_1,\cdots,x_n)=e^{ix_k}f(x_1+z_{k,1},\cdots,x_n+z_{k,n}):=g(x_1,\cdots,x_n) 
\end{equation*}
and
\begin{equation*}
U_lf(x_1,\cdots,x_n)=e^{ix_l}f(x_1+z_{l,1},\cdots,x_n+z_{l,n}):=h(x_1,\cdots,x_n).
\end{equation*}
Now consider
\begin{eqnarray*}
 \left[U_kU_lf\right](x_1,\cdots,x_n)&=&\left[U_kh\right](x_1,\cdots,x_n)\\
&=&e^{ix_{k}}h(x_1+z_{k,1},\cdots,x_n+z_{k,n})\\
&=&e^{i(x_k+x_l)}e^{iz_{k,l}}f(x_1+z_{k,1}+z_{l,1},\cdots,x_n+z_{k,n}+z_{l,n}).
\end{eqnarray*}
Similarly we can determine
\begin{eqnarray*}
 \left[U_lU_kf\right](x_1,\cdots,x_n)&=&\left[U_lg\right](x_1,\cdots,x_n)\\
&=&e^{ix_{l}}g(x_1+z_{l,1},\cdots,x_n+z_{l,n})\\
&=&e^{i(x_k+x_l)}e^{iz_{l,k}}f(x_1+z_{k,1}+z_{l,1},\cdots,x_n+z_{k,n}+z_{l,n}).
\end{eqnarray*}
This enables us to determine the commutation relation between any $U_k$ and $U_l$, which we find to be
\begin{equation}\label{highDcomrel}
 U_kU_l=e^{i(z_{k,l}-z_{l,k})}U_lU_k:=e^{2\pi i\theta_{k,l}}U_lU_k.
\end{equation}
We are now able to generalize our earlier definition in the following way.
\begin{definition}(The Quantum n-Torus)\newline
 Let $C^{\theta}(\mathbb{T}^n)$ be the $C^*$-subalgebra of $B(L^2(\mathbb{T}^n))$ generated by the unitary operators $\{U_j:j=1,\cdots,n\}$ that satisfy the commutation relation (\ref{highDcomrel}).
\end{definition}
\noindent
For the remainder of this thesis we will be working with the normal quantum two torus, the $C^*$-algebra generated by two unitary operators $U$ and $V$ satisfying the commutation relation (\ref{commutation_relation}).
 
\section{The Quantum Torus as a Crossed Product}
This section is based on the work done by Dana Williams in his book \cite{Williams}.  We will follow the broad outline laid out in his book to construct the crossed product of the quantum torus.  We include the necessary lemmas and propositions from \cite{Williams} some of which, without proof, since the reader can easily find them in the book.  For most of this section we will simply use the results mentioned in \cite{Williams} to show that the quantum torus can be written as a crossed product of two $C^*$-algebras.
\begin{definition}\textsl{($C^*$-Dynamical System)}\label{dynamical_system}\newline
 A $C^*$-dynamical system is a triple $(A,G,\alpha)$ consisting of a $C^*$-algebra $A$, a locally compact group $G$ and a continuous homomorphism $\alpha:G\rightarrow \text{Aut}A$. 
\end{definition}
The ideas of $C^*$-dynamical systems will occur frequently in the sections that follow.
\begin{definition}\textsl{($C(X)$, $C_0(X)$ and $C_c(X)$)}\newline
 If $X$ is a locally compact Hausdorff space, then $C(X)$, $C_0(X)$ and $C_c(X)$ denote, respectively, the algebra of all continuous complex-valued functions on $X$, the subalgebra of all bounded complex valued functions vanishing at infinity and the subalgebra of $C_0(X)$ consisting of functions with compact support.
\end{definition}

\begin{definition}\textsl{(Covariant Representation)}\newline
 Let $(A,G,\alpha)$ be a $C^*$-dynamical system.  Then a covariant representation of $(A,G,\alpha)$ is a pair $(\pi,U)$ consisting of a representation $\pi:A\rightarrow B(H)$ and a unitary representation $U:G\rightarrow U(H)$ on the same Hilbert space $H$, such that for any $a\in A$
\begin{equation*}
 \pi\left(\alpha_s(a)\right)=U_s\pi(a)U^*_s.
\end{equation*}
Here we define the notation $U(s):=U_s$ and similarly $\alpha(s)(a):=\alpha_s(a)$.
\end{definition}
%% kan miskien hier n voorbeeld van so n covariant representation insit om net die idee te illustreer %%
We state the next three lemmas without proof (The interested reader can be find details regarding the proofs in \cite[Lemma 1.87, Proposition 2.23, Lemma 2.27]{Williams}).
\begin{lemma}\label{induct}
 Suppose that $D_0$ is a dense subset of a Banach space $D$.  Then 
\begin{equation*}
 C_c(G)\odot D_0:=\text{span}\left\{z\otimes a: z\in C_c(G), a\in D_0 \right\}
\end{equation*}
is dense in $C_c(G,D)$ in the inductive limit topology, and therefore for the topology on $C_c(G,D)$ induced by the $L^1$-norm.
\end{lemma}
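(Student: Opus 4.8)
The plan is to prove density of $C_c(G)\odot D$ first, using the full Banach space $D$, and then reduce to $D_0$ by invoking its density in $D$. The two analytic tools are the uniform continuity of a compactly supported continuous $D$-valued function on a locally compact group and the existence of a partition of unity subordinate to a finite open cover of a compact set. Throughout, $z\otimes a$ denotes the function $s\mapsto z(s)a$.

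First I would fix $f\in C_c(G,D)$ with $\operatorname{supp}f$ contained in a compact set $K$, and fix $\varepsilon>0$. Since $f$ is continuous with compact support it is left uniformly continuous, so there is a precompact open neighbourhood $V$ of the identity $e$ with $\|f(s)-f(t)\|<\varepsilon$ whenever $s^{-1}t\in V$. Cover $K$ by finitely many translates $s_1V,\dots,s_nV$ with $s_i\in K$, and choose $\{\phi_i\}_{i=1}^n\subset C_c(G)$ subordinate to this cover, so that $\operatorname{supp}\phi_i\subseteq s_iV$, $0\le\sum_i\phi_i\le1$ everywhere, and $\sum_i\phi_i=1$ on $K$. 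Setting
\begin{equation*}
g:=\sum_{i=1}^{n}\phi_i\otimes f(s_i)\in C_c(G)\odot D,
\end{equation*}
the fact that $\phi_i(s)\neq0$ forces $s_i^{-1}s\in V$, hence $\|f(s)-f(s_i)\|<\varepsilon$, gives for every $s\in G$
\begin{equation*}
\|f(s)-g(s)\|\le\sum_{i=1}^{n}\phi_i(s)\,\|f(s)-f(s_i)\|<\varepsilon\sum_{i=1}^{n}\phi_i(s)\le\varepsilon,
\end{equation*}
so $\|f-g\|_\infty<\varepsilon$. Moreover $\operatorname{supp}g\subseteq K\overline{V}$; fixing once and for all a precompact neighbourhood $V_0\ni e$ and restricting to $V\subseteq V_0$ keeps every such support inside the single compact set $K\overline{V_0}$, independently of $\varepsilon$. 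This is precisely convergence in the inductive limit topology.

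To pass from $D$ to $D_0$ I would approximate each coefficient: by density of $D_0$ choose $a_i\in D_0$ with $\|f(s_i)-a_i\|<\varepsilon$ and set $\tilde g:=\sum_i\phi_i\otimes a_i\in C_c(G)\odot D_0$. Then $\|g(s)-\tilde g(s)\|\le\sum_i\phi_i(s)\|f(s_i)-a_i\|\le\varepsilon$ for every $s$, with $\operatorname{supp}\tilde g\subseteq K\overline{V_0}$, whence $\|f-\tilde g\|_\infty<2\varepsilon$ on a fixed compact set. This yields density of $C_c(G)\odot D_0$ in the inductive limit topology. The final claim is then automatic: inductive limit convergence forces a common compact support $C$ together with uniform convergence, so $\|f-\tilde g\|_1\le\mu(C)\,\|f-\tilde g\|_\infty\to0$, giving density for the $L^1$-norm as well.

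The main obstacle is the first approximation step with the full space $D$, and specifically arranging uniform continuity of $f$ together with supports confined to a single compact set as $\varepsilon\to0$; once the subordinate partition of unity is in hand the estimates are routine. The reduction to $D_0$ and the passage from the inductive limit topology to the $L^1$-topology are then straightforward.
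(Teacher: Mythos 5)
Your proof is correct: the partition-of-unity argument combined with uniform continuity of a compactly supported $D$-valued function, the fixed precompact neighbourhood $V_0$ to pin all supports inside the single compact set $K\overline{V_0}$, and the final coefficient approximation from $D_0$ together give exactly the inductive-limit density claimed, and the $L^1$ statement follows from the finite Haar measure of that compact set. The paper itself states this lemma without proof, deferring to Williams' book, and your argument is essentially the standard proof given there, so there is nothing to compare beyond noting the agreement.
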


\begin{lemma}
Suppose that $(\pi,U)$ is a covariant representation of $(A,G,\alpha)$ on $H$.  Then
\begin{equation*}
 \pi\rtimes U(f):=\int_G\pi(f(s))U_sd\mu(s)
\end{equation*}
 defines a *-representation of $C_c(G,A)$ on $H$ called the integrated form of $(\pi,U)$.  We call $\pi\rtimes U$ the integrated form of $(\pi,U)$.
\end{lemma}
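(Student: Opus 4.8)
The plan is to verify that $\pi\rtimes U$ is a $*$-homomorphism from the $*$-algebra $C_c(G,A)$ --- equipped with the convolution product $(f*g)(s)=\int_G f(t)\alpha_t(g(t^{-1}s))\,d\mu(t)$ and the involution $f^*(s)=\Delta(s)^{-1}\alpha_s\!\left(f(s^{-1})^*\right)$, where $\Delta$ is the modular function --- into $B(H)$. Linearity is immediate from linearity of the integral, and the norm bound
\[
\|\pi\rtimes U(f)\|\leq\int_G\|\pi(f(s))U_s\|\,d\mu(s)\leq\int_G\|f(s)\|\,d\mu(s)=\|f\|_1
\]
follows at once from the fact that $\pi$ is contractive and each $U_s$ is unitary; this also guarantees the defining integral converges in norm. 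The two substantive points are multiplicativity and $*$-preservation.

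First I would establish multiplicativity. Writing out the product of the two integrals and inserting the covariance relation $U_s\pi(a)=\pi(\alpha_s(a))U_s$ gives
\[
\pi\rtimes U(f)\,\pi\rtimes U(g)=\int_G\!\int_G\pi\!\left(f(s)\alpha_s(g(t))\right)U_{st}\,d\mu(s)\,d\mu(t).
\]
The key manoeuvre is the change of variable $t\mapsto s^{-1}r$ in the inner integral; by left-invariance of Haar measure this turns $U_{st}$ into $U_r$ and the integrand into $\pi(f(s)\alpha_s(g(s^{-1}r)))U_r$. Pulling $\pi$ and the $s$-integration inside --- legitimate because $\pi$ is a bounded linear map and hence commutes with the vector-valued integral --- recovers exactly $\pi\!\left((f*g)(r)\right)$, and the outer integral then yields $\pi\rtimes U(f*g)$.

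Next I would verify $*$-preservation. Taking adjoints under the integral and again using covariance, this time in the form $U_{s^{-1}}\pi(a)=\pi(\alpha_{s^{-1}}(a))U_{s^{-1}}$, one obtains
\[
\left(\pi\rtimes U(f)\right)^*=\int_G U_s^*\,\pi(f(s))^*\,d\mu(s)=\int_G\pi\!\left(\alpha_{s^{-1}}(f(s)^*)\right)U_{s^{-1}}\,d\mu(s).
\]
The substitution $s\mapsto s^{-1}$ now introduces the modular function through $\int_G h(s^{-1})\,d\mu(s)=\int_G h(s)\Delta(s)^{-1}\,d\mu(s)$, and the resulting factor $\Delta(s)^{-1}\alpha_s(f(s^{-1})^*)$ is precisely $f^*(s)$. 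Hence the right-hand side equals $\int_G\pi(f^*(s))U_s\,d\mu(s)=\pi\rtimes U(f^*)$.

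The hard part will be the rigorous justification of the interchanges: both computations move the two integrations past one another and past the bounded map $\pi$, which for $A$-valued and $B(H)$-valued integrands requires a Fubini theorem for vector-valued (Bochner) integrals together with the fact that $\pi$ intertwines such integrals. This is exactly where the hypothesis $f,g\in C_c(G,A)$ does the work: continuity and compact support make the map $(s,t)\mapsto\pi(f(s)\alpha_s(g(t)))U_{st}$ Bochner integrable over $G\times G$, so Fubini applies and the change of variables is valid. I would isolate this as the single technical lemma and otherwise treat the covariance substitutions and the modular-function bookkeeping as routine. (Note that in the application to the quantum torus one has $G=\mathbb{Z}$, which is discrete and unimodular, so $\Delta\equiv 1$, the integrals become absolutely convergent sums, and the interchange is trivial --- but I would record the argument in the stated generality.)
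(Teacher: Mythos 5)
The paper states this lemma without proof, citing \cite[Proposition 2.23]{Williams}, so there is no internal argument to compare against; your proof is the standard one and is correct in all essentials (linearity, the $L^1$-bound, covariance plus the left-invariant substitution $t\mapsto s^{-1}r$ for multiplicativity, and the $s\mapsto s^{-1}$ substitution producing exactly the $\Delta(s^{-1})\alpha_s(f(s^{-1})^*)$ in the paper's definition of $f^*$). The one technical point to adjust is your framing of the interchange: $s\mapsto U_s$ is only strongly continuous, so $(s,t)\mapsto \pi(f(s)\alpha_s(g(t)))U_{st}$ is generally \emph{not} Bochner integrable in the norm of $B(H)$; the integrals should be read in the strong (or weak) operator topology, i.e.\ one fixes $\xi\in H$ and works with the norm-continuous, compactly supported $H$-valued function $s\mapsto \pi(f(s))U_s\xi$, after which Fubini and the commutation of $\pi$ with vector-valued integrals go through exactly as you describe. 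With that reinterpretation your argument is complete.
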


\begin{lemma}\label{cp}
 Suppose that $(A,G,\alpha)$ is a dynamical system and that for each $f\in C_c(G,A)$ we define
\begin{equation*}
 \|f\|_u:=\text{sup}\left\{\|\pi\rtimes U(f)\|:(\pi,U) \text{ is a covariant representation of } (A,G,\alpha) \right\}.
\end{equation*}
Then $\|\cdot\|_u$ is a norm on $C_c(G,A)$ called the universal norm.  The completion of $C_c(G,A)$ with respect to $\|\cdot\|_u$ is a $C^*$-algebra called the crossed product of $A$ by $G$ and denoted by $A\rtimes_{\alpha}G$.
\end{lemma}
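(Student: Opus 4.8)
The plan is to verify the three assertions of the lemma in turn: that $\|\cdot\|_u$ is finite on $C_c(G,A)$, that it is a genuine norm, and that the completion carries the structure of a $C^*$-algebra. The starting observation is that for any covariant representation $(\pi,U)$ the integrated form is contractive with respect to the $L^1$-norm: since $\pi$ is a representation of a $C^*$-algebra it is norm-decreasing, and each $U_s$ is unitary, so
\begin{equation*}
\|\pi\rtimes U(f)\|=\left\|\int_G\pi(f(s))U_s\,d\mu(s)\right\|\leq\int_G\|f(s)\|\,d\mu(s)=\|f\|_1.
\end{equation*}
Hence the supremum defining $\|f\|_u$ is bounded above by $\|f\|_1<\infty$, so $\|\cdot\|_u$ is well-defined and finite.

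Next I would establish that $\|\cdot\|_u$ satisfies the algebraic relations of a $C^*$-seminorm. Homogeneity and the triangle inequality follow at once from the linearity of each integrated form together with the corresponding properties of the operator norm, taking suprema at the end. Because each $\pi\rtimes U$ is a $*$-representation of the convolution $*$-algebra $C_c(G,A)$, for every covariant representation we have $\|\pi\rtimes U(f*g)\|\leq\|\pi\rtimes U(f)\|\,\|\pi\rtimes U(g)\|$ and $\|\pi\rtimes U(f^{*}*f)\|=\|\pi\rtimes U(f)\|^{2}$; passing to the supremum yields submultiplicativity $\|f*g\|_u\leq\|f\|_u\|g\|_u$ and the $C^*$-identity $\|f^{*}*f\|_u=\|f\|_u^{2}$ on $C_c(G,A)$.

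The crux, and the step I expect to be the main obstacle, is showing that $\|\cdot\|_u$ is actually a norm rather than merely a seminorm, that is, that $\|f\|_u=0$ forces $f=0$. For this I would exhibit at least one covariant representation whose integrated form is injective on $C_c(G,A)$, namely a regular representation. Starting from a faithful representation $\pi_0$ of $A$ on a Hilbert space $H_0$, I would build an induced representation on $L^2(G,H_0)$ by setting $(\tilde\pi(a)\xi)(s)=\pi_0(\alpha_{s^{-1}}(a))\xi(s)$ and letting $\lambda$ act by left translation, $(\lambda_t\xi)(s)=\xi(t^{-1}s)$. One checks the covariance identity $\tilde\pi(\alpha_t(a))=\lambda_t\tilde\pi(a)\lambda_t^{*}$, so that $(\tilde\pi,\lambda)$ is a covariant representation. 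The delicate part is then to prove that its integrated form is faithful on $C_c(G,A)$: evaluating $\tilde\pi\rtimes\lambda(f)$ against continuous compactly supported $H_0$-valued vectors and using an approximate identity of $A$ together with the faithfulness of $\pi_0$, one recovers the values $f(s)$ and concludes that $\tilde\pi\rtimes\lambda(f)=0$ implies $f=0$. Since $\|f\|_u\geq\|\tilde\pi\rtimes\lambda(f)\|$, this gives definiteness, so $\|\cdot\|_u$ is a norm.

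Finally, having a genuine $C^*$-norm on $C_c(G,A)$, the completion is by construction a Banach space. The convolution product and the involution extend continuously to the completion, using submultiplicativity and the fact that the $C^*$-identity makes the involution isometric, and the $C^*$-identity itself passes to the completion by continuity of the norm and the operations. This exhibits $A\rtimes_\alpha G$ as a $C^*$-algebra and completes the argument.
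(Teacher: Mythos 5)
Your argument is correct and is essentially the standard one: the paper itself states this lemma without proof, deferring to Williams, and your route (the $L^1$-bound $\|\pi\rtimes U(f)\|\leq\|f\|_1$ giving finiteness, the $C^*$-seminorm properties from each integrated form being a $*$-representation, and definiteness via the faithfulness of the regular representation $(\tilde\pi,\lambda)$ on $L^2(G,H_0)$) is precisely the proof in that reference. The only part left as a sketch is the faithfulness of $\tilde\pi\rtimes\lambda$ on $C_c(G,A)$, but the strategy you indicate for it is the right one.
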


It is possible to describe the $C^*$-algebra structure on $C_c\left(G,C_0(X)\right)$ in terms of functions on $G\times X$.  We make the identification
\begin{equation*}
 \psi_f:G\times X\rightarrow \mathbb{C}:(s,x)\mapsto f(s)(x)
\end{equation*}
where $f\in C_c(G,C_0(X))$.  Then clearly $\psi_f(s,\cdot)=f(s)\in C_0(X)$.  It is clear that we have the following inclusions
\begin{equation*}
 C_c(G\times X)\subset C_c(G,C_c(X))\subset C_c(G,C_0(X)).
\end{equation*}
Furthermore since point evaluation is a *-homomorphism from $C_0(X)$ to $\mathbb{C}$ if $f\in C_c(G,C_0(X))$ , we have
\begin{equation*}
 \int_G f(s)d\mu(s)(x)=\int_Gf(s)(x)d\mu(s)
\end{equation*}
with $\mu$ the Haar measure of the group $G$.  Note that if the group is not abelian, there will be both a left and a right Haar measure.  For our puroposes the left Haar measure will be in view. 

\begin{lemma}\cite[Lemma 1.61]{Williams}
Let $\mu$ be the Haar measure on a locally compact group $G$.  Then there is a continuous homomorphism $\Delta:G\rightarrow \mathbb{R}^+$ such that
\begin{equation*}
 \Delta(r)\int_Gf(sr)d\mu(s)=\int_Gf(s)d\mu(s)
\end{equation*}
 for all $f\in C_c(G)$.  The function $\Delta$ is independent of choice of Haar measure and is called the modular function on $G$.
\end{lemma}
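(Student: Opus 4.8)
The plan is to derive $\Delta$ from the uniqueness of left Haar measure. Fix a nonzero left Haar measure $\mu$ on $G$ and, for each fixed $r\in G$, define a linear functional on $C_c(G)$ by
\[
 I_r(f):=\int_G f(sr)\,d\mu(s).
\]
Since right translation $s\mapsto sr$ is a homeomorphism of $G$ carrying compact sets to compact sets, $I_r$ is a well-defined, nonzero, positive linear functional on $C_c(G)$. The first key step is to check that $I_r$ is again \emph{left}-invariant: writing $(L_tf)(x)=f(t^{-1}x)$ and using left invariance of $\mu$ under the substitution $s\mapsto ts$, one gets $I_r(L_tf)=\int_G f((t^{-1}s)r)\,d\mu(s)=\int_G f(sr)\,d\mu(s)=I_r(f)$. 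Thus $I_r$ is a left-invariant positive functional, hence by the Riesz representation theorem it is integration against a left Haar measure.

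By the uniqueness of Haar measure up to a positive scalar, there is a unique constant $c(r)>0$ with $I_r(f)=c(r)\int_G f\,d\mu$ for all $f\in C_c(G)$; setting $\Delta(r):=c(r)^{-1}$ immediately yields the displayed identity $\Delta(r)\int_G f(sr)\,d\mu(s)=\int_G f(s)\,d\mu(s)$. Independence of the choice of Haar measure is then automatic: replacing $\mu$ by $\lambda\mu$ with $\lambda>0$ multiplies both sides of $I_r(f)=c(r)\int_G f\,d\mu$ by $\lambda$, leaving $c(r)$, and hence $\Delta$, unchanged.

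To see that $\Delta$ is a homomorphism I would compute $I_{rr'}$ by iteration. For the function $g(u):=f(ur')$ one has $f(srr')=g(sr)$, so
\[
 \int_G f(srr')\,d\mu(s)=\int_G g(sr)\,d\mu(s)=c(r)\int_G g\,d\mu=c(r)c(r')\int_G f\,d\mu,
\]
giving $c(rr')=c(r)c(r')$; since $\mathbb{R}^+$ is abelian, $\Delta=c^{-1}$ is a homomorphism as well. Finally, for continuity, fix one $f\in C_c(G)$ with $f\ge 0$ and $\int_G f\,d\mu\neq 0$, so that $\Delta(r)^{-1}=I_r(f)/\int_G f\,d\mu$; it then suffices to show $r\mapsto I_r(f)$ is continuous. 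This follows from the uniform continuity of right translation on $C_c(G)$: as $r\to r_0$ the functions $s\mapsto f(sr)$ converge uniformly to $s\mapsto f(sr_0)$ while their supports stay inside a single compact set, so the integrals converge. I expect this continuity step, together with the clean invocation of Haar uniqueness, to be the crux: the algebraic identities are routine, but the genuine content lives in the uniqueness theorem for Haar measure (which supplies $\Delta$ in the first place) and in the continuity-of-translation argument that makes $\Delta$ continuous.
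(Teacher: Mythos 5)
Your proof is correct and complete: the reduction to uniqueness of left Haar measure via the left-invariant functional $I_r(f)=\int_G f(sr)\,d\mu(s)$, the iteration argument for $c(rr')=c(r)c(r')$, the scaling argument for independence of the choice of $\mu$, and the continuity argument via uniform continuity of right translation on $C_c(G)$ are all sound. Note that the paper itself states this lemma without proof, simply citing Williams; your argument is the standard one found there (and in Folland), so there is nothing to contrast it against in the text.
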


It is clear that for any $f$ and $g$ in $C_c(G,A)$ and $(r,s)\in G\times G$ we have $f(r)\alpha_r(g(r^{-1}s))\in A$.  Since both $f$ and $g$ are in $C_c(G,A)$ it is easy to see that the mapping $(s,r)\mapsto f(r)\alpha_r(g(r^{-1}s))$ is in $C_c(G\times G,A)$.  We also observe that
\begin{equation*}
 f*g(s):=\int_G f(r)\alpha_r(g(r^{-1}s))d\mu(r)
\end{equation*}
defines ans element of $C_c(G,A)$.  We call the above star product the \textsl{convolution} of $f$ and $g$.  It can be shown using \cite[Proposition 1.105, Lemma 1.92]{Williams} that for all $f,g,h\in C_c(G,A)$ this convolution is associative
\begin{equation*}
 (f*g)*h=f*(g*h).
\end{equation*}

\begin{lemma}
 Define the mapping $*:C_c(G,A)\rightarrow C_c(G,A)$ by
\begin{equation*}
 f^*(s):=\Delta(s^{-1})\alpha_s(f(s^{-1})^*).
\end{equation*}
The *-mapping is an involution on $C_c(G,A)$.
\end{lemma}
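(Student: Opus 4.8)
The plan is to verify the defining properties of an involution on the $*$-algebra $(C_c(G,A),*)$: that the prescription $f\mapsto f^*$ lands in $C_c(G,A)$, is conjugate-linear, is involutive (i.e. $(f^*)^*=f$), and is anti-multiplicative with respect to convolution (i.e. $(f*g)^*=g^* * f^*$). First I would confirm that $f^*\in C_c(G,A)$. Since inversion $s\mapsto s^{-1}$ is a homeomorphism of $G$, the map $s\mapsto f(s^{-1})$ is continuous with support contained in the compact set $(\operatorname{supp}f)^{-1}$; composing with the conjugate-linear involution on $A$, the continuous automorphisms $\alpha_s$, and the continuous modular function $\Delta$ preserves both continuity and compact support. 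Conjugate-linearity is then immediate, since each $\alpha_s$ is linear while the involution on $A$ is conjugate-linear, giving $(\lambda f+g)^*(s)=\bar\lambda f^*(s)+g^*(s)$.

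Next I would establish $(f^*)^*=f$. Unwinding the definition once gives $f^*(s^{-1})=\Delta(s)\alpha_{s^{-1}}(f(s)^*)$, and taking adjoints — using that each $\alpha_s$ is a $*$-automorphism, so $\alpha_{s^{-1}}(a)^*=\alpha_{s^{-1}}(a^*)$, and that $\Delta(s)$ is a positive real, hence self-conjugate — yields $f^*(s^{-1})^*=\Delta(s)\alpha_{s^{-1}}(f(s))$. Substituting into $(f^*)^*(s)=\Delta(s^{-1})\alpha_s\big(f^*(s^{-1})^*\big)$ and invoking that $\alpha$ and $\Delta$ are homomorphisms — so $\alpha_s\alpha_{s^{-1}}=\alpha_e=\operatorname{id}$ and $\Delta(s)\Delta(s^{-1})=\Delta(e)=1$ — collapses the whole expression to $f(s)$.

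The main work, and the step I expect to be the real obstacle, is anti-multiplicativity $(f*g)^*=g^* * f^*$. I would expand $(f*g)^*(s)$ directly from the convolution formula, pushing the involution through the product (which reverses the order of the two factors) and through $\alpha_r$ via $\alpha_r(a)^*=\alpha_r(a^*)$, to reach $(f*g)^*(s)=\Delta(s^{-1})\int_G \alpha_{sr}\big(g(r^{-1}s^{-1})^*\big)\,\alpha_s\big(f(r)^*\big)\,d\mu(r)$. Separately I would expand $(g^* * f^*)(s)$; here the homomorphism property of $\Delta$ recombines the several modular factors ($\Delta(r^{-1})\Delta(s^{-1}r)=\Delta(s^{-1})$) into a single $\Delta(s^{-1})$, and the relation $\alpha_r\alpha_{r^{-1}s}=\alpha_s$ simplifies the automorphism, leaving $(g^* * f^*)(s)=\Delta(s^{-1})\int_G \alpha_r\big(g(r^{-1})^*\big)\,\alpha_s\big(f(s^{-1}r)^*\big)\,d\mu(r)$.

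The two integrals are then matched by the substitution $r\mapsto sr$, and this is precisely where left invariance of the Haar measure $\mu$ is essential: under $r=sr'$ one has $d\mu(sr')=d\mu(r')$, and the integrand $\alpha_r(g(r^{-1})^*)\,\alpha_s(f(s^{-1}r)^*)$ transforms exactly into $\alpha_{sr'}(g(r'^{-1}s^{-1})^*)\,\alpha_s(f(r')^*)$, which is the integrand obtained for $(f*g)^*$. The delicate point throughout is bookkeeping: keeping the arguments of $\Delta$ and the compositions of the $\alpha$'s correct at each stage, so that every modular factor and every automorphism index lands where it should. Once these four verifications are assembled, $f\mapsto f^*$ is a conjugate-linear, involutive anti-automorphism of $(C_c(G,A),*)$, which is the assertion of the lemma.
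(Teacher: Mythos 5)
Your proposal is correct, and it is in fact more complete than the argument the paper gives. The paper's proof verifies only the single identity $(f^*)^*=f$ — the computation $\Delta(s^{-1})\Delta(s)\,\alpha_s\bigl(\alpha_{s^{-1}}(f(s))\bigr)=f(s)$ — and stops there, taking the word ``involution'' in the narrow sense of a self-inverse map. You read it in the *-algebra sense and additionally check that $f^*$ stays in $C_c(G,A)$, that $f\mapsto f^*$ is conjugate-linear, and that $(f*g)^*=g^**f^*$; your computation of the involutive property itself is identical to the paper's. The anti-multiplicativity step is the genuinely substantive addition: your two expansions of $(f*g)^*(s)$ and $(g^**f^*)(s)$ are correct (the modular factors recombine via $\Delta(r^{-1})\Delta(s^{-1}r)=\Delta(s^{-1})$ exactly as you say), and the change of variables $r\mapsto sr$ using left invariance of $\mu$ matches them. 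Since the paper uses this lemma precisely to conclude that $C_c(G,A)$ is a *-algebra, your fuller verification is the one the surrounding text actually needs; the only caution is that the well-definedness and anti-multiplicativity checks implicitly use joint continuity of $(s,a)\mapsto\alpha_s(a)$ and the freedom to pull $\alpha_s$ and the adjoint through the $A$-valued integral, both standard for $C^*$-dynamical systems but worth a one-line acknowledgement.
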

\begin{proof}
The following calculation proves the lemma:
\begin{eqnarray*}
 (f^*)^*(s)&=&\Delta(s^{-1})\alpha_s((f^*)(s^{-1})^*)\\
&=&\Delta(s^{-1})\alpha_s\left(\left( \Delta(s)\alpha_{s^{-1}}(f(s)^*) \right)^* \right)\\
&=&\Delta(s^{-1})\Delta(s)\alpha_s\left( \alpha_{s^{-1}}(f(s)) \right)\\
&=&f(s).
\end{eqnarray*}
\end{proof}
The convolution together with the involution defined in the previous lemma shows that $C_c(G,A)$ is a *-algebra.
%% Moet hier nog mooi uitskryf wat presies ek wil doen, dit gaan maar so biejtie lukraak hierso, nie tevrede daarmee nie.

\begin{remark}
 We know that $C_c(\mathbb{Z},C_c(\mathbb{T}))$ has a dense *-subalgebra $C_c(\mathbb{Z}\times \mathbb{T})$ where the convolution product is given by the finite sum
\begin{equation*}
 f*g(n,z):=\sum^{\infty}_{m=-\infty}f(m,z)f(n-m,e^{-2\pi im\theta}z),
\end{equation*}
and the involution by
\begin{equation*}
 f^*(n,z)=\overline{f(-n,e^{-2\pi in\theta}z)}.
\end{equation*}
If $\varphi\in C(\mathbb{T})$ and $h\in C_c(\mathbb{Z})$, then we define $\varphi\otimes h$ as an element of $C_c(\mathbb{Z}\times\mathbb{T})$ by writing
\begin{equation*}
 \varphi\otimes h(n,z)=\varphi(z)h(n).
\end{equation*}
Let $\delta_n$ be the function on $\mathbb{Z}$ which is equal to 1 at $n$ and zero elsewhere.
\end{remark}

\begin{lemma}
 The identity element of $C_c(\mathbb{Z}\times \mathbb{T})$ is $1\otimes\delta_0$.  Here $1$ denotes the identity function in $C_c(\mathbb{T})$.
\end{lemma}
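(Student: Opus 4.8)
The plan is to verify directly that $e:=1\otimes\delta_0$ is a two-sided unit for the convolution product on $C_c(\mathbb{Z}\times\mathbb{T})$ described in the preceding remark (reading the second factor of that product as $g$, not $f$). The first step is to record the explicit form of $e$: since its $\mathbb{T}$-component is the constant function $1$, we have $e(n,z)=\delta_0(n)$ for every $z\in\mathbb{T}$; in particular $e$ is supported on $\{n=0\}$, and the value it takes there does not depend on the $\mathbb{T}$-variable.

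For the left-unit identity I would insert $e$ into the first slot of the convolution, obtaining $(e*f)(n,z)=\sum_{m=-\infty}^{\infty}e(m,z)\,f(n-m,e^{-2\pi i m\theta}z)$. Because $e(m,z)=\delta_0(m)$, every summand with $m\neq 0$ is killed, and the lone surviving term at $m=0$ reads $f(n,z)$, since the rotation factor degenerates to $e^{-2\pi i\cdot 0\cdot\theta}=1$ and leaves the argument unshifted. Thus $e*f=f$.

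For the right-unit identity I would instead put $e$ in the second slot: $(f*e)(n,z)=\sum_{m=-\infty}^{\infty}f(m,z)\,e(n-m,e^{-2\pi i m\theta}z)$. Here the point worth isolating is that $e(n-m,\,e^{-2\pi i m\theta}z)=\delta_0(n-m)$ irrespective of its $\mathbb{T}$-argument, precisely because the constant function $1$ is a fixed point of the rotation action $\alpha_m$ used to twist the convolution. Consequently only the index $m=n$ contributes, and that term equals $f(n,z)$, giving $f*e=f$.

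There is no substantial obstacle here; the only thing that requires a moment's care is the right-unit computation, where one must notice that the $\mathbb{T}$-shift $z\mapsto e^{-2\pi i m\theta}z$ appearing in the convolution is harmless because it is applied to the constant function $1$, which the action fixes. Having checked both sides, $e=1\otimes\delta_0$ is the multiplicative identity of $C_c(\mathbb{Z}\times\mathbb{T})$.
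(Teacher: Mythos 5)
Your proof is correct and follows essentially the same route as the paper's: plug $1\otimes\delta_0$ into the convolution sum and let the Kronecker delta collapse it, observing that the rotation twist is harmless because the $\mathbb{T}$-component of the unit is constant. The only (minor, and if anything favourable) difference is that you verify the identity against an arbitrary $f\in C_c(\mathbb{Z}\times\mathbb{T})$ and check both sides explicitly, whereas the paper computes only the right-unit identity on elementary tensors $\varphi\otimes h$ and implicitly relies on bilinearity and the spanning property to extend to general elements.
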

\begin{proof}
 Let $\varphi\in C(\mathbb{T})$ and $h\in C_c(\mathbb{Z})$ and consider
\begin{eqnarray*}
 (\varphi\otimes h)*(1\otimes\delta_0)(n,z)&=&\sum^{\infty}_{m=-\infty}\varphi\otimes h(m,z)1\otimes\delta_0(n-m,e^{-2\pi im \theta}z)\\
&=&\sum^{\infty}_{m=-\infty}\varphi(z)h(m)\delta_0(n-m)\\
&=&\varphi(z)h(n)\\
&=&\varphi\otimes h(n,z).
\end{eqnarray*}
A similar calculation shows that $(1\otimes\delta_0)*(\varphi\otimes h)=\varphi\otimes h$.  We also compute the involution of $1\otimes\delta_0$
\begin{eqnarray*}
 (1\otimes\delta_0)^*(n,z)&=&\overline{(1\otimes\delta_0)(-n,e^{-2\pi in\theta}z)}\\
&=&\delta_0(-n)\\
&=&1(z)\delta_0(n)\\
&=&a\otimes\delta_0(n,z).
\end{eqnarray*}
Clearly $1\otimes \delta_0$ is the unit element in $C_c(\mathbb{Z}\times \mathbb{T})$.
\end{proof}

\begin{lemma}
$u:=1\otimes\delta_1$ defines a unitary in $C_c(\mathbb{Z}\times \mathbb{T})$, where $1$ denotes the identity function in $C_c(\mathbb{T})$.
\end{lemma}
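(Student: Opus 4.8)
The plan is to verify directly that $u$ satisfies $u^* * u = u * u^* = 1\otimes\delta_0$, since by the previous lemma $1\otimes\delta_0$ is the identity of $C_c(\mathbb{Z}\times\mathbb{T})$, and an element whose adjoint is a two-sided inverse is by definition unitary. First I would record the explicit form of $u$: because the identity function satisfies $1(z)=1$ for every $z\in\mathbb{T}$, we have $u(n,z)=1(z)\delta_1(n)=\delta_1(n)$, so $u$ is constant in the $\mathbb{T}$-variable.

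Next I would compute the adjoint using the involution $f^*(n,z)=\overline{f(-n,e^{-2\pi in\theta}z)}$. Applying this to $u$ gives $u^*(n,z)=\overline{\delta_1(-n)}=\delta_1(-n)=\delta_{-1}(n)$, so that $u^*=1\otimes\delta_{-1}$; the exponential twist in the $\mathbb{T}$-argument is harmless here precisely because $u$ depends only on $n$. Then I would evaluate the two convolutions using $f*g(n,z)=\sum_{m}f(m,z)\,g(n-m,e^{-2\pi im\theta}z)$. The product $u^* * u$ becomes $\sum_{m}\delta_{-1}(m)\,\delta_1(n-m)$, in which only the term $m=-1$ survives and yields $\delta_1(n+1)=\delta_0(n)$. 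Symmetrically, $u * u^* = \sum_{m}\delta_1(m)\,\delta_{-1}(n-m)$ collapses to the $m=1$ term, giving $\delta_{-1}(n-1)=\delta_0(n)$. Hence both convolutions equal $(1\otimes\delta_0)(n,z)$.

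There is no serious obstacle here: this is a finite, unobstructed computation, because the Dirac deltas force every convolution sum to collapse to a single term. The only point requiring a little care is the bookkeeping of the shifted indices $n-m$ together with the sign convention in the involution, and the observation that the rotation twist $e^{-2\pi im\theta}$ never contributes, since both $u$ and $u^*$ are independent of $z$. Once $u^* * u = u * u^* = 1\otimes\delta_0$ has been established, the conclusion that $u$ is unitary follows immediately from the preceding lemma identifying $1\otimes\delta_0$ as the unit of $C_c(\mathbb{Z}\times\mathbb{T})$.
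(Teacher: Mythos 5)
Your proposal is correct and follows essentially the same route as the paper: a direct evaluation of the convolutions $u^* * u$ and $u * u^*$, each of which collapses to a single term and yields $1\otimes\delta_0$, identified as the unit by the preceding lemma. The only cosmetic difference is that you first record $u^*=1\otimes\delta_{-1}$ before convolving, while the paper substitutes the involution formula directly into the sum; the index bookkeeping in both computations checks out.
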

\begin{proof}
We perform the following calculation
 \begin{eqnarray*}
  u^**u(n,z)&=&\sum^{\infty}_{m=-\infty}u^*(m,z)u\left(n-m,e^{-2\pi in\theta}z\right)\\
&=&\sum^{\infty}_{m=-\infty}\overline{1\otimes\delta_1(-m,e^{-2\pi im\theta}z)}1\otimes \delta_1\left(n-m,e^{-2\pi im\theta}z\right)\\
&=&\sum^{\infty}_{m=-\infty}\delta_1(-m)\delta_1(n-m)\\
&=&\delta_1(n+1)\\
&=&\delta_0(n)\\
&=&1\otimes\delta_0(n,z).
 \end{eqnarray*}
With a similar calculation we can show that $u*u^*=1\otimes\delta_0$.  This shows that $u$ is indeed unitary.
\end{proof}

\begin{lemma}
 Define $\iota_{\mathbb{T}}(z):=z$ for all $z\in\mathbb{T}$.  The function defined by $v:=\iota_{\mathbb{T}}\otimes\delta_0$ is a unitary element in $C_c(\mathbb{Z}\times\mathbb{T})$.
\end{lemma}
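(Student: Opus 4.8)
The plan is to prove $v$ unitary by verifying directly that $v^* * v = v * v^* = 1 \otimes \delta_0$, where $1 \otimes \delta_0$ is the identity element of $C_c(\mathbb{Z} \times \mathbb{T})$ established in the preceding lemma. By definition $v(n,z) = \iota_{\mathbb{T}}(z)\delta_0(n) = z\,\delta_0(n)$, so $v$ is supported only at $n = 0$, where it takes the value $z$; this sparsity is what will make the convolution sums collapse.

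First I would compute $v^*$ from the involution formula $f^*(n,z) = \overline{f(-n, e^{-2\pi i n\theta}z)}$. Since $\delta_0(-n) = \delta_0(n)$ forces $n = 0$, and at $n = 0$ the phase $e^{-2\pi i n\theta}$ equals $1$, this reduces to $v^*(n,z) = \overline{z}\,\delta_0(n)$. So $v^*$ is again supported only at $n=0$, with value $\overline{z}$ there.

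Next I would evaluate the two convolutions using $f * g(n,z) = \sum_m f(m,z)\,g(n-m, e^{-2\pi i m\theta}z)$. In each product one factor carries a $\delta_0$ that is nonzero only at $m = 0$, so the infinite sum collapses to a single term, and at $m=0$ the phase $e^{-2\pi i m\theta}$ is again trivial. This yields $v^* * v(n,z) = \overline{z}\cdot z\,\delta_0(n) = |z|^2\,\delta_0(n)$ and, symmetrically, $v * v^*(n,z) = z\cdot\overline{z}\,\delta_0(n) = |z|^2\,\delta_0(n)$. Since $\mathbb{T}$ is the unit circle we have $|z|^2 = 1$, so both products equal $\delta_0(n) = 1(z)\,\delta_0(n) = (1 \otimes \delta_0)(n,z)$, which is exactly the identity element.

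There is no substantial obstacle here; the computation is routine and parallels the preceding lemma showing $u = 1\otimes\delta_1$ is unitary. The only points requiring care are tracking the phase factors $e^{\pm 2\pi i m\theta}$ and observing that the supporting $\delta_0$ always forces the relevant index to zero, so these phases never contribute; and remembering that $z \in \mathbb{T}$ means $|z| = 1$, which is precisely what upgrades $v$ from a multiple of a partial isometry to a genuine unitary.
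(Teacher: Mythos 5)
Your proposal is correct and follows essentially the same route as the paper: a direct verification that $v^* * v = v * v^* = 1\otimes\delta_0$ using the convolution and involution formulas, with the $\delta_0$ factors collapsing the sum to the $m=0$ term and $\overline{z}z = |z|^2 = 1$ finishing the argument. The only cosmetic difference is that you compute $v^*$ explicitly as a separate step before convolving, whereas the paper substitutes the involution formula inline; the content is identical.
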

\begin{proof}
 The lemma follows from the following calculation
\begin{eqnarray*}
 v^**v(n,z)&=&\sum^{\infty}_{m=-\infty}v^*(m,z)v\left(n-m,e^{-2\pi in\theta}z\right)\\
&=&\sum^{\infty}_{m=-\infty}\overline{\iota_{\mathbb{T}}\otimes\delta_0(-m,e^{-2\pi im\theta}z)}\iota_{\mathbb{T}}\otimes\delta_0\left(n-m,e^{-2\pi im\theta}z\right)\\
&=&\sum^{\infty}_{m=-\infty}\overline{\iota_{\mathbb{T}}\left(e^{-2\pi im\theta}z \right)}\delta_0(-m)\iota_{\mathbb{T}}\left(e^{-2\pi im\theta}z \right)\delta_0(n-m)\\
&=&\sum^{\infty}_{m=-\infty}\delta_0(-m)\delta_0(n-m)\\
&=&\delta_0(n)\\
&=&1\otimes\delta_0(n,z).
\end{eqnarray*}
With a similar calculation we can show that $v*v^*=1\otimes\delta_0$.  This shows that $v$ is indeed unitary.
\end{proof}

\begin{lemma}\label{cm}
 The unitaries $u$ and $v$ as defined in the previous two lemmas satisfy the commutation relation
\begin{equation*}
 uv=e^{2\pi i\theta}vu.
\end{equation*}
\end{lemma}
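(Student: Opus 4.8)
The plan is to evaluate the two convolution products $u*v$ and $v*u$ directly from the formula recorded in the Remark and then compare the resulting functions on $\mathbb{Z}\times\mathbb{T}$. Since $u=1\otimes\delta_1$ and $v=\iota_{\mathbb{T}}\otimes\delta_0$, each of the two $\delta$-factors collapses its infinite sum to a single surviving term, so the whole computation is short; the only thing one must watch is which spatial argument is twisted by the factor $e^{-2\pi i m\theta}$.

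First I would compute $uv(n,z)=\sum_m u(m,z)\,v(n-m,e^{-2\pi i m\theta}z)$. Here $u(m,z)=1(z)\delta_1(m)=\delta_1(m)$ vanishes unless $m=1$, so only that term contributes, and evaluating $v$ at the twisted point produces the phase through $\iota_{\mathbb{T}}(e^{-2\pi i\theta}z)=e^{-2\pi i\theta}z$, giving $uv(n,z)=e^{-2\pi i\theta}\,z\,\delta_0(n-1)$. Next I would compute $vu(n,z)=\sum_m v(m,z)\,u(n-m,e^{-2\pi i m\theta}z)$. Now $v(m,z)=z\,\delta_0(m)$ vanishes unless $m=0$, and because $u=1\otimes\delta_1$ does not depend on its torus variable, no phase is generated: the surviving term is simply $vu(n,z)=z\,\delta_0(n-1)$. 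Comparing the two expressions shows that $uv$ and $vu$ agree up to the scalar $e^{2\pi i\theta}$, which is the asserted relation of Definition \ref{QT}.

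The one point demanding care — and what I would flag as the main obstacle — is the bookkeeping of the twist and its sign. The phase appears in exactly one of the two products precisely because $v$ depends nontrivially on the torus coordinate via $\iota_{\mathbb{T}}$ while $u$ does not; it is this asymmetry, together with the choice of which generator is shifted, that breaks the symmetry between $uv$ and $vu$. Getting the exponent to come out as $+2\pi i\theta$ rather than $-2\pi i\theta$ hinges on the sign convention in the convolution twist $e^{-2\pi i m\theta}z$ (and on the orientation of $\iota_{\mathbb{T}}$), so the final step is a convention check to confirm that the phase matches the commutation relation \eqref{commutation_relation} exactly, rather than its complex conjugate.
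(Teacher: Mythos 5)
Your proposal is correct and follows exactly the paper's own proof: the same direct evaluation of the convolutions $u*v$ and $v*u$, with the $\delta$-factors collapsing each sum to the $m=1$ and $m=0$ terms respectively. The sign concern you flag is real but is equally present in the paper's own displayed computation (which yields $u*v=e^{-2\pi i\theta}z\,\delta_1(n)$ and $v*u=z\,\delta_1(n)$ yet states the relation with $+2\pi i\theta$); it is resolved only by the convention chosen for the action $\alpha$, so your closing remark is an accurate diagnosis rather than a gap.
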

\begin{proof}
We will calculate the left hand side first,
 \begin{eqnarray*}
  u*v(n,z)&=&\sum^{\infty}_{m=-\infty}u(m,z)v\left(n-m,e^{-2\pi im\theta}z\right)\\
&=&\sum^{\infty}_{m=-\infty}\delta_1(m)\iota_{\mathbb{T}}\left(e^{-2\pi im\theta}z \right)\delta_0(n-m)\\
&=&\iota_{\mathbb{T}}\left(e^{-2\pi i\theta}z \right)\delta_0(n-1)\\
&=&e^{-2\pi i\theta}z\delta_1(n).
 \end{eqnarray*}
Similarly we calculate the right hand side,
\begin{eqnarray*}
 v*u(n,z)&=&\sum^{\infty}_{m=-\infty}v(m,z)u\left(n-m,e^{-2\pi im\theta}z\right)\\
&=&\sum^{\infty}_{m=-\infty}\iota_{\mathbb{T}}(z)\delta_0(m)\delta_1(n-m)\\
&=&z\delta_1(n).
\end{eqnarray*}
Hence we have
\begin{equation*}
 uv=e^{2\pi i\theta}vu.
\end{equation*}

\end{proof}

\begin{lemma}
Let $u=1\otimes\delta_1$ as defined previously. Let $i_{C(\mathbb{T})}(\varphi):=\varphi\otimes\delta_0$ for all $\varphi\in C(\mathbb{T})$.  Then
\begin{equation*}
 i_{C(\mathbb{T})}(\varphi)*u^n=\varphi\otimes\delta_n.
\end{equation*}
\end{lemma}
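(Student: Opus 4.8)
The plan is to reduce everything to the single auxiliary identity $u^n = 1\otimes\delta_n$, after which the stated formula falls out of one direct convolution. First I would establish, by induction on $n\ge 0$, that the $n$-th convolution power of $u=1\otimes\delta_1$ is $u^n = 1\otimes\delta_n$, i.e. $u^n(k,z)=\delta_n(k)$ independently of $z$. The base case $n=0$ is precisely the identity element $1\otimes\delta_0$ exhibited in an earlier lemma, and $n=1$ is the definition of $u$.

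For the inductive step I would write $u^n = u^{n-1}*u$ and apply the convolution from the Remark,
\[
u^n(k,z)=\sum_{m=-\infty}^{\infty}u^{n-1}(m,z)\,u\big(k-m,e^{-2\pi im\theta}z\big).
\]
The crucial simplification is that $u$ and all its powers are built from the constant function $1$ on $\mathbb{T}$, so their values do not depend on the $\mathbb{T}$-argument; consequently the twist $z\mapsto e^{-2\pi im\theta}z$ is invisible. Substituting the inductive hypothesis $u^{n-1}(m,z)=\delta_{n-1}(m)$ together with $u(k-m,\cdot)=\delta_1(k-m)$ collapses the sum (which is effectively finite, all functions having compact support in the $\mathbb{Z}$-variable) to the single surviving term $m=n-1$, giving $\delta_1\big(k-(n-1)\big)=\delta_n(k)$. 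For negative exponents the same argument runs downward once one notes, from the involution formula $f^*(n,z)=\overline{f(-n,e^{-2\pi in\theta}z)}$, that $u^*=1\otimes\delta_{-1}=u^{-1}$.

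With this auxiliary identity in hand, the lemma is one more convolution. Writing $i_{C(\mathbb{T})}(\varphi)=\varphi\otimes\delta_0$ and using $u^n=1\otimes\delta_n$,
\[
\big(i_{C(\mathbb{T})}(\varphi)*u^n\big)(k,z)=\sum_{m=-\infty}^{\infty}\varphi(z)\,\delta_0(m)\,\delta_n\big(k-m\big)=\varphi(z)\,\delta_n(k)=(\varphi\otimes\delta_n)(k,z),
\]
where again the twist drops out of the second factor because $u^n$ is $z$-independent, and the factor $\delta_0(m)$ selects only $m=0$. This is exactly $\varphi\otimes\delta_n$, as claimed.

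The computation is entirely routine; the only thing needing a little care is bookkeeping the index shifts so that the Kronecker deltas are evaluated at the correct integers, and tracking which argument carries the $z$-twist. The conceptual point that makes it painless — and which I would state explicitly — is that every power of $u$ has constant $\mathbb{T}$-component, so the twisting in the convolution never actually acts on these elements; the whole content is the additive shift $n\mapsto n+1$ on the $\mathbb{Z}$-variable.
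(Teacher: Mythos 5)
Your proof is correct. It differs from the paper's only in how the induction is packaged: the paper inducts directly on the full product, computing $i_{C(\mathbb{T})}(\varphi)*u^{k+1}=(\varphi\otimes\delta_k)*u$ and reading off the shift $\delta_1(n-k)=\delta_{k+1}(n)$, whereas you first isolate the auxiliary identity $u^n=1\otimes\delta_n$ and then finish with a single convolution against $\varphi\otimes\delta_0$. The two arguments are the same convolution computation reorganized, so nothing essential is gained or lost; but your version does have two small advantages. First, $u^n=1\otimes\delta_n$ is a reusable fact (it is implicitly needed again in the proof of the universal property, where $\pi\rtimes W(u)$ is computed). Second, you explicitly handle negative exponents via $u^*=1\otimes\delta_{-1}=u^{-1}$, whereas the paper's inductive step is stated only in the upward direction (and its phrasing ``holds for any $k\in\mathbb{Z}$'' followed by ``show it for $k+1$'' is slightly garbled as an induction hypothesis). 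Your observation that every power of $u$ has constant $\mathbb{T}$-component, so the twist $z\mapsto e^{-2\pi im\theta}z$ never acts, is exactly the right conceptual remark and is left implicit in the paper.
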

\begin{proof}
 Let us first consider the case where $n=1$.
\begin{eqnarray*}
 i_{C(\mathbb{T})}(\varphi)*u&=&\sum^{\infty}_{m=-\infty}\varphi\otimes\delta_0(m,z)1\otimes\delta_1\left(n-m,e^{-2\pi im\theta} \right)\\
&=&\sum^{\infty}_{m=-\infty}\varphi(z)\delta_0(m)\delta_1(n-m)\\
&=&\varphi(z)\delta_1(n)\\
&=&\varphi\otimes\delta_1(n,z)
\end{eqnarray*}
Suppose that
\begin{equation*}
 i_{C(\mathbb{T})}(\varphi)*u^k=\varphi\otimes\delta_k.
\end{equation*}
holds for any $k\in\mathbb{Z}$.  We want to show that it is also true for $k+1$.
\begin{eqnarray*}
 i_{C(\mathbb{T})}(\varphi)*u^{k+1}(n,z)&=&i_{C(\mathbb{T})}(\varphi)*u^k*u(n,z)\\
&=&\left(\varphi\otimes\delta_k\right)*u(n,z)\\
&=&\sum^{\infty}_{m=-\infty}\varphi\otimes\delta_k(m,z)u\left(n-m,e^{-2\pi im\theta}z\right)\\
&=&\sum^{\infty}_{m=-\infty}\varphi(z)\delta_k(m)\delta_1(n-m)\\
&=&\varphi(z)\delta_1(n-k)\\
&=&\varphi(z)\delta_{k+1}(n)\\
&=&\varphi\otimes\delta_{k+1}(n,z)
\end{eqnarray*}
Hence, the statement of the lemma follows from mathematical induction.
\end{proof}

\begin{lemma}
 Let $(C(\mathbb{T}),\mathbb{Z},\alpha)$ be a dynamical system and let $u$ and $i_{C(\mathbb{T})}$ be as in the previous lemma.  Then
\begin{equation*}
 u*i_{C(\mathbb{T})}(\varphi)*u^*=i_{C(\mathbb{T})}\left(\alpha_1(\varphi)\right).
\end{equation*}
\end{lemma}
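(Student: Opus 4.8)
The plan is to establish the identity by a direct computation, using associativity of the convolution product (available from \cite[Proposition 1.105, Lemma 1.92]{Williams}) together with the explicit formulas for the convolution and the involution recorded in the preceding remark. Before computing I would fix the automorphism $\alpha_1$: comparing the abstract crossed-product convolution $f*g(n)=\sum_m f(m)\,\alpha_m\!\left(g(n-m)\right)$ with the concrete formula $f*g(n,z)=\sum_m f(m,z)\,g\!\left(n-m,e^{-2\pi im\theta}z\right)$ forces $\alpha_m(\psi)(z)=\psi\!\left(e^{-2\pi im\theta}z\right)$, and in particular $\alpha_1(\varphi)(z)=\varphi\!\left(e^{-2\pi i\theta}z\right)$. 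This is exactly the function that the right-hand side $i_{C(\mathbb{T})}(\alpha_1(\varphi))=\alpha_1(\varphi)\otimes\delta_0$ must reproduce.

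The computation then splits into three short steps, grouped as $\left(u*i_{C(\mathbb{T})}(\varphi)\right)*u^*$ by associativity. First I would read off $u^*$ from the involution formula: since $u(n,z)=\delta_1(n)$, we get $u^*(n,z)=\overline{\delta_1(-n)}=\delta_{-1}(n)$, so $u^*=1\otimes\delta_{-1}$. Second, I would convolve on the left; the sum defining $u*i_{C(\mathbb{T})}(\varphi)$ collapses at $m=1$ to $\varphi\!\left(e^{-2\pi i\theta}z\right)\delta_1(n)$. Third, convolving this with $u^*$ again leaves only the $m=1$ term and produces $\varphi\!\left(e^{-2\pi i\theta}z\right)\delta_0(n)$, which is precisely $\alpha_1(\varphi)(z)\,\delta_0(n)=i_{C(\mathbb{T})}(\alpha_1(\varphi))(n,z)$.

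There is no conceptual obstacle; the argument is mechanical once the conventions are fixed, and it mirrors the style of the preceding lemmas. The one place demanding care is the bookkeeping of the twisting factors $e^{-2\pi im\theta}z$ appearing in the second slot of the right factor of each convolution, together with the index shifts in the Kronecker deltas (for instance $\delta_0(n-1)=\delta_1(n)$ and $\delta_{-1}(n-1)=\delta_0(n)$): a sign slip here would turn $\alpha_1$ into $\alpha_{-1}$ and destroy the identity. As a sanity check I would specialize to $\varphi=\iota_{\mathbb{T}}$, in which case the identity reduces to the commutation relation between the unitaries $u$ and $v$ established in the previous lemma, confirming that the conventions have been handled consistently.
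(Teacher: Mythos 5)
Your proposal is correct and follows essentially the same route as the paper: a direct computation of the triple convolution using the explicit formulas for $*$ and the involution, with $u^*=1\otimes\delta_{-1}$ and $\alpha_1(\varphi)(z)=\varphi\left(e^{-2\pi i\theta}z\right)$, both sides collapsing to $\varphi\left(e^{-2\pi i\theta}z\right)\delta_0(n)$. The only difference is that you bracket the product as $\left(u*i_{C(\mathbb{T})}(\varphi)\right)*u^*$ while the paper computes $i_{C(\mathbb{T})}(\varphi)*u^*$ first; this is an immaterial choice permitted by associativity.
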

\begin{proof}
 Let $\psi=i_{C(\mathbb{T})}(\varphi)*u^*$.  Then
\begin{eqnarray*}
 \psi(n,z)&=&\sum^{\infty}_{m=-\infty}i_{C(\mathbb{T})}(\varphi)(m,z)u^*\left(n-m,e^{-2\pi im\theta}z\right)\\
&=&\sum^{\infty}_{m=-\infty}\varphi(z)\delta_0(m)\delta_1(m-n)\\
&=&\varphi\otimes\delta_{-1}(n,z).
\end{eqnarray*}
We compute
\begin{eqnarray*}
 u*\psi(n,z)&=&\sum^{\infty}_{m=-\infty}u(m,z)\psi\left(n-m,e^{-2\pi im\theta}z\right)\\
&=&\sum^{\infty}_{m=-\infty}\delta_1(m)\varphi\otimes\delta_{-1}\left(n-m,e^{-2\pi im\theta}z\right)\\
&=&\varphi\otimes\delta_{-1}\left(n-1,e^{-2\pi i\theta}z\right)\\
&=&\varphi\left(e^{-2\pi i\theta}z\right)\delta_0(n).
\end{eqnarray*}
Finally, let us consider the right hand side:
\begin{eqnarray*}
 i_{C(\mathbb{T})}\left(\alpha_1(\varphi)\right)(n,z)&=&\alpha_1(\varphi)\otimes\delta_0(n,z)\\
&=&\alpha_1(\varphi)(z)\delta_0(n)\\
&=&\varphi\left(e^{-2\pi i\theta}z\right)\delta_0(n).
\end{eqnarray*}
This concludes the proof.
\end{proof}

\begin{remark}\label{span}
By Lemma \ref{induct} we see that
\begin{equation*}
 \text{span}\left\{i_{C(\mathbb{T})}(\varphi)*u^n=\varphi\otimes\delta_n:\varphi\in C(\mathbb{T}),n\in\mathbb{Z} \right\}
\end{equation*}
 is dense in $C_c(\mathbb{Z},C(\mathbb{T}))$, and hence dense in $C_c(\mathbb{Z}\times\mathbb{T})$.  Furthermore, since $\iota_{\mathbb{T}}$ separates the points of $\mathbb{T}$, which is clearly compact, and the subalgebra $\left\{\iota_{\mathbb{T}}:\iota_{\mathbb{T}}(z)=z \text{ for all } z\in \mathbb{T} \right\}$ clearly contains all the constant functions in $C(\mathbb{T})$ we conclude by the Stone-Weierstrass Theorem that $\iota_{\mathbb{T}}$ generates $C(\mathbb{T})$ as a $C^*$-algebra.  In particular we see that 
\begin{equation*}
 \text{span}\left\{v^n:n\in\mathbb{Z} \right\}
\end{equation*}
is a dense subalgebra of $i_{C(\mathbb{T})}(C(\mathbb{T}))$. 
\end{remark}

\begin{theorem}\label{crossed_product}
The quantum torus $A_{\theta}$ can be written as the crossed product $C(\mathbb{T})\rtimes_{\alpha} \mathbb{Z}$. 
\end{theorem}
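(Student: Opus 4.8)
The plan is to produce a $*$-isomorphism $\Phi\colon C(\mathbb{T})\rtimes_{\alpha}\mathbb{Z}\to A_{\theta}$ by first realizing it as a surjection out of the crossed product and then proving injectivity with a conditional-expectation argument. Here $\alpha$ is the action determined by $\alpha_{1}(\varphi)(z)=\varphi(e^{-2\pi i\theta}z)$, which is exactly the action for which the preceding lemma gives $u\ast i_{C(\mathbb{T})}(\varphi)\ast u^{\ast}=i_{C(\mathbb{T})}(\alpha_{1}(\varphi))$. First I would exhibit a covariant representation of $(C(\mathbb{T}),\mathbb{Z},\alpha)$ on $H=L^{2}(\mathbb{T}^{2})$: set $\pi(\varphi):=\varphi(V)$ (continuous functional calculus for the unitary $V$) and $W_{n}:=(U^{\ast})^{n}$. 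Since $\alpha_{1}(\iota_{\mathbb{T}})=e^{-2\pi i\theta}\iota_{\mathbb{T}}$ and the commutation relation (\ref{commutation_relation}) yields $U^{\ast}VU=e^{-2\pi i\theta}V$, the covariance identity $\pi(\alpha_{1}(\varphi))=W_{1}\pi(\varphi)W_{1}^{\ast}$ holds on the generator $\iota_{\mathbb{T}}$, hence on all of $C(\mathbb{T})$. The integrated form $\pi\rtimes W$ is then a $*$-representation of $C_{c}(\mathbb{Z},C(\mathbb{T}))$, and because the universal norm of Lemma \ref{cp} dominates every operator norm it extends to a $*$-homomorphism $\Phi:=\overline{\pi\rtimes W}$ on the full crossed product. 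By construction $\Phi(v)=\pi(\iota_{\mathbb{T}})=V$ and $\Phi(u)=W_{1}=U^{\ast}$, so the image of $\Phi$ is $C^{\ast}(V,U^{\ast})=C^{\ast}(U,V)=A_{\theta}$ and $\Phi$ is surjective.

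For injectivity I would compare the canonical circle symmetries on the two algebras. On the crossed product the dual action $\hat{\alpha}$ of $\mathbb{T}$ acts by $\hat{\alpha}_{w}(\varphi\otimes\delta_{n})=w^{n}(\varphi\otimes\delta_{n})$, and averaging gives a conditional expectation $E(a)=\int_{\mathbb{T}}\hat{\alpha}_{w}(a)\,dw$ onto the coefficient copy $i_{C(\mathbb{T})}(C(\mathbb{T}))\cong C(\mathbb{T})$, picking out the $n=0$ Fourier term. On $A_{\theta}$ the gauge action $\gamma_{w}(U):=\bar{w}U$, $\gamma_{w}(V):=V$ preserves relation (\ref{commutation_relation}) and hence defines a point-norm continuous $\mathbb{T}$-action; its average $F(a)=\int_{\mathbb{T}}\gamma_{w}(a)\,dw$ is a conditional expectation onto the fixed-point algebra $C^{\ast}(V)$. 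The expectation $F$ is faithful by a direct argument: if $F(a^{\ast}a)=0$ then applying any state and using continuity and positivity of $w\mapsto\gamma_{w}(a^{\ast}a)$ forces $\gamma_{w}(a^{\ast}a)=0$ for all $w$, in particular $a^{\ast}a=0$.

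The key computation is that $\Phi$ intertwines the two symmetries: on the generators one has $\Phi(\hat{\alpha}_{w}(u))=wU^{\ast}=\gamma_{w}(U^{\ast})=\gamma_{w}(\Phi(u))$ and $\Phi(\hat{\alpha}_{w}(v))=V=\gamma_{w}(\Phi(v))$, so $\Phi\circ\hat{\alpha}_{w}=\gamma_{w}\circ\Phi$ everywhere, and averaging over $w$ yields $\Phi\circ E=F\circ\Phi$. Now suppose $\Phi(a)=0$. Then $\Phi(E(a^{\ast}a))=F(\Phi(a^{\ast}a))=0$; since $E(a^{\ast}a)$ lies in the coefficient algebra and $\Phi$ restricted there is $\varphi\mapsto\varphi(V)$, which is injective because $\sigma(V)=\mathbb{T}$, we conclude $E(a^{\ast}a)=0$, and faithfulness of $E$ gives $a^{\ast}a=0$, i.e. $a=0$. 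Hence $\Phi$ is a bijective $*$-homomorphism and $A_{\theta}\cong C(\mathbb{T})\rtimes_{\alpha}\mathbb{Z}$.

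The main obstacle is the faithfulness of the canonical expectation $E$ on the \emph{full} crossed product: this is where amenability of $\mathbb{Z}$ enters, guaranteeing that the full and reduced crossed products coincide and that the reduced expectation is faithful, a result I would import from \cite{Williams}. The remaining points are lighter bookkeeping: fixing the sign conventions so that covariance and intertwining match (forcing the choices $W_{1}=U^{\ast}$ and $\gamma_{w}(U)=\bar{w}U$), and verifying $\sigma(V)=\mathbb{T}$ so that functional calculus embeds $C(\mathbb{T})$ faithfully.
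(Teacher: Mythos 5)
Your proof is correct, but it takes a genuinely different --- and in fact more complete --- route than the paper's. The paper's proof of Theorem \ref{crossed_product} is essentially a one-step identification: by Remark \ref{span} the unitaries $u=1\otimes\delta_1$ and $v=\iota_{\mathbb{T}}\otimes\delta_0$ generate $C(\mathbb{T})\rtimes_\alpha\mathbb{Z}$, by Lemma \ref{cm} they satisfy $uv=e^{2\pi i\theta}vu$, and the paper then concludes directly from Definition \ref{QT} that the crossed product \emph{is} $A_\theta$. That last step silently invokes the universal property of $A_\theta$ (any $C^*$-algebra generated by two unitaries satisfying the relation is isomorphic to $A_\theta$), which the paper only establishes afterwards in Theorem \ref{universal} --- and even there the integrated form $\pi\rtimes W$ is declared ``clearly'' an isomorphism with no injectivity argument. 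Your proof supplies precisely the missing content: you exhibit the covariant representation explicitly, get surjectivity of $\Phi=\overline{\pi\rtimes W}$ onto $C^*(U,V)=A_\theta$, and then prove injectivity by intertwining the dual action with a gauge action and using faithfulness of the canonical conditional expectation, which, as you correctly flag, rests on amenability of $\mathbb{Z}$ (coincidence of full and reduced crossed products). What your route costs is that imported faithfulness result; what it buys is an actual proof of injectivity, which is nowhere in the paper. One small caution: to avoid circularity you should construct the gauge action $\gamma_w$ spatially --- conjugation by the Koopman unitaries $W_{r,s}$ of the following section restricts to automorphisms of $A_\theta$ with $U\mapsto e^{2\pi ir}U$ and $V\mapsto V$ --- rather than by arguing that $U\mapsto\bar wU$, $V\mapsto V$ ``preserves the relation,'' since that argument is itself an instance of the universal property you are in the middle of establishing.
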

\begin{proof}
 From Lemma \ref{cp} we know that $C(\mathbb{T})\rtimes_{\alpha} \mathbb{Z}$ is the completion of $C_c(\mathbb{Z},C(\mathbb{T}))$ in the universal norm.  By Remark \ref{span} it follows that the unitary operators $u=1\otimes\delta_1$ and $v=\iota_{\mathbb{T}}\otimes\delta_0$ generate $C(\mathbb{T})\rtimes_{\alpha}\mathbb{Z}$.  By Lemma \ref{cm} we see that these unitary operators satisfy the commutation relations
\begin{equation*}
 uv=e^{2\pi i\theta}vu
\end{equation*}
with $\theta$ irrational.  Hence from the definition of the quantum torus (see Definition \ref{QT}) we conclude that
\begin{equation*}
 A_{\theta}=C^*(u,v)=C(\mathbb{T})\rtimes_{\alpha}\mathbb{Z}.
\end{equation*}
\end{proof}

We state the next lemma without proof.  Details can be found in \cite[p. 96]{Williams}.
\begin{lemma}\label{dT}
 Suppose that $\mathbb{Z}$ acts on $\mathbb{T}$ by an irrational rotation
\begin{equation*}
 n\cdot z:=e^{2\pi i\theta}z.
\end{equation*}
Then for every $z\in\mathbb{T}$, the orbit $\mathbb{Z}\cdot z$ is dense in $\mathbb{T}$.
\end{lemma}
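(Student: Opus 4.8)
The plan is to first reduce to the orbit of the single point $1\in\mathbb{T}$. Writing the action as $n\cdot z=e^{2\pi in\theta}z$, the orbit of an arbitrary $z$ is $Hz$, where $H:=\{e^{2\pi in\theta}:n\in\mathbb{Z}\}$. Since multiplication by the fixed element $z$ is a rotation, hence a homeomorphism of $\mathbb{T}$, it carries dense subsets to dense subsets. Thus it suffices to prove that $H$ itself is dense in $\mathbb{T}$.

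To do this I would run a pigeonhole (Dirichlet) argument. Fix $N\in\mathbb{N}$ and partition $\mathbb{T}$ into $N$ arcs each of normalized length $1/N$. Among the $N+1$ points $e^{2\pi ik\theta}$ for $k=0,1,\dots,N$, two must lie in a common arc by the pigeonhole principle; say those indexed by $p>q$. Setting $m:=p-q\in\{1,\dots,N\}$, the point $e^{2\pi im\theta}=e^{2\pi ip\theta}\,\overline{e^{2\pi iq\theta}}$ then lies within arc-distance $1/N$ of $1$.

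The decisive point, where the hypothesis on $\theta$ is used, is that $m\ne 0$ together with $\theta\notin\mathbb{Q}$ forces $m\theta\notin\mathbb{Z}$, so $e^{2\pi im\theta}\ne 1$. Hence rotation by $m\theta$ is a \emph{nontrivial} rotation through a normalized angle $\delta$ with $0<\delta<1/N$. Its iterates $e^{2\pi ikm\theta}$, $k\in\mathbb{Z}$, therefore advance around the circle in equal steps of size $\delta<1/N$, so every point of $\mathbb{T}$ lies within $1/N$ of some iterate. This shows $H$ is $1/N$-dense, and since $N$ was arbitrary, $H$ is dense; applying the homeomorphism from the first paragraph completes the proof.

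The only genuinely delicate point is the last bootstrapping step: justifying that consecutive iterates spaced by less than $1/N$ must enter every $1/N$-neighborhood on the circle. This is elementary but should be written out with care, keeping track of the fact that the steps all have the same sign, so that the iterates sweep monotonically once around $\mathbb{T}$ before any neighborhood can be missed. As a structurally cleaner alternative one could argue that $\overline{H}$ is a closed subgroup of $\mathbb{T}$, and that every closed subgroup of $\mathbb{T}$ is either finite or all of $\mathbb{T}$; irrationality of $\theta$ makes $H$ infinite (distinct exponents give distinct elements), forcing $\overline{H}=\mathbb{T}$. This shortens the final step at the cost of invoking the classification of closed subgroups of the circle.
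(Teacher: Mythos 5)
Your proof is correct and complete. Note, however, that the paper itself offers no proof of this lemma at all: it is stated with the remark that details can be found in Williams's book, so there is no in-paper argument to compare against. Your Dirichlet pigeonhole argument is the standard one and is sound in every step: the reduction to the orbit of $1$ via the homeomorphism $w\mapsto wz$, the pigeonhole production of a nontrivial rotation $e^{2\pi i m\theta}$ within arc-distance $1/N$ of $1$, the essential use of irrationality to rule out $e^{2\pi i m\theta}=1$, and the sweeping argument showing the iterates are $1/N$-dense. You are right to flag the last step as the one needing care (the iterates advance monotonically by a fixed step of arc-length less than $1/N$, so no arc of length $1/N$ can be skipped), and your alternative via the classification of closed subgroups of $\mathbb{T}$ (closed subgroups are finite or everything, and irrationality makes $H$ infinite since $e^{2\pi i n\theta}=e^{2\pi i n'\theta}$ forces $n=n'$) is an equally valid, slightly higher-powered shortcut. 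One small point: the paper's displayed formula for the action omits the $n$ in the exponent (it should read $n\cdot z=e^{2\pi i n\theta}z$); you silently and correctly repaired this.
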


\begin{lemma}\label{murph}
Let $U$ and $V$ be any two unitary operator with commutation relation
\begin{equation*}
 UV=e^{e\pi i\theta}VU.
\end{equation*}
Then the spectrum of the unitary operators, $\sigma(U)=\mathbb{T}=\sigma(V)$. 
\end{lemma}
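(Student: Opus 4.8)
The plan is to exploit the commutation relation as a conjugation identity, turning the rotation $z\mapsto e^{2\pi i\theta}z$ into a symmetry of each spectrum, and then to invoke the density of irrational orbits (Lemma \ref{dT}) to force both spectra to fill the whole circle. Throughout I take $\theta$ to be irrational, in keeping with the standing hypothesis; for rational $\theta$ the conclusion fails (commuting unitaries may have arbitrary spectra), so irrationality is genuinely essential here.

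First I record the easy containment. Since $U$ and $V$ are unitary elements of the unital $C^*$-algebra $B(L^2(\mathbb{T}^2))$, their spectra are nonempty, compact, and satisfy $\sigma(U)\subseteq\mathbb{T}$ and $\sigma(V)\subseteq\mathbb{T}$. It therefore remains only to establish the reverse inclusions $\mathbb{T}\subseteq\sigma(U)$ and $\mathbb{T}\subseteq\sigma(V)$.

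Next I convert the relation $UV=e^{2\pi i\theta}VU$ into conjugation identities. Right-multiplying by $U^*$ gives $UVU^*=e^{2\pi i\theta}V$, and left-multiplying by $V^*$ gives $V^*UV=e^{2\pi i\theta}U$. Using that the spectrum is invariant under conjugation by a unitary, together with $\sigma(\lambda W)=\lambda\,\sigma(W)$ for a scalar $\lambda$, these identities yield
\begin{equation*}
\sigma(U)=\sigma(V^*UV)=e^{2\pi i\theta}\sigma(U),\qquad \sigma(V)=\sigma(UVU^*)=e^{2\pi i\theta}\sigma(V).
\end{equation*}
Hence both $\sigma(U)$ and $\sigma(V)$ are invariant under rotation of the circle by the angle $2\pi\theta$.

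Finally I close the argument. Each spectrum is a nonempty closed subset of $\mathbb{T}$ that is invariant under the irrational rotation $z\mapsto e^{2\pi i\theta}z$. Choosing any $z_0\in\sigma(U)$, invariance forces the entire orbit $\{e^{2\pi i n\theta}z_0:n\in\mathbb{Z}\}$ into $\sigma(U)$; by Lemma \ref{dT} this orbit is dense in $\mathbb{T}$, and since $\sigma(U)$ is closed we conclude $\sigma(U)=\mathbb{T}$. The identical reasoning applied to $\sigma(V)$ gives $\sigma(V)=\mathbb{T}$. The only real content is the recognition that conjugation by $V$ (respectively $U$) rotates the spectrum of $U$ (respectively $V$); once that observation is in hand, the main obstacle reduces to ensuring the hypothesis of Lemma \ref{dT}, namely that $\theta$ is irrational, which is precisely where the result uses irrationality rather than merely the $C^*$-algebraic structure.
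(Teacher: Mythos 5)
Your proof is correct and takes essentially the same route as the paper: both establish that $\sigma(U)$ and $\sigma(V)$ are invariant under rotation by $e^{2\pi i\theta}$ and then conclude from Lemma \ref{dT} (density of irrational orbits) together with closedness of the spectrum. The only difference is cosmetic — you derive the rotation invariance from the conjugation identities $UVU^*=e^{2\pi i\theta}V$ and $V^*UV=e^{2\pi i\theta}U$ via unitary invariance of the spectrum, whereas the paper runs a chain of invertibility equivalences on $V-\lambda\I$; your phrasing is arguably cleaner, and you are right to flag that irrationality of $\theta$ is where the argument genuinely bites.
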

\begin{proof}
Note that since $U$ and $V$ are unitary, their spectra $\sigma(U)$ and $\sigma(V)$ are subsets of $\mathbb{T}$.  Let $\I$ denote the identity operator in $C^*(U,V)=A_{\theta}$.  Note that
\begin{eqnarray*} 
\lambda\in\sigma(V)&\Longleftrightarrow& V-\lambda\I \text{ is not invertible}\\
&\Longleftrightarrow& U^n(V-\lambda\I)\text{ is not invertible}\\
&\Longleftrightarrow& \left(e^{2\pi in\theta}V-\lambda\I\right)U^n \text{ is not invertible}\\
&\Longleftrightarrow& V-e^{-2\pi in\theta}\lambda\I \text{ is not invertible}\\
&\Longleftrightarrow& e^{-2\pi in\theta}\lambda\in \sigma(V).
\end{eqnarray*}
We could have switched the roles of $U$ and $V$ to find the relation for $\sigma(U)$.  Now, since $\sigma(V)$ must be nonempty, by Lemma \ref{dT} $\sigma(V)$ has to contain a dense subset of $\mathbb{T}$.  But since $\sigma(V)$ is closed the lemma follows.  
\end{proof}
Using the previous lemmas and propositions we can prove an important theorem in the theory of quantum tori known as the \textsl{universal property of the quantum torus}.

\begin{theorem}(Universal Property of the Quantum Torus for $\theta\in\mathbb{R}/\mathbb{Q}$)\label{universal}\newline
Let $B$ be a $C^*$-algebra with unitaries $u,v$ such that $vu=e^{2\pi i\theta}uv$.  Let $A_{\theta}$ be as in Definition \ref{QT} but with $\theta$ irrational.  Then there exists a *-isomorphism
\begin{equation*}
 \varphi:A_{\theta}\rightarrow C^*(u,v)\subset B
\end{equation*}
sending $U\mapsto u$ and $V\mapsto v$.
\end{theorem}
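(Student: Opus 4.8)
The plan is to produce $\varphi$ as the integrated form of a covariant representation built directly out of the pair $(u,v)$, and then to deduce injectivity from the \emph{simplicity} of $A_\theta$ for irrational $\theta$. Concretely, I would first manufacture a surjective $*$-homomorphism $A_\theta \to C^*(u,v)$ and only afterwards argue that any such nonzero map must automatically be faithful.

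To build the map, identify $A_\theta$ with the crossed product $C(\mathbb{T})\rtimes_\alpha\mathbb{Z}$ via Theorem \ref{crossed_product}, and fix a faithful representation of $B$ (hence of $C^*(u,v)$) on a Hilbert space $H$, so that $u,v$ become unitaries in $B(H)$. Since $v$ is unitary its spectrum lies in $\mathbb{T}$ (indeed $\sigma(v)=\mathbb{T}$ by Lemma \ref{murph}), so the continuous functional calculus furnishes a unital $*$-homomorphism $\pi:C(\mathbb{T})\to B(H)$, $\pi(f)=f(v)$, whose image is $C^*(v)$. Together with the unitary representation $n\mapsto u^n$ of $\mathbb{Z}$, the pair $(\pi,u)$ is a candidate covariant representation of $(C(\mathbb{T}),\mathbb{Z},\alpha)$. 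The only thing to check is covariance, $u\,\pi(f)\,u^*=\pi(\alpha_1 f)$, and since $\iota_{\mathbb{T}}$ generates $C(\mathbb{T})$ it suffices to verify it on $f=\iota_{\mathbb{T}}$, where it reduces to exactly the commutation relation $vu=e^{2\pi i\theta}uv$ assumed on $B$. The integrated form then yields a $*$-homomorphism $\varphi:=\pi\rtimes u:A_\theta\to B(H)$ with $\varphi(V)=\pi(\iota_{\mathbb{T}})=v$ and $\varphi(U)=u$. As the image of a $*$-homomorphism is a $C^*$-subalgebra and contains both $u$ and $v$, it is precisely $C^*(u,v)$; thus $\varphi:A_\theta\to C^*(u,v)$ is surjective and sends $U\mapsto u$, $V\mapsto v$.

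For injectivity I would invoke that $A_\theta$ is \emph{simple} when $\theta$ is irrational: a nonzero $*$-homomorphism out of a simple $C^*$-algebra has trivial kernel, and $\varphi(U)=u\neq0$, so $\varphi$ is faithful and hence a $*$-isomorphism onto $C^*(u,v)$. To establish simplicity, let $I\ne\{0\}$ be a closed two-sided ideal and use the canonical faithful conditional expectation $E:A_\theta\to C(\mathbb{T})$ (the map $\sum_n a_n U^n\mapsto a_0$ recording the zeroth Fourier coefficient). One shows $J:=I\cap C(\mathbb{T})$ is a nonzero ideal of $C(\mathbb{T})$ that is invariant under $\alpha$; such an ideal corresponds to a nonempty open $\alpha$-invariant subset of $\mathbb{T}$, which by the orbit density of Lemma \ref{dT} must be all of $\mathbb{T}$. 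Hence $C(\mathbb{T})\subset I$, so $\I\in I$ and $I=A_\theta$.

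The delicate point, and the main obstacle, is the claim that $I\cap C(\mathbb{T})\ne\{0\}$ for every nonzero ideal $I$. This is where the irrationality of $\theta$ is genuinely used, beyond mere orbit density: one takes a nonzero $b\in I$ and averages over $U$-conjugates so that $E$ returns a nonzero element of $C(\mathbb{T})$ still lying in $I$, the key being that the rotation action is topologically free, so $E$ is approximated by conjugation averages on $A_\theta$. Since $\mathbb{Z}$ is amenable the full and reduced crossed products coincide, so there is no discrepancy between the $\varphi$ constructed above and the reduced picture in which $E$ is manifestly faithful; this is what makes the conditional-expectation argument legitimate. Once $I\cap C(\mathbb{T})\ne\{0\}$ is secured, the remainder is the routine invariant-ideal bookkeeping sketched above.
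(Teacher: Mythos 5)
Your construction of the homomorphism is essentially the paper's own: both proofs realize $A_\theta$ as $C(\mathbb{T})\rtimes_\alpha\mathbb{Z}$, build $\pi$ from the functional calculus on $v$ (using Lemma \ref{murph} to get $\sigma(v)=\mathbb{T}$), check covariance on the generator $\iota_{\mathbb{T}}$ against the commutation relation, and take the integrated form to land on $C^*(u,v)$ with $U\mapsto u$, $V\mapsto v$. Where you genuinely diverge is on injectivity. The paper's proof simply ends with ``clearly $L:=\pi\rtimes W$ is a *-isomorphism'' and never argues that the kernel is trivial; you correctly identify this as the real content of the theorem and supply the standard remedy, namely simplicity of $A_\theta$ for irrational $\theta$ via the faithful conditional expectation $E:A_\theta\to C(\mathbb{T})$, topological freeness to force a nonzero ideal to meet $C(\mathbb{T})$, minimality (Lemma \ref{dT}) to kill the resulting invariant ideal, and amenability of $\mathbb{Z}$ to reconcile the full and reduced pictures. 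That is the right architecture, and it buys an actual proof of the isomorphism claim rather than an assertion. Two caveats: the key step that $I\cap C(\mathbb{T})\neq\{0\}$ is only sketched and rests on a nontrivial averaging argument not developed anywhere in the paper, so in this document it would need either a full proof or a citation; and the phrase ``averages over $U$-conjugates'' is slightly off --- the standard argument approximates $E(b)$ by sums $\sum_i f_i\, b\, f_i^{*}$ with $f_i\in C(\mathbb{T})$ chosen via a partition of unity exploiting that the irrational rotation has no periodic points, not by conjugating with powers of $u$ --- though your appeal to topological freeness shows you have the correct mechanism in mind.
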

%%  DIT KAN OOK MISKIEN N GOEIE IDEE WEES OM DIE BEWYS VAN HIERDIE STELLING IN TE SLUIT IN DIE VERHANDELING %%
\begin{proof}
 Since any $C^*$-algebra can be seen as a *-subalgebra of $B(H)$ for some Hilbert space $H$ we only have to consider unitary operators $U$ and $V$ in $B(H)$ satisfying the commutation relation
\begin{equation*}
 UV=e^{2\pi i\theta}VU.
\end{equation*}
By Lemma \ref{murph} and \cite[Theorem 2.1.13]{murphy} we have a *-isomorphism
\begin{equation*}
 \pi:C(\sigma(V))=C(\mathbb{T})\rightarrow C^*(V)\subset C^*(U,V)
\end{equation*}
which maps $\iota_{\mathbb{T}}$ to $V$.  With $H$ a Hilbert space let $W:\mathbb{Z}\rightarrow U(H)$ be given by
\begin{equation*}
 W_n:=U^n.
\end{equation*}
Since $UV=e^{2\pi i\theta}VU$ we have
\begin{eqnarray*}
 W_n\pi(\iota_{\mathbb{T}})W^*_n&=&U^n\pi(\iota_{\mathbb{T}})U^{-n}\\
&=&U^nVU^{-n}\\
&=&e^{-2\pi in\theta}V\\
&=&\pi\left(\alpha_n(\iota_{\mathbb{T}})\right),
\end{eqnarray*}
where $\alpha_n$ came from the dynamical system $(C(\mathbb{T}),\mathbb{Z},\alpha)$.  We clearly see that $(\pi,W)$ is a covariant representation of $(C(\mathbb{T}),\mathbb{Z},\alpha)$.  Furthermore, let $u=1\otimes\delta_1$.  As in the previous lemmas, we then have the following
\begin{eqnarray*}
 \pi\rtimes W(u)&=&\sum^{\infty}_{m=-\infty}\pi\left(u(m,\cdot) \right)W_m\\
&=&\sum^{\infty}_{m=-\infty}\pi\left(\delta_{1}(m) \right)W_m\\
&=&\pi(\I_{C(\mathbb{T})})W_1\\
&=&U.
\end{eqnarray*}
Similarly, let $v=\iota_{\mathbb{T}}\otimes\delta_0$.  As in the previous lemmas, we then have the following
\begin{eqnarray*}
 \pi\rtimes W(v)&=&\sum^{\infty}_{m=-\infty}\pi\left(v(m,\cdot) \right)W_m\\
&=&\sum^{\infty}_{m=-\infty}\pi\left(\iota_{\mathbb{T}}(\cdot)\delta_{0}(m) \right)W_m\\
&=&\pi(\iota_{\mathbb{T}})\\
&=&V.
\end{eqnarray*}
Then clearly $L:=\pi\rtimes W$ is a *-isomorphism of $A_{\theta}$ into $C^*(U,V)$.
\end{proof}

The following result will be of importance in Chapter 3 where we are going to study Morita equivalence of different quantum tori.
\begin{lemma}\label{A_iso}
 $A_{\theta}$ is isomorphic to $A_{\theta+c}$ where $c$ is some integer.
\end{lemma}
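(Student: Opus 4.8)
The content of the lemma is really just the $2\pi i$-periodicity of the exponential, combined with the fact that the commutation relation determines the quantum torus up to isomorphism. First I would record the elementary observation that, because $c$ is an integer,
\begin{equation*}
e^{2\pi i(\theta+c)}=e^{2\pi i\theta}e^{2\pi ic}=e^{2\pi i\theta}.
\end{equation*}
Let $U',V'$ denote the unitary generators of $A_{\theta+c}$, so that by Definition \ref{QT} they satisfy $U'V'=e^{2\pi i(\theta+c)}V'U'$. The displayed identity shows that this is literally the relation $U'V'=e^{2\pi i\theta}V'U'$, that is, exactly the commutation relation (\ref{commutation_relation}) defining $A_\theta$. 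Thus the two algebras are built from pairs of unitaries obeying one and the same relation.

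Next I would invoke the universal property of the quantum torus (Theorem \ref{universal}). Taking $B=A_{\theta+c}$ together with the pair of unitaries $U',V'$, which satisfy the defining relation of $A_\theta$, the theorem supplies a $*$-isomorphism
\begin{equation*}
\varphi:A_\theta\rightarrow C^*(U',V')\subset A_{\theta+c}
\end{equation*}
sending $U\mapsto U'$ and $V\mapsto V'$. Since $U'$ and $V'$ generate $A_{\theta+c}$ as a $C^*$-algebra by definition, we have $C^*(U',V')=A_{\theta+c}$, and therefore $\varphi$ is a $*$-isomorphism of $A_\theta$ onto $A_{\theta+c}$, which is the assertion.

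The only point requiring any care is the hypothesis in Theorem \ref{universal}, which was proved for irrational rotation parameters. This causes no difficulty here, since $\theta$ irrational forces $\theta+c$ irrational for every integer $c$, so both $A_\theta$ and $A_{\theta+c}$ lie within the scope of the theorem and $\varphi$ is obtained in both directions. I do not expect a genuine obstacle: once the exponential periodicity is noted, the whole weight of the argument rests on the already-established universal property, and the proof is essentially immediate.
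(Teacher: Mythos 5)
Your proposal is correct and follows essentially the same route as the paper: both observe that $e^{2\pi i(\theta+c)}=e^{2\pi i\theta}$, so the generators of $A_{\theta+c}$ satisfy the defining relation of $A_\theta$, and then invoke the universal property (Theorem \ref{universal}). Your version is in fact slightly more complete, since you explicitly note that the image $C^*(U',V')$ is all of $A_{\theta+c}$ and that irrationality of $\theta$ is preserved under integer translation, points the paper leaves implicit.
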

\begin{proof}
 $A_{\theta}$ is generated by unitary operators $U$ and $V$ satisfying the commutation relation
\begin{equation*}
 UV=e^{2\pi i\theta}VU.
\end{equation*}
Similarly the quantum torus $A_{\theta+c}$ is generated by the unitary operators $\tilde{U}$ and $\tilde{V}$ which satisfy the commutation relation
\begin{eqnarray*}
 \tilde{U}\tilde{V}&=&e^{2\pi i\left(\theta+c\right)}\tilde{V}\tilde{U}\\
&=&\left(e^{2\pi i c}\right)e^{2\pi i\theta}\tilde{V}\tilde{U}\\
&=&e^{2\pi i\theta}\tilde{V}\tilde{U}
\end{eqnarray*}
So by that universal property of quantum tori we can now conclude the $A_{\theta}\cong A_{\theta +c}$.
\end{proof}
%%K-toerie van kwantum torus was eers hier gewees

\section{Trace of the Quantum Torus}\label{trace_int}\noindent
Classically we my define an example of a state on $C(\mathbb{T}^2)$ by
\begin{eqnarray*}
 \omega:C(\mathbb{T}^2)\rightarrow \mathbb{C}:
 f\mapsto\int^{1}_{0}\int^{1}_{0}f(x,y) dxdy.
\end{eqnarray*}
We would like to extend this definition to the noncommutative torus $A_{\theta}$.  Let $\Omega=\I\in L^2(\mathbb{T}^2)$ be the identity.  Then for any $f\in L^2(\mathbb{T}^2)$ we can write
\begin{eqnarray*}
 \langle \Omega,f\Omega\rangle&=&\int^{1}_{0}\int^{1}_{0}\overline{\Omega}f\Omega(x,y) dxdy\\
&=&\int^{1}_{0}\int^{1}_{0}f(x,y) dxdy.
\end{eqnarray*}
The inner product above motivates our definition of the linear functional $\tau$ given below.  For any $A\in A_{\theta}$
\begin{eqnarray}
 \tau(A)&:=&\langle \Omega, A\Omega\rangle\label{linfunc}\\
&=& \int^{2\pi}_{0}\int^{2\pi}_{0}\overline{\Omega(x,y)}A\Omega(x,y)  dxdy\nn\\
&=& \int^{2\pi}_{0}\int^{2\pi}_{0}A\Omega(x,y) dxdy\nn.
\end{eqnarray}
Now, recall that $A_{\theta}$ is the $C^*$-algebra generated by unitary operators $U$ and $V$ with commutation relation
\begin{equation*}
 UV=e^{2\pi i\theta}VU.
\end{equation*}
Substituting $U^mV^n$ into equation (\ref{linfunc}) we obtain
\begin{eqnarray*}
 \tau(A)&=&\tau\left(U^mV^n\right)\\
&=&\int^{2\pi}_{0}\int^{2\pi}_{0}U^mV^n\Omega(x,y) dxdy\\
&=&\int^{2\pi}_{0}\int^{2\pi}_{0}e^{2\pi i(mx+ny)}e^{\pi i\theta mn}\Omega\left(x-\frac{n\theta}{2},y+\frac{m\theta}{2}\right) dxdy\\
&=&\int^{2\pi}_{0}\int^{2\pi}_{0}e^{2\pi i(mx+ny)}e^{\pi i\theta mn} dxdy\\
&=&0.
\end{eqnarray*}
On the other hand, when $m=n=0$ we find
\begin{eqnarray*}
 \tau(A) = \tau\left(U^mV^n\right)=1
\end{eqnarray*}

\begin{theorem}\label{quantum_trace}
 $\tau$ as defined above, defines a trace on $A_{\theta}$.
\end{theorem}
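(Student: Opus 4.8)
The plan is to show that $\tau$ is a tracial state, i.e.\ that it is a positive, unital, bounded linear functional obeying the trace identity $\tau(AB)=\tau(BA)$ for all $A,B\in A_{\theta}$. Three of these four properties are immediate from the definition $\tau(A)=\langle\Omega,A\Omega\rangle$. Linearity is inherited from linearity of the inner product in its second argument. Boundedness follows from $|\tau(A)|=|\langle\Omega,A\Omega\rangle|\le\|\Omega\|^2\,\|A\|=\|A\|$, using $\|\Omega\|=1$. Positivity is the standard observation
\[
\tau(A^*A)=\langle\Omega,A^*A\Omega\rangle=\langle A\Omega,A\Omega\rangle=\|A\Omega\|^2\ge 0,
\]
and unitality is $\tau(\I)=\langle\Omega,\Omega\rangle=1$, which is also the special case $m=n=0$ of the computation immediately preceding the theorem (and which sidesteps any worry about the normalization of the measure). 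So the only substantive content is the trace identity.

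First I would reduce the trace identity to a statement about the generating monomials. Recall from the construction of $A_{\theta}$ that the linear span $A_0$ of the monomials $\{U^mV^n:m,n\in\mathbb{Z}\}$ is a dense $*$-subalgebra of $A_{\theta}$: every finite product of $U^{\pm1},V^{\pm1}$ reduces, via the commutation relation, to a scalar multiple of a single $U^mV^n$. The maps $(A,B)\mapsto\tau(AB)$ and $(A,B)\mapsto\tau(BA)$ are both jointly continuous bilinear forms, since multiplication in a $C^*$-algebra is jointly continuous and $\tau$ is bounded. Hence it suffices to verify $\tau(AB)=\tau(BA)$ for $A=U^{m_1}V^{n_1}$ and $B=U^{m_2}V^{n_2}$; the identity on all of $A_{\theta}$ then follows by bilinearity and continuity.

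For the monomial computation I would first record, writing $k=e^{2\pi i\theta}$, the reordering rule $U^mV^n=k^{mn}V^nU^m$ (an easy double induction from $UV=kVU$), equivalently $V^nU^m=k^{-mn}U^mV^n$. Using this,
\[
(U^{m_1}V^{n_1})(U^{m_2}V^{n_2})=k^{-m_2 n_1}\,U^{m_1+m_2}V^{n_1+n_2},
\]
and symmetrically $(U^{m_2}V^{n_2})(U^{m_1}V^{n_1})=k^{-m_1 n_2}\,U^{m_1+m_2}V^{n_1+n_2}$. Applying the already-established values $\tau(U^aV^b)=\delta_{a,0}\,\delta_{b,0}$, both $\tau(AB)$ and $\tau(BA)$ vanish unless $m_2=-m_1$ and $n_2=-n_1$; in that case the two phase factors become $k^{-m_2 n_1}=k^{m_1 n_1}$ and $k^{-m_1 n_2}=k^{m_1 n_1}$, which coincide. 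Thus $\tau(AB)=\tau(BA)$ on monomials. I expect the main obstacle to be not this algebra but the extension step: one must make explicit that $\tau$ is genuinely continuous and that $A_0$ is dense, so that agreement of the two continuous bilinear forms on $A_0\times A_0$ propagates to the whole of $A_{\theta}\times A_{\theta}$.
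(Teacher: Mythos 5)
Your proposal is correct, and its overall skeleton matches the paper's: verify positivity and linearity from the inner-product form of $\tau$, establish the trace identity on the dense $*$-subalgebra spanned by the monomials $U^mV^n$, and extend by density. Where you differ is in how the monomial identity is obtained. The paper recomputes $\tau(U^mV^nU^jV^k)$ by explicitly applying the concrete operators to $\Omega$ on $L^2(\mathbb{T}^2)$ and integrating, whereas you argue purely algebraically: you use the reordering rule $V^nU^m=k^{-mn}U^mV^n$ to collapse each product to a single phase times $U^{m_1+m_2}V^{n_1+n_2}$, and then invoke the already-established values $\tau(U^aV^b)=\delta_{a,0}\delta_{b,0}$. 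This buys two things. First, the phase bookkeeping is transparent and manifestly symmetric in the only surviving case $m_2=-m_1$, $n_2=-n_1$ (both phases equal $k^{m_1n_1}$), so nothing depends on the particular representation of $U$ and $V$; the argument would work verbatim for any pair of unitaries satisfying the commutation relation and any state with $\omega(U^aV^b)=\delta_{a,0}\delta_{b,0}$. Second, you make the extension step explicit — boundedness of $\tau$ (via $\|\Omega\|=1$) plus joint continuity of multiplication gives two continuous bilinear forms agreeing on a dense subalgebra — which the paper passes over with a one-line appeal to density. The only mild caveat is the normalization of the measure (the paper alternates between $[0,1]^2$ and $[0,2\pi]^2$), but your choice to read off unitality from the $m=n=0$ case of the preceding computation sidesteps this cleanly.
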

\begin{proof}
In order to prove this theorem we need to show that $\tau$ is a tracial positive linear functional.  Let $A$ and $B$ be elements in $A_{\theta}$.  Linearity is trivial due to the inner product.  If $A$ is a positive element in $A_{\theta}$ we can write $A=S^*S$ for some $S\in A_{\theta}$.  Substituting this into the definition we find
\begin{eqnarray*}
 \tau(S^*S)&=&\langle \Omega,S^*S\Omega\rangle\\
&=&\langle S\Omega,S\Omega\rangle\\
&=&\|S\Omega\|^2\\
&\geq&0.
\end{eqnarray*}
Let $B$ be the *-algebra generated by $\{U,V\}$.  We want to show that $\tau(ab)=\tau(ba)$ for any $a=U^mV^n,b=U^jV^k\in B$,  
\begin{eqnarray*}
 \tau(U^mV^nU^jV^k)&=&\int^{2\pi}_{0}\int^{2\pi}_{0}U^mV^nU^jV^k\Omega(x,y) dxdy\\
&=&\int^{2\pi}_{0}\int^{2\pi}_{0}U^mV^ne^{2\pi i(jx+ky)}e^{\pi i\theta jk}\Omega\left(x-\frac{k\theta}{2},y+\frac{j\theta}{2}\right) dxdy\\
&=&\int^{2\pi}_{0}\int^{2\pi}_{0}e^{2\pi i\left((m+j)x+(n+k)y\right)}e^{\pi i\theta (mn+jk)}dx dy\\
&=&\int^{2\pi}_{0}\int^{2\pi}_{0}U^jV^ke^{2\pi i(mx+ny)}e^{\pi i\theta mn}\Omega\left(x-\frac{n\theta}{2},y+\frac{m\theta}{2}\right) dxdy\\
&=&\int^{2\pi}_{0}\int^{2\pi}_{0}U^mV^nU^jV^k\Omega(x,y) dxdy\\
&=&\tau(U^jV^kU^mV^n).
\end{eqnarray*}
Recall that the elements of $B$ consist of finite linear combinations of elements of the form $U^mV^n$ and that the above result holds for any $a,b\in B$.  Since $B$ is a dense *-subalgebra of the $C^*$-algebra $A_{\theta}$ we can extend the linear functional (\ref{linfunc}) to the whole quantum torus.
\end{proof}

\begin{lemma}\label{Ad}
Let $A$ be a $C^*$-algebra. The mapping defined by
\begin{equation*}
\text{Ad}u:A\rightarrow A:a\mapsto uau^*
\end{equation*}
with $u$ a unitary in $A$, is a *-isomorphism.
\end{lemma}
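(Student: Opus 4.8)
The plan is to verify directly that $\text{Ad}u$ satisfies each defining property of a *-isomorphism: that it is a linear, multiplicative, involution-preserving bijection of $A$ onto itself. Throughout I take $A$ to be unital, so that ``unitary in $A$'' is meaningful and we have the two identities $u^*u = uu^* = \I$; essentially every step below amounts to inserting a factor $u^*u$ or $uu^*$ at an opportune place.

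First I would record that $\text{Ad}u$ maps $A$ into $A$ and is linear: for $a,b\in A$ and scalars $\alpha,\beta$, the computation $u(\alpha a+\beta b)u^* = \alpha uau^* + \beta ubu^*$ is immediate, so $\text{Ad}u$ is a linear operator on $A$. Next I would establish multiplicativity by writing $\text{Ad}u(ab) = uabu^* = ua(u^*u)bu^* = (uau^*)(ubu^*)$, where the crucial middle step inserts $u^*u=\I$; this gives $\text{Ad}u(ab) = \text{Ad}u(a)\,\text{Ad}u(b)$. For compatibility with the involution I would compute $(\text{Ad}u(a))^* = (uau^*)^* = u\,a^*\,u^* = \text{Ad}u(a^*)$, using anti-multiplicativity of $*$ together with $(u^*)^* = u$. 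These three checks together show that $\text{Ad}u$ is a *-homomorphism.

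To finish I would prove bijectivity by exhibiting an explicit two-sided inverse, namely $\text{Ad}u^*$, which is of exactly the same form. The compositions give $\text{Ad}u^*(\text{Ad}u(a)) = u^*(uau^*)u = (u^*u)a(u^*u) = a$ and, symmetrically, $\text{Ad}u(\text{Ad}u^*(a)) = a$, again by repeated use of $u^*u = uu^* = \I$. Hence $\text{Ad}u$ is a bijection, and combined with the previous paragraph it is a *-isomorphism of $A$ onto itself.

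There is no genuine obstacle here; the only point requiring a word of care is the standing assumption that $A$ is unital (or, more generally, that $u$ is a unitary in the multiplier algebra $M(A)$ acting on $A$), since the identities $u^*u = uu^* = \I$ are invoked at every stage. Once that is granted, the entire argument is a mechanical insertion of $\I = u^*u$.
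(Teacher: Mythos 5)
Your proof is correct and follows essentially the same route as the paper: a direct verification that $\text{Ad}u$ is a *-homomorphism followed by bijectivity, the only cosmetic difference being that you exhibit the explicit inverse $\text{Ad}u^*$ while the paper checks injectivity via the kernel and surjectivity by producing the preimage $u^*au$ separately. Your remark about needing $A$ unital (or $u$ in the multiplier algebra) is a sensible point of care that the paper leaves implicit.
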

\begin{proof}
\begin{eqnarray*}
\left(\text{Ad}u\right)(a+b)&=&u(a+b)u^*\\
&=&uau^*+ubu^*\\
&=&\left(\text{Ad}u\right)(a)+\left(\text{Ad}u\right)(b)
\end{eqnarray*} 
\begin{eqnarray*}
\left(\text{Ad}u\right)(ab)&=&u(ab)u^*\\
&=&(uau^*)(ubu^*)\\
&=&\left(\text{Ad}u\right)(a)\left(\text{Ad}u\right)(b)
\end{eqnarray*}
\begin{eqnarray*}
\left(\text{Ad}u\right)(a^*)&=&ua^*u^*=(uau^*)^*=\left(\text{Ad}u\right)(a)^*
\end{eqnarray*}
So $\text{Ad}u$ is a *-homomorphism.  Let $x\in \text{ker}(\text{Ad}u)$
\begin{equation*}
\left(\text{Ad}u\right)(x)=uxu^*=0
\end{equation*}
This implies that $x=0$ and $\text{Ad}u$ is injective.  Let $a\in A$ be arbitrary
\begin{equation*}
\left(\text{Ad}u\right)(u^*au)=u(u^*au)u^*=a
\end{equation*}
So $\text{Ad}u$ is surjective.  We conclude that $\text{Ad}u$ is a bijective *-homomorphism and hence a *-isomorphism.
\end{proof}

\begin{remark}
The ideas and methods of dynamical systems constantly come into play throughout this thesis.  In the following lemma we will use a dynamical system approach to show that the canonical trace $\tau$ on the quantum torus is unique.  Consider the two *-automorphisms
\begin{eqnarray*}
 \alpha(a):=UaU^*,\quad
 \beta(a):=VaV^*.
\end{eqnarray*}
We observe that for any trace $\omega$ on $A_{\theta}$ we have
\begin{equation*}
 \omega\left(\alpha(a)\right)=\omega\left(UaU^*\right)=\omega(a)
\end{equation*}
 and a similar result for $\beta$.  Hence any trace on $A_{\theta}$ is invariant with respect to the above *-automorphisms.
\end{remark}

\begin{lemma}
Any state on $A_{\theta}$ that is invariant under both $\alpha$ and $\beta$ as defined above has to be the canonical trace $\tau$ on $A_{\theta}$. 
\end{lemma}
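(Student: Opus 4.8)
The plan is to exploit the fact that $\alpha$ and $\beta$ act \emph{diagonally} on the natural spanning set of $A_\theta$, so that invariance of the state forces it to vanish on every basis element except the identity. First I would reduce the problem to the dense $*$-subalgebra $B$ generated by $\{U,V\}$. Using unitarity ($U^*=U^{-1}$, $V^*=V^{-1}$) together with the commutation relation $UV=e^{2\pi i\theta}VU$, every finite product of $U,V,U^*,V^*$ can be reordered into a scalar multiple of a monomial $U^mV^n$ with $m,n\in\mathbb{Z}$; hence $B=\operatorname{span}\{U^mV^n:m,n\in\mathbb{Z}\}$. Since $\omega$ and $\tau$ are both bounded (a state on a unital $C^*$-algebra has norm $1$, and $\tau$ is a state by Theorem \ref{quantum_trace}) and $B$ is dense in $A_\theta$, it suffices to prove $\omega(U^mV^n)=\tau(U^mV^n)$ for all $m,n$.

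Next I would record the action of the two automorphisms on a monomial. Repeated use of the commutation relation gives $U^aV^b=e^{2\pi i ab\theta}V^bU^a$ for all $a,b\in\mathbb{Z}$, and from this one computes $\alpha(U^mV^n)=U(U^mV^n)U^{-1}=e^{2\pi i n\theta}U^mV^n$ and $\beta(U^mV^n)=V(U^mV^n)V^{-1}=e^{-2\pi i m\theta}U^mV^n$. Thus each $U^mV^n$ is a simultaneous eigenvector of $\alpha$ and $\beta$, with eigenvalues $e^{2\pi i n\theta}$ and $e^{-2\pi i m\theta}$ respectively.

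Now applying invariance, $\omega(U^mV^n)=\omega(\alpha(U^mV^n))=e^{2\pi i n\theta}\omega(U^mV^n)$, so $(e^{2\pi i n\theta}-1)\omega(U^mV^n)=0$; likewise $\beta$-invariance yields $(e^{-2\pi i m\theta}-1)\omega(U^mV^n)=0$. Because $\theta$ is irrational, $e^{2\pi i n\theta}\neq 1$ whenever $n\neq 0$ and $e^{-2\pi i m\theta}\neq 1$ whenever $m\neq 0$, forcing $\omega(U^mV^n)=0$ unless $m=n=0$. Since $\omega$ is a state it is unital, so $\omega(\I)=\omega(U^0V^0)=1$. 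These values agree with $\tau$ exactly (Section \ref{trace_int}), giving $\omega=\tau$ on $B$ and, by boundedness and density, on all of $A_\theta$.

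The conceptual content here is light; the only genuine obstacle is the bookkeeping in the reordering computation and, more essentially, the use of the irrationality of $\theta$. It is precisely the fact that $n\mapsto e^{2\pi i n\theta}$ has no nontrivial fixed point that annihilates every off-diagonal Fourier coefficient of $\omega$. Were $\theta$ rational, some $e^{2\pi i n\theta}$ with $n\neq 0$ would equal $1$, the argument would break down, and the invariant state (hence the trace) would fail to be unique.
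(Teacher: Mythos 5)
Your proof is correct and follows essentially the same route as the paper: compute the action of $\alpha$ and $\beta$ on the monomials $U^mV^n$, use invariance together with the irrationality of $\theta$ to kill every coefficient except at $m=n=0$, and extend by density. Your version is in fact slightly cleaner, since you record the eigenvalues as $e^{2\pi i n\theta}$ and $e^{-2\pi i m\theta}$ directly on $U^mV^n$ (the paper leaves its $\alpha$-computation in the form $e^{2\pi i\theta(m+1)n}V^nU^m$ and elides the reordering phase), and you make explicit both the reduction to the dense subalgebra and the role of irrationality, which the paper only states implicitly.
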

\begin{proof}
 Since all elements of $A_{\theta}$ can be expressed as finite products of elements of the form $U^mV^n$, where $U$ and $V$ are the two unitary operators generating the quantum torus, we only have to consider the action of the two *-automorphisms on $U^mV^n$
\begin{eqnarray*}
 \alpha\left( U^mV^n\right)&=&U^{m+1}V^nU^*\\
&=&e^{2\pi i\theta (m+1)n}V^nU^m
\end{eqnarray*}
and
\begin{eqnarray*}
 \beta\left( U^mV^n\right)&=&VU^{m}V^nV^*\\
&=&e^{-2\pi i\theta m}U^mV^n.
\end{eqnarray*}
Let $\omega$ be an arbitrary state on $A_{\theta}$ that is invariant under $\alpha$ and $\beta$.  We then have
\begin{eqnarray*}
 \omega\left( U^mV^n \right) &=& \omega\left[ \alpha \left( U^mV^n \right)\right]\\
&=&e^{2\pi i\theta (m+1)n}\omega\left( U^mV^n \right)
\end{eqnarray*}
and
\begin{eqnarray*}
 \omega\left( U^mV^n \right) &=& \omega\left[ \beta \left( U^mV^n \right)\right]\\
&=&e^{-2\pi i\theta m}\omega\left( U^mV^n \right).
\end{eqnarray*}
These solutions can only be realised when either both $m$ and $n$ are zero or $\omega\left(U^mV^n\right)=0$.  Comparing these to 
\begin{equation*}
 \tau\left(U^mV^n\right)=\left\{
\begin{array}{cc}
 0 & \text{if }m\neq n\\
 1 & \text{if }m=0=n
\end{array}
\right.
\end{equation*}
concludes the proof.
\end{proof}

%% hierdie afdeling was geskuif van voor na hier

\section{The Koopman Construction and the \\ natural action of $\mathbb{R}^2$ on $A_{\theta}$}
\noindent
In this section we study the natural action of $\mathbb{R}^2$ on the quantum torus in preparation for the following section in which we study the smooth elements of the quantum torus.  Ideas from dynamical systems such as those used in section \ref{trace_int} will once again play a major role in the present section, as well as those that follow.

We are familiar with the time evolution of operators in quantum mechanics, if $A$ is some operator on $B(H)$, we can describe the time evolution by
\begin{equation*}
 A(t)=U_tA(0)U_t^*
\end{equation*}
where $A(0)$ denotes the original operator and $A(t)$ the same operator at some later time.  $U$ is a unitary group which in quantum mechanics is normally written as
\begin{equation*}
 t\mapsto U_t=e^{-iHt}
\end{equation*}
with $H=H^*$ the Hamiltonian and $t$ the time.  In the next section we will use this point of view to describe dynamical systems on the quantum torus.

\begin{lemma}\label{koop}
 Let $L^2(\mu):=\mathcal{H}$ and define the operator
\begin{equation*}
 W:\mathcal{H}\rightarrow \mathcal{H}, f\mapsto f\circ T.
\end{equation*}
Let $\mu$ be a probability measure on the measure space $(X,\Sigma)$.  Define the operator $T$ in the following way:
\begin{enumerate}
 \item $T:X\rightarrow X$,
 \item $T^{-1}(Y)\in \Sigma$ and $\mu(T^{-1}(Y))=\mu(Y)$ for every $Y\in\Sigma$, and
 \item $T$ is invertible
\end{enumerate}
then $W$ is unitary.
\end{lemma}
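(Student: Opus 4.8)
The plan is to follow the same two-pronged strategy used for the unitaries $U$ and $V$ earlier in this chapter: first verify that $W$ preserves the $L^2$-inner product, so that $W^*W=\I$, and then exhibit a two-sided inverse for $W$. A surjective isometry on a Hilbert space is unitary, so these two facts together give the lemma.

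First I would record the measure-theoretic content of the hypotheses. Condition (2) says precisely that $T$ is measure-preserving, i.e. the pushforward $T_*\mu$ equals $\mu$, and it also guarantees that $f\circ T$ is $\Sigma$-measurable whenever $f$ is, since $T^{-1}(Y)\in\Sigma$. The standard change-of-variables identity for a pushforward, $\int_X (h\circ T)\,d\mu=\int_X h\,d(T_*\mu)$, then collapses to
\begin{equation*}
\int_X (h\circ T)\,d\mu=\int_X h\,d\mu
\end{equation*}
for every nonnegative measurable, and hence every integrable, $h$.

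Applying this identity first to the nonnegative function $h=|f|^2$ yields $\int_X|f\circ T|^2\,d\mu=\int_X|f|^2\,d\mu$, so that $Wf\in\mathcal{H}$ with $\|Wf\|=\|f\|$; in particular $W$ is a well-defined bounded operator on $\mathcal{H}$. Applying it next to $h=\overline{f}\,g\in L^1$ gives the key computation
\begin{eqnarray*}
\langle Wf,Wg\rangle&=&\int_X\overline{f(Tx)}\,g(Tx)\,d\mu(x)\\
&=&\int_X\overline{f(x)}\,g(x)\,d\mu(x)\\
&=&\langle f,g\rangle,
\end{eqnarray*}
which is exactly the statement $W^*W=\I$. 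For surjectivity I would invoke condition (3): since $T$ is invertible with measurable inverse, $T^{-1}$ is itself measure-preserving (putting $Y=T(A)$ in condition (2) and using $T^{-1}(T(A))=A$ gives $\mu(T(A))=\mu(A)$), so $W'\colon f\mapsto f\circ T^{-1}$ is a well-defined bounded operator on $\mathcal{H}$ by the same argument. One then checks $WW'f=(f\circ T^{-1})\circ T=f$ and $W'Wf=(f\circ T)\circ T^{-1}=f$, so $W'=W^{-1}$, and an isometry with a two-sided inverse is unitary.

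The main obstacle is the careful handling of the change-of-variables step: one must justify $\int_X(h\circ T)\,d\mu=\int_X h\,d\mu$ by first verifying it on indicator functions $h=\chi_Y$ (where it is just condition (2)), extending to nonnegative simple functions by linearity and to general nonnegative measurable $h$ by monotone convergence, and only then applying it to $h=\overline{f}\,g$. This is precisely what ensures $W$ is genuinely $\mathcal{H}$-valued rather than merely measurable; everything else is bookkeeping that parallels the earlier unitarity proofs.
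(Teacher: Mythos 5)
Your proposal is correct and follows essentially the same route as the paper: exhibit $f\mapsto f\circ T^{-1}$ as a two-sided inverse of $W$, and show $\langle Wf,Wg\rangle=\langle f,g\rangle$ via the measure-preserving property of $T$, so that $W$ is an invertible isometry and hence unitary. The only difference is that you spell out the change-of-variables identity (indicators, simple functions, monotone convergence) that the paper's computation $\int f g^*\,d\mu(T^{-1})=\int fg^*\,d\mu$ takes for granted.
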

\begin{proof}
We can define the operator
\begin{equation*}
 V:\mathcal{H}\rightarrow \mathcal{H}, f\mapsto f\circ T^{-1}.
\end{equation*}
Now clearly we have
\begin{eqnarray*}
 V(Wf)&=&V(f\circ T)=f\circ T^{-1}\circ T=f\\
 W(Vf)&=&W(f\circ T^{-1})=f\circ T^{-1}\circ T=f.
\end{eqnarray*}
So $W$ has an inverse.  Let $f,g\in \mathcal{H}$, the inner product is defined by
\begin{equation*} 
 \langle f,g \rangle=\int f g^*d\mu.
\end{equation*}
Now we consider
\begin{eqnarray*}
 \langle Wf,Wg\rangle&=&\langle f\circ T,g\circ T\rangle\\
&=&\int (f\circ T)(g\circ T)^*d\mu\\
&=&\int fg^*d\mu(T^{-1})\\
&=&\int fg^*d\mu\\
&=&\langle f,g \rangle,
\end{eqnarray*}
hence $W^*W=\I$ and $W$ is unitary.
\end{proof}

We would like to consider the action of the classical torus, $\mathbb{T}^2=\mathbb{R}^2/ 2\pi\mathbb{Z}^2$ on the quantum torus, however it is enough to consider only the action of $\mathbb{R}^2$ on the quantum torus since the contribution from $\mathbb{Z}^2$ does not play any part.  For convenience we state the generating operators of the quantum torus below:
\begin{eqnarray*}
 Uf(x,y)=e^{2\pi ix}f\left(x,y+\frac{\theta}{2}\right),\quad
Vf(x,y)=e^{2\pi iy}f\left(x-\frac{\theta}{2},y\right).
\end{eqnarray*}
We define the mapping
\begin{equation}\label{groupmap}
 \varphi_{r,s}:\mathbb{T}^2\rightarrow \mathbb{T}^2:(x,y)\mapsto (x+r,y+s).
\end{equation}
We are interested in square integrable functions on $\mathbb{T}^2$.  However any $f\in L(\mathbb{T}^2)$ is an element of the class of measurable functions $\left\{f:\int |f|^2 d\mu<\infty \right\}$ with respect to the normalized Haar measure on $\mathbb{T}^2$.  In our case we may identiry the Haar measure with the Lebesgue measure on $[0,1)\times [0,1)$ restricted to the Borel $\sigma$-algebra.  Now suppose we have $[f]=[g]$ in $L^2(\mathbb{T}^2)$.  This implies that $f=g$ almost everywhere except maybe on some set of zero measure.  Hence the same holds for the composition
$f\circ \varphi_{r,s}=g\circ \varphi_{r,s}$.  This enables us to well define the operator
\begin{equation*}
 W_{r,s}:\mathcal{H}\rightarrow\mathcal{H}:f\mapsto f\circ\varphi_{r,s}.
\end{equation*}
Since $\varphi_{-r,-s}=\varphi^{-1}_{r,s}$ we observe that $\varphi_{r,s}$ is invertible.  Furthermore, let $Y=[a,b)\times[c,d)$ be an element of $\Sigma$.  Then according to our construction we have
\begin{equation*}
 \mu(Y)=(b-a)(d-c).
\end{equation*}
After applying $\varphi_{-r,-s}$ to $Y$, we find
\begin{eqnarray*}
 \mu\left(\varphi_{-r,-s}(Y)\right)&=&\mu\left([a-r,b-r)\times[c-s,d-s)\right)\\
&=&(b-r-a+r)(d-s-c+s)\\
&=&(b-a)(d-c).
\end{eqnarray*}
Hence $\varphi^{-1}_{r,s}(Y)\in \Sigma$ for every $Y\in\Sigma$ and $\mu\left(\varphi^{-1}_{r,s}(Y)\right)=\mu(Y)$.  So $\varphi_{r,s}$ satisfies all the conditions of Lemma \ref{koop} and we can conclude that $W_{r,s}$ is unitary.  
\begin{lemma}
$(r,s)\mapsto W_{r,s}$ is a unitary group. 
\end{lemma}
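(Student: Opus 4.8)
The plan is to verify directly that $(r,s)\mapsto W_{r,s}$ is a homomorphism from the additive group $\mathbb{R}^2$ (equivalently $\mathbb{T}^2$) into the unitaries on $\mathcal{H}$, every value of which has already been shown to be unitary in the discussion following Lemma \ref{koop}; and then, if strong continuity is to be included in the meaning of ``unitary group,'' to deduce it from the classical fact that translation acts strongly continuously on $L^2$.

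First I would record the identity element: since $\varphi_{0,0}$ is the identity map on $\mathbb{T}^2$, we have $W_{0,0}f=f\circ\varphi_{0,0}=f$ for every $f\in\mathcal{H}$, so $W_{0,0}=\I$. The core of the argument is the group law, which is a short formal computation. Unwinding the definition $W_{r,s}f=f\circ\varphi_{r,s}$ gives
\begin{equation*}
W_{r,s}\left(W_{r',s'}f\right)=\left(f\circ\varphi_{r',s'}\right)\circ\varphi_{r,s}=f\circ\left(\varphi_{r',s'}\circ\varphi_{r,s}\right),
\end{equation*}
and since $\varphi_{r',s'}\circ\varphi_{r,s}(x,y)=\varphi_{r',s'}(x+r,y+s)=(x+r+r',\,y+s+s')=\varphi_{r+r',s+s'}(x,y)$, this yields $W_{r,s}W_{r',s'}=W_{r+r',s+s'}$. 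Hence the assignment is a homomorphism; combined with $W_{0,0}=\I$ it follows that $W_{-r,-s}=W_{r,s}^{-1}$, matching the inverse already exhibited when the hypotheses of Lemma \ref{koop} were checked for $\varphi_{r,s}$. Together with the unitarity of each $W_{r,s}$ this shows $(r,s)\mapsto W_{r,s}$ is a unitary representation of $\mathbb{R}^2$.

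The only step requiring genuine analysis, and therefore the main obstacle, is strong continuity: that $\|W_{r,s}f-f\|_2\to0$ as $(r,s)\to(0,0)$ for each fixed $f\in\mathcal{H}$. I would prove this by the standard three-$\epsilon$ density argument. Continuous functions are dense in $L^2(\mathbb{T}^2)$; a continuous function on the compact torus is uniformly continuous, so its translates converge to it uniformly and hence in $L^2$; and one transfers this to an arbitrary $f$ by approximating $f$ by a continuous $g$, using the unitarity of $W_{r,s}$ to keep the estimate $\|W_{r,s}(f-g)\|_2=\|f-g\|_2$ uniform in $(r,s)$. Everything else in the statement is purely algebraic and reduces to the composition identity for $\varphi_{r,s}$, so this continuity estimate is the substantive content of the lemma.
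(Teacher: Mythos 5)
Your proof is correct and follows essentially the same route as the paper: verify the group law $W_{r,s}W_{r',s'}=W_{r+r',s+s'}$ by composing the translations $\varphi_{r,s}$, check that $W_{0,0}=\I$, and combine with the unitarity of each $W_{r,s}$ already established via Lemma \ref{koop}. The strong-continuity argument you sketch is a sound addition but goes beyond what the paper proves in this lemma (the paper treats continuity only later, at the level of the automorphisms $\alpha_{r,s}$).
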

\begin{proof}
Let $f\in L^2(\mathbb{T}^2)$ and consider the following equalities
 \begin{eqnarray*}
  W_{r+a,s+b}f&=&f\circ \varphi_{r+a,s+b}\\
&=&\left(f\circ\varphi_{a,b}\right)\circ\varphi_{r,s}\\
&=&W_{r,s}\left(f\circ\varphi_{a,b}\right)\\
&=&W_{r,s}W_{a,b}f.
 \end{eqnarray*}
We also have
\begin{eqnarray*}
W_{0,0}\left(W_{r,s}f\right)&=&W_{0,0}\left(f\circ\varphi_{r,s}\right)\\
&=&f\circ\varphi_{r,s}\circ\varphi_{0,0}\\
&=&f\circ\varphi_{r,s}\\
&=&W_{r,s}f.
\end{eqnarray*}
Similarly we find that $W_{r,s}W_{0,0}=W_{r,s}$.  Since we know that $W_{r,s}$ is unitary the lemma follows.
\end{proof}
\noindent
This is an example of the \emph{Koopman construction}.  Since $W_{r,s}$ is unitary we can use it to determine the time evolution of a dynamical system, similar to how time evolution arises in elementary quantum mechanics.  Now we define a mapping on $A_{\theta}$ which describes the evolution
\begin{equation*}
 \alpha_{r,s}:A_{\theta}\rightarrow A_{\theta}:a\mapsto W_{r,s}aW^*_{r,s}.
\end{equation*}
Suppose $a$ and $b$ are two distinct elements of $A_{\theta}$ and consider
\begin{eqnarray*}
 a \neq b &\Longleftrightarrow& W_{r,s}a\neq W_{r,s}b\\
&\Longleftrightarrow&W_{r,s}aW^*_{r,s}\neq W_{r,s}bW^*_{r,s}\\
 &\Longleftrightarrow&\alpha_{r,s}(a)\neq\alpha_{r,s}(b).
\end{eqnarray*}
So $\alpha_{r,s}$ is well defined.  Now we are able to consider the ``time'' evolution of the generating operators of the quantum torus.  We see that
\begin{eqnarray*}
 \left(\alpha_{r,s}(U)f\right)(x,y)&=&\left(W_{r,s}UW^*_{r,s} f\right)(x,y)\\
&=&(W_{r,s}h)(x,y)
\end{eqnarray*}
where we write
\begin{eqnarray*}
 h(x,y)&:=&\left(UW^*_{r,s}f\right)(x,y)\\
&=&Ug(x,y)
\end{eqnarray*}
with
\begin{eqnarray*}
 g(x,y)&:=&\left(W^*_{r,s}f\right)(x,y)\\
&=&f(x-r,y-s).
\end{eqnarray*}
Substituting this result back into the previous constructions we obtain
\begin{eqnarray*}
 h(x,y)&=&\left(Ug\right)(x,y)\\
&=&e^{2\pi i x}g(x,y+\frac{\theta}{2})\\
&=&e^{2\pi i x}f(x-r,y-s+\frac{\theta}{2})
\end{eqnarray*}
and finally
\begin{eqnarray*}
 (W_{r,s}h)(x,y)&=&h(x+r,y+r)\\
&=&e^{2\pi i r}e^{2\pi i x}f(x,y+\frac{\theta}{2})\\
&=&e^{2\pi i r}uf(x,y).
\end{eqnarray*}
We can perform a similar calculation to obtain the ``time'' evolution of $V$.  The final results are
\begin{eqnarray*}
 \alpha_{r,s}(U)&=&e^{2\pi i r}U\\
 \alpha_{r,s}(V)&=&e^{2\pi i s}V.
\end{eqnarray*}
The previous construction now enables us to construct a mapping from $\mathbb{R}^2$ to $A_{\theta}$ in a natural way, namely
\begin{eqnarray*}
 \alpha:\mathbb{R}^2\rightarrow A_{\theta}:(r,s)\mapsto \alpha_{r,s}(a)
\end{eqnarray*}
where $a\in A_{\theta}$.

\begin{proposition}
 $(A_{\theta},\mathbb{R}^2,\alpha)$ is a $C^*$-dynamical system.
\end{proposition}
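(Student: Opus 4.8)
The plan is to verify the three requirements of Definition \ref{dynamical_system} in turn. That $A_{\theta}$ is a $C^*$-algebra is Definition \ref{QT}, and $\mathbb{R}^2$ under addition is a locally compact abelian group, so the entire content lies in showing that $\alpha$ is a continuous homomorphism into $\text{Aut}\,A_{\theta}$. Much of the groundwork is already in place: we have shown that each $W_{r,s}$ is unitary on $L^2(\mathbb{T}^2)$, that $(r,s)\mapsto W_{r,s}$ is a unitary group with $W_{0,0}=\I$ and $W_{r+a,s+b}=W_{r,s}W_{a,b}$, and that $\alpha_{r,s}=\text{Ad}\,W_{r,s}$ satisfies $\alpha_{r,s}(U)=e^{2\pi i r}U$ and $\alpha_{r,s}(V)=e^{2\pi i s}V$.

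First I would check that each $\alpha_{r,s}$ is genuinely a $*$-automorphism of $A_{\theta}$. By Lemma \ref{Ad}, $\text{Ad}\,W_{r,s}$ is a $*$-isomorphism of $B(L^2(\mathbb{T}^2))$. Since it is a $*$-homomorphism sending the generators $U,V$ to $e^{2\pi i r}U,\ e^{2\pi i s}V\in A_{\theta}$, it carries every polynomial in $U,V,U^*,V^*$ into $A_{\theta}$, and by norm-continuity of $*$-homomorphisms it maps the closure $A_{\theta}$ into $A_{\theta}$. Applying the same reasoning to $\alpha_{-r,-s}$ and using the group law $\alpha_{r,s}\circ\alpha_{-r,-s}=\text{Ad}(W_{r,s}W_{-r,-s})=\text{Ad}\,W_{0,0}=\text{id}$, I obtain that $\alpha_{r,s}$ restricts to a bijective $*$-homomorphism of $A_{\theta}$, hence a $*$-automorphism. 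The homomorphism property of $\alpha$ itself then follows immediately from the unitary group law, namely $\alpha_{(r,s)+(a,b)}=\text{Ad}(W_{r,s}W_{a,b})=\alpha_{r,s}\circ\alpha_{a,b}$ together with $\alpha_{0,0}=\text{id}$.

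The main work, and the step I expect to be the real obstacle, is continuity in the point-norm topology, i.e. that $(r,s)\mapsto\alpha_{r,s}(a)$ is norm-continuous from $\mathbb{R}^2$ to $A_{\theta}$ for each fixed $a\in A_{\theta}$. On monomials this is transparent, since $\alpha_{r,s}(U^mV^n)=e^{2\pi i(mr+ns)}U^mV^n$ is a norm-continuous $\mathbb{C}$-valued scalar times a fixed element; by linearity this extends to the dense $*$-subalgebra $A_0$ of finite linear combinations of such monomials. To pass to an arbitrary $a\in A_{\theta}$ I would run the standard $\epsilon/3$ argument, exploiting that each $\alpha_{r,s}$ is isometric (being a $*$-automorphism of a $C^*$-algebra): choosing $a_0\in A_0$ with $\|a-a_0\|<\epsilon/3$ gives $\|\alpha_{r,s}(a)-\alpha_{r,s}(a_0)\|=\|a-a_0\|<\epsilon/3$ uniformly in $(r,s)$, so that continuity at $a_0$ transfers to continuity at $a$. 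Assembling these three ingredients shows that $\alpha$ is a continuous homomorphism $\mathbb{R}^2\to\text{Aut}\,A_{\theta}$, and hence that $(A_{\theta},\mathbb{R}^2,\alpha)$ is a $C^*$-dynamical system.
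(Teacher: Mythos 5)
Your proof is correct and follows essentially the same route as the paper: establish continuity on the dense $*$-subalgebra generated by $U$ and $V$ via the explicit formula $\alpha_{r,s}(U^mV^n)=e^{2\pi i(mr+ns)}U^mV^n$, then extend to all of $A_{\theta}$ by an $\epsilon/3$ argument using a uniform bound on $\|\alpha_{r,s}\|$. Your additional verification that each $\alpha_{r,s}$ actually restricts to a $*$-automorphism of $A_{\theta}$ (rather than merely of $B(L^2(\mathbb{T}^2))$) is a worthwhile detail that the paper's proof leaves implicit.
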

\begin{proof}
 Let ${r_l}$ and ${s_l}$ be sequences in $\mathbb{T}$ respectively converging to $r$ and $s$.  Let $B$ be the *-algebra generated by $\{U,V\}$, the generating operators of the quantum torus.  Every element of $B$ is a finite linear combination of terms of the form $U^mV^n$, so
\begin{eqnarray*}
 \lim_{l\rightarrow\infty}\alpha(r_l,s_l)(U^mV^n)
&=&\lim_{l\rightarrow\infty}e^{2m\pi i r_l}e^{2n\pi i s_l}U^mV^n\\
&=&e^{2\pi i (mr+ns)}U^mV^n\\
&=&\alpha(r,s)(U^mV^n).
\end{eqnarray*}
By the uniqueness of the limit $\alpha$ is continuous on $B$.  We know that $B$ is dense in $A_{\theta}$ and we can extend the continuity to the whole of $A_{\theta}$.  Let $(r_n)$ and $(s_n)$ be two sequences in $\mathbb{T}$ respectively converging to $r$ and $s$.  Let $a\in A_{\theta}$.  Then we can find an upper bound for $\alpha_{r,s}$
\begin{eqnarray*}
 \|\alpha_{r,s}\|&=&\sup_{\|a\|=1}\|\alpha_{r,s}(a)\|\\
&=&\sup_{\|a\|=1}\|W_{r,s}aW^*_{r,s}\|\\
&\leq&1.
\end{eqnarray*}
Given $b\in B$ and $\epsilon>0$, then for some $n$ large enough, we have  
\begin{equation*}
 \|\alpha_{r_n,s_n}(b)-\alpha_{r,s}(b)\|<\frac{\epsilon}{3}.
\end{equation*}
Since $B$ is dense in $A_{\theta}$ we have for some $b\in B$ and $a\in A_{\theta}$ that
\begin{equation*}
 \|b-a\|<\frac{\epsilon}{3}.
\end{equation*}
Now we show that the continuity can be extended to the whole of $A_{\theta}$:
\begin{eqnarray*}
 &&\|\alpha_{r_n,s_n}(a)-\alpha_{r,s}(a)\|\\
 &=&\|\alpha_{r_n,s_n}(a)-\alpha_{r_n,s_n}(b)+\alpha_{r_n,s_n}(b)-\alpha_{r,s}(b)+\alpha_{r,s}(b)-\alpha_{r,s}(a)\|\\
 &\leq&\|\alpha_{r_n,s_n}(a)-\alpha_{r_n,s_n}(b) \|+\|\alpha_{r_n,s_n}(b)-\alpha_{r,s}(b) \|+\|\alpha_{r,s}(b)-\alpha_{r,s}(a) \|\\
 &\leq&\|\alpha_{r_n,s_n}\|\|a-b\|+\|\alpha_{r_n,s_n}(b)-\alpha_{r,s}(b) \|+\|\alpha_{r,s}\|\|a-b\|\\
 &<&\frac{\epsilon}{3}+\frac{\epsilon}{3}+\frac{\epsilon}{3}\\
 &=&\epsilon.
\end{eqnarray*}
To conclude, we have a $C^*$-algebra $A_{\theta}$, a group $\mathbb{R}^2$ and a continuous homomorphism $(r,s)\mapsto \alpha_{r,s}(a)$ for some $s\in A_{\theta}$ and according to Definition \ref{dynamical_system} $\left(A_{\theta},\mathbb{R}^2,\alpha\right)$ is a $C^*$-dynamical system.
\end{proof}

\section{Derivations and the smooth algebra $A^{\infty}_{\theta}$}\label{der}

\begin{definition}(Derivations)\cite[p. 153]{Sakai}\newline
 Let $A$ be a $*$-algebra and let $\delta$ be a linear mapping of $A$ into itself.  $\delta$ is called a $*$-\emph{derivation} if
\begin{eqnarray*}
 \delta(xy)=\delta(x)y+x\delta(y),\quad
 \delta(x^*)=\delta(x)^*
\end{eqnarray*}
for all $x,y\in A$.  
\end{definition}

\noindent
\begin{remark}\label{time_evo}
Let $A(0)$ be an operator in $B(\mathcal{H})$ with $\mathcal{H}$ our Hillbert space.  When we consider the Heisenberg picture of quantum mechanics we can determine the time evolution of $A(0)$ in the following familiar fashion:
\begin{eqnarray*}
 \frac{d}{dt}A(t)&=&\frac{d}{dt}\left(e^{-iHt}A(0)e^{iHt}\right)\\
&=&-iHe^{-iHt}A(0)e^{iHt}+ie^{-iHt}A(0)He^{iHt}.
\end{eqnarray*}
We would like to focus on a particular point of interest which will suppress the exponential functions.  This happens when $t=0$.
\begin{eqnarray*}
 \left.\frac{d}{dt}A(t)\right|_{t=0}&=&-iHA(0)+iA(0)H\\
&=&\frac{1}{i}\left[H,A(0)\right].
\end{eqnarray*}
If we define 
\begin{eqnarray*}
 \delta(\cdot):=\frac{1}{i}\left[H,\cdot\right]
\end{eqnarray*}
and take two operators $A$ and $B$ on $B(\mathcal{H})$ where $\mathcal{H}$ is the Hilbert space in question, we calculate
\begin{eqnarray*}
 \delta(AB)&=&\frac{1}{i}\left[H,AB\right]\\
&=&\frac{1}{i}\left([H,A]B+A[H,B]\right)\\
&=&\frac{1}{i}[H,A]B+\frac{1}{i}A[H,B]\\
&=&\delta(A)B+A\delta(B)
\end{eqnarray*}
and
\begin{eqnarray*}
 \delta(A^*)&=&\frac{1}{i}\left[H,A^*\right]\\
&=&\frac{1}{i}\left(HA^*-A^*H\right)\\
&=&\frac{1}{i}\left(AH-HA\right)^*\\
&=&\left(\frac{1}{i}\left[H,A\right]\right)^*\\
&=&\delta(A)^*.
\end{eqnarray*}
This formal calculation suggests that indeed $\delta$ is a derivation.  Now using a similar approach to the one used in quantum mechanics we will define derivations on the irrational rotation algebra which reduce to normal partial derivatives in the classical scenario.
\end{remark}

We require a definition for derivatives of operator valued functions.  Let $A$ be a normed algebra and consider a mapping
\begin{equation*}
 f:\mathbb{R}\rightarrow A.
\end{equation*}
If there exists an $f'(x)\in A$ such that
\begin{equation}\label{derivative}
 0=\lim_{h\rightarrow 0}\frac{\|f(x+h)-f(x)-f'(x)h \|}{\|h\|}
\end{equation}
then we say $f'(x)$ is the derivative of $f$ at $x$.  Note that sometimes we make use of the notation $\frac{d}{dx}f(x)$ to denote the derivative of $f$ at $x$ and sometimes switch between the two notations.  Furthermore note that the existence of $f'(x)$ implies the continuity of $f$ at $x$.
\begin{proposition}\label{product_rule}
 The ``product rule'', ``sum rule'' as well as the rule for scalar multiplication holds for the above definition of the derivative of an operator valued function.  Furthermore we also have
\begin{equation*}
 \frac{d}{dx}\left(f(x)^*\right)=\left(\frac{d}{dx}f(x)\right)^*.
\end{equation*}
\end{proposition}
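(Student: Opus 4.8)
The plan is to reduce all four assertions to the defining limit $(\ref{derivative})$ by exploiting the first-order expansion it encodes. For a function $f$ differentiable at $x$, write $f(x+h)=f(x)+f'(x)h+r_f(h)$, where the remainder $r_f(h)\in A$ satisfies $\|r_f(h)\|/\|h\|\to 0$ as $h\to 0$; introduce $r_g(h)$ analogously for a second differentiable function $g$. The sum rule and the scalar-multiplication rule are then immediate: adding the two expansions (respectively multiplying one by $\lambda\in\mathbb{C}$) yields an expansion of the same shape whose remainder is $r_f(h)+r_g(h)$ (respectively $\lambda r_f(h)$), and the triangle inequality $\|r_f(h)+r_g(h)\|\le\|r_f(h)\|+\|r_g(h)\|$ shows this remainder is still $o(\|h\|)$. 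Hence $(\ref{derivative})$ holds with derivatives $f'(x)+g'(x)$ and $\lambda f'(x)$.

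For the product rule I would substitute both expansions into $f(x+h)g(x+h)$ and multiply out. The key simplification is that $h$ is a \emph{real scalar}, so it commutes with every element of $A$ and $f'(x)h\,g(x)=f'(x)g(x)h$. Subtracting $f(x)g(x)$ together with the candidate first-order term $\bigl(f'(x)g(x)+f(x)g'(x)\bigr)h$ cancels the zeroth- and first-order contributions exactly, leaving
\begin{equation*}
f(x)r_g(h)+r_f(h)g(x)+f'(x)g'(x)h^2+f'(x)r_g(h)h+r_f(h)g'(x)h+r_f(h)r_g(h).
\end{equation*}
Dividing by $\|h\|$ and invoking the triangle inequality together with submultiplicativity $\|ab\|\le\|a\|\,\|b\|$ of the norm on $A$, every term vanishes in the limit: the terms carrying a single remainder contribute a factor $\|r_f(h)\|/\|h\|$ or $\|r_g(h)\|/\|h\|$, the term $f'(x)g'(x)h^2$ contributes an extra factor $\|h\|$, and the final term is bounded by $\|r_f(h)\|\cdot\|r_g(h)\|/\|h\|\to 0$. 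This confirms $(\ref{derivative})$ for $fg$ with derivative $f'(x)g(x)+f(x)g'(x)$.

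For the involution identity I would apply $*$ to the expansion $f(x+h)=f(x)+f'(x)h+r_f(h)$. Since $h\in\mathbb{R}$ we have $\overline{h}=h$, whence $\bigl(f'(x)h\bigr)^*=h\,f'(x)^*$ and
\begin{equation*}
f(x+h)^*=f(x)^*+f'(x)^*\,h+r_f(h)^*.
\end{equation*}
Because the involution on a $C^*$-algebra such as $A_{\theta}$ is isometric, $\|r_f(h)^*\|=\|r_f(h)\|$, so the remainder $r_f(h)^*$ is again $o(\|h\|)$ and $(\ref{derivative})$ holds for $f(\cdot)^*$ with derivative $f'(x)^*$. The one place demanding genuine care is the product-rule bookkeeping above — verifying that each surviving cross term carries either a remainder factor or an extra power of $h$ — which is precisely where submultiplicativity of the norm is used; by contrast, the scalar nature of $h$ (it passes through products and is fixed by conjugation) is what makes the remaining three rules fall out directly.
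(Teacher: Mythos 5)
Your proof is correct and takes essentially the same route as the paper: both reduce each rule to the defining limit, using the triangle inequality and submultiplicativity of the norm for the product rule, and using that $h$ is real together with the isometry of the involution for the identity $\frac{d}{dx}\left(f(x)^*\right)=\left(\frac{d}{dx}f(x)\right)^*$. Your explicit remainder-term bookkeeping is only a cosmetic reorganization of the paper's add-and-subtract estimates, and in the product-rule case it is in fact tidier than the paper's own displayed chain of inequalities.
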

\begin{proof}
 Let $A$ be a normed algebra and consider two mappings $f,g:\mathbb{R}\rightarrow A$.  We would like to show that 
\begin{equation*}
 \left(f g \right)'(x)=f'(x)g(x)+f(x)g'(x).
\end{equation*}
Consider the definition
\begin{eqnarray*}
 &&\frac{\|f(x+h)g(x+h)-f(x)g(x)-f'(x)g(x)h-f(x)g'(x)h \|}{\|h\|}\\
 &=&\frac{1}{\|h\|}\left\|\left(f(x+h)-f(x)\right)g(x+h)\right.\\
&&\left.+f(x)\left(g(x+h)-g(x)\right)-f'(x)g(x)h-f(x)g'(x)h \right\|\\
 &\leq&\frac{1}{\|h\|}\left(\|f(x+h)-f(x)-f'(x)h\|\|g(x+h)\|\right.\\
&&\left.+\|g(x+h)-g(x)-g'(x)h\|\|f(x)\|+\|f'(x)\left(g(x+h)-g(x)\right)h\|\right)\\
&\leq&\frac{1}{\|h\|}\|f'(x)\left(g(x+h)-g(x)\right)\|\|h\|\\
&=&\|f'(x)\left(g(x+h)-g(x)\right)\|.
\end{eqnarray*}
Now if we take the limit as $h$ goes to zero.  We find that
\begin{equation*}
 \lim_{h\rightarrow 0}\frac{\|f(x+h)g(x+h)-f(x)g(x)-f'(x)g(x)h-f(x)g'(x)h \|}{\|h\|}=0.
\end{equation*}
Hence by the uniqueness of the limit we can conclude that the product rule holds for the derivative of operator valued functions.  Now suppose that both $f'(x)$ and $g'(x)$ exist, we would like to show that
\begin{equation*}
 (f+g)'(x)=f'(x)+g'(x).
\end{equation*}
Consider the following:
\begin{eqnarray*}
 &&\frac{\|(f+g)(x+h)-(f+g)(x)-(f'(x)+g'(x))h\|}{|h|}\\
&=&\frac{\|\left(f(x+h)-f(x)-f'(x)h\right)+\left(g(x+h)-g(x)-g'(x)h\right)\|}{|h|}\\
&\leq&\frac{\|f(x+h)-f(x)-f'(x)h\|}{|h|}+\frac{\|g(x+h)-g(x)-g'(x)h\|}{|h|}.
\end{eqnarray*}
Taking the limit as $h$ goes to zero we find that the ``sum rule'' holds.  The scalar multiplication result follows trivially.  

Finally, suppose that both $\frac{d}{dx}\left(f(x)^*\right)$ and $\left[\frac{d}{dx}\left(f(x)\right)\right]^*$ exist.  Hence we have
\begin{eqnarray*}
 0&=&\lim_{h\rightarrow 0}\frac{\left\|f(x+h)^*-f(x)^*-\frac{d}{dx}\left(f(x)^*\right)h \right\|}{\|h\|}\\
 &=&\lim_{h\rightarrow 0}\frac{\left\|f(x+h)^*-f(x)^*-\left[\frac{d}{dx}\left(f(x)\right)\right]^*h \right\|}{\|h\|}.
\end{eqnarray*}
By the uniqueness of the limit we conclude that $\frac{d}{dx}\left(f(x)^*\right)=\left[\frac{d}{dx}\left(f(x)\right)\right]^*$.
\end{proof}
\begin{remark}
 Since we are considering functions from $\mathbb{R}$ to some noncommutative algebra we can simplify (\ref{derivative}) to
\begin{equation*}
 0=\lim_{h\rightarrow 0}\left\|\frac{f(x+h)-f(x)}{h}-f'(x) \right\|.
\end{equation*}
If we write the derivative as above the connection with the normal definition of the derivative becomes clear.
\end{remark}
\begin{definition}(Smooth, $C^{\infty}$)\newline
 Let $A$ be a $C^*$-algebra.  Then we say a function $f:\mathbb{R}^n\rightarrow A$ is smooth or of $C^{\infty}$ class if all the partial derivatives
\begin{eqnarray*}
 \partial_{x_{j_1}}\cdots\partial_{x_{j_m}}f(x_1,\cdots x_n)
\end{eqnarray*}
exist for any $j_{1},\cdots, j_{m}\in\{1,\cdots,n\}$ and any $m\in \{0,1,2,\cdots\}$.  

If there exists a $\partial_{x_{j}}f(x_1,\cdots,x_n)\in A$ such that
\begin{equation*}
 0=\lim_{h\rightarrow 0}\frac{\|f(x_1,\cdots,x_j+h,\cdots,x_n)-f(x_1,\cdots,x_j,\cdots,x_n)-\partial_{x_j}f(x_1,\cdots,x_n)h\|}{\|h\|}
\end{equation*}
then we say that $\partial_{x_j}f(x_1,\cdots, x_n)$ is the partial derivative of $f(x_1,\cdots,x_n)$ at $x_j$.
%% Hierdie definisie moet steeds verbeter word %%
\end{definition}

\begin{remark}
For ease of calculation it will be useful to write these partial derivatives in terms of the accent notation.  For this purpose we define
\begin{equation*}
 \partial_{x_j}f(x_1,\cdots,x_n):=g'(x_j)
\end{equation*}
with
\begin{equation*}
 g(x_j)=f(x_1,\cdots,x_j,\cdots,x_n),
\end{equation*}
where all the $x_i$'s are fixed except for $x_j$.
\end{remark}
Similar to Connes \cite{Connes1980} we define the smooth quantum torus as follows:
\begin{definition}(\textsl{Smooth Quantum Torus})\newline
 Let $(A,\mathbb{R}^n,\alpha)$ be a $C^*$ dynamical system.  We shall say that $x \in A$ is of $C^{\infty}$ class if and only if the map $g \mapsto \alpha_g(x)$ from $\mathbb{R}^n$ to the normed space $A$ is $C^{\infty}$.  The \textsl{smooth quantum torus} is defined to be
\begin{equation*}
 A^{\infty}_{\theta}:=\left\{a\in A_{\theta}:a\text{ is of }C^{\infty}\right\}.
\end{equation*}
\end{definition}
\begin{proposition}
 The space $A^{\infty}_{\theta}$ is a *-algebra.
\end{proposition}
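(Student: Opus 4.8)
The plan is to verify directly that $A^{\infty}_{\theta}$ is closed under the four operations making a $*$-algebra — addition, scalar multiplication, multiplication and involution — the central observation being that each $\alpha_g$ (for $g=(r,s)\in\mathbb{R}^2$) is a $*$-automorphism of $A_{\theta}$. Indeed $\alpha_{r,s}(a)=W_{r,s}aW^*_{r,s}$ with $W_{r,s}$ unitary, so exactly as in Lemma~\ref{Ad} the map $\alpha_g$ is linear, multiplicative and adjoint-preserving:
\begin{equation*}
 \alpha_g(a+b)=\alpha_g(a)+\alpha_g(b),\quad \alpha_g(\lambda a)=\lambda\alpha_g(a),\quad \alpha_g(ab)=\alpha_g(a)\alpha_g(b),\quad \alpha_g(a^*)=\alpha_g(a)^*.
\end{equation*}
Since smoothness of an element $a$ means precisely that the map $g\mapsto\alpha_g(a)$ from $\mathbb{R}^2$ to $A_{\theta}$ is $C^{\infty}$, the idea is to transport smoothness to algebraic combinations of smooth elements by differentiating these identities and invoking the differentiation rules of Proposition~\ref{product_rule} (applied in each coordinate $x_j$ with the remaining variables held fixed, as in the definition of the partial derivative).

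First I would treat the vector-space structure. Given $a,b\in A^{\infty}_{\theta}$ and $\lambda\in\mathbb{C}$, the maps $g\mapsto\alpha_g(a)$ and $g\mapsto\alpha_g(b)$ are $C^{\infty}$, so by the ``sum rule'' and the rule for scalar multiplication of Proposition~\ref{product_rule} — applied to each partial derivative in turn — the maps $g\mapsto\alpha_g(a)+\alpha_g(b)=\alpha_g(a+b)$ and $g\mapsto\lambda\alpha_g(a)=\alpha_g(\lambda a)$ are again $C^{\infty}$. Hence $a+b,\lambda a\in A^{\infty}_{\theta}$, so $A^{\infty}_{\theta}$ is a linear subspace of $A_{\theta}$.

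Next, for multiplication I would use $\alpha_g(ab)=\alpha_g(a)\alpha_g(b)$ together with the ``product rule'' of Proposition~\ref{product_rule}. The product rule yields the first-order partials of $g\mapsto\alpha_g(a)\alpha_g(b)$; the real content is to promote this to all orders, which I would do by induction on the order of differentiation. Iterating the product rule gives a Leibniz-type expansion in which every higher partial $\partial_{x_{j_1}}\cdots\partial_{x_{j_m}}\bigl(\alpha_g(a)\alpha_g(b)\bigr)$ is a finite sum of terms, each the product of a partial derivative of $g\mapsto\alpha_g(a)$ with a partial derivative of $g\mapsto\alpha_g(b)$; all these factors lie in $A_{\theta}$ because $a$ and $b$ are smooth, so each such partial exists in $A_{\theta}$ and $g\mapsto\alpha_g(ab)$ is $C^{\infty}$, giving $ab\in A^{\infty}_{\theta}$. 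For the involution I would use $\alpha_g(a^*)=\alpha_g(a)^*$ and the final assertion of Proposition~\ref{product_rule}, $\frac{d}{dx}(f(x)^*)=\bigl(\frac{d}{dx}f(x)\bigr)^*$; applied in each coordinate and iterated, it shows that every partial of $g\mapsto\alpha_g(a)^*$ exists and equals the adjoint of the corresponding partial of $g\mapsto\alpha_g(a)$, whence $a^*\in A^{\infty}_{\theta}$.

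I expect the only genuine obstacle to be the bookkeeping in passing from the first-order rules of Proposition~\ref{product_rule} to the $C^{\infty}$ conclusion, i.e.\ confirming that repeated differentiation never leaves the class of finite sums of products (respectively adjoints) of partial derivatives of $\alpha_g(a)$ and $\alpha_g(b)$; this is a clean induction once the Leibniz expansion is set up. With closure under all four operations established, $A^{\infty}_{\theta}$ is a self-adjoint subalgebra of the $C^*$-algebra $A_{\theta}$, hence a $*$-algebra, which completes the proof.
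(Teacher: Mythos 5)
Your proof is correct and follows essentially the same route as the paper's: exploit that each $\alpha_{r,s}$ is a *-automorphism to rewrite $\alpha_g(a+b)$, $\alpha_g(\lambda a)$, $\alpha_g(ab)$, $\alpha_g(a^*)$ in terms of the smooth maps $g\mapsto\alpha_g(a)$ and $g\mapsto\alpha_g(b)$, then invoke the sum, scalar, product and adjoint rules of Proposition~\ref{product_rule}. If anything, you are more careful than the paper in flagging the Leibniz induction needed to pass from the first-order rules to the full $C^{\infty}$ conclusion, a step the paper's proof leaves implicit.
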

\begin{proof}
 Consider the $C^*$-dynamical system $(A_{\theta},\mathbb{R}^2,\alpha_{r,s})$.  Let $a,b\in A^{\infty}_{\theta}$, since $\alpha_{r,s}$ is a *-automorphism linearity follows
\begin{eqnarray*}
 (r,s)\mapsto\alpha_{r,s}(a+b)&=&\alpha_{r,s}(a)+\alpha_{r,s}(b).
\end{eqnarray*}
But, both the mappings 
\begin{eqnarray*}
 (r,s)\mapsto\alpha_{r,s}(a), \quad
 (r,s)\mapsto\alpha_{r,s}(b)
\end{eqnarray*}
are of $C^{\infty}$ class, hence so is the mapping 
\begin{equation*}
(r,s)\mapsto\alpha_{r,s}(a+b).
\end{equation*}
Let $k\in\mathbb{C}$ and $a\in A^{\infty}_{\theta}$ and consider 
\begin{eqnarray*}
 (r,s)\mapsto\alpha_{r,s}(ka)=k\alpha_{r,s}(a).
\end{eqnarray*}
But $(r,s)\mapsto\alpha_{r,s}(a)$ is of class $C^{\infty}$, so clearly scalar multiplication also holds.  Similarly using the fact that $\alpha_{r,s}$ is a *-automorphism we can consider the mapping
\begin{eqnarray*}
 (r,s)\mapsto\alpha_{r,s}(ab)
=\alpha_{r,s}(a)\alpha_{r,s}(b).
\end{eqnarray*}
By Proposition \ref{product_rule} we know that the ``product rule'' holds for operator valued functions of $\mathbb{R}$ and we can conclude that the mapping
\begin{equation*}
 (r,s)\mapsto\alpha_{r,s}(ab)
\end{equation*}
is of $C^{\infty}$ class.  $A^{\infty}_{\theta}$ inherits its involution directly from $A_{\theta}$.  We can consider the mapping
\begin{eqnarray*}
 (r,s)&\mapsto&\alpha_{r,s}(a^*)=\alpha_{r,s}(a)^*
\end{eqnarray*}
which implies that $a^*$ is of $C^{\infty}$ class.  This shows that $A^{\infty}_{\theta}$ is indeed a *-algebra.
\end{proof}
The next proposition shows that the smooth irrational rotation algebra is not empty.

\begin{lemma}\label{u_and_v}
Let $B$ be the *-algebra generated by the generating unitaries $U$ and $V$ of the quantum torus.  Then $B$ is contained in $A^{\infty}_{\theta}$.
\end{lemma}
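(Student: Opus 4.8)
The plan is to reduce the claim to the two generators and then invoke the structure result just proved, namely that $A^{\infty}_{\theta}$ is a *-algebra. Since $B$ is by definition the *-algebra generated by $U$ and $V$, every element of $B$ arises from $U$ and $V$ through finitely many additions, scalar multiplications, products and involutions. As $A^{\infty}_{\theta}$ is closed under each of these operations, it is enough to show $U\in A^{\infty}_{\theta}$ and $V\in A^{\infty}_{\theta}$.

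To do this I would use the description of the natural action of $\mathbb{R}^2$ on $A_{\theta}$ computed earlier in this chapter, namely
\[
 \alpha_{r,s}(U)=e^{2\pi i r}U,\qquad \alpha_{r,s}(V)=e^{2\pi i s}V.
\]
Hence the orbit map $(r,s)\mapsto\alpha_{r,s}(U)$ is the fixed operator $U\in A_{\theta}$ multiplied by the scalar-valued $C^{\infty}$ function $(r,s)\mapsto e^{2\pi i r}$, and similarly for $V$. The key step is to verify that such a map is of $C^{\infty}$ class in the operator-valued sense defined above. Because the operator factor does not depend on $(r,s)$, every partial derivative $\partial_r^{k}\partial_s^{l}\,\alpha_{r,s}(U)$ is simply the corresponding derivative of the scalar function times $U$; it equals $(2\pi i)^{k}e^{2\pi i r}U$ when $l=0$ and vanishes when $l\geq 1$, and in either case lies in $A_{\theta}$. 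To confirm that these are indeed the derivatives required by the definition, I would check the defining limit directly: for the first partial in $r$ the difference quotient factors as
\[
 \frac{\left\| \alpha_{r+h,s}(U)-\alpha_{r,s}(U)-2\pi i\,e^{2\pi i r}U\,h \right\|}{|h|}
 =\frac{\left| e^{2\pi i(r+h)}-e^{2\pi i r}-2\pi i\,e^{2\pi i r}h \right|}{|h|}\,\|U\|,
\]
and since $\|U\|=1$ and the scalar increment is $o(h)$ by ordinary differentiability of the exponential, the limit is zero. An induction on the order of differentiation handles all higher partials, so the orbit map is smooth and $U\in A^{\infty}_{\theta}$; the argument for $V$ is identical with the roles of $r$ and $s$ exchanged.

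With $U,V\in A^{\infty}_{\theta}$ in hand, the inclusion $B\subseteq A^{\infty}_{\theta}$ is immediate from the *-algebra property of $A^{\infty}_{\theta}$. I do not expect a genuine obstacle: the only delicate point is the bookkeeping in the operator-valued derivative definition, and this is routine once one notes that the operator factor is constant in $(r,s)$, so that all the analysis is carried by the scalar exponential together with the unitarity of the generators.
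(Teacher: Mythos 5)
Your proof is correct and follows essentially the same route as the paper: both verify the defining limit for the operator-valued derivative by factoring out the fixed unitary and reducing to the ordinary differentiability of the scalar exponential $e^{2\pi i r}$. The only (harmless) difference is that you check smoothness just for the generators $U$ and $V$ and then invoke the preceding proposition that $A^{\infty}_{\theta}$ is a *-algebra, whereas the paper runs the same limit computation directly on the monomials $U^mV^n$ and then uses linearity.
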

\begin{proof}
 We can consider the mapping
\begin{eqnarray*}
 (r,s)&\mapsto&\alpha_{r,s}(U^mV^n)\\
&=&\alpha_{r,s}(U)^m\alpha_{r,s}(V)^n\\
&=&e^{2\pi i(rm+ns)}U^mV^n
\end{eqnarray*}
We substitute the above result into the definition for the derivative of an operator valued function to obtain
\begin{eqnarray*}
 &&\lim_{h\rightarrow 0}\frac{1}{\|h\|}\|\alpha_{r+h,s}(U^mV^n)-\alpha_{r,s}(U^mV^n)-2\pi ime^{2\pi i(rm+ns)}U^mV^nh\|\\
&=&\lim_{h\rightarrow 0}\frac{1}{\|h\|}\|e^{2\pi i(rm+ns)}\left(e^{2\pi ihm}U^mV^n-U^mV^n-2\pi imU^mV^nh\right)\|\\
&\leq&\lim_{h\rightarrow 0}\frac{1}{\|h\|}\|e^{2\pi ihm}-1-2\pi imh\|\|U^mV^n\|\\
&=&0.
\end{eqnarray*}
Performing similar calculations we can find the $s$-derivative of $U^mV^n$ and we can extend the results to partial derivatives of any order.  By the uniqueness of the limit we can conclude that $U^mV^n\in A^{\infty}_{\theta}$.  Hence $A^{\infty}_{\theta}$ contains all finite linear combinations of elements of the form $U^mV^n$ and hence contains $B$.
\end{proof}

\begin{proposition}
The smooth irrational rotation algebra $A^{\infty}_{\theta}$ is dense in $A_{\theta}$. 
\end{proposition}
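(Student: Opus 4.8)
The plan is to exploit the chain of inclusions $B \subseteq A^{\infty}_{\theta} \subseteq A_{\theta}$ together with the fact that $B$, the $*$-algebra generated by $U$ and $V$, is already dense in $A_{\theta}$. Recall that by the construction carried out in the first section of this chapter, $A_{\theta}$ is defined as the closure of the $*$-algebra $A_0$ consisting of all finite linear combinations of finite products of elements of $\{U,V\}\cup\{U^*,V^*\}$; that is, $A_{\theta} = \overline{B}$. Hence the density of $B$ in $A_{\theta}$ is immediate from the definition, and no separate argument is needed for it.

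First I would invoke Lemma \ref{u_and_v}, which gives the inclusion $B \subseteq A^{\infty}_{\theta}$. Taking closures in $A_{\theta}$ across the inclusions $B \subseteq A^{\infty}_{\theta} \subseteq A_{\theta}$ yields $\overline{B} \subseteq \overline{A^{\infty}_{\theta}} \subseteq A_{\theta}$. Since $\overline{B} = A_{\theta}$, the two outer terms coincide, which forces $\overline{A^{\infty}_{\theta}} = A_{\theta}$. This is precisely the asserted density. Equivalently, given any $a \in A_{\theta}$ and any $\epsilon > 0$, density of $B$ supplies some $b \in B \subseteq A^{\infty}_{\theta}$ with $\|a - b\| < \epsilon$, so every element of $A_{\theta}$ lies in the closure of $A^{\infty}_{\theta}$.

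There is essentially no remaining obstacle: all of the analytic content has already been discharged in Lemma \ref{u_and_v}, where the existence to all orders of the partial derivatives of the map $(r,s) \mapsto \alpha_{r,s}(U^mV^n)$ was verified, so that each monomial $U^mV^n$ — and hence all of $B$ — sits inside $A^{\infty}_{\theta}$. The present proposition is therefore a purely formal consequence of that lemma combined with the fact that the generating $*$-algebra $B$ is dense in $A_{\theta}$ by construction. The only point worth stating carefully is that closure is monotone with respect to inclusion, which is what permits the squeeze $\overline{B} \subseteq \overline{A^{\infty}_{\theta}} \subseteq A_{\theta}$ to collapse to an equality.
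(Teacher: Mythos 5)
Your argument is correct and is essentially the same as the paper's: both invoke Lemma \ref{u_and_v} to place the generating $*$-algebra $B$ inside $A^{\infty}_{\theta}$ and then use the density of $B$ in $A_{\theta}$ to conclude. The extra remark about monotonicity of closure just makes explicit a step the paper leaves implicit.
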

\begin{proof}
 Let $B$ be the *-algebra generated by the unitaries $U$ and $V$ that generate the quantum torus.  By Lemma \ref{u_and_v} we know that $B$ is contained in $A^{\infty}_{\theta}$.  Furthermore we know that $B$ is dense in $A_{\theta}$, hence $A^{\infty}_{\theta}$ is also dense in $A_{\theta}$.
\end{proof}
We will now define derivations on $A_{\theta}$ in accordance with Remark \ref{time_evo}.
\begin{equation}\label{define_derivations}
 \delta_1(a):=\left.\frac{d}{dr}\alpha_{r,0}(a)\right|_{r=0}, \quad
 \delta_2(a):=\left.\frac{d}{ds}\alpha_{0,s}(a)\right|_{s=0}
\end{equation}

\begin{proposition}
 $\delta_i:A^{\infty}_{\theta} \rightarrow A^{\infty}_{\theta}$ as defined in equation (\ref{define_derivations}) is a *-derivation for $i=1,2$.
\end{proposition}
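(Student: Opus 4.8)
The plan is to verify the three defining properties of a $*$-derivation --- linearity, the Leibniz rule, and compatibility with the involution --- all of which follow by differentiating the automorphism identities for $\alpha_{r,0}$ (respectively $\alpha_{0,s}$) at the origin, together with a separate check that $\delta_i$ actually lands in $A^{\infty}_{\theta}$. Throughout I work with $\delta_1$; the argument for $\delta_2$ is identical with $s$ in place of $r$. The key structural facts I will use are that each $\alpha_{r,s}$ is a $*$-automorphism of $A_{\theta}$, that $\alpha_{0,0}=\mathrm{id}$ (since $W_{0,0}=\I$), that $\alpha$ is a group action so that $\alpha_{r',s'}\circ\alpha_{r,s}=\alpha_{r+r',s+s'}$, and the calculus rules for operator-valued functions established in Proposition \ref{product_rule}.

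First I would treat linearity. Since $\alpha_{r,0}$ is linear, $\alpha_{r,0}(a+\lambda b)=\alpha_{r,0}(a)+\lambda\alpha_{r,0}(b)$ for $a,b\in A^{\infty}_{\theta}$ and $\lambda\in\mathbb{C}$; differentiating at $r=0$ and invoking the sum rule and scalar-multiplication rule of Proposition \ref{product_rule} yields $\delta_1(a+\lambda b)=\delta_1(a)+\lambda\delta_1(b)$. For the Leibniz rule, I set $f(r):=\alpha_{r,0}(a)$ and $g(r):=\alpha_{r,0}(b)$ and use that $\alpha_{r,0}$ is multiplicative, so $\alpha_{r,0}(ab)=f(r)g(r)$. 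Applying the product rule of Proposition \ref{product_rule} gives
\begin{equation*}
 \frac{d}{dr}\alpha_{r,0}(ab)=f'(r)g(r)+f(r)g'(r),
\end{equation*}
and evaluating at $r=0$, where $f(0)=a$ and $g(0)=b$ because $\alpha_{0,0}=\mathrm{id}$, produces $\delta_1(ab)=\delta_1(a)b+a\delta_1(b)$. For the involution, the identity $\alpha_{r,0}(a^*)=\alpha_{r,0}(a)^*$ together with the adjoint rule $\frac{d}{dr}(f(r)^*)=(f'(r))^*$ from Proposition \ref{product_rule} gives $\delta_1(a^*)=\delta_1(a)^*$ after evaluating at $r=0$.

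The remaining and most delicate point is to confirm that $\delta_1$ maps $A^{\infty}_{\theta}$ into itself rather than merely into $A_{\theta}$; this is where I expect the real work to lie. For $a\in A^{\infty}_{\theta}$ the map $(r,s)\mapsto\alpha_{r,s}(a)$ is $C^{\infty}$, so $\delta_1(a)$ is at least a well-defined element of $A_{\theta}$. To see it is smooth I would compute $\alpha_{r',s'}(\delta_1(a))$ by pulling the automorphism inside the defining limit: because each $\alpha_{r',s'}$ is isometric and linear, it commutes with differentiation, and the group law then gives
\begin{equation*}
 \alpha_{r',s'}(\delta_1(a))=\left.\frac{\partial}{\partial r}\alpha_{r+r',\,s'}(a)\right|_{r=0}=\left.\frac{\partial}{\partial u}\alpha_{u,s'}(a)\right|_{u=r'}.
\end{equation*}
Since $(u,s')\mapsto\alpha_{u,s'}(a)$ is $C^{\infty}$, its partial derivative in $u$ is again $C^{\infty}$ in $(u,s')$, so $(r',s')\mapsto\alpha_{r',s'}(\delta_1(a))$ is $C^{\infty}$, whence $\delta_1(a)\in A^{\infty}_{\theta}$. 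The one step genuinely requiring care is the interchange of $\alpha_{r',s'}$ with the limit defining the derivative, which I would justify using the continuity (indeed isometry) of $\alpha_{r',s'}$ together with the existence of the limit guaranteed by $a\in A^{\infty}_{\theta}$. The identical computation with the roles of $r$ and $s$ exchanged settles $\delta_2$, completing the proof.
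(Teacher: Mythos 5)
Your verification of linearity, the Leibniz rule, and compatibility with the involution is correct and matches the paper's proof essentially step for step: both arguments differentiate the automorphism identities $\alpha_{r,0}(ab)=\alpha_{r,0}(a)\alpha_{r,0}(b)$, $\alpha_{r,0}(a+\lambda b)=\alpha_{r,0}(a)+\lambda\alpha_{r,0}(b)$ and $\alpha_{r,0}(a^*)=\alpha_{r,0}(a)^*$ at $r=0$ using the calculus rules of Proposition \ref{product_rule}. Where you genuinely diverge is on the point you correctly single out as delicate, namely that $\delta_1(a)$ lands back in $A^{\infty}_{\theta}$. The paper attacks this by writing out the difference quotient for $\alpha_{r+h,s}(\delta_1(a))$, remarks that it is ``not known if the partial derivatives $\partial_r$ and $\partial_t$ commute,'' and then essentially asserts that $\partial_t\alpha_{t,s}(a)$ is of class $C^{\infty}$ because $\alpha_{t,s}$ is a *-isomorphism of $A^{\infty}_{\theta}$ onto itself --- an assertion that is uncomfortably close to what is being proved. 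Your route is cleaner: by pulling the isometric automorphism $\alpha_{r',s'}$ through the limit and invoking the group law $\alpha_{r',s'}\circ\alpha_{h,0}=\alpha_{r'+h,s'}$, you identify $\alpha_{r',s'}(\delta_1(a))$ outright with the partial derivative $\left.\partial_u\alpha_{u,s'}(a)\right|_{u=r'}$ of the $C^{\infty}$ map $(u,v)\mapsto\alpha_{u,v}(a)$, whose smoothness is immediate from the definition of $C^{\infty}$ (all higher partials of a smooth map exist, so its first partial is again smooth). This sidesteps the commuting-derivatives issue entirely and, in my view, repairs the weakest link in the paper's argument. Both approaches buy the same conclusion, but yours makes the invariance $\alpha_{r',s'}(A^{\infty}_{\theta})\subset A^{\infty}_{\theta}$ and the smoothness of $\delta_1(a)$ fall out of a single identity rather than being assumed midway.
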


\begin{proof}
Let $a,b\in A_{\theta}$.  We can write
\begin{eqnarray*}
 \delta_1(ab)&=&\left.\frac{d}{dr}\alpha_{r,0}(ab)\right|_{r=0}\\
&=&\left.\frac{d}{dr}\left(\alpha_{r,0}(a)\alpha_{r,0}(b)\right)\right|_{r=0}\\
&=&\left.\left(\frac{d}{dr}\alpha_{r,0}(a)\right)\alpha_{r,0}(b)\right|_{r=0}+\left.\alpha_{r,0}(a)\left(\frac{d}{dr}\alpha_{r,0}(b)\right)\right|_{r=0}\\
&=&\delta_1(a)b+a\delta_1(b).
\end{eqnarray*}
A similar arguments holds for $\delta_2$.  Linearity follows trivially from 
\begin{eqnarray*}
 \delta_1(a+b)&=&\left.\frac{d}{dr}\alpha_{r,0}(a+b)\right|_{r=0}\\
&=&\left.\frac{d}{dr}\left[\alpha_{r,0}(a)+\alpha_{r,s}(b)\right]\right|_{r=0}\\
&=&\delta_1(a)+\delta_1(b).
\end{eqnarray*}
Let $\lambda \in\mathbb{C}$ and consider
\begin{eqnarray*}
 \delta_1(\lambda a)&=&\partial_t\alpha_{t,0}(\lambda a)|_{t=0}\\
&=&\lambda\partial_t\alpha_{t,0}(a)\\
&=&\lambda\delta_1(a).
\end{eqnarray*}
Similar calculations holds for $\delta_2$.  Hence we have $\delta_j(\lambda a)=\lambda\delta_j(a)$.  Furthermore we have
\begin{eqnarray*}
 \delta_1(a^*)&=&\partial_t\alpha_{r,0}(a^*)\\
&=&\partial_t\alpha_{t,0}(a)^*\\
&=&\delta_1(a)^*
\end{eqnarray*}
and a similar calculation holds for $\delta_2$.  Hence we have $\delta_j(a^*)=\delta_j(a)^*$.  Lastly we need to show that the mapping
\begin{equation*}
 (r,s)\mapsto\alpha_{r,s}\left(\delta_i(a)\right)
\end{equation*}
is of $C^{\infty}$ class.  We will first consider the case of $\delta_1(a)$
\begin{eqnarray*}
 &&\left\|\frac{\alpha_{r+h,s}(\delta_1(a))-\alpha_{r,s}(\delta_1(a))}{h}-\partial_r\alpha_{r,s}(\delta_1(a))\right\|\\
&=&\left\|\frac{\alpha_{r+h,s}(\partial_t\alpha_{t,0}(a)|_{t=0})-\alpha_{r,s}(\partial_t\alpha_{t,0}(a)|_{t=0}}{h}-\partial_r\alpha_{r,s}(\partial_t\alpha_{t,0}(a)|_{t=0})\right\|
\end{eqnarray*}
Here we have a problem, it is not known if the partial derivatives $\partial_r$ and $\partial_t$ commute.  The problem then is with the term of the form
\begin{eqnarray*}
 \partial_r\alpha_{r,0}\left(\partial_{t}\alpha_{t,s}(a)\right).
\end{eqnarray*}
We know that $a\in A^{\infty}_{\theta}$ and hence is of class $C^{\infty}$.  Furthermore since $\alpha_{t,s}$ is a *-isomorphism from $A^{\infty}_{\theta}$ to itself, $\alpha_{t,s}(a)$ also is of class $C^{\infty}$ and the same holds for $\partial_t\alpha_{t,s}(a)$.  Hence the mapping
\begin{equation*}
 (p,q)\mapsto\alpha_{p,q}\left(\partial_{t}\alpha_{t,s}(a)\right)
\end{equation*}
is of class $C^{\infty}$ and there exists an operator $\partial_p \alpha_{p,q} \left(\partial_t \alpha_{t,s}(a)\right)$ such that
\begin{eqnarray*}
 0=\lim_{h\rightarrow \infty}\left\|\frac{\alpha_{p+h,q}\left(\partial_{t}\alpha_{t,s}(a)\right)-\alpha_{p,q}\left(\partial_{t}\alpha_{t,s}(a)\right)}{h} -\partial_p \alpha_{p,q} \left(\partial_t \alpha_{t,s}(a)\right) \right\|.
\end{eqnarray*}
Then in particular we can choose $p=r$ and $q=0$, then
\begin{equation*}
 (r,0)\mapsto\alpha_{r,0}\left(\partial_{t}\alpha_{t,s}(a)\right)
\end{equation*}
is of class $C^{\infty}$ and $\partial_r \alpha_{r,0} \left(\partial_t \alpha_{t,s}(a)\right)$ exists.  Hence, we see that
\begin{equation*}
 \lim_{h\rightarrow 0}\left\|\frac{\alpha_{r+h,s}(\delta_1(a))-\alpha_{r,s}(\delta_1(a))}{h}-\partial_r \alpha_{r,s}(\delta_1(a))\right\|=0.
\end{equation*}
The same can be shown in the case of $\delta_2(a)$.  And a similar procedure follows for the higher order derivatives.
\end{proof}
\begin{remark}
 According to the previous theorem we can write the derivations in the following form:
\begin{eqnarray}\label{derivations_def}
\begin{array}{ccccccc}
 \delta_1(u) &=& 2\pi iu &,& \delta_2(u) &=& 0 \\
 \delta_1(v) &=& 0       &,& \delta_2(v) &=& 2\pi iv.
\end{array}
\end{eqnarray}
\end{remark}

\subsection{Classical Limit}\label{clas}
We may ask if the above constructions reduce to their respective classical counterparts in the case where $\theta$ is zero and $A_{\theta}\cong C(\mathbb{T}^2)$ as in Proposition \ref{kontinu}.  For this let us first consider the time evolution in the classical limit.  Let $h=U^*_{r,s}$ and $l=gh$.  Suppose $g\in C(\mathbb{T}^2)$ and consider the following:
\begin{eqnarray*}
 \left[\tau_{r,s}(g)\right]f(x,y)&=&\left[U_{r,s}gU^*_{r,s}f\right](x,y)\\
&=&\left[U_{r,s}gh\right](x,y)\\
&=&\left[U_{r,s}l\right](x,y)\\
&=&l(x+r,y+s)\\
&=&g(x+r,y+s)h(x+r,y+s)\\
&=&g(x+r,y+s)f(x,y).
\end{eqnarray*}
Clearly we observe that
\begin{equation*}
 \tau_{r,s}(g)=g\circ\varphi_{r,s}
\end{equation*}
which is exactly the evolution we would expect to find classically.  Similarly we can show that in the classical limit the derivations as defined above reduce to the corresponding partial derivatives.  This result is also mentioned in \cite{Rosenberg2008}.  Consider the derivation $\delta_1$ in the classical limit:
\begin{eqnarray*}
\left[\delta_1(g)\right](x,y)&=&\left[\left.\frac{\partial}{\partial r}\tau_{r,0}(g)\right|_{r=0}\right](x,y)\\
&=&\left.\frac{\partial}{\partial r}g(x+r,y)\right|_{r=0}.
\end{eqnarray*}
Let $z=x+r$.  Then according to the chain rule we can write 
\begin{equation*}
 \frac{\partial g}{\partial r}=\frac{\partial g}{\partial z}\frac{\partial z}{\partial r}=\frac{\partial g}{\partial z}
\end{equation*}
and substitute this back into the previous result to find
\begin{eqnarray*}
 \left[\delta_1(g)\right](x,y)&=&\left.\frac{\partial}{\partial z}g(z,y)\right|_{z=x}\\
&=&\frac{\partial}{\partial x}g(x,y).
\end{eqnarray*}
So we see that
\begin{equation*}
 \delta_1=\frac{\partial}{\partial x}
\end{equation*}
in the classical limit.  Similarly we can show that we also have
\begin{equation*}
 \delta_2=\frac{\partial}{\partial y}
\end{equation*}
in the classical limit.

%%miskien moet ek hier n subafdeling insit
\begin{lemma}
 The unique trace $\tau$ on the quantum torus is preserved under the action of the classical torus.
\end{lemma}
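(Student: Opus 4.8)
The plan is to verify the invariance $\tau(\alpha_{r,s}(a)) = \tau(a)$ first on the dense $*$-subalgebra $B$ generated by $U$ and $V$, and then extend to all of $A_{\theta}$ by continuity. Since every element of $B$ is a finite linear combination of monomials $U^m V^n$ and both $\tau$ and $\alpha_{r,s}$ are linear, it suffices to treat a single monomial.

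First I would use the action on the generators, $\alpha_{r,s}(U) = e^{2\pi i r}U$ and $\alpha_{r,s}(V) = e^{2\pi i s}V$, together with the fact that $\alpha_{r,s}$ is a $*$-homomorphism, to obtain
\begin{equation*}
 \alpha_{r,s}(U^m V^n) = \alpha_{r,s}(U)^m \alpha_{r,s}(V)^n = e^{2\pi i(rm + ns)} U^m V^n.
\end{equation*}
Applying $\tau$ and using linearity gives $\tau(\alpha_{r,s}(U^m V^n)) = e^{2\pi i(rm+ns)}\tau(U^m V^n)$. Now I split into two cases according to the values of $\tau$ computed in Section \ref{trace_int}: if $(m,n) \neq (0,0)$ then $\tau(U^m V^n) = 0$, so both sides vanish; while if $m = n = 0$ the phase factor is $e^{0} = 1$ and the identity is immediate since $\tau(\I) = 1$. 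In either case $\tau(\alpha_{r,s}(U^m V^n)) = \tau(U^m V^n)$, and hence $\tau \circ \alpha_{r,s} = \tau$ on $B$.

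To extend this to the whole quantum torus I would invoke continuity: $\alpha_{r,s}$ is a $*$-automorphism and therefore isometric, while $\tau$ is a state and hence bounded, so $\tau \circ \alpha_{r,s}$ is a bounded linear functional that agrees with the bounded functional $\tau$ on the dense subalgebra $B$; the two therefore coincide on all of $A_{\theta}$. Because the contribution from $\mathbb{Z}^2$ does not play any part, invariance under $\mathbb{R}^2$ is the same as invariance under the classical torus $\mathbb{T}^2$, which is exactly the assertion. The computation itself is entirely routine; the only point requiring any care is the density/continuity step, and even that reduces to the standard fact that two bounded functionals agreeing on a dense set are equal.

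As a cleaner alternative that avoids the explicit monomial computation, I could instead set $\omega := \tau \circ \alpha_{r,s}$ and appeal to the uniqueness lemma. One checks that $\omega$ is a state with $\omega(\I) = 1$ (using that $\alpha_{r,s}$ is a unital $*$-automorphism), and that $\omega$ is invariant under $\text{Ad}\,U$ and $\text{Ad}\,V$: since $\alpha_{r,s}(U)=e^{2\pi i r}U$ gives $\alpha_{r,s}(UaU^*) = e^{2\pi i r}e^{-2\pi i r}\,U\alpha_{r,s}(a)U^* = U\alpha_{r,s}(a)U^*$, the scalar phases cancel in the conjugation, and invariance then follows from $\tau$ being a trace. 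The uniqueness lemma forces $\omega = \tau$, giving $\tau \circ \alpha_{r,s} = \tau$.
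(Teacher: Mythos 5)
Your proposal is correct and follows essentially the same route as the paper: compute $\alpha_{r,s}(U^mV^n)=e^{2\pi i(rm+ns)}U^mV^n$, observe that $\tau$ of this equals $\tau(U^mV^n)$ in both the $(m,n)\neq(0,0)$ and $(m,n)=(0,0)$ cases, and extend from the dense $*$-subalgebra $B$ to all of $A_{\theta}$ by continuity. Your added care with the boundedness of $\tau\circ\alpha_{r,s}$ and the alternative argument via the uniqueness of the trace are both fine, but they do not change the substance of the argument.
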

\begin{proof}
Let $B$ be the *-algebra generated by the generating unitaries $U$ and $V$ of the quantum torus.  Since every element of $B$ is a finite linear combination of terms of the form $Y^mV^n$ we only need to concern ourselves with these terms.  Observe that
\begin{eqnarray*}
 \tau\left(\alpha_{r,s}\left(U^mV^n\right)\right)&=&\tau\left(e^{2\pi i(rm+sn)}U^mV^n\right)\\
&=&e^{2\pi i(rm+sn)}\tau\left(U^mV^n\right)\\
&=&\left\{\begin{array}{cc}
           0 & \text{if } m\neq n\\
	   1 & \text{if } m=n=0
          \end{array}\right.
\end{eqnarray*}
Since $B$ is dense in $A_{\theta}$ we can extend this result to the whole of $A_{\theta}$ and the result follows.
\end{proof}\noindent
We have found an analog of partial derivatives on the quantum torus.  We can consider these mappings as the infinitesimal generators of the action of $\mathbb{T}^2$ on the quantum torus.
\begin{lemma}\label{useful}
 Let $\tau$ be the unique trace on $A_{\theta}$ and let $\delta_j$ with $j=1,2$ be the derivations defined above.  Then $\tau\circ \delta_j=0$.
\end{lemma}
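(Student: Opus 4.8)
The plan is to exploit the invariance of the trace under the $\mathbb{R}^2$-action, which was just established, together with the fact that $\tau$ is a bounded linear functional. Recall that $\tau$ is a state on the $C^*$-algebra $A_{\theta}$, hence norm-continuous (indeed $\|\tau\|=\tau(\I)=1$), and that the preceding lemma gives $\tau(\alpha_{r,s}(a))=\tau(a)$ for every $a\in A_{\theta}$ and every $(r,s)\in\mathbb{R}^2$.

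First I would fix $a\in A^{\infty}_{\theta}$, so that $\delta_1(a)=\left.\frac{d}{dr}\alpha_{r,0}(a)\right|_{r=0}$ exists as a norm limit, namely
\begin{equation*}
 \delta_1(a)=\lim_{h\rightarrow 0}\frac{\alpha_{h,0}(a)-a}{h},
\end{equation*}
using that $\alpha_{0,0}=\mathrm{id}$. Since $\tau$ is norm-continuous and linear it commutes with this limit, so
\begin{equation*}
 \tau(\delta_1(a))=\lim_{h\rightarrow 0}\frac{\tau(\alpha_{h,0}(a))-\tau(a)}{h}.
\end{equation*}
By invariance $\tau(\alpha_{h,0}(a))=\tau(a)$ for every $h$, so each difference quotient vanishes and the limit is $0$. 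The identical argument with $\alpha_{0,s}$ in place of $\alpha_{r,0}$ gives $\tau(\delta_2(a))=0$, and since $a$ was an arbitrary smooth element this proves $\tau\circ\delta_j=0$ for $j=1,2$.

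The only point requiring care is the interchange of $\tau$ with the derivative, but this is immediate from boundedness of the state: writing $x_h=(\alpha_{h,0}(a)-a)/h$, one has $|\tau(x_h)-\tau(\delta_1(a))|\le\|x_h-\delta_1(a)\|$, so norm convergence $x_h\to\delta_1(a)$ forces $\tau(x_h)\to\tau(\delta_1(a))$. Hence there is no real obstacle once the invariance lemma is available; the ``hard part'' is entirely shifted onto that earlier result.

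As an independent check one can compute directly on the dense $*$-subalgebra $B$ generated by $U$ and $V$. Applying the Leibniz rule to $\delta_1(U)=2\pi i U$ and $\delta_1(V)=0$ gives $\delta_1(U^mV^n)=2\pi i m\,U^mV^n$, whence $\tau(\delta_1(U^mV^n))=2\pi i m\,\tau(U^mV^n)$; this vanishes because the factor $m$ kills the case $m=0=n$ while $\tau(U^mV^n)=0$ in every other case. I would nevertheless present the invariance argument as the main proof, since extending this computation from $B$ to all of $A^{\infty}_{\theta}$ would require controlling the unbounded derivation in the appropriate Fr\'echet topology, whereas the invariance argument applies verbatim to every smooth element.
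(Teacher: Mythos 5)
Your proof is correct, but it takes a genuinely different route from the paper's. The paper computes directly on the dense $*$-subalgebra $B$ generated by $U$ and $V$: it evaluates $\delta_1(U^mV^n)=2\pi i m\,e^{2\pi i sn}U^mV^n$, applies $\tau$, observes that the result vanishes in every case (the factor $m$ handles $m=n=0$, and $\tau(U^mV^n)=0$ otherwise), and then invokes density of $B$ to conclude --- essentially the computation you relegate to an ``independent check.'' Your main argument instead derives the result from the invariance $\tau\circ\alpha_{r,s}=\tau$ together with norm-continuity of the state, so that $\tau$ of every difference quotient $(\alpha_{h,0}(a)-a)/h$ is identically zero and the limit defining $\tau(\delta_1(a))$ vanishes. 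This buys you something real: it applies verbatim to every element of $A^{\infty}_{\theta}$, whereas the paper's closing sentence (``Since $B$ is dense in $A_{\theta}$ we can extend the result'') is the weak point of its proof --- $\delta_j$ is an unbounded derivation defined only on $A^{\infty}_{\theta}$, so passing from $B$ to the full domain by density is not automatic and is exactly the issue you flag. The trade-off is that your argument leans entirely on the preceding invariance lemma, while the paper's is self-contained modulo the (unjustified) density step; as written, yours is the more watertight of the two.
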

\begin{proof}
As usual let $B$ denote the *-algebra generated by the generating unitaries $U$ and $V$ of the quantum torus.  Since all elements of $B$ are finite linear combinations of terms of the form $U^mV^n$ we only need to concern ourselves with terms of this form.  Let us first consider the derivation $\delta_1$.  Observe that
\begin{eqnarray*}
\delta_1\left(U^mV^n\right)&=&\left.\frac{d}{dr}\alpha_{r,0}\left(U^mV^n \right)\right|_{r=0}\\
&=&\left.\frac{d}{dr}\left(e^{2\pi i(rm+sn)}U^mV^n \right)\right|_{r=0}\\
&=&\left.\left(2\pi i m e^{2\pi i(rm+sn)}U^mV^n \right)\right|_{r=0}\\
&=&2\pi i m e^{2\pi i sn}U^mV^n.
\end{eqnarray*}
Now substitute this result into the unique trace of the quantum torus to get
\begin{eqnarray*}
 \tau\left( \delta_1(U^mV^n)\right)&=&\tau\left( 2\pi i m e^{2\pi i sn}U^mV^n\right)\\
&=&2\pi i m e^{2\pi i sn}\tau\left(U^mV^n\right)\\
&=&\left\{\begin{array}{cc}
    0 & \text{if } m\neq n\\
    0 & \text{if } m=n=0
   \end{array}\right.\\
&=&0.
\end{eqnarray*}
A similar calculation can be performed for $\delta_2$.  Since $B$ is dense in $A_{\theta}$ we can extend the result to the whole of $A_{\theta}$.  This then concludes the proof.
\end{proof}
The next lemma can be seen as the noncommutative generalization of integration by parts to irrational rotation algebras
\begin{lemma}\cite{Rosenberg2008}\label{integration_by_parts}\newline
 If $a,b\in A^{\infty}_{\theta}$, then $\tau(\delta_j(a)b)=-\tau(a\delta_j(b))$ for $j=1,2$.
\end{lemma}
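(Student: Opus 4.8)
The plan is to reduce the claim to the Leibniz rule for $\delta_j$ together with the vanishing result $\tau\circ\delta_j=0$ established in Lemma \ref{useful}. Since both of these ingredients are already in hand, the argument is short; the only point requiring a moment's care is that the expression $\delta_j(ab)$ must make sense, i.e. that $ab$ itself lies in the smooth algebra.

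First I would observe that $A^{\infty}_{\theta}$ is a $*$-algebra, so for $a,b\in A^{\infty}_{\theta}$ we have $ab\in A^{\infty}_{\theta}$ and $\delta_j(ab)$ is a well-defined element of $A^{\infty}_{\theta}$. Next I would invoke the fact that $\delta_j$ is a $*$-derivation on $A^{\infty}_{\theta}$, which gives the product rule
\begin{equation*}
 \delta_j(ab)=\delta_j(a)b+a\,\delta_j(b).
\end{equation*}
Applying the linear functional $\tau$ to both sides and using linearity of $\tau$ yields
\begin{equation*}
 \tau\big(\delta_j(ab)\big)=\tau\big(\delta_j(a)b\big)+\tau\big(a\,\delta_j(b)\big).
\end{equation*}

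Finally I would apply Lemma \ref{useful} to the element $ab\in A^{\infty}_{\theta}$, which gives $\tau\big(\delta_j(ab)\big)=0$. Substituting this into the displayed identity leaves
\begin{equation*}
 0=\tau\big(\delta_j(a)b\big)+\tau\big(a\,\delta_j(b)\big),
\end{equation*}
and rearranging produces the desired formula $\tau(\delta_j(a)b)=-\tau(a\,\delta_j(b))$ for $j=1,2$.

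There is no genuine obstacle here: the entire content of the lemma is packaged into the two earlier results, and the proof is essentially a one-line consequence of the derivation property combined with $\tau\circ\delta_j=0$. If anything is worth stressing, it is the bookkeeping point that Lemma \ref{useful} was proved on the dense $*$-subalgebra $B$ generated by $U$ and $V$ and then extended to all of $A_{\theta}$; since $ab\in A^{\infty}_{\theta}\subset A_{\theta}$, that extended form applies directly and no separate density or continuity argument is needed in this lemma.
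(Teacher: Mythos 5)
Your proposal is correct and follows exactly the paper's own argument: apply the derivation (Leibniz) property of $\delta_j$ to $ab$, take $\tau$ of both sides, and use $\tau\circ\delta_j=0$ from Lemma \ref{useful}. The extra remarks about $ab$ lying in $A^{\infty}_{\theta}$ and about how Lemma \ref{useful} was extended from the dense subalgebra are sensible bookkeeping but do not change the route.
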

\begin{proof}
 From Lemma \ref{useful} we have
\begin{equation*}
 0=(\tau\circ\delta_j)(ab)=\tau(\delta_j(a)b)+\tau(a\delta_j(b)).
\end{equation*}
The result clearly follows from the above calculation.
\end{proof}

\section{Finite Dimensional Representation}\label{finite_representation}\noindent  
In this section we construct a matrix model generated by two unitary matrices which satisfy the same commutation relation as the two unitary operators that generate the quantum torus.  The noncommutative behaviour of the matrices is then naturally encoded into the model and it serves as a good example of how we can think about noncommutative actions.  We follow the outline of \cite{Madore1999}.

We can choose an orthonormal basis $|j\rangle_1$ with $0\leq j\leq n-1$ of $\mathbb{C}^n$ and set $|n\rangle_1=|0\rangle_1$.  Introduce the unitary matrices $u,v$ in $M_n(\mathbb{C})$ by their respective action on these basis vectors by setting
\begin{eqnarray*}
 u|j\rangle_1=q^j|j\rangle_1,\quad
 v|j\rangle_1=|j+1\rangle_1.
\end{eqnarray*}
Here $q$ is some complex number such that $|q|=1$ and we assume $q^n=1$.  They satisfy 
\begin{equation}\label{com_rel}
 uv=qvu, \quad u^n=1, \quad v^n=1.
\end{equation} 
We can choose another orthonormal basis $|j\rangle_2$ in which $v$ is diagonal.  These two bases can be related by the Fourier transform
\begin{eqnarray*}
 |j\rangle_1=\frac{1}{\sqrt{n}}\sum^{n-1}_{l=0}q^{jl}|l\rangle_2, \quad
 |l\rangle_2=\frac{1}{\sqrt{n}}\sum^{n-1}_{j=0}q^{-jl}|j\rangle_1.
\end{eqnarray*} 
Where $k$ and $r$ are non-negative real numbers, introduce hermitian matrices $x$ and $y$ by
\begin{equation}\label{x_and_y}
 x|j\rangle_1=\frac{k}{r}j|j\rangle_1, \quad
 y|l\rangle_2=\frac{k}{r}l|l\rangle_2
\end{equation}
It can be shown that for $q=e^{ik/r^2}$ we have
\begin{equation}\label{u_v_q}
 u=e^{ix/r},\quad
 v=e^{iy/r}
\end{equation}
where the parameters $r$ and $k$ must be related by 
\begin{equation*}
n=\frac{2\pi r^2}{k}=\frac{\left(2\pi r\right)^2}{2\pi k}. 
\end{equation*}
%ek kan miskien n bewys van hierdie insluit, vind maar net uit of ek dit ook in die dokument moet sit.
\begin{proposition}\label{uv_generate}
The matrices $u$ and $v$, generate the entire $M_n(\mathbb{C})$.
\end{proposition}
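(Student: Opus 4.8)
The plan is to prove that every matrix unit $E_{jk} := |j\rangle_1\langle k|_1$ lies in the algebra generated by $u$ and $v$; since the $E_{jk}$ form a linear basis of $M_n(\mathbb{C})$, this immediately forces $C^*(u,v)=M_n(\mathbb{C})$ (in finite dimensions no closure issue arises, and $u^*=u^{n-1}$, $v^*=v^{n-1}$ are already polynomials in the generators). Before anything else I would record the decisive structural fact: the relation $n=2\pi r^2/k$ together with $q=e^{ik/r^2}$ forces $q=e^{2\pi i/n}$, a \emph{primitive} $n$-th root of unity. This guarantees that $u=\mathrm{diag}(1,q,\dots,q^{n-1})$ has $n$ distinct eigenvalues.

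The first main step is to extract the diagonal matrix units (the spectral projections of $u$) from powers of $u$ by a discrete Fourier / Lagrange interpolation identity, namely
\[
 E_{jj}=\frac{1}{n}\sum_{m=0}^{n-1}q^{-jm}u^m .
\]
I would verify this by evaluating both sides on an arbitrary basis vector $|l\rangle_1$: the right-hand side yields $\frac{1}{n}\sum_{m=0}^{n-1}q^{(l-j)m}|l\rangle_1$, and the geometric-sum identity $\sum_{m=0}^{n-1}q^{(l-j)m}=n\,\delta_{lj}$ holds precisely because $q$ is a primitive $n$-th root of unity (if $l\neq j \bmod n$ then $q^{l-j}\neq 1$ while $q^{(l-j)n}=1$, so the sum telescopes to zero). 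Hence each $E_{jj}$ is a polynomial in $u$ and so lies in $C^*(u,v)$.

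The second step uses $v$ to move off the diagonal. Since $v|k\rangle_1=|k+1\rangle_1$ with indices read modulo $n$, we obtain $v^{j-k}E_{kk}=|j\rangle_1\langle k|_1=E_{jk}$, where the exponent is interpreted modulo $n$ (legitimate as $v^n=1$, so a negative exponent may be replaced by $v^{j-k+n}$). Combining the two steps,
\[
 E_{jk}=v^{j-k}\cdot\frac{1}{n}\sum_{m=0}^{n-1}q^{-km}u^m\in C^*(u,v)
\]
for all $0\le j,k\le n-1$. Since $\{E_{jk}\}_{0\le j,k\le n-1}$ spans $M_n(\mathbb{C})$ and every member lies in the generated algebra, we conclude $C^*(u,v)=M_n(\mathbb{C})$.

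I expect the only genuine subtlety to be the primitivity of $q$: it is what makes the eigenvalues of $u$ distinct and the interpolation identity valid. Were $q$ merely some $n$-th root of unity of smaller order, the eigenvalues of $u$ would collide, the sum $\sum_{m}q^{(l-j)m}$ would no longer separate the basis vectors, and $u,v$ would fail to generate the full matrix algebra. Everything past that observation is routine finite-dimensional linear algebra.
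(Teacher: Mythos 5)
Your proof is correct, and it takes a genuinely different route from the paper's. The paper invokes the double commutant theorem: since $M_n(\mathbb{C})$ is a von Neumann algebra, it suffices to show that the commutant of $C^*(u,v)$ consists only of scalars (a matrix commuting with the diagonal algebra $C^*(u)$ must be diagonal, and a diagonal matrix commuting with the cyclic shift $v$ must be scalar), whence $C^*(u,v)''=M_n(\mathbb{C})$. You instead construct every matrix unit explicitly: the spectral projections $E_{jj}=\frac{1}{n}\sum_{m=0}^{n-1}q^{-jm}u^m$ via the discrete Fourier orthogonality relation, then $E_{jk}=v^{j-k}E_{kk}$. The two arguments overlap in their first step — the paper's interpolating polynomial $p$ with $p(q^j)=\delta_{ij}$ is essentially your Fourier sum, and both hinge on $q$ being a \emph{primitive} $n$-th root of unity so that the eigenvalues of $u$ are distinct — but they diverge afterward. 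Your version is more elementary and self-contained (no appeal to von Neumann algebra theory or to the finite-dimensional algebra being weakly closed) and yields explicit formulas for the matrix units, which is occasionally useful downstream; the paper's version is shorter given the bicommutant theorem and generalizes more readily to situations where explicit matrix units are unavailable. One tiny presentational point: when you write $v^{j-k}E_{kk}=E_{jk}$ you should (as you do) note that negative exponents are read modulo $n$ using $v^n=1$, so that everything stays inside the unital algebra generated by $u$ and $v$ without needing inverses as separate generators.
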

\begin{proof}
Since $A:=M_n(\mathbb{C})$ is a von Neumann algebra we must have $A=A''$, where $A''$ is the bicommutant of $A$ \cite[p,115]{murphy}.  Let $C^*(u,v)$ be the $C^*$-algebra generated by the unitary matrices $u$ and $v$.  We will show that the commutant of $C^*(u,v)$ consists only of scalar multiples of the $n\times n$ identity matrix.  This in turn implies that $C^*(u,v)''$ must be all of $A$.

Consider the $C^*$-subalgebra $C^*(u)\subset C^*(u,v)$.  Since $q$ is the an $n$-th root of unity, which we can also assume to be
\begin{equation*}
 q=e^{\frac{2\pi i}{n}},
\end{equation*}
$C^*(u)$ contains all the diagonal matrices.  The easiest way to see this is to note that the diagonal entries $1,q,\cdots,q^{n-1}$ of $u$ are all distinct.  Hencegiven a polynomial $p$ for which $p(q^j)=\delta_{ij}$ for some fixed $0\leq i\leq n-1$, we then have that $E_{ii}=p(u)\in C^*(u)$.  When a matrix commutes with all diagonal matrices it also has to be a diagonal matrix.  Furthermore, diagonal matrices that commute with $v$ have to be scalar multiples of the $n\times n$ identity matrix.  This implies that the commutant of $C^*(u,v)$ is the scalars.  This completes the proof.
\end{proof}
\begin{lemma}
 $M_n(\mathbb{C})$ is a simple $C^*$-algebra, in other words the only closed ideals it contains is $0$ and the whole of $M_n(\mathbb{C})$.
\end{lemma}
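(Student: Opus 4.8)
The plan is to show that any nonzero two-sided ideal $I$ of $M_n(\mathbb{C})$ must be all of $M_n(\mathbb{C})$; since $M_n(\mathbb{C})$ is finite dimensional every ideal is automatically norm-closed, so the closedness hypothesis in the statement requires no separate attention. The engine of the argument is the family of matrix units $E_{ij}$, where $E_{ij}$ has a $1$ in the $(i,j)$ entry and zeros elsewhere. These satisfy the relation $E_{pk}E_{ij}=\delta_{ki}E_{pj}$ and span $M_n(\mathbb{C})$, so it suffices to produce a single matrix unit inside $I$ and then manufacture all the others.

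First I would pick a nonzero element $a\in I$ and expand it as $a=\sum_{i,j}a_{ij}E_{ij}$, then fix a pair of indices $k,l$ with $a_{kl}\neq 0$. The key computation is
\begin{equation*}
E_{pk}\,a\,E_{lq}=\sum_{i,j}a_{ij}\,E_{pk}E_{ij}E_{lq}=\sum_{i,j}a_{ij}\,\delta_{ki}\delta_{jl}\,E_{pq}=a_{kl}E_{pq},
\end{equation*}
and this element lies in $I$ precisely because $I$ is a two-sided ideal (we multiply $a$ on the left and right by elements of $M_n(\mathbb{C})$). Dividing by the nonzero scalar $a_{kl}$ shows $E_{pq}\in I$ for every pair $p,q$ with $0\le p,q\le n-1$.

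Since the matrix units $\{E_{pq}:0\le p,q\le n-1\}$ span $M_n(\mathbb{C})$ and $I$ is a linear subspace containing all of them, I conclude that $I=M_n(\mathbb{C})$. Hence the only two-sided ideals of $M_n(\mathbb{C})$ are $0$ and the whole algebra, which is exactly the assertion that $M_n(\mathbb{C})$ is simple.

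There is no genuine obstacle in this argument; the only point demanding care is the index bookkeeping in $E_{pk}E_{ij}E_{lq}=\delta_{ki}\delta_{jl}E_{pq}$, which is what collapses the double sum to the single surviving term $a_{kl}E_{pq}$. One could instead deduce simplicity more abstractly from Proposition \ref{uv_generate} together with structure theory for finite-dimensional $C^*$-algebras, but the direct matrix-unit computation is cleaner and entirely self-contained, so I would present it that way.
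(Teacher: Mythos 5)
Your argument is correct and complete: the matrix-unit computation $E_{pk}\,a\,E_{lq}=a_{kl}E_{pq}$ is the standard engine for simplicity of $M_n(\mathbb{C})$, the index bookkeeping checks out, and your observation that finite-dimensionality makes every ideal automatically closed properly disposes of the ``closed'' hypothesis. For comparison: the paper does not actually prove this lemma at all --- it is stated bare, and the fact is justified elsewhere in the text (in the proof of Proposition \ref{why_auto}) only by a citation to Murphy's book, Example 3.2.2. So your self-contained proof supplies an argument where the paper supplies none, and it is the right one to give. Your closing aside about deducing simplicity from Proposition \ref{uv_generate} is the one point I would drop or rephrase: that proposition shows the commutant of $C^*(u,v)$ consists of scalars, i.e.\ that $M_n(\mathbb{C})$ is a factor, and passing from ``factor'' to ``simple'' requires additional structure theory for finite-dimensional $C^*$-algebras that the paper never develops; the direct computation you give is both shorter and genuinely self-contained, so present it as the proof rather than as one of two options.
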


We can determine the commutation relations
\begin{eqnarray*}
 \left[x,v\right]=\frac{k}{r}v\left(1-nP\right),\quad
 \left[y,u\right]=-\frac{k}{r}u\left(1-nQ\right).
\end{eqnarray*}
Where we define the projections
\begin{eqnarray*}
 P=\left|n-1\rangle_1 \langle n-1\right|,\quad
 Q=\left|0\rangle_2 \langle 0\right|.
\end{eqnarray*}
Define the derivations 
\begin{equation}\label{deri}
 \delta_1=-\frac{1}{ik}\text{ad}y, \quad
 \delta_2=-\frac{1}{ik}\text{ad}x
\end{equation}
where $\text{ad}y(u):=[y,u]$.  The action of the derivations on the matrices $u$ and $v$ are
\begin{eqnarray*}
 \delta_1(u)=-ir^{-1}u(1-nQ),\quad
 \delta_1(v)=0\\
 \delta_2(v)=-ir^{-1}v(1-nP),\quad
 \delta_2(u)=0.
\end{eqnarray*}
Due to equation (\ref{com_rel}) we can think of $M_n(\mathbb{C})$ as a finite dimensional representation of the quantum torus.

\chapter{$\sigma$-Models}\label{chapter_03}
%Binne hierdie hoofstuk moet ek die polyakov aksie herformuleer vir die klassieke torus en dit mooi in alle besonderhede deurwerk
%Kan ook die Euler-Lagrange vergelykings en alles aangaandie die aksies ook in hierdie hoofstuk sit
\begin{center}
 \begin{quote}
  \textsl{``The mathematical problems that have been solved or techniques that have arisen out of physics in the past have been the lifeblood of mathematics.''\newline - Sir Michael Atiyah}
 \end{quote}
\end{center}
This chapter follows very closely form the work of Mathai Vargese and Johnathan
Rosenberg \cite{MR}.  Here we generalize the Polyakov action to the case where
parameter space and world-time are both replaced by different noncommutative
tori and show that in the classical limit the noncommutative action reduces to
its classical counterpart.  This chapter plays a key role, as it shows how the
mathematics and physics come together to form a noncommutative sigma model.  To
explore these results we consider an example not explored by the previous two
authors.  Here we create an example of such a noncommutative action in the case
of the space of $2\times2$ matrices with complex entries, denoted by
$M_2(\mathbb{C})$.  In this special case we construct a specific partition
function and calculate noncommutative path integrals. 

\section{Noncommutative Action}\noindent
\noindent
Recall that in classical string theory we can describe the string dynamics using the Polyakov action \cite[p. 583]{zwiebach} given by
\begin{equation}\label{polyakov}
 S=\frac{-1}{4\pi\alpha^{\prime}}\int \sqrt{-h}h^{\alpha\beta}\partial_{\alpha}X^{\mu}\partial_{\beta}X^{\nu}\eta_{\mu\nu}d\sigma_1 d\sigma_2,
\end{equation}
where $\sigma_1$ and $\sigma_2$ play the roles of $\sigma$ and $\tau$
respectively.  We use Einsteinian notation, where repeated indices imply
summation.  For example using the Euclidean metric we can write
\begin{equation*}
 \sum^{n}_{\mu=1}X^{\mu}X^{\mu}=X_{\mu}X^{\mu}.
\end{equation*}
Let us simplify equation (\ref{polyakov}) by setting all the constants equal to one and looking specifically at the Euclidean metric together with the mapping
\begin{equation*}
 g:\Sigma\rightarrow \mathbb{R}^n, ~ g(\sigma_1,\sigma_2)=\left(X^1(\sigma_1,\sigma_2),\cdots,X^n(\sigma_1,\sigma_2)\right),
\end{equation*}
where the $X^j(\sigma,\tau)$ are called the mapping functions and map $\mathbb{R}$ into itself.  Here $\Sigma$ plays the role of the two dimensional parameter space and $\mathbb{R}^n$ that of world-time.  Using the framework just described we write the Polyakov action as
\begin{equation*}
 S=\int\partial_{\alpha}X^{\mu}\partial_{\alpha}X^{\mu}d\sigma_1 d\sigma_2.
\end{equation*}
In order to generalize the action to the noncommutative realm we have to work in
an algebraic picture.  For this purpose, let $f\in C(\mathbb{R}^n)$.  Then we
define
\begin{equation}\label{dephine}
 \varphi:C(\mathbb{R}^n)\rightarrow C(\Sigma):f\mapsto f\circ g.
\end{equation}
Consider the following calculation
\begin{eqnarray}
 &&\left[\partial_{\alpha}e^{iX^1\circ g}\right]^*\partial_{\alpha}e^{iX^1\circ g}+
\cdots+\left[\partial_{\alpha}x^{iX^n\circ g}\right]^*\partial_{\alpha}e^{iX^n\circ g}\nn\\
 &=&i\sum^{n}_{j=1} \left[ i\left( \partial_{\alpha}X^j\circ g\right)e^{iX^j\circ g}\right]^*\left(\partial_{\alpha} X^j\circ g \right) e^{iX^j\circ g} \nn\\
 &=&\sum^{n}_{j=1} \left[\partial_{\alpha}X^j\circ g\right]^*\left(\partial_{\alpha} X^j\circ g\right) e^{i\left((X^j)^*-X^j\right)\circ g}\nn\\
 &=&\left(\partial_{\alpha}X^{\mu}\right)^*\partial_{\alpha} X^{\mu} \label{calc_1}
\end{eqnarray}
where in the final step we employ the shorthand notation $X^{\mu}:=X^{\mu}\circ g$.  Since we can write
\begin{equation*}
 e^{iX^{\mu}\circ g}=e^{iX^{\mu}}\circ g
\end{equation*}
we see that in the context of *-algebras we can regard $e^{iX^{\mu}}$ as a unitary element.  This leads us to define the unitaries
\begin{equation*}
 U^{\mu}:=e^{iX^{\mu}}.
\end{equation*}
The calculation that led to equation (\ref{calc_1}) then enables us to write the Polyakov action as
\begin{equation}\label{polya}
 S=\int\left[\partial_{\alpha}\varphi(U^{\mu})\right]^*\partial_{\alpha}\varphi(U^{\mu})d\sigma_1 d\sigma_2.
\end{equation}
where $\varphi$ is as defined in equation (\ref{dephine}).

We would like to generalize the Polyakov action to the case where our parameter space and world-time both become noncommutative $C^*$-algebras and the natural way to proceed is to make use of equation (\ref{polya}).  In order to justify the noncommutative version of the Polyakov action let us first consider how this action would behave on the classical torus.  

We have seen in Proposition \ref{kontinu} that when $\theta=0$ we have
$A_{0}\cong C(\mathbb{T}^2)$.  This will be the case we are interested in to
show that the noncommutative version of the Polyakov action reduces to it's
classical form in the $\theta=0$ limit.  Let us now suppose that parameter space
as well as world-time can both be described by $\mathbb{T}^2$.  The mapping
$g:\Sigma\rightarrow X$ then becomes
\begin{equation*}
 g:\mathbb{T}^2\rightarrow \mathbb{T}^2
\end{equation*}
and as in equation (\ref{dephine}) we have
\begin{equation*}
 \varphi:C\left(\mathbb{T}^2\right)\rightarrow C\left(\mathbb{T}^2\right): f\mapsto f\circ g.
\end{equation*}
Recall that in the classical limit we can write the generating unitaries of the
quantum torus, $U$ and $V$; as
\begin{eqnarray*}
 U&:=&U(x,y)=e^{ix}\\
 V&:=&V(x,y)=e^{iy}.
\end{eqnarray*}
In deriving equation (\ref{polya}) we obtained $n$ unitaries $e^{iX^{\mu}}$, however now we are interested in $\mathbb{T}^{2}=\mathbb{R}^2/2\pi\mathbb{Z}^2$ and so only two unitaries are needed and they are $U$ and $V$, the generating unitaries of the quantum torus in the classical limit.  We saw in subsection \ref{clas} that in the classical limit where $\theta=0$, the derivations we constructed in section \ref{der} reduce to normal partial derivatives.  In our present case $\partial_{\sigma_1}=\delta_1$ and $\partial_{\sigma_2}=\delta_2$.  Now we can write equation (\ref{polya}) as
\begin{eqnarray*}
 S&=&\int\left[
\delta_1\left(\varphi(U)\right)^*\delta_1(\varphi(U))+\delta_2\left(\varphi(U)
\right)^*\ delta_2(\varphi(U))+\right.\\
 && \delta_1\left(\varphi(V)\right)^*\delta_1(\varphi(V))+\delta_2\left(\varphi(V)\right)^*\delta_2(\varphi(V))\left. \right]d\sigma_1 d\sigma_2
\end{eqnarray*}
From Theorem \ref{quantum_trace} we know that we can write the above expression for the Polyakov action as in the following claim:
\begin{claim}
The natural generalization of the Polyakov action to noncommutative $C^*$-algebra, $A_{\theta}$ is  
\begin{eqnarray}\label{noncom_action}
  S(\varphi)&=&\tau\left[\delta_1(\varphi(U))^*\delta_1(\varphi(U))+\delta_2(\varphi(U))^*\delta_2(\varphi(U))\right.\nn\\
&&\left.+\delta_1(\varphi(V))^*\delta_1(\varphi(V))+\delta_2(\varphi(V))^*\delta_2(\varphi(V))
 \right].\label{new_polyakov}
\end{eqnarray}
This can also be written as
\begin{equation}\label{for_calc}
 S\left(\varphi\right)=\sum^{2}_{k=1}\sum^{2}_{l=1}\tau\left[\delta_k\left(\varphi(U_l)\right)^*\delta_k\left(\varphi(U_l)\right)\right]
\end{equation}
where we denote
\begin{equation*}
 U_1:=U, \quad U_2:=V.
\end{equation*}
\end{claim}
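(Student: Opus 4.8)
The plan is to read this Claim as a statement about the classical limit together with a bookkeeping identity, since calling (\ref{new_polyakov}) the \emph{natural} generalization of the Polyakov action means precisely that it is obtained by the substitutions replacing $\int$ by $\tau$ and $\partial_{\sigma_k}$ by $\delta_k$, and that it collapses to the classical action displayed immediately above the Claim when $\theta = 0$. Thus the proof splits into two parts: the equivalence of the two displayed forms of $S(\varphi)$, and the consistency check against the commutative case $A_0 \cong C(\mathbb{T}^2)$.

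First I would verify that (\ref{new_polyakov}) and (\ref{for_calc}) are the same expression. Setting $U_1 := U$ and $U_2 := V$, the double sum $\sum_{k=1}^{2}\sum_{l=1}^{2}\tau[\delta_k(\varphi(U_l))^*\delta_k(\varphi(U_l))]$ expands, by linearity of $\tau$, into exactly the four terms enclosed by the trace in (\ref{new_polyakov}). This step is pure reindexing and needs no estimate.

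The substantive part is the classical-limit check. By Proposition \ref{kontinu}, at $\theta = 0$ we have $A_0 \cong C(\mathbb{T}^2)$; by the computation of Subsection \ref{clas} the derivations become ordinary partial derivatives, $\delta_1 = \partial/\partial x = \partial_{\sigma_1}$ and $\delta_2 = \partial/\partial y = \partial_{\sigma_2}$; and, by the construction of $\tau$ in Section \ref{trace_int}, on $C(\mathbb{T}^2)$ the trace of Theorem \ref{quantum_trace} is literally $\int_0^{2\pi}\!\int_0^{2\pi}(\cdot)\,d\sigma_1 d\sigma_2$. Substituting these three identifications into (\ref{new_polyakov}) turns it into the integral expression written just before the Claim. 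Invoking the calculation (\ref{calc_1}), in which $[\partial_\alpha e^{iX}]^*\partial_\alpha e^{iX} = (\partial_\alpha X)^*\partial_\alpha X$ for a real mapping function $X$, this integral equals $\int (\partial_\alpha X^\mu)^*\partial_\alpha X^\mu\,d\sigma_1 d\sigma_2$, which is the Polyakov action (\ref{polya}); this is exactly the consistency the Claim asserts.

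The hard part will not be any single calculation but the question of well-definedness, and of what a proof can mean for a naturality statement. For (\ref{new_polyakov}) to be meaningful at irrational $\theta$ one needs $\varphi(U)$ and $\varphi(V)$ to lie in the smooth subalgebra $A^{\infty}_{\theta}$ of Section \ref{der}, so that $\delta_1,\delta_2$ may be applied before taking $\tau$; this is where one uses that $\varphi$ is a $*$-homomorphism carrying the generators to smooth elements (Lemma \ref{u_and_v}). I would therefore record this smoothness requirement explicitly, after which the Claim reduces to assembling the three earlier identifications --- trace-to-integral, derivation-to-partial-derivative, and the exponential identity (\ref{calc_1}) --- together with the reindexing above.
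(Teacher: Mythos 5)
Your proposal is correct and follows essentially the same route as the paper: the paper justifies the Claim precisely by the preceding derivation of (\ref{polya}), the identification of $U$ and $V$ as the two relevant unitaries for $\mathbb{T}^{2}$, and the observation that in the $\theta=0$ limit (Proposition \ref{kontinu}, Subsection \ref{clas}, Section \ref{trace_int}) the derivations become partial derivatives and $\tau$ becomes integration, so that (\ref{noncom_action}) collapses to the classical Polyakov action. Your explicit remark that one must require $\varphi(U),\varphi(V)\in A^{\infty}_{\theta}$ for the derivations to apply is a worthwhile precision that the paper leaves implicit.
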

\noindent
When we consider the manipulations done on the Polyakov action in the classical limit we can clearly see that equation (\ref{noncom_action}) reduces to the normal Polyakov action.  In what follows we will use equation (\ref{noncom_action}) when considering the action on mappings between different noncommutative tori.  As we have mentioned earlier, the existence of *-homomorphism $A_{\Theta}\rightarrow A_{\theta}$ is not obvious and so far we have merely assumed the existence of such maps.  In chapter \ref{chapter_5} we will study the existence of such mappings.

\begin{proposition}(Simple case)\newline
 Let $\Theta=\theta$ and $\varphi$ be the identity map.  Then we find that
$S(\varphi)=8\pi^2$.
\end{proposition}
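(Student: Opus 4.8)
The plan is to evaluate the action (\ref{new_polyakov}) directly by substituting the explicit values of the derivations on the generators. First I would observe that the hypothesis $\Theta=\theta$ is exactly what guarantees that the identity map is a well-defined $*$-homomorphism $A_\theta\to A_\theta$: the generators $U,V$ of the domain satisfy the same commutation relation as the generators of the codomain, so sending $U\mapsto U$ and $V\mapsto V$ respects (\ref{commutation_relation}). Consequently $\varphi(U)=U$ and $\varphi(V)=V$, and the four summands of $S(\varphi)$ become expressions purely in the generators $U$ and $V$.

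Next I would invoke the derivation values recorded in (\ref{derivations_def}), namely $\delta_1(U)=2\pi i U$, $\delta_2(U)=0$, $\delta_1(V)=0$, and $\delta_2(V)=2\pi i V$. Substituting these, two of the four terms vanish immediately (the ones built from $\delta_2(U)$ and $\delta_1(V)$), while the surviving two are
\begin{equation*}
 \delta_1(U)^*\delta_1(U)=(2\pi i U)^*(2\pi i U)=4\pi^2\,U^*U
\end{equation*}
and, in the same way, $\delta_2(V)^*\delta_2(V)=4\pi^2\,V^*V$. Since $U$ and $V$ are unitary we have $U^*U=V^*V=\I$, so the bracketed operator in (\ref{new_polyakov}) collapses to the scalar operator $8\pi^2\I$.

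Finally I would apply the trace. By Theorem \ref{quantum_trace}, $\tau$ is a trace on $A_\theta$, and from its normalization in Section \ref{trace_int} we have $\tau(\I)=\tau(U^0V^0)=1$. Linearity of $\tau$ then yields
\begin{equation*}
 S(\varphi)=\tau\!\left(8\pi^2\I\right)=8\pi^2\,\tau(\I)=8\pi^2,
\end{equation*}
as claimed. There is no serious obstacle in this computation; the only point genuinely requiring care is the preliminary check that the identity descends to a $*$-homomorphism, which is precisely where the assumption $\Theta=\theta$ is used.
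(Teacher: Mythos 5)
Your proof is correct and follows essentially the same route as the paper: substitute the derivation values from (\ref{derivations_def}), note that two of the four terms vanish, reduce the surviving terms to $4\pi^2\I$ each via unitarity, and apply $\tau(\I)=1$. The only difference is that you spell out the well-definedness of the identity map and the normalization of the trace, which the paper leaves implicit.
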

\begin{proof}
According to equation (\ref{derivations_def}) we have
\begin{eqnarray*}
 S(\varphi)&=&\tau\left[\delta_1(u)^*\delta_1(u)+\delta_2(u)^*\delta_2(u)
+\delta_1(v)^*\delta_1(v)+\delta_2(v)^*\delta_2(v)\right]\\
&=&\tau\left[\delta_1(u)^*\delta_1(u)+0+0+\delta_2(v)^*\delta_2(v)\right]\\
&=&\tau\left[4\pi^2+4\pi^2\right]\\
&=&8\pi^2.
\end{eqnarray*}
This completes the proof.
\end{proof}

\begin{proposition}(More general, but still $\Theta=\theta$)\newline
 Let $U$ and $V$ be the generating unitaries of the quantum torus.  Consider the mapping defined by
\begin{equation*}
 \varphi_{A}:U\mapsto U^pV^q,~ V\mapsto U^rV^s
\end{equation*}
with 
$A=\left(
\begin{array}{cc}
  p&q\\
  r&s
 \end{array}
\right)\in$SL$(2,\mathbb{Z})$.  $\varphi$ is a well defined *-isomorphism and $S(\varphi)=4\pi^2(p^2+q^2+r^2+s^2)$.
\end{proposition}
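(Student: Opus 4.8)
The plan is to treat the two assertions separately: first that $\varphi_A$ is a genuine $*$-automorphism of $A_\theta$, and then the computation of $S(\varphi)$ from formula (\ref{for_calc}).

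For well-definedness I would put $\tilde U := U^pV^q$ and $\tilde V := U^rV^s$ and check that this pair of unitaries satisfies the same relation (\ref{commutation_relation}) as $U$ and $V$. Writing $k := e^{2\pi i\theta}$, the relation $UV = kVU$ gives $V^aU^b = k^{-ab}U^bV^a$ by a short induction, and hence
\begin{align*}
\tilde U\tilde V &= U^pV^qU^rV^s = k^{-qr}U^{p+r}V^{q+s},\\
\tilde V\tilde U &= U^rV^sU^pV^q = k^{-ps}U^{p+r}V^{q+s}.
\end{align*}
Therefore $\tilde U\tilde V = k^{ps-qr}\tilde V\tilde U$, and since $A\in SL(2,\mathbb{Z})$ forces $ps-qr=\det A=1$, we obtain $\tilde U\tilde V = e^{2\pi i\theta}\tilde V\tilde U$. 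By the universal property of the quantum torus (Theorem \ref{universal}), applied exactly as in Lemma \ref{A_iso}, there is then a $*$-isomorphism $\varphi_A$ of $A_\theta$ onto $C^*(\tilde U,\tilde V)$ with $U\mapsto\tilde U$ and $V\mapsto\tilde V$.

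The one point that needs care is promoting this to an automorphism, i.e.\ showing $C^*(\tilde U,\tilde V)=A_\theta$. Products of $\tilde U$ and $\tilde V$ yield, up to a unimodular scalar, every monomial $U^mV^n$ with $(m,n)$ lying in the $\mathbb{Z}$-lattice generated by the rows $(p,q)$ and $(r,s)$ of $A$; because $\det A = 1$ these rows form a basis of $\mathbb{Z}^2$, so $(1,0)$ and $(0,1)$ are both attainable and thus $U,V\in C^*(\tilde U,\tilde V)$. Hence $\varphi_A$ is surjective, and being already injective from Theorem \ref{universal}, it is an automorphism. I expect this surjectivity bookkeeping to be the only genuinely delicate step; everything else is mechanical.

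For the action, the key observation is that the generator images are joint eigenvectors of the derivations. The Leibniz rule together with the values (\ref{derivations_def}) gives $\delta_1(U^mV^n)=2\pi i\,mU^mV^n$ and $\delta_2(U^mV^n)=2\pi i\,nU^mV^n$, so in particular
\begin{align*}
\delta_1(\varphi(U))&=2\pi ip\,\varphi(U), & \delta_2(\varphi(U))&=2\pi iq\,\varphi(U),\\
\delta_1(\varphi(V))&=2\pi ir\,\varphi(V), & \delta_2(\varphi(V))&=2\pi is\,\varphi(V).
\end{align*}
Since $\varphi(U)$ and $\varphi(V)$ are unitary we have $\varphi(U)^*\varphi(U)=\varphi(V)^*\varphi(V)=\I$, and $\tau(\I)=1$ by Theorem \ref{quantum_trace}. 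Substituting into (\ref{for_calc}) and using $\overline{2\pi i m}\,(2\pi i m)=4\pi^2m^2$ for each matrix entry yields
\begin{equation*}
S(\varphi)=4\pi^2 p^2+4\pi^2 q^2+4\pi^2 r^2+4\pi^2 s^2=4\pi^2(p^2+q^2+r^2+s^2),
\end{equation*}
which is the claimed value. This final computation is entirely routine once the eigenvector identities are recorded.
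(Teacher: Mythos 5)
Your proposal is correct and follows essentially the same route as the paper: verify the commutation relation for $U^pV^q$ and $U^rV^s$ using $ps-qr=1$, invoke the universal property, and compute the action from the eigenvector identities $\delta_1(U^mV^n)=2\pi i\,mU^mV^n$, $\delta_2(U^mV^n)=2\pi i\,nU^mV^n$. The one place you go beyond the paper is the lattice argument showing $C^*(U^pV^q,U^rV^s)=A_\theta$, i.e.\ that $\varphi_A$ is genuinely surjective; the paper leaves this implicit, so your extra step is a welcome tightening rather than a divergence.
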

\begin{proof}
From the definition of $\varphi_A$ we have
\begin{eqnarray*}
 &&\varphi_A(U)\varphi_A(V)=e^{2\pi i\theta}\varphi_A(V)\varphi_A(U)\\
&\Longleftrightarrow&U^pV^qU^rV^s=e^{2\pi i\theta} U^rV^sU^pV^q\\
&\Longleftrightarrow&U^p\left(e^{-2\pi i\theta qr} U^rV^q\right)V^s=e^{2\pi
i\theta}U^r\left(e^{-2\pi i\theta ps} U^pV^s\right)V^q\\
&\Longleftrightarrow&U^{p+r}V^{q+s}=e^{2\pi i\theta}e^{2\pi
i\theta(qr-ps)}U^{p+r}V^{q+s}.
\end{eqnarray*}
But since $A\in SL(2,\mathbb{C})$ we have
\begin{equation*}
 ps-qr=1
\end{equation*}
which implies that $\varphi_A$ is well defined. That $\varphi_A$ is a
*-isomorphism follows from the universal property of the quantum torus (Theorem
\ref{universal}).

Let us now determine the noncommutative action.  We see that
\begin{eqnarray*}
 S(\varphi_{A})&=&\tau\left[\delta_1(u^pv^q)^*\delta_1(u^pv^q)+\delta_2(u^pv^q)^*\delta_2(u^rv^s)\right.\\
&&\left.+\delta_1(u^rv^s)^*\delta_1(u^rv^s)+\delta_2(u^rv^s)^*\delta_2(u^rv^s)\right]\\
&=&\tau\left[(2\pi i p u^pv^q)^*(2\pi i pu^pv^q)+(2\pi iq u^pv^q)^*(2\pi i qu^pv^q)\right.\\
&&\left.+(2\pi i ru^rv^s)^*(2\pi i r u^rv^s)+(2\pi i su^rv^s)^*(2\pi i su^rv^s)\right]\\
&=&\tau\left[4\pi^2(p^2+q^2+r^2+s^2)\right]\\
&=&4\pi^2(p^2+q^2+r^2+s^2).
\end{eqnarray*}
\end{proof}

%\begin{remark}
% It has been shown by Hanfeng Li \cite{Li2009} that this action is a minimum.
%\end{remark}

\section{The Two Dimensional Representation}\noindent
In this section we look at an example not included in \cite{MR}.  We would like
to construct an example which shows what can be done in the simplest case which
exhibits clear noncommutative behaviour.  The natural choice would be to
consider the space of two by two matrices with complex entries.  We considered
the general $n\times n$ case in section \ref{finite_representation} and in this
section we will continue with the notation introduced for the general case.  Let
$q=e^{i\frac{k}{r^2}}=e^{i\pi}=-1$.  Then for the two by two case we can write
these matrices explicitly as
\begin{eqnarray*}
 u=\left(
\begin{array}{cc}
 1 & 0 \\
 0 & -1
\end{array}
\right),\quad
v=\left(
\begin{array}{cc}
 0 & 1 \\
 1 & 0
\end{array}
\right),
\end{eqnarray*}
where they satisfy the form of equation (\ref{u_v_q}).  These matrices satisfy the commutation relation
\begin{equation*}
 uv=-uv.
\end{equation*}
From Proposition \ref{uv_generate} we know that $M_2(\mathbb{C})$ is generated by these two unitary matrices $u$ and $v$ and for the remainder of the section we will concern ourselves with the case $M_2(\mathbb{C})$ which we regard as the finite dimensional representation of the quantum torus.  Similarly we can write down expressions for the matrices $x$ and $y$ as in equation (\ref{x_and_y})
\begin{eqnarray*}
 x=\frac{k}{r} \left(
\begin{array}{cc}
 0 & 0 \\
 0 & 1
\end{array}
\right),\quad
y=\frac{k}{2 r} \left(
\begin{array}{cc}
 1 & -1 \\
 -1 & 1
\end{array}
\right).
\end{eqnarray*}
Likewise, the derivations (\ref{deri}) can be written as
\begin{eqnarray*}
 \delta_1(\cdot)&=&-\frac{1}{ik}\left[y,\cdot\right]=-\frac{1}{2ir}\left[\left(
\begin{array}{cc}
 1 & -1 \\
 -1 & 1
\end{array}
\right),\cdot \right]\\
\delta_2(\cdot)&=&\frac{1}{ik}\left[x,\cdot\right]=\frac{1}{ir}\left[\left(
\begin{array}{cc}
 0 & 0 \\
 0 & 1
\end{array}
\right),\cdot \right].
\end{eqnarray*}
Using these formulas for the matrices together with the method used by Mathai and Rosenberg \cite{MR} and equation (\ref{noncom_action}) we are able to determine a specific action for the finite dimensional representation.

\begin{proposition}\label{why_auto}
 If $\varphi:M_n(\mathbb{C})\rightarrow M_n(\mathbb{C})$ is a unital *-homomorphism then $\varphi$ necessarily has to be a *-automorphism.
\end{proposition}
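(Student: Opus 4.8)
The plan is to exploit the simplicity of $M_n(\mathbb{C})$ to force $\varphi$ to be injective, and then to invoke finite-dimensional linear algebra to upgrade injectivity to bijectivity. First I would observe that, as for any $*$-homomorphism between $C^*$-algebras, the kernel $\ker\varphi$ is a closed two-sided ideal of $M_n(\mathbb{C})$. By the preceding lemma, which asserts that $M_n(\mathbb{C})$ is simple, the only closed two-sided ideals are $\{0\}$ and $M_n(\mathbb{C})$ itself, so $\ker\varphi$ must be one of these two possibilities.

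Next I would eliminate the possibility $\ker\varphi = M_n(\mathbb{C})$ using the standing hypothesis that $\varphi$ is unital. Since $\varphi(\I)=\I\neq 0$, the identity matrix does not lie in $\ker\varphi$, and therefore $\ker\varphi\neq M_n(\mathbb{C})$. Combining this with the dichotomy above gives $\ker\varphi=\{0\}$, so $\varphi$ is injective. To finish, I would regard $\varphi$ as a $\mathbb{C}$-linear endomorphism of the finite-dimensional vector space $M_n(\mathbb{C})$, whose dimension is $n^2$; by the rank-nullity theorem an injective endomorphism of a finite-dimensional space is automatically surjective, hence $\varphi$ is a bijection. A bijective $*$-homomorphism has a $*$-homomorphism inverse, so $\varphi$ is a $*$-isomorphism of $M_n(\mathbb{C})$ onto itself, i.e. a $*$-automorphism.

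There is no serious obstacle to overcome here; the argument is essentially a packaging of two standard facts. The one point genuinely requiring the hypotheses is the passage from injectivity to surjectivity, which would fail in infinite dimensions and so relies on $\dim_{\mathbb{C}} M_n(\mathbb{C})<\infty$, together with the unitality needed to exclude the zero map. The verification that the set-theoretic inverse of a bijective $*$-homomorphism is again a $*$-homomorphism is routine and can be stated without detailed computation.
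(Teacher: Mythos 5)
Your proposal is correct and follows essentially the same route as the paper: use the simplicity of $M_n(\mathbb{C})$ to force $\ker\varphi$ to be trivial (unitality ruling out the zero map), and conclude that $\varphi$ is a $*$-automorphism. In fact you are slightly more careful than the paper, which passes directly from injectivity to bijectivity without comment, whereas you explicitly supply the surjectivity step via rank--nullity on the $n^2$-dimensional space $M_n(\mathbb{C})$ --- the one place where finite-dimensionality is genuinely needed.
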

\begin{proof}
Since we are dealing with finite dimensional representations we have
\begin{equation*}
 M_n(\mathbb{C})\cong B(H)=K(H)
\end{equation*}
for the Hilbert space $H$, consisting of $n$-dimensional column vectors.  From \cite[example 3.2.2]{murphy} we know that $M_n(\mathbb{C})$ is a simple $C^*$-algebra, in other words the only closed ideals contained in $M_n(\mathbb{C})$ is $0$ and the whole of $M_n(\mathbb{C})$.  The kernel of $\varphi$,
\begin{equation*}
 \ker(\varphi)=\left\{a\in M_n(\mathbb{C}): \varphi(a)=0 \right\}
\end{equation*}
is a closed, two sided ideal of $M_n(\mathbb{C})$.  But since $M_n(\mathbb{C})$
is simple $\ker(\varphi)$ must be either $0$ or $M_n(\mathbb{C})$.  If
$\ker(\varphi)=M_n(\mathbb{C})$ then $\varphi$ has to be identically zero.  But
this is not possible since $\varphi$ is unital, and hence $\ker(\varphi)=0$. 
Suppose $\varphi(a)=\varphi(b)$ for some $a,b\in M_n(\mathbb{C})$.  Then
$\varphi(a-b)=0$.  But this can only occur for $a=b$.  Hence $\varphi$ is a
bijective *-homomorphism and therefore a *-isomorphism, which in this case
implies it is in fact a *-automorphism.
\end{proof}

We use equation (\ref{for_calc}) to perform computations and observe for
conveniece that
\begin{equation*}
 S\left(\varphi\right)=\sum^{2}_{k=1}\sum^{2}_{l=1}\text{Tr}\left[\delta_k\left(\varphi(u_l)\right)^*\delta_k\left(\varphi(u_l)\right)\right]
\end{equation*}
where we denote
\begin{equation*}
 u_1:=U, ~ u_2:=V.
\end{equation*}
From Proposition \ref{why_auto} we only need to work with the case where the
mapping $\varphi$ is a *-automorphism of $M_2\left(\mathbb{C}\right)$.  Since
$M_2\left(\mathbb{C}\right)$ is finite dimensional it is isomorphic to $B(H)$
for some finite dimensional Hilbert space $H$.  Furthermore, the finite
dimensionality implies that $B(H)=K(H)$.  It is known that every automorphisms
of $K(H)$ has the form
\begin{equation*}
 \text{Ad}W:A\mapsto WAW^*
\end{equation*}
where $W$ is some unitary operator in $K(H)$ \cite{Raeburn}.  Therefore these types of mappings are all that we have to consider when calculating the actions of the *-automorphisms.  The action can then be rewritten in the form
\begin{equation*}
 S\left(W\right)=\sum^{2}_{k=1}\sum^{2}_{l=1}\text{Tr}\left[\delta_k\left(Wu_lW^*\right)^*\delta_k\left(Wu_lW^*\right)\right].
\end{equation*}
If we are now able to parametrize all the unitary two by two matrices with complex entries we will be able to determine the action for any *-automorphism of $M_2\left(\mathbb{C}\right)$.

\section{Parametrization of $SU(2)$}\noindent
There has been some literature regarding the parametrization of $SU(N)$ \cite{Tilma2002}, but for now we are only interested in the $SU(2)$ case.  This case has been studied in \cite{Inomata}, however for completeness we give a brief recollection of the main constructions.  By definition $SU(2)$ consists of unitary two by two matrices $g$ with complex entries which adhere to the constraint
\begin{equation*}
 \det g=1.
\end{equation*}
We can write $g\in SU(2)$ as
\begin{equation*}
 g=\left(
\begin{array}{cc}
 a&b\\
 c&d
\end{array}
\right).
\end{equation*}
Since the row (and column) vectors of unitary matrices are orthonormal we can
always choose the row vectors such that $d=\overline{a}$ and $c=-\overline{b}$. 
Then
\begin{eqnarray*}
 g^*g&=&\left(
\begin{array}{cc}
 \overline{a}&-b\\
 \overline{b}&a
\end{array}
\right)
\left(
\begin{array}{cc}
 a&b\\
 -\overline{b}&\overline{a}
\end{array}
\right)\\
&=&\left(
\begin{array}{cc}
 \overline{a}a+\overline{b}b&0\\
 0&\overline{a}a+\overline{b}b
\end{array}
\right)
\end{eqnarray*}
and the $\det g=1$ condition becomes
\begin{equation*}
 |a|^2+|b|^2=1.
\end{equation*}
Now we write 
\begin{eqnarray*}
a=u_1+iu_2,\quad
b=-u_3+iu_4
\end{eqnarray*}
with $u_1,\dots,u_4\in\mathbb{R}$ such that
\begin{equation*}
u^2_1+u^2_2+u^2_3+u^2_4=1.
\end{equation*}
This is just the unit sphere $S^3$ centered at the origin in four dimensional real Euclidean space.  The matrix $g$ can now be rewritten as
\begin{equation*}
 g=\left(
\begin{array}{cc}
 u_1+iu_2&-u_3+iu_4\\
 u_3+iu_4&u_1-iu_2
\end{array}
\right).
\end{equation*}
Using Euler angles we determine the parametrization of the unit sphere, $S^3$ as in \cite{Inomata}
\begin{eqnarray*}
u_1=\cos\frac{\theta}{2} \cos\frac{\phi+\psi}{2},\quad
u_2=\cos\frac{\theta}{2} \sin\frac{\phi+\psi}{2}\\
u_3=\sin\frac{\theta}{2} \sin\frac{\phi-\psi}{2},\quad
u_4=\sin\frac{\theta}{2} \cos\frac{\phi-\psi}{2}
\end{eqnarray*}
where the ranges of the angles are
\begin{eqnarray*}
 0\leq \theta \leq \pi,\quad
 0\leq \phi \leq 2\pi,\quad
0\leq \psi \leq 4\pi.
\end{eqnarray*}
This implies that we can write any unitary matrix $g\in SU(2)$ as a function of the Euler angles as follows:
\begin{equation*}
 g(\phi,\theta,\psi)=\left(
\begin{array}{cc}                          
\cos\frac{\theta}{2}e^{i\frac{\phi+\psi}{2}}&i\sin\frac{\theta}{2}e^{i\frac{\phi-\psi}{2}}\\
i\sin\frac{\theta}{2}e^{-i\frac{\phi-\psi}{2}}&\cos\frac{\theta}{2}e^{-i\frac{\phi+\psi}{2}}
\end{array}
\right).
\end{equation*}
We can also find the invariant volume element or measure as in \cite{Inomata}
\begin{equation*}
 d\mu(g)=\frac{1}{16\pi^2}\sin\theta d\theta d\phi d\psi
\end{equation*}
which is normalized such that
\begin{equation*}
 \int_{SU(2)}d\mu(g)=1.
\end{equation*}
This parametrization gives us all the elements of $SU(2)$ exactly once.  We can now use this to express the action of any *-automorphism of $M_2(\mathbb{C})$ as a function of $\phi,\theta$ and $\psi$.

\section{Path Integrals}\noindent
In string theory the path integral is an integral over the space of all world-sheets of the string.  In the noncommutative case, specifically $M_n(\mathbb{C})$, the world-sheet is the space of all unital *-homomorphisms 
\begin{equation*}
 \varphi:M_n(\mathbb{C})\rightarrow M_n(\mathbb{C}),
\end{equation*}
described by
\begin{equation*}
 \varphi(\cdot)=U(\cdot) U^*
\end{equation*}
where $U$ is in $SU(n)$.  We only need to consider $SU(n)$ since we can take any matrix in $U(n)$ and multiply it by a certain factor in such a way that its determinant becomes 1.  In \cite{MR} the authors try to calculate noncommutative path integrals but have to make numerous simplifying assumption.  In the end they find a semi classical approximation by summing over the critical points.  In this section we calculate explicit noncommutative path integrals in the case of $M_2(\mathbb{C})$.  Since we are now able to determine the action for any *-automorphism of $M_2(\mathbb{C})$ we can proceed to study the noncommutative path integral.  After the parametrization we can write the action of any *-automorphism $\varphi$ of $M_2\left(\mathbb{C}\right)$ as
\begin{eqnarray*}
 S_{P}(g)&=&\sum^2_{k=1}\sum^2_{l=1}\text{Tr}\left[\delta_k\left(gu_lg^*\right)^*\delta_k\left(gu_lg^*\right)\right]\\
&=&\frac{1}{16 r^2}e^{-2 i (\phi +\psi )} \left[-4 \left(-1+e^{4 i \phi }\right) \left(-1+e^{4 i \psi }\right) \cos(\theta )-\right.\\
&&\left.\left(1+e^{2 i \phi }\right)^2 \left(1+e^{2 i \psi }\right)^2 \cos(2 \theta )+\right.\\
&&\left.4 e^{2 i (\phi +\psi )} (21+\cos(2 \phi ) (1-3 \cos(2 \psi ))+\cos(2 \psi ))\right].
\end{eqnarray*}
Mathematica was used to perform the calculations which yielded the result of the second line.  We are able to determine the relationship between the minima (Figure \ref{minima}) and maxima (Figure \ref{maxima}) of the Polyakov action with respect to $r$.  The minima and maxima can respectively be described by 
\begin{equation*}
S^{\text{min}}_{P}=4r^{-2}, \quad S^{\text{max}}_{P}=6r^{-2}. 
\end{equation*}
We can also consider the path integral associated with this particular Polyakov action.  We can write the partition function as the following path integral
\begin{eqnarray*}
 Z\left(g\right)&=&\int_{SU(2)}e^{-S(\varphi)}d\mu(g)\\
&=&\frac{1}{16\pi^2}\int^{4\pi}_{0}\int^{2\pi}_{0}\int^{\pi}_{0}e^{-S(\varphi)}\sin\theta d\theta d\phi d\psi.
\end{eqnarray*}
A graphical representation of the partition function, $Z(g)$ as a function of $r$ is given below (Figure \ref{partition_function}).
\setcounter{figure}{2}
\begin{figure}[h!]
 \centering
 \includegraphics[scale=0.85]{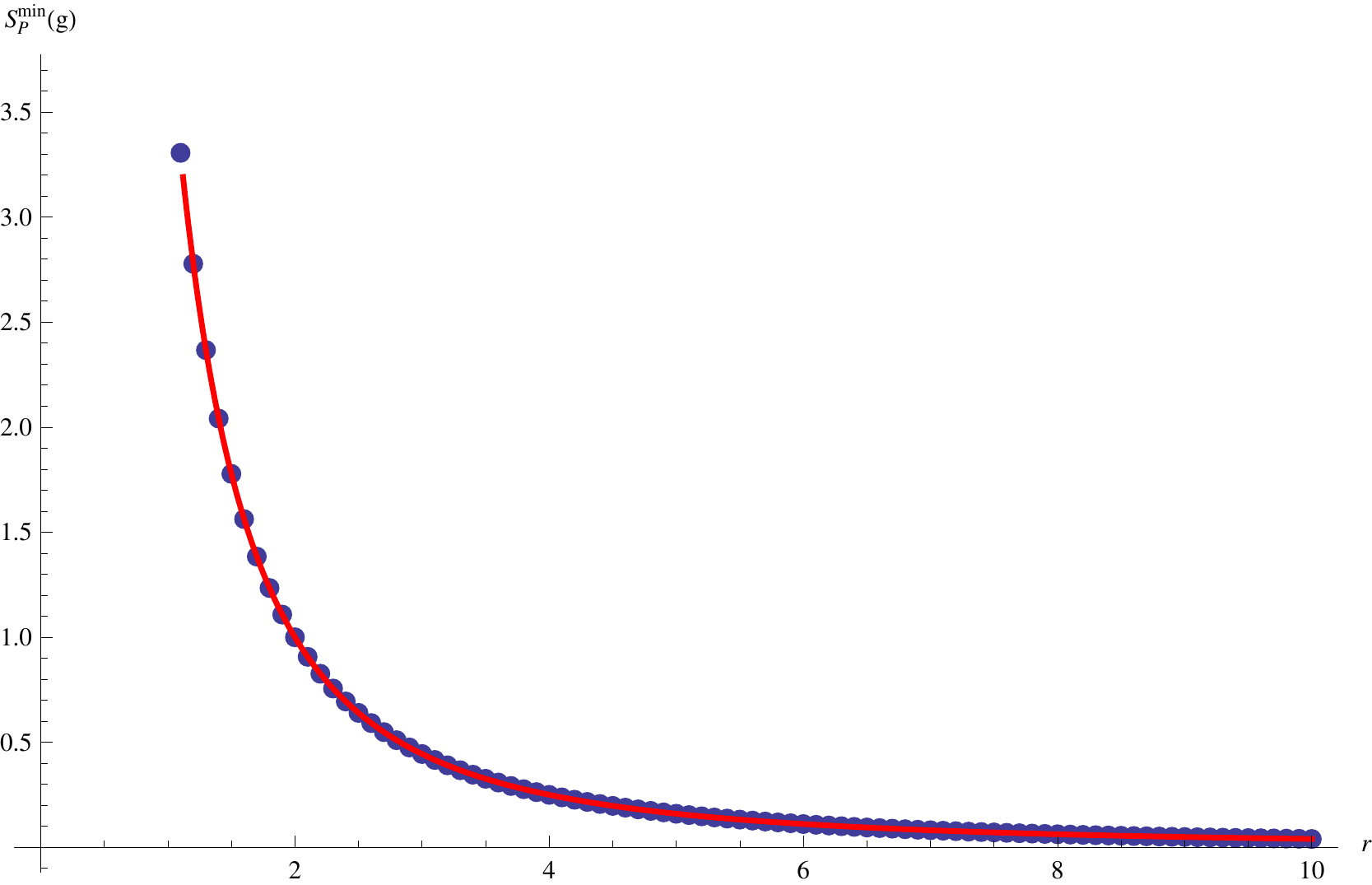}
 % munimum.eps: 0x0 pixel, 300dpi, 0.00x0.00 cm, bb=
 \caption{Minimum of the partition function as a function of $r$.  We can write an exact formula $S_{\text{min}}=4r^{-2}$.}
\label{minima}
\end{figure}
\begin{figure}[h!]
 \centering
 \includegraphics[scale=0.85]{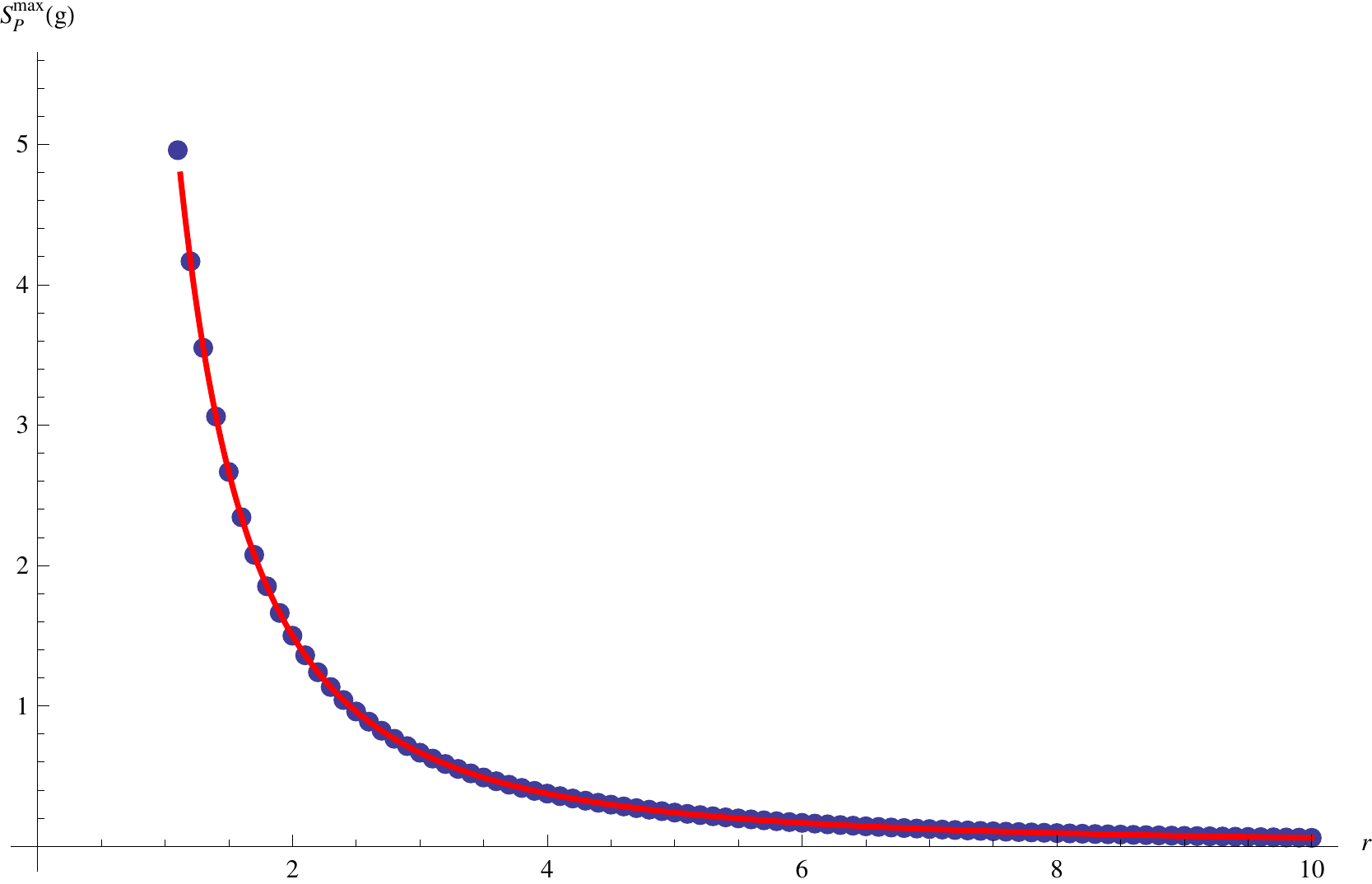}
 % maximum.eps: 0x0 pixel, 300dpi, 0.00x0.00 cm, bb=
 \caption{Maximum of the partition function as a function of $r$.  We can write an exact formula $S_{\text{max}}=6r^{-2}$.}
\label{maxima}
\end{figure}

\begin{figure}[h!]
 \centering
 \includegraphics[scale=0.8]{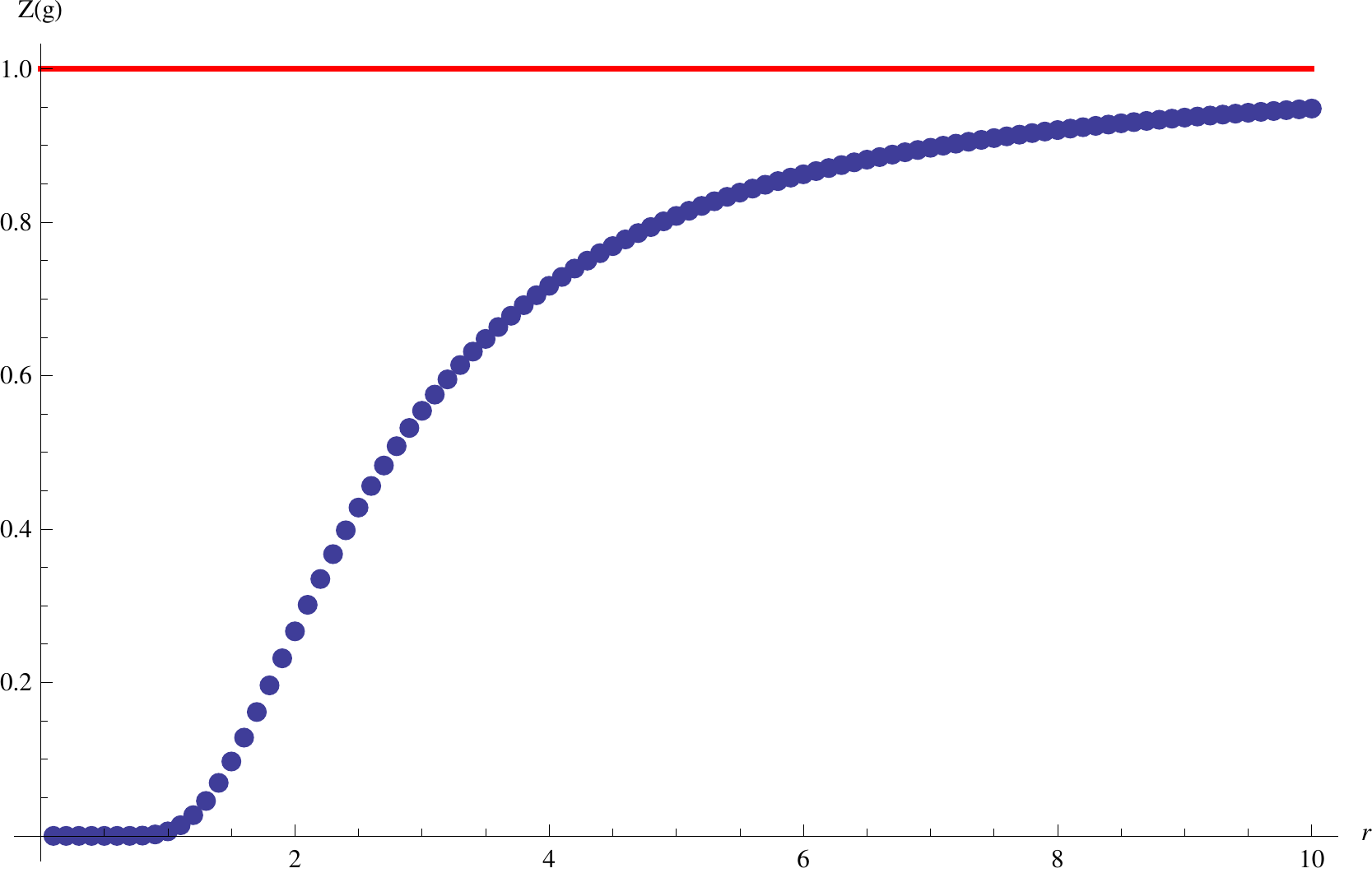}
 % partition_function.eps: 0x0 pixel, 300dpi, 0.00x0.00 cm, bb=
 \caption{Partition function as a function of $r$}
\label{partition_function}
\end{figure}

Classically we would be very interested in the critical points of the action since they should satisfy the classical Euler-Lagrange equations and describe the physical trajectories of particles.  A strange phenomena that we see in the two by two matrix scenario is that these maxima and minima occur infinitely many times as can be seen by the contour plots in figures \ref{contour01} and \ref{contour02}.

\begin{figure}[h!]
 \centering
 \includegraphics[scale=0.6]{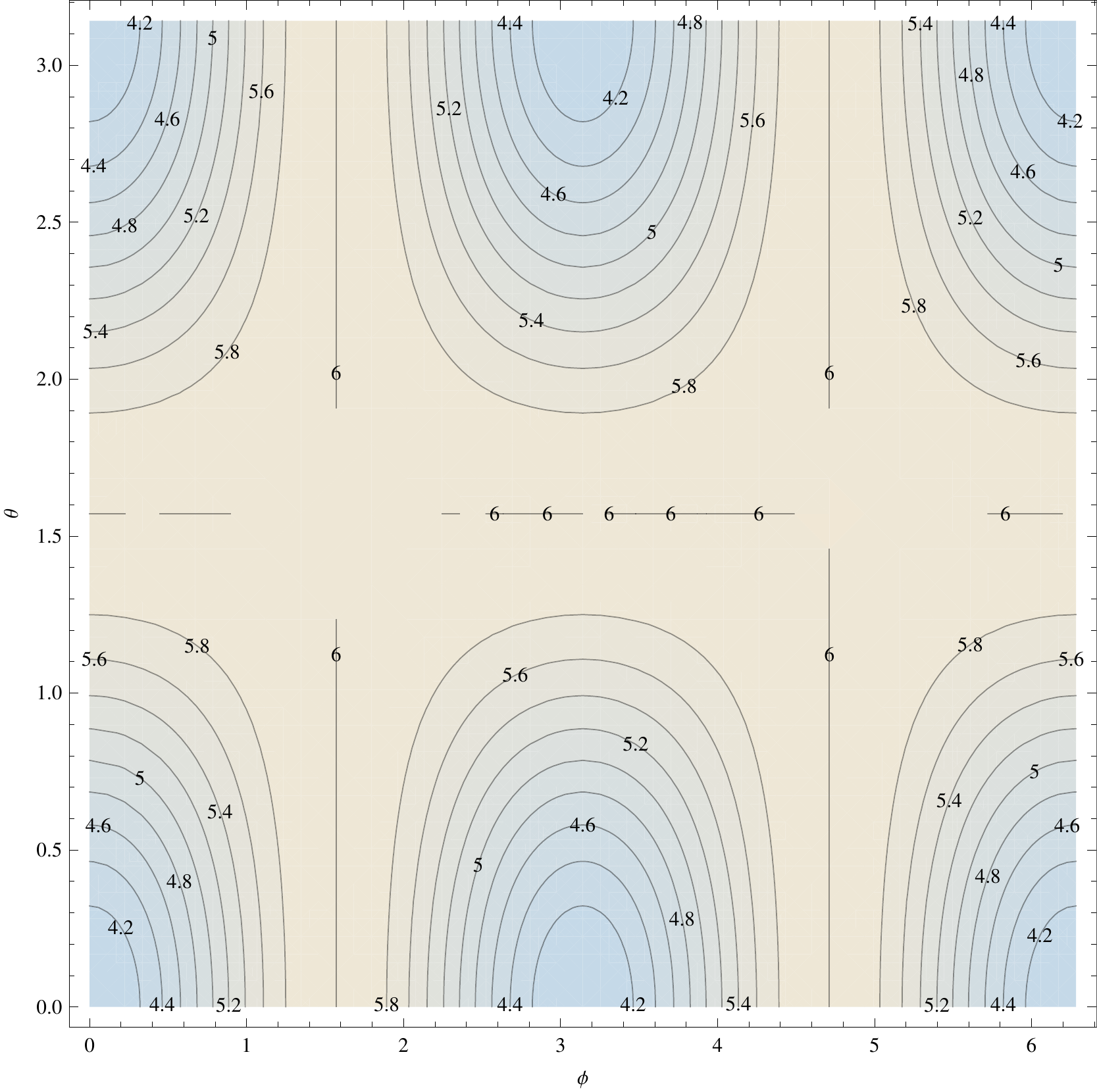}
 % contour.eps: 0x0 pixel, 300dpi, 0.00x0.00 cm, bb=
\caption{Contour plot of the action for $\psi=0$ and $r=1$.}
\label{contour01}
\end{figure}

\begin{figure}[h!]
 \centering
 \includegraphics[scale=0.6]{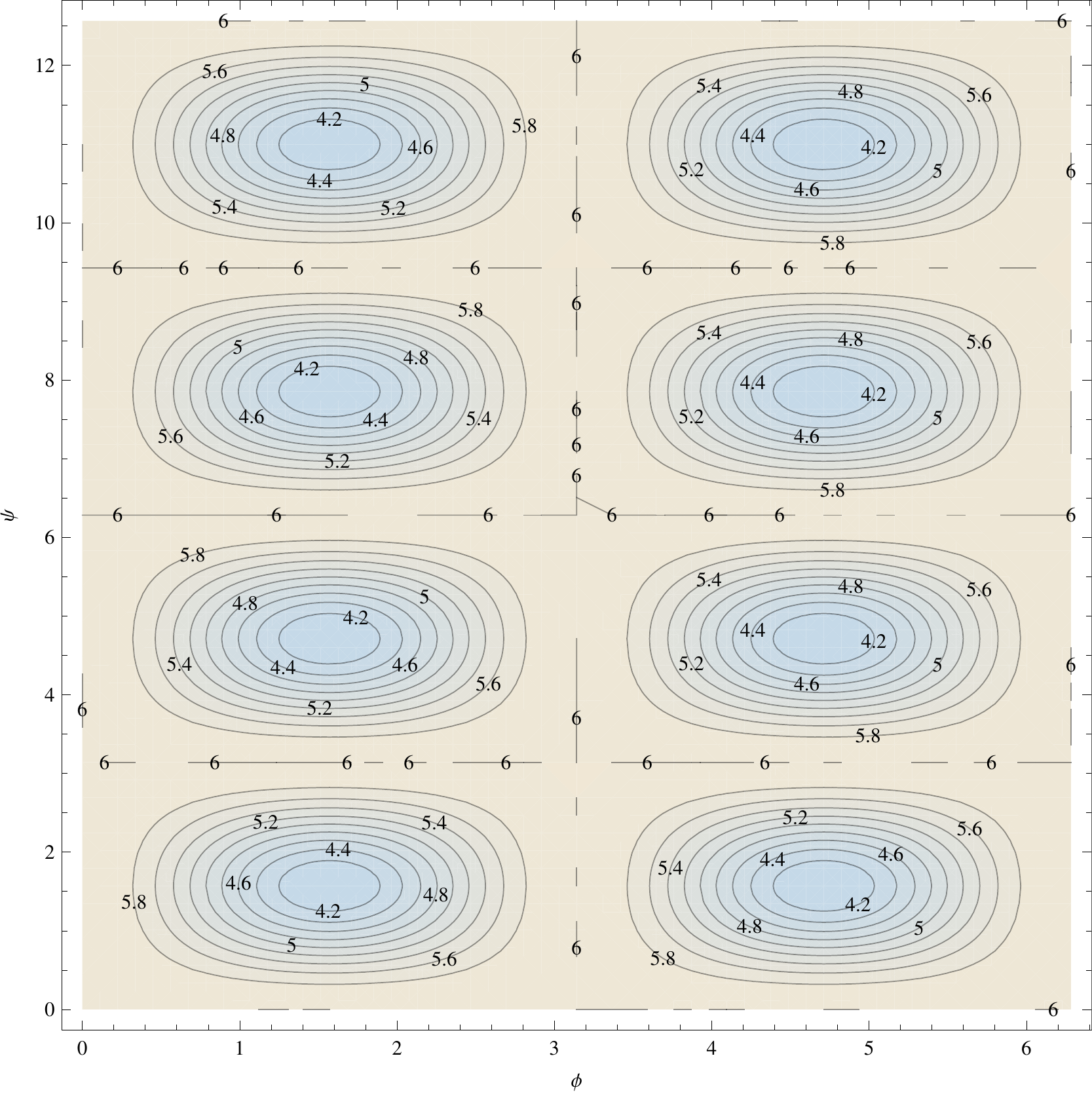}
 % contour2.pdf: 0x0 pixel, 0dpi, 0.00x0.00 cm, bb=
\caption{Contour plot of the action for $\theta=\frac{\pi}{2}$ and $r=1$.}
\label{contour02}
\end{figure}

In normal statistical mechanics we can calculate the expectation values of the energy and the variance of the energy by
\begin{equation}
 \langle E \rangle = -\frac{\partial \ln Z}{\partial \beta}, \quad \langle \Delta(E)^2\rangle = \langle \left(E-\langle E \rangle \right)^2 \rangle = \frac{1}{Z^2}\frac{\partial^2 Z}{\partial \beta^2}
\end{equation}
respectively, where $\beta=\frac{1}{k_BT}$ is the inverse temperature.  The only
free parameter we have at our disposal is $r$, which was introduced in equation
(\ref{x_and_y}).  From a dimensional analysis point of view $r$ will have the
dimensions of length.  Furthermore, in special relativity time also has the
dimensions of length, so heuristically at least, the role of time can then also
be played by $r$ in the above context.  It is a familiar result from
statistical mechanics that we can write $\beta=it$, the ``length'' of the system
in imaginary time.  Then, at least heuristically we can treat $r$ as the inverse
temperature $\beta$.  This enables us to determine the ``expectation value of
the energy of our string'' of which we give a graphical representation in figure
\ref{expectH}.

\begin{figure}[h!]
 \includegraphics[bb=0 0 480 307,scale=0.8]{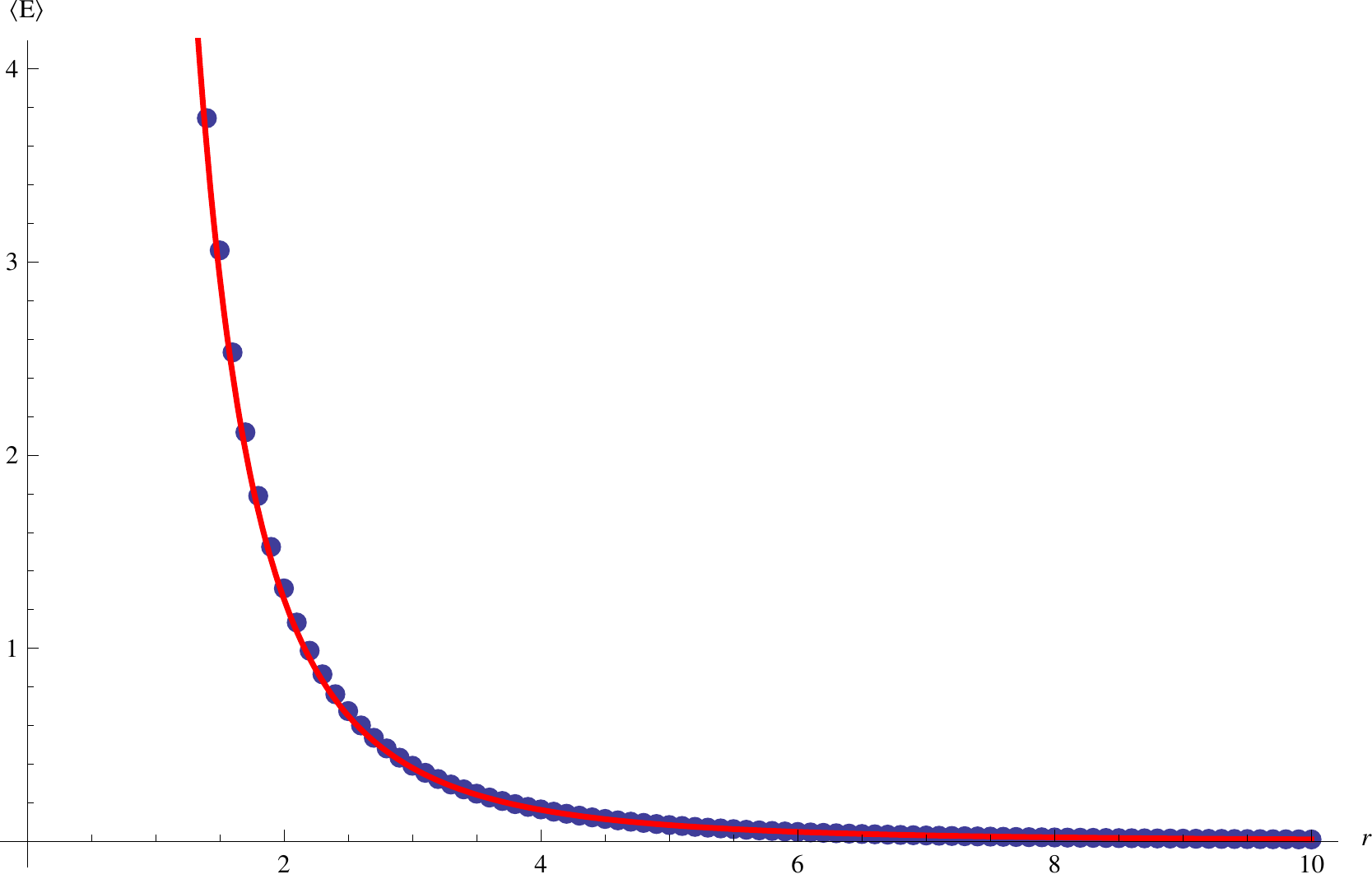}
 \caption{$\langle H \rangle$ as a function of the ``inverse temperature'' $r$.  The red line is a near perfect fit and can be described by the equation $\langle H \rangle=9.63 r^{-2.94}$.}
  \label{expectH}
\end{figure}

\begin{figure}[h!]
 \includegraphics[bb=0 0 480 310,scale=0.8]{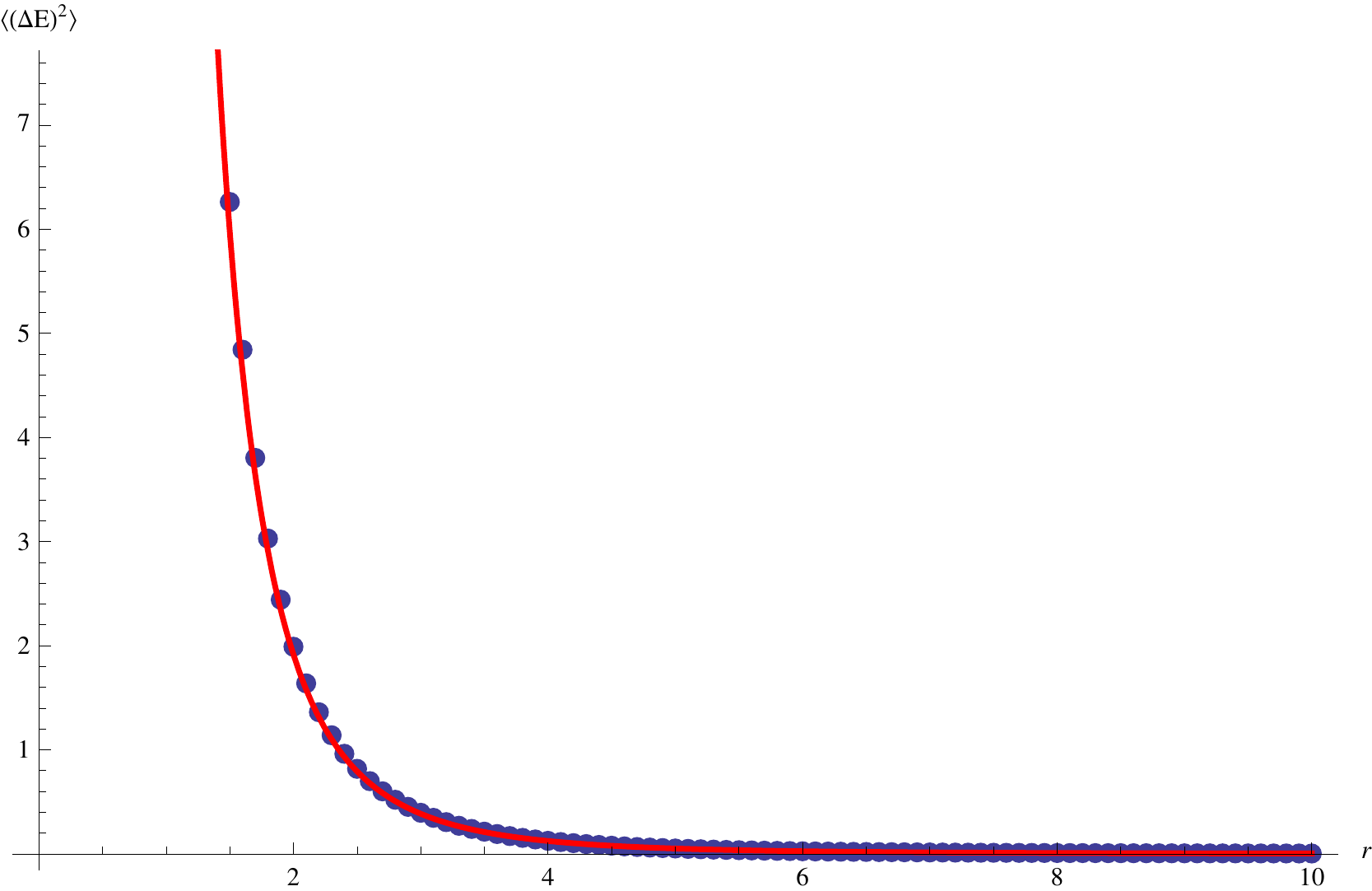}
\caption{$\langle \left(\Delta E\right)^2\rangle$ as a function of $r$.  The red line is a near perfect fit and can be described by the equation $\langle \left(\Delta E\right)^2\rangle=29.71r^{-3.957}$.}
\label{expect_variance}
\end{figure} 

In Principle we are now able to calculate various other thermodynamic quantities
and study their respective behaviour as a function of $r$, our ``inverse
temperature''.  If we are so bold as to assume that $r$ is indeed the inverse
temperature $r=\beta=\frac{1}{k_BT}$, then we are able to compute the specific
heat $C_v$ and entropy $S$ of our system 
\begin{equation}
C_v=\frac{\partial \langle E \rangle}{\partial T} = \frac{1}{k_BT^2}\langle \left(\Delta(E) \right)^2 \rangle, \quad  S=k_B\left(\ln Z+\beta\langle E\rangle\right)
\end{equation} 
as functions of temperature.  For ease of computation the calculations were
performed in units of $k_B$.  Figures \ref{Cv} and \ref{entopry} show the
respective graphical representations.

\begin{figure}[h!]
 \includegraphics[bb=0 0 480 299,scale=0.8]{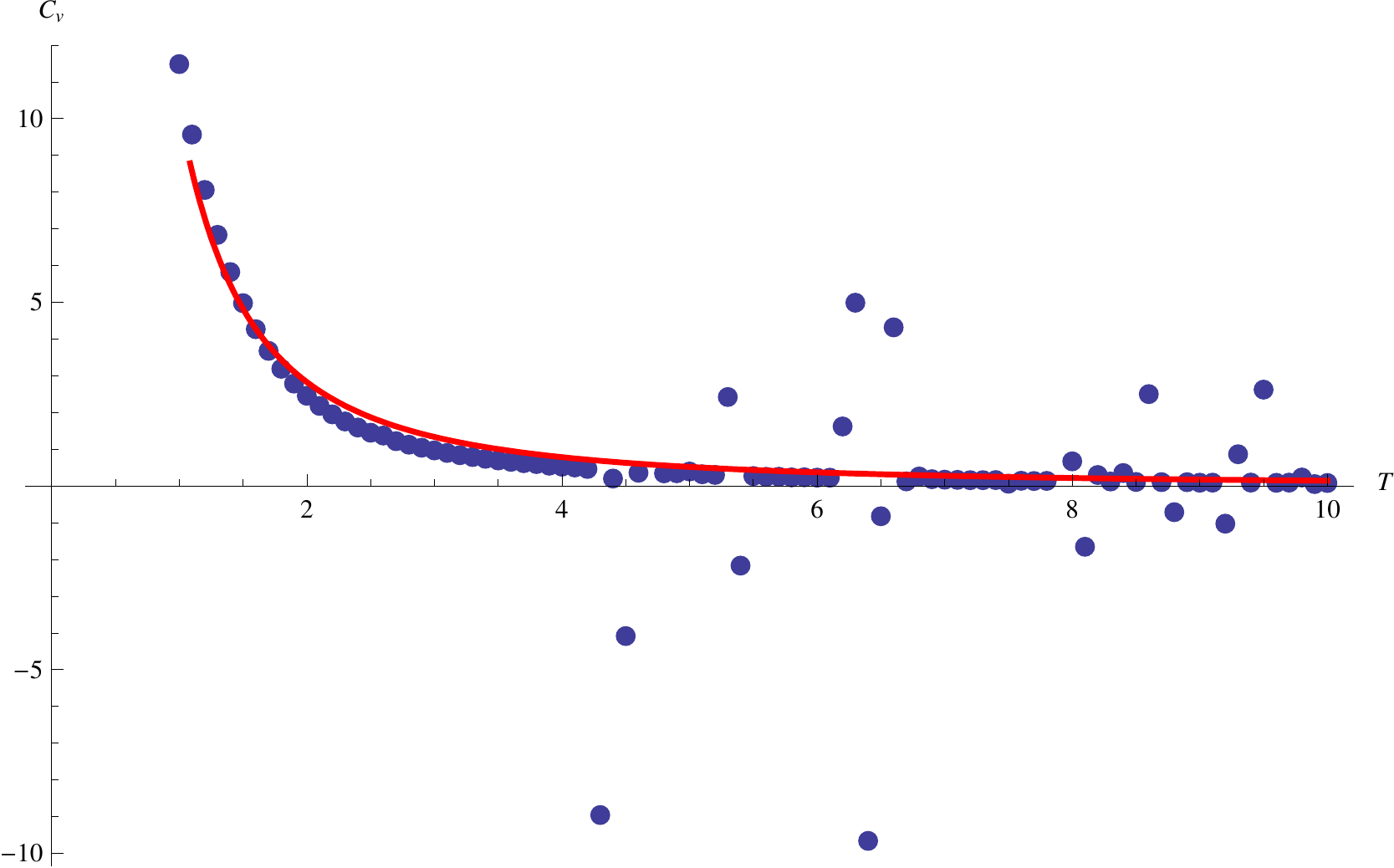}
 \caption{The specific heat of the finite dimensional representation as functions of ``temperature'', $T=\frac{1}{r}$.  The oscillating terms appearing form $T=4$ and onward occur due to convergence problems, however when studying the $T\rightarrow\infty$ limit we observe that the specific heat tends to zero.}
 \label{Cv}
\end{figure}

\begin{figure}[h!]
 \includegraphics[bb=0 0 480 299,scale=0.8]{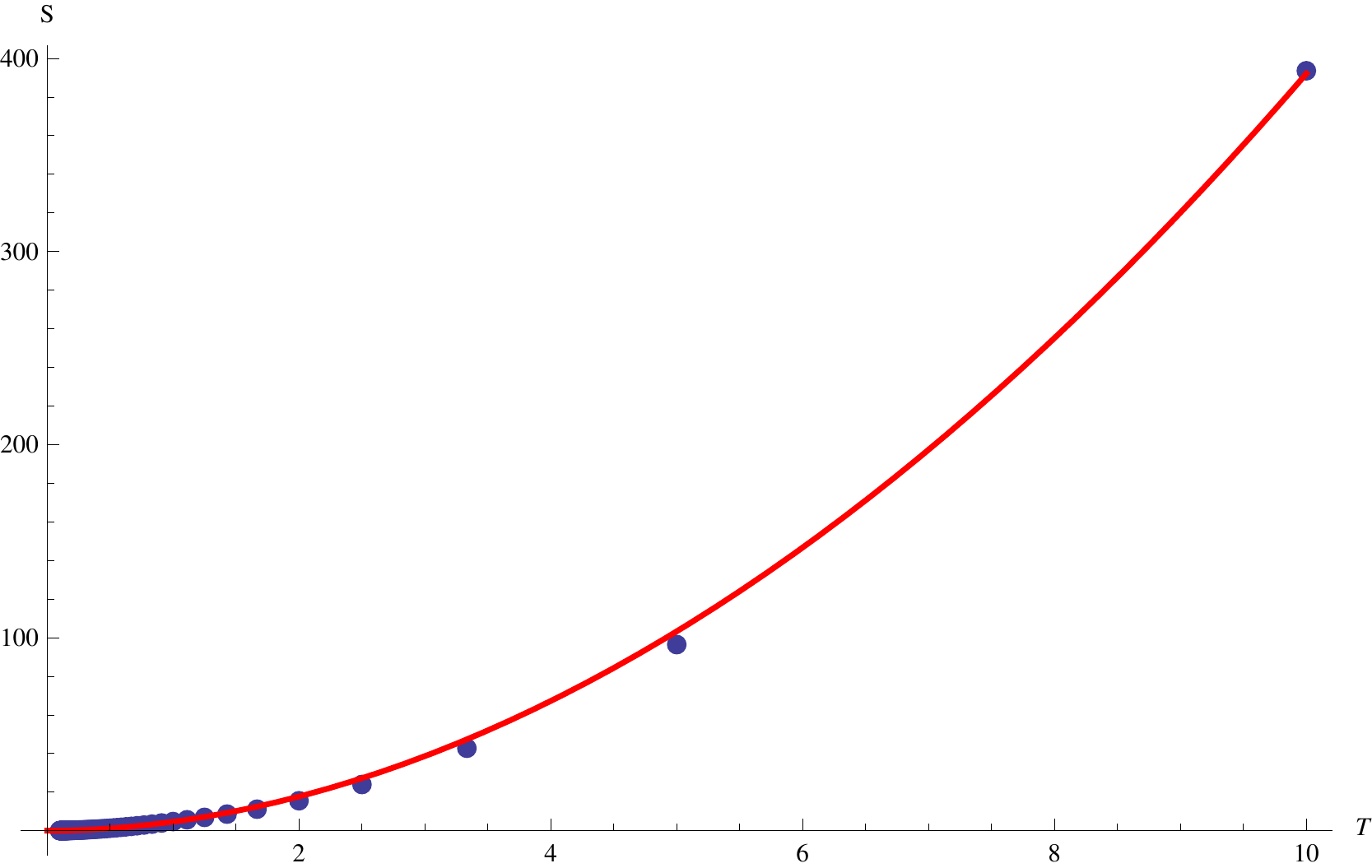}
\caption{The entropy as a function of ``temperature'' $T$.}
\label{entopry}
\end{figure} 

\newpage
The study of noncommutative path integrals is in its infancy and the
fundamentals of the theory are being developed from a $C^*$-algebraic point of
view.  The Mathai-Rosenberg $\sigma$-model \cite{MR} is an interesting
noncommutative $\sigma$-model to study since we can create a finite dimensional
representation which highlights the basic ideas of the model and enables us to
calculate specific thermodynamic quantities.  Specifically, if we make the
assumption that $r$, which was introduced in equation (\ref{x_and_y}), plays the
role of the inverse temperature then we can determine the specific heat as well
as the entropy of the finite dimensional representation as a function of the
``temperature''.  Even when making this bold assumption we do not observe clear
phase transitions in the system.  This can be ascribed to the fact that
interactions have not yet been included into the model.  In order to study
interactions from the point of view of $C^*$-algebras we have to study the
differential geometry of 
noncommutative spaces, the foundations of the most widely accepted approach
having been laid by Connes in his ground breaking book \cite{Connes}.  The
introduction of interactions in the Mathai-Rosenberg $\sigma$-model might be a
suitable project for future work. 

When considering higher dimensional representations the thermodynamic quantities
have the same general shape as in the $M_2(\mathbb{C})$ case, which leads us to
make the conjecture that when taking the ``thermodynamic'' limit, which in this
case can be seen as $n\rightarrow \infty$, the finite dimensional representation
approximates the behaviour of the case in Section \ref{qt_section}.  In the
$C^*$-algebraic framework there is still room available for improvement and the
finite dimensional representation might be able to shed some light on the
matter.

\chapter{K-Theory and Morita Equivalence}\label{KM}\noindent
\begin{center}
\begin{quotation}
 \textsl{``The most useful piece of advice I would give to a mathematics student is always to suspect an impressive sounding Theorem if it does not have a special case which is both simple and non-trivial.''\newline
-Michael Atiyah}
\end{quotation}
\end{center}
In order to study noncommutative $\sigma$-models there has to exist mappings
between the target space and world sheet.  Without such mappings we are not able
to construct an action and all dynamics will be lost.  In this chapter we
summarize the basic ideas regarding K-theory and Morita equivalence that will be
used in proving the existence theorems that appear in the following chapter. 
For completeness sake we show the construction of the $K_0$ and $K_1$ groups. 
This will give the reader some background information regarding the origin of
the abelian cancellative groups used in K-theory.  
\section{K-Theory}\noindent
We follow the same procedure as \cite[Chapter 7]{murphy} when dealing with the
K-theory of $C^*$-algebras.  Letting $A$ be any unital *-algebra, we define the
following set of projections
\begin{equation}\label{P[A]}
 P[A]:=\bigcup^{\infty}_{n=1}\{p\in M_n(A):p^2=p=p^*\}
\end{equation}
\begin{definition}(Equivalent Projections)\cite[p. 218]{murphy}\newline
Two projections $p,q\in P[A]$ are said to be \emph{equivalent} and denoted by $p\sim q$ if there is a rectangular matrix $u$ with entries in $A$ such that $p=u^*u$ and $q=uu^*$. 
\end{definition}
\begin{definition}(Stable Equivalence)\cite[p. 219]{murphy}\label{stably_equivalent}\newline
Let $A$ be a unital *-algebra.  We say that two projections $p,q\in P[A]$ are \emph{stably equivalent} and write $p\approx q$ if there is a positive integer $n$ such that $\I_n\oplus p\sim \I_n\oplus q$.  Here $\I_n$ denotes the $n\times n$ identity matrix.
\end{definition}
Now, for any $p\in P[A]$ let $[p]$ denote its stable equivalence class and let $K_0(A)^+$ denote the set of all these equivalence classes.  For any $[p],[q]\in K_0(A)^+$ define
\begin{equation}\label{productK0}
 [p]+[q]:=[p\oplus q]
\end{equation}
By Theorem 7.1.2 of \cite{murphy} we know that $K_0(A)^+$ is a cancellative
abelian semigroup with zero element $[0]$.  We will now define the enveloping or
Grothendieck group $G(N)$ of an abelian cancellative group $N$ by construction.
First of all define an equivalence relation $\asymp$ on $N\times N$ by declaring
$(x,y)\asymp(z,t)$ if, and only if
\begin{equation*}
 x+t=y+z.
\end{equation*}
Now let $[x,y]$ denote the equivalence classes of $(x,y)$ and let $G(N)$ be the set of all such equivalence classes.  $G(N)$ is an additive group with operation defined by
\begin{equation*}
 [x,y]+[z,t]:=[x+z,y+t]
\end{equation*}
From this we can easily see that the inverse of $[x,y]$ is $[y,x]$.
\begin{equation*}
 [x,y]+[y,x]=[x+y,y+x]
\end{equation*}
But $(x+y,y+x)\asymp(z,t)$ if $x+y+t=z+x+y$ which implies that $z=t$ because the cancellation property holds.

We can easily see that the map
\begin{equation*}
 \varphi:N\rightarrow G(N):x\mapsto [x,0]
\end{equation*}
is a homomorphism, since for $x,y\in N$
\begin{equation*}
 \varphi(x+y)=[x+y,0]=[x,0]+[y+0]=\varphi(x)+\varphi(y).
\end{equation*}
It is also clear form the definition of $\asymp$ that $\varphi$ is injective.  It is now natural to identify $N$ as a subsemigroup of $G(N)$ by identifying $x$ with $[x,0]$.  This can be visualised by thinking of elements of $G(N)$ to be differences of elements in $N$.  Mathematically speaking we can write
\begin{equation*}
 G(N)=\{x-y:x,y\in N\}.
\end{equation*}
Now we have enough information to define the K-theory of a *-algebra.
\begin{definition}($K_0(A)$)\cite[p. 220]{murphy}\newline
Let $A$ be a unital *-algebra.  We define $K_0(A)$ to be the Grothendieck group of $K_0(A)^+$.
\end{definition}

We can also define maps between different $K_0$ groups using *-homomorphisms.  Consider the *-homomorphism 
\begin{equation*}
 \varphi:A\rightarrow B
\end{equation*}
between *-algebras $A$ and $B$.  If $a:=(a_{ij})$ is an $m\times n$ matrix with entries in $A$ we can now extend $\varphi$ to the corresponding matrix algebra. 
\begin{equation*}
 \varphi(a)=(\varphi(a_{ij}))
\end{equation*}
where $(\varphi(a_{ij}))$ is an $m\times n$ matrix with entries in $B$.  Consider now another $n\times p$ matrix, $b$ with entries in $A$, clearly $ab$ is a $m\times p$ matrix with entries in $A$.
\begin{eqnarray*}
 \varphi(ab)&=&
\varphi\left[\left(\begin{array}{lll}
a_{11} & \dots & a_{1n}\\
 \vdots&\ddots  & \vdots\\
a_{m1} & \dots & a_{mn} 
\end{array}\right)
\left(\begin{array}{lll}
b_{11} & \dots & b_{1p}\\
 \vdots&\ddots  & \vdots\\
b_{n1} & \dots & b_{np} 
\end{array}\right)\right]\\
&=&\varphi\left(
\begin{array}{lll}
\sum^{n}_{i=1}a_{1i}b_{i1} & \dots & \sum^{n}_{i=1}a_{1i}b_{ip}\\
 \vdots&\ddots  & \vdots\\
\sum^{n}_{i=1}a_{mi}b_{i1} & \dots & \sum^n_{i=1}a_{mi}b_{in} 
\end{array}
\right)\\
&=&\left(
\begin{array}{lll}
\sum^{n}_{i=1}\varphi(a_{1i})\varphi(b_{i1}) & \dots & \sum^{n}_{i=1}\varphi(a_{1i})\varphi(b_{ip})\\
 \vdots&\ddots  & \vdots\\
\sum^{n}_{i=1}\varphi(a_{mi})\varphi(b_{i1}) & \dots & \sum^n_{i=1}\varphi(a_{mi})\varphi(b_{in}) 
\end{array}
\right)\\
&=&\left(
\begin{array}{lll}
\varphi(a_{11}) & \dots & \varphi(a_{1n})\\
 \vdots&\ddots  & \vdots\\
\varphi(a_{m1}) & \dots & \varphi(a_{mn}) 
\end{array}\right)
\left(\begin{array}{lll}
\varphi(b_{11}) & \dots & \varphi(b_{1p})\\
 \vdots&\ddots  & \vdots\\
\varphi(b_{n1}) & \dots & \varphi(b_{np}) 
\end{array}\right)\\
&=&\varphi(a)\varphi(b)
\end{eqnarray*}
In a similar way we can also show that in the case where $m=n$, we have $\varphi(a^*)=\varphi(a)^*$, so $\varphi:M_n(A)\rightarrow M_n(B)$ is a *-homomorphism.
\begin{lemma}\cite[p. 220]{murphy}\label{lem1}
 Let $A$ and $B$ be *-algebras and let $\varphi:A\rightarrow B$ be a *-homomorphism between them.  If $p\sim q$ then $\varphi(p)\sim\varphi(q)$.
\end{lemma}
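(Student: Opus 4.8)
The plan is to reduce everything to the already-established fact that $\varphi$ extends to a multiplicative, involution-preserving map on matrix algebras. Recall that $p\sim q$ means precisely that there is a rectangular matrix $u$ with entries in $A$ satisfying $p=u^*u$ and $q=uu^*$. So the strategy is to apply the matrix-extended $\varphi$ to these two defining relations and exhibit a single rectangular matrix over $B$ that witnesses $\varphi(p)\sim\varphi(q)$.

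First I would recall the extension of $\varphi$ to matrices. Just above this lemma we verified that for square matrices $\varphi(ab)=\varphi(a)\varphi(b)$ and $\varphi(a^*)=\varphi(a)^*$, so that $\varphi:M_n(A)\to M_n(B)$ is a $*$-homomorphism. The only mild point to address is that the $u$ witnessing $p\sim q$ is \emph{rectangular} (allowing $p$ and $q$ to sit in matrix algebras of different sizes), so I would note that the same entrywise definition $\varphi((a_{ij}))=(\varphi(a_{ij}))$ makes sense for rectangular matrices and that the computation already carried out for the product $\varphi(ab)=\varphi(a)\varphi(b)$ goes through verbatim whenever the shapes are compatible, and likewise $\varphi(a^*)=\varphi(a)^*$.

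The core of the argument is then a two-line computation. Setting $w:=\varphi(u)$, a rectangular matrix over $B$, I would write
\begin{equation*}
 \varphi(p)=\varphi(u^*u)=\varphi(u^*)\varphi(u)=w^*w,
\qquad
 \varphi(q)=\varphi(uu^*)=\varphi(u)\varphi(u^*)=ww^*.
\end{equation*}
Since $\varphi(p)$ and $\varphi(q)$ are thereby expressed in the form $w^*w$ and $ww^*$ for one and the same rectangular matrix $w$ over $B$, the definition of equivalence gives $\varphi(p)\sim\varphi(q)$.

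There is essentially no obstacle here; the lemma is a direct consequence of the functoriality of the matrix extension of $\varphi$. The only thing that requires a word of care is making explicit that the product and involution identities for $\varphi$, which were proved for square matrices, remain valid for the rectangular matrix $u$ implementing the equivalence. Once that bookkeeping is acknowledged, the result follows immediately, and I would end the proof right after displaying the two identities above.
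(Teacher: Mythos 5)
Your proof is correct and is essentially the same as the paper's: the paper simply remarks that the lemma ``follows trivially from the argument just before,'' namely the entrywise matrix extension of $\varphi$ with $\varphi(ab)=\varphi(a)\varphi(b)$ and $\varphi(a^*)=\varphi(a)^*$, applied to $p=u^*u$ and $q=uu^*$. Your extra care about the rectangular case is a welcome (and harmless) elaboration, since the paper's multiplicativity computation is already carried out for rectangular shapes.
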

The proof follows trivially from the argument just before the lemma.
\begin{lemma}\cite[p. 220]{murphy}
Let $\varphi:A\rightarrow B$ be a unital *-homomorphism between unital *-algebras.  If $p\approx q$ then $\varphi(p)\approx\varphi(q)$. 
\end{lemma}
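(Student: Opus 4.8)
The plan is to reduce the statement entirely to the already-established Lemma \ref{lem1} together with the unitality of $\varphi$. By hypothesis $p \approx q$, so Definition \ref{stably_equivalent} furnishes a positive integer $n$ with $\I_n \oplus p \sim \I_n \oplus q$, where both sides are projections in a suitable matrix algebra over $A$.

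First I would extend $\varphi$ entrywise to the relevant matrix algebras, exactly as in the discussion preceding Lemma \ref{lem1}, so that $\varphi$ is again a *-homomorphism at the level of matrices. Applying Lemma \ref{lem1} to the equivalence $\I_n \oplus p \sim \I_n \oplus q$ then immediately gives
\begin{equation*}
 \varphi(\I_n \oplus p) \sim \varphi(\I_n \oplus q).
\end{equation*}

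The key step is to compute $\varphi(\I_n \oplus p)$ explicitly. Since $\varphi$ acts entrywise and the block matrix $\I_n \oplus p$ is block-diagonal, with the unit $1_A$ along the first $n$ diagonal entries and $p$ in the remaining block, unitality of $\varphi$ gives $\varphi(1_A) = 1_B$, so the image is block-diagonal with $1_B$ along the first $n$ diagonal entries and $\varphi(p)$ in the remaining block. Hence
\begin{equation*}
 \varphi(\I_n \oplus p) = \I_n \oplus \varphi(p), \qquad \varphi(\I_n \oplus q) = \I_n \oplus \varphi(q).
\end{equation*}
Substituting these into the equivalence above yields $\I_n \oplus \varphi(p) \sim \I_n \oplus \varphi(q)$, which is exactly the assertion $\varphi(p) \approx \varphi(q)$ by Definition \ref{stably_equivalent}.

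The only genuine point requiring care is the bookkeeping in the third step: one must verify that applying a unital *-homomorphism entrywise commutes with forming the block sum $\I_n \oplus (\cdot)$, and in particular that unitality is precisely what sends the identity block to the identity block (a non-unital $\varphi$ could map $1_A$ to a proper subprojection, breaking the argument). Everything else is a direct appeal to Lemma \ref{lem1} and the definition of stable equivalence, so I expect no further difficulty.
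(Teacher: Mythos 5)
Your proof is correct and follows exactly the route the paper indicates (the paper merely remarks that the result ``follows from standard matrix manipulation and Lemma \ref{lem1}''): you apply the entrywise extension of $\varphi$, invoke Lemma \ref{lem1} on the equivalence $\I_n\oplus p\sim \I_n\oplus q$, and use unitality to identify $\varphi(\I_n\oplus p)$ with $\I_n\oplus\varphi(p)$. You have simply written out the details the paper leaves implicit, and your observation that unitality is the essential ingredient is exactly the right point to emphasize.
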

The proof follows from standard matrix manipulation and Lemma \ref{lem1}.  

\begin{lemma}\label{welldefinedhomomorphism}
Let $A$ and $B$ be unital $C^*$-algebras, then
\begin{equation*}
 \varphi_*:K_0(A)\rightarrow K_0(B):[p]\mapsto[\varphi(p)]
\end{equation*}
is a well defined group homomorphism.
\end{lemma}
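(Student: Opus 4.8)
The plan is to construct $\varphi_*$ in two stages, first on the semigroup $K_0(A)^+$ and then extending to its Grothendieck group $K_0(A)$, with the two immediately preceding lemmas supplying the essential inputs. Throughout, one must keep in mind that $[p]$ denotes the stable equivalence class of a projection $p\in P[A]$, while elements of the Grothendieck group $G(K_0(A)^+)$ are classes $[x,y]$ of pairs; conflating these two bracket notations is the main thing to guard against.

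First I would verify well-definedness on $K_0(A)^+$. Suppose $[p]=[q]$, i.e. $p\approx q$. By the immediately preceding lemma (a unital *-homomorphism preserves stable equivalence) we get $\varphi(p)\approx\varphi(q)$, hence $[\varphi(p)]=[\varphi(q)]$ in $K_0(B)^+$, so the assignment $[p]\mapsto[\varphi(p)]$ does not depend on the chosen representative. Note it is precisely the $\approx$-lemma, not the weaker $\sim$-lemma (Lemma \ref{lem1}), that is needed here. Next I would check compatibility with the operation in (\ref{productK0}). Since applying $\varphi$ blockwise to a block-diagonal matrix returns the block-diagonal matrix of images, we have $\varphi(p\oplus q)=\varphi(p)\oplus\varphi(q)$, whence $[\varphi(p\oplus q)]=[\varphi(p)]+[\varphi(q)]$. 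This exhibits $\psi:[p]\mapsto[\varphi(p)]$ as a semigroup homomorphism $K_0(A)^+\to K_0(B)^+$, and composing with the canonical inclusion $K_0(B)^+\hookrightarrow K_0(B)$ yields a semigroup homomorphism into the abelian group $K_0(B)$.

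Finally I would extend $\psi$ to $K_0(A)=G(K_0(A)^+)$ by setting $\varphi_*[x,y]:=[\psi(x),\psi(y)]$ for $x,y\in K_0(A)^+$. Well-definedness follows directly from the definition of the relation $\asymp$: if $(x,y)\asymp(z,t)$, that is $x+t=y+z$, then applying the additive map $\psi$ gives $\psi(x)+\psi(t)=\psi(y)+\psi(z)$, so $(\psi(x),\psi(y))\asymp(\psi(z),\psi(t))$ and the two pairs land in the same class. The homomorphism property is then immediate from the additivity of $\psi$ together with the definition of addition $[x,y]+[z,t]=[x+z,y+t]$ in $G(K_0(A)^+)$.

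Every computation involved is routine; the content lies entirely in the well-definedness checks at the two levels, which is exactly where the preceding lemmas on $\sim$ and $\approx$ are used. I therefore expect no genuine obstacle beyond careful bookkeeping — in particular, making sure the stable-equivalence lemma is invoked for the first check and that the two bracket conventions are kept cleanly separated when writing out the extension to the Grothendieck group.
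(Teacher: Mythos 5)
Your proposal is correct and follows essentially the same route as the paper: well-definedness on $K_0(A)^+$ via the lemma that unital *-homomorphisms preserve stable equivalence, additivity via $\varphi(p\oplus q)=\varphi(p)\oplus\varphi(q)$, and then extension through the Grothendieck group construction. You simply spell out the last step (the explicit formula $[x,y]\mapsto[\psi(x),\psi(y)]$ and its compatibility with $\asymp$) more carefully than the paper, which leaves it to ``the properties of the Grothendieck group.''
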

\begin{proof}
The above two results guarantee that 
\begin{equation*}
\varphi_*:K_0(A)^+\rightarrow K_0(B)^+ 
\end{equation*}
is well defined by setting
\begin{equation*}
 \varphi_*([p])=[\varphi(p)].
\end{equation*}
From matrix calculations we can show that
\begin{equation*}
\varphi(p\oplus q)=\varphi(p)\oplus\varphi(q) 
\end{equation*}
which then implies that
\begin{equation*}
 \varphi_*([p]+[q])=\varphi_*([p])+\varphi_*([q]).
\end{equation*}
Hence $\varphi_*$ is a homomorphism.  These results can then be extended to show that
\begin{equation*}
 \varphi_*:K_0(A)^+\rightarrow K_0(B)
\end{equation*}
is a well defined homomorphism.  Finally, the properties of the Grothendieck group $G(N)$ then imply that the map
\begin{equation*}
 \varphi_*:K_0(A)\rightarrow K_0(B):[p]\mapsto[\varphi(p)]
\end{equation*}
is a well defined group homomorphism.
\end{proof}
%put the definition of the non unital case here%

\begin{remark}\label{nonunitalK}
Let $A$ be a non-unital $C^*$-algebra.  We denote its unitization by
$\widetilde{A}:=A\oplus\mathbb{C}$.  If
$\tau:\widetilde{A}\rightarrow\mathbb{C}$ is the canonical *-homomorphism
\cite[p. 208]{murphy}, we set $\widetilde{K}_0(A):=\ker(\tau_*)$.  Hence
$\widetilde{K}_0(A)$ is a subgroup of $K_0(\widetilde{A})$.  If
$\varphi:A\rightarrow B$ is a *-homomorphism of $C^*$-algebras and
$\widetilde{\varphi}:\widetilde{A}\rightarrow \widetilde{B}$ is the unique
unital *-homomorphism extending $\varphi$, then
$\widetilde{\varphi}_{*}\left(\widetilde{K}_0(A) \right)\subset
\widetilde{K}_0(B)$.  Hence we get a homomorphism
$\varphi_*:\widetilde{K}_0(A)\rightarrow \widetilde{K}_0(B)$ restricting
$\widetilde{\varphi}_*$ \cite[p. 229]{murphy}.  Now let $\I$ be the unit of
$\widetilde{A}$ and consider any $x\in \widetilde{K}_0(A)$.  According to the
construction of the Grothendieck group $G(N)$ we can write $x=[r]-[q]$ for two
projections $r$ and $q$, both of which we may suppose to be elements of
$M_n(\widetilde{A})
$ for some $n$.  Then we can write
\begin{eqnarray*}
 x=[r]+[\I_n -q] -[\I_n]&=&[r\oplus(\I_n -q)]-[\I_n]\\
&:=&[p]-[\I_n]
\end{eqnarray*}
where $p\in P[\widetilde{A}]$.  This enables us to use the theory developed
earlier to treat the nonunital case by extending any nonunital $C^*$-algebra to
its unitization.
\end{remark}

\begin{definition}\label{cone_suspension}(Cone, Suspension of $C^*$-Algebra)\cite[p. 246]{murphy}
Let $A$ be a $C^*$-algebra.  The \emph{cone} of $A$ is defined by
\begin{equation*}
 \mathcal{C}(A):=\{f\in A[0,1]:f(1)=0\}
\end{equation*}
The \emph{suspension} of $A$ is defined by
\begin{equation*}
 S(A):=\{f\in \mathcal{C}(A): f(0)=0\}.
\end{equation*}
Here $A[0,1]$ is a shorthand notation for $C([0,1],A)$.
\end{definition}
Note that $C([0,1],A)$ is a $C^*$-algebra equipped with pointwise operations,
sup norm and for any $f\in C([0,1],A)$ the involution given by
$f^*(x)=\left(f(x)\right)^*$.
\begin{lemma}
Let $A$ be a $C^*$-algebra.  The cone, $\mathcal{C}(A)$ is a $C^*$-algebra. 
\end{lemma}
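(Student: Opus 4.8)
The plan is to realize $\mathcal{C}(A)$ as the kernel of a $*$-homomorphism and thereby avoid any direct verification of the $C^*$-axioms. Concretely, I would first invoke the note immediately preceding the statement, which tells us that $C([0,1],A)$ is a $C^*$-algebra under pointwise operations, the sup norm, and the involution $f^*(x)=\left(f(x)\right)^*$. Then I would introduce the evaluation map
\begin{equation*}
 \mathrm{ev}_1:C([0,1],A)\rightarrow A:f\mapsto f(1),
\end{equation*}
and observe that since all operations on $C([0,1],A)$ are defined pointwise, $\mathrm{ev}_1$ respects addition, scalar multiplication, products, and the involution; that is, $\mathrm{ev}_1$ is a $*$-homomorphism. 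By definition $\mathcal{C}(A)=\ker(\mathrm{ev}_1)$.

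The key steps then follow the same pattern already used in Section~1.1 of this thesis, where a closed, self-adjoint subalgebra of a $C^*$-algebra was shown to be a $C^*$-algebra in its own right. First I would note that $\mathrm{ev}_1$ is norm-decreasing, since $\|f(1)\|\leq\sup_{x\in[0,1]}\|f(x)\|=\|f\|$, and hence continuous. A continuous $*$-homomorphism has a kernel that is automatically a closed, two-sided, self-adjoint ideal: closedness comes from continuity, the subalgebra and involution-invariance properties come from $\mathrm{ev}_1$ being a $*$-homomorphism. Thus $\mathcal{C}(A)$ is a closed $*$-subalgebra of $C([0,1],A)$. Finally, a closed $*$-subalgebra of a $C^*$-algebra inherits the norm and the $C^*$-identity $\|a^*a\|=\|a\|^2$, so it is itself a $C^*$-algebra; applying this to $\mathcal{C}(A)$ concludes the proof.

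If one prefers to avoid appealing to the general fact about kernels, the same conclusion can be reached by checking the subalgebra conditions directly: for $f,g\in\mathcal{C}(A)$ and $\lambda\in\mathbb{C}$ one has $(f+g)(1)=f(1)+g(1)=0$, $(\lambda f)(1)=0$, $(fg)(1)=f(1)g(1)=0$, and $f^*(1)=\left(f(1)\right)^*=0$, so $\mathcal{C}(A)$ is a self-adjoint subalgebra; closedness then follows because if $f_n\to f$ uniformly with each $f_n(1)=0$, then $\|f(1)\|=\|f(1)-f_n(1)\|\leq\|f-f_n\|\to 0$, giving $f(1)=0$. I expect no serious obstacle here, as every step is routine; the only point deserving care is establishing closedness, which rests precisely on the continuity of evaluation at the endpoint, and I would present that estimate explicitly so that the reader sees why $\mathcal{C}(A)$ is a \emph{closed} subalgebra rather than merely a subalgebra.
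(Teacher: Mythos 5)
Your proof is correct, but it takes a different route from the one in the thesis. The thesis proves the lemma by direct verification: it takes a Cauchy sequence $\{f_n\}$ in $\mathcal{C}(A)$, shows the pointwise limit $f$ is continuous, that $f_n\to f$ in the sup norm, that $f(1)=0$, and finally checks the identity $\|f^*f\|=\|f\|^2$ by hand. You instead exploit the note preceding the definition --- that $C([0,1],A)$ is already a $C^*$-algebra --- and exhibit $\mathcal{C}(A)$ as the kernel of the evaluation $*$-homomorphism $\mathrm{ev}_1$, hence as a closed self-adjoint subalgebra, which is a $C^*$-algebra by the same principle the thesis invokes at the end of Section 1.1. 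Your argument is shorter and avoids re-proving completeness of a space of continuous functions, which the thesis in effect does twice (once implicitly in the note, once in the proof); the cost is that it leans on the note being taken as established, whereas the thesis's computation is self-contained. Your explicit estimate $\|f(1)\|\leq\|f\|$ for closedness is exactly the right point to isolate, and your fallback direct check of the subalgebra axioms is also sound. Either approach is acceptable; yours additionally records that $\mathcal{C}(A)$ is an ideal in $C([0,1],A)$, which the thesis only establishes later for the suspension.
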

\begin{proof}
It is clear that $\mathcal{C}(A)$ is a *-algebra.  It only remains to check that $\mathcal{C}(A)$ is complete and satisfies the $C^*$-algebra norm.  Let $\{f_n\}$ be a Cauchy sequence in $\mathcal{C}(A)$.  Given $\epsilon>0$, then there is an $N$ such that 
\begin{eqnarray*}
 \|f_n-f_m\|=\sup_{x\in [0,1]}\|f_n(x)-f_m(x)\|<\frac{\epsilon}{3}
\end{eqnarray*}
whenever $n,m>N$.  Fix $n>N$ and select some $\delta>0$ for which we have
\begin{equation*}
 \|f_n(x)-f_m(y)\|<\frac{\epsilon}{3}\text{ whenever }\|x-y\|<\delta.
\end{equation*}
Letting $m\rightarrow \infty$ we get $\|fn-f\|\leq\frac{\epsilon}{3}$.  Then we
can write
\begin{eqnarray*}
\|f(x)-f(y)\|&=&\|f(x)-f_n(x)+f_n(x)-f_n(y)+f_n(y)-f(y)\|\\
 &\leq&\|f(x)-f_n(x)\|+\|f_n(x)-f_n(y)\|+\|f_n(y)-f(y)\|\\
&<&\epsilon
\end{eqnarray*}
which shows that $f$ is continuous.  Next note that
\begin{eqnarray*}
 \|f_n-f\|&=&\sup_{x\in[1,0]}\left\|f_n(x)-f(x) \right\|\\
&=&\sup_{x\in[1,0]}\left\|f_n(x)-\lim_{m\rightarrow\infty}f_m(x) \right\|\\
&=&\sup_{x\in[1,0]}\lim_{m\rightarrow\infty}\left\|f_n(x)-f_m(x) \right\|\\
&<&\epsilon.
\end{eqnarray*}
Hence $f_n$ converges to $f$ in $\mathcal{C}(A)$.  Lastly we need to verify that
$f$ is in $\mathcal{C}(A)$.  Clearly
\begin{equation*}
 0=\lim_{n\rightarrow \infty}f_n(1)=f(1).
\end{equation*}
So $\mathcal{C}(A)$ is complete and hence a Banach algebra.  It remains to
verify that the norm is a $C^*$-algebra norm.  Let $f\in\mathcal{C}(A)$
\begin{eqnarray*}
 \|f\|^2&=&\sup_{x\in[0,1]} \|f(x) \|^2\\
&=& \sup_{x\in[0,1]} \| \left(f(x)\right)^*f(x)\|\\
&=& \sup_{x\in[0,1]} \| \left(f^*f\right)(x)\|\\
&=& \|f^*f\|
\end{eqnarray*}
This concludes the proof.
\end{proof}
\begin{lemma}
Let $A$ be a $C^*$-algebra.  The suspension, $S(A)$ is a closed ideal in $\mathcal{C}(A)$ and therefore a $C^*$-algebra.
\end{lemma}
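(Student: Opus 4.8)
The plan is to exhibit $S(A)$ as the kernel of a continuous $*$-homomorphism, from which both the ideal property and the closedness follow at once, and then to invoke the same principle used above for the cone: a closed, self-adjoint subalgebra of a $C^*$-algebra is itself a $C^*$-algebra.

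First I would introduce the evaluation functional $\mathrm{ev}_0:\mathcal{C}(A)\rightarrow A$ defined by $\mathrm{ev}_0(f):=f(0)$. Working from the pointwise operations and the involution $f^*(x)=\left(f(x)\right)^*$ on $C([0,1],A)$, one checks directly that $\mathrm{ev}_0$ is linear, multiplicative, and $*$-preserving, so it is a $*$-homomorphism; moreover $\|\mathrm{ev}_0(f)\|=\|f(0)\|\leq\sup_{x\in[0,1]}\|f(x)\|=\|f\|$, so it is contractive and in particular continuous. By the very definition of the suspension, $S(A)=\{f\in\mathcal{C}(A):f(0)=0\}=\ker(\mathrm{ev}_0)$.

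Then the two assertions are immediate. The kernel of a $*$-homomorphism is a two-sided, self-adjoint ideal, and since $\mathrm{ev}_0$ is continuous its kernel is closed in $\mathcal{C}(A)$. For completeness I would also record the direct verifications: if $f\in S(A)$ and $g\in\mathcal{C}(A)$, then $(fg)(0)=f(0)g(0)=0$ and $(gf)(0)=g(0)f(0)=0$, while $(fg)(1)=f(1)g(1)=0$ and $(gf)(1)=g(1)f(1)=0$ because $f(1)=0$ already holds in $\mathcal{C}(A)$, so $fg,gf\in S(A)$; and if $f_n\rightarrow f$ uniformly with each $f_n\in S(A)$, then $f_n(0)\rightarrow f(0)$ forces $f(0)=0$, hence $f\in S(A)$.

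Finally, having shown that $S(A)$ is a closed, self-adjoint subalgebra of the $C^*$-algebra $\mathcal{C}(A)$, I would conclude exactly as in the construction of $C^*(\mathcal{O})$ in the first section that $S(A)$ is a $C^*$-algebra in its own right under the inherited norm. There is no serious obstacle here: the only point needing a moment's care is that the constraint $f(0)=0$ is preserved under uniform limits, which is precisely the statement that $\mathrm{ev}_0$ is continuous. If one prefers to avoid appealing to the theorem that closed ideals in $C^*$-algebras are automatically $*$-closed, the self-adjointness observation above lets us apply the closed-subalgebra criterion directly.
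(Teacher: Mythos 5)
Your proof is correct. The route is slightly different from the one taken in the text: you organize the argument around the observation that $S(A)=\ker(\mathrm{ev}_0)$ for the evaluation $*$-homomorphism $\mathrm{ev}_0:\mathcal{C}(A)\rightarrow A$, $f\mapsto f(0)$, so that the ideal property and closedness both drop out of general facts about kernels of continuous $*$-homomorphisms, whereas the text proceeds purely by hand: it checks $(fg)(0)=f(0)g(0)=0$ and $(gf)(0)=0$ directly to get the two-sided ideal property, and verifies closedness by passing the condition $f(0)=0$ through a uniform limit. Since your proposal also records exactly these direct verifications as a fallback, it subsumes the paper's argument; what the kernel framing buys you is conceptual economy and reusability (the same one-line argument handles the cone, the suspension, and any fibre-evaluation constraint), while the paper's element-wise computation has the advantage of not invoking any general theory beyond the definition of the sup norm. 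Your closing remark is also well taken: either one quotes the theorem that closed two-sided ideals in $C^*$-algebras are automatically self-adjoint, or one notes (as you do) that $S(A)$ is visibly $*$-closed and applies the closed-$*$-subalgebra criterion; the paper implicitly relies on the former in the phrase ``and therefore a $C^*$-algebra,'' so your explicit self-adjointness observation is if anything a small improvement in rigour.
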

\begin{proof}
We will show that $S(A)$ is a closed, two sided ideal in $\mathcal{C}(A)$.  Involution on $S(A)$ is inherited from $\mathcal{C}(A)$.  For any $f\in S(A)$, $g\in\mathcal{C}(A)$ and $x\in[0,1]$ we have
\begin{eqnarray*}
(fg)(x)&=&f(x)g(x)\\
(fg)(1)=f(1)g(1)&=&0=g(1)f(1)=(gf)(1)\\
(fg)(0)=f(0)g(0)&=&0=g(0)f(0)=(gf)(0), 
\end{eqnarray*}
hence $fg,gf\in S(A)$ and $S(A)$ is an ideal of $\mathcal{C}(A)$.  Since both
$f$ and $g$ are continuous it is clear that $fg$ and $fg$ are also continuous. 
It remains to show that $S(A)$ is closed.  Consider a sequence, $\{f_n\}$ in
$S(A)$ converging to $f\in\mathcal{C}(A)$; so we already have $f(1)=0$.  Since
these functions are continuous we can write
\begin{equation*}
 \lim_{n\rightarrow\infty}f_n(x)=f(x)
\end{equation*}
and in particular
\begin{equation*}
 0=\lim_{n\rightarrow\infty}f_n(0)=f(0). 
\end{equation*}
This shows that $S(A)$ is a closed ideal of the $C^*$-algebra $\mathcal{C}(A)$ and hence is a $C^*$-algebra in its own right.
\end{proof}
Notice that $S(A)$ is a nonunital $C^*$-algebra and when we want to calculate its K-theory we have to make use of Remark \ref{nonunitalK}.
\begin{definition}($K_1(A)$)\newline
 $K_1(A)$ is defined as the $K_0$ group of the suspension of $A$
\begin{equation*}
 K_1(A):=K_0(S(A)).
\end{equation*}
\end{definition}
\begin{definition}(Stably Isomorphic)\newline
 Two $C^*$-algebras $A$ and $B$ are said to be \emph{stably isomorphic} if $A\otimes K$ is isomorphic to $B\otimes K$ where $K$ is the space of compact operators on some separable infinite dimensional Hilbert space $H$.
\end{definition}
\begin{proposition}\label{isoK}
 Stably isomorphic $C^*$-algebras have isomorphic $K_0$-groups.
\end{proposition}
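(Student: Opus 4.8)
The plan is to split the statement into two parts: the functoriality of $K_0$, which is already available from Lemma \ref{welldefinedhomomorphism} together with its non-unital extension in Remark \ref{nonunitalK}, and the \emph{stability} of $K_0$, by which I mean the natural isomorphism $K_0(A)\cong K_0(A\otimes K)$ holding for every $C^*$-algebra $A$. Once stability is in hand the proposition is immediate. Indeed, if $\Phi:A\otimes K\to B\otimes K$ is a *-isomorphism witnessing the stable isomorphism, then $\Phi_*$ and $(\Phi^{-1})_*=(\Phi_*)^{-1}$ are mutually inverse group homomorphisms, so $K_0(A\otimes K)\cong K_0(B\otimes K)$, and the chain
\begin{equation*}
K_0(A)\cong K_0(A\otimes K)\cong K_0(B\otimes K)\cong K_0(B)
\end{equation*}
finishes the argument.

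The bulk of the work is therefore to establish stability. First I would identify $A\otimes K$ with the inductive limit of the amplifications $M_n(A)=A\otimes M_n(\mathbb{C})$: since $K$ is the norm closure of the increasing union $\bigcup_n M_n(\mathbb{C})$ under the corner inclusions $a\mapsto a\oplus 0$, the algebra $A\otimes K$ is the closure of $\bigcup_n M_n(A)$ along the connecting *-homomorphisms $\iota_n:M_n(A)\hookrightarrow M_{n+1}(A)$, $x\mapsto x\oplus 0$. Next I would observe that each identification $K_0(M_n(A))\cong K_0(A)$ is already encoded in the definition of $K_0$: the set $P[A]$ of projections ranges over all matrix sizes, and the stable equivalence $\approx$ of Definition \ref{stably_equivalent} absorbs the passage from $A$ to $M_n(A)$, so the induced maps $(\iota_n)_*$ are isomorphisms and correspond to the identity on $K_0(A)$. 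Passing to the limit collapses $\varinjlim K_0(M_n(A))$ onto $K_0(A)$, giving the desired isomorphism with $K_0(A\otimes K)$.

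The hard part will be this limiting step, i.e.\ the continuity of $K_0$ along the inductive system $\{M_n(A),\iota_n\}$. Concretely one must show that every projection in $A\otimes K$ is equivalent, in the sense of the relation $\sim$, to a projection lying in some finite corner $M_n(A)$, and that two such projections equivalent in $A\otimes K$ are already stably equivalent at a finite stage. The existence part rests on the standard functional-calculus fact that a self-adjoint element lying within distance less than $\frac{1}{2}$ of a projection can be replaced by a genuine nearby projection to which it is homotopic (hence equivalent), combined with the density of $\bigcup_n M_n(A)$ in $A\otimes K$; the injectivity part is a similar approximation of the implementing partial isometries. A final bookkeeping point, which I would handle via Remark \ref{nonunitalK}, is that $A\otimes K$ is never unital, so all of the above must be carried out for the reduced group $\widetilde{K}_0$; this causes no real trouble, since the unitization and the augmentation kernel are functorial and compatible with the corner inclusions.
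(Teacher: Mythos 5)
Your overall plan is sound and is the standard argument: reduce the proposition to functoriality of $K_0$ plus the stability isomorphism $K_0(A)\cong K_0(A\otimes K)$, and obtain the latter by writing $A\otimes K$ as the closure of the increasing union $\bigcup_n M_n(A)$ and invoking continuity of $K_0$ under inductive limits. The paper, by contrast, does not prove the statement at all: its entire proof is a citation of \cite[Theorem 7.4.3]{murphy}, which is precisely the stability result you set out to establish. So your route is not wrong, it is simply the content of the cited theorem unpacked; what the paper's approach buys is brevity, and what yours buys is an actual argument. That said, be aware that as written your proposal is a plan rather than a proof at exactly the point you flag as "the hard part": you assert, but do not verify, that every projection in $A\otimes K$ is equivalent to one in some $M_n(A)$ and that equivalence in the limit algebra is witnessed at a finite stage. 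Both claims are true and follow from the functional-calculus perturbation lemma you mention (a self-adjoint element within distance $\tfrac{1}{2}$ of a projection is close to, and Murray--von Neumann equivalent to, a genuine projection) applied to both the projections and the implementing partial isometries, but these estimates need to be carried out, and the non-unital bookkeeping through $\widetilde{K}_0$ in Remark \ref{nonunitalK} must be threaded through each step. If you intend this as a self-contained proof rather than a reduction to a standard textbook theorem, those approximation arguments are the part that still has to be written down.
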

\begin{proof}
The proof follows from \cite[Theorem 7.4.3]{murphy}.
\end{proof}
A fundamental result from K-theory is the six term exact sequence, also known as the PV-sequence, which will be used in determining the K-theory of the quantum torus.  The results are given below without proof.
\begin{theorem}\label{PV}(Pimsner and Voiculescu Short Exact
Sequence)\cite[Theorem 2.4]{PV1980}\newline
 Suppose $\alpha$ is a *-automorphism of the $C^*$-algebra $A$.  Then there is a six term exact sequence
\begin{eqnarray*}
 \begin{diagram}
  K_0(A)&\rTo^{\text{Id}_*-\alpha_*}
&K_0(A)& \rTo^{i_*}&K_0(A\rtimes_{\alpha}\mathbb{Z})\\
  \uTo&&&&\dTo\\
K_1(A\rtimes_{\alpha}\mathbb{Z})&\lTo^{i_*}&K_1(A)&\lTo^{\text{Id}_*-\alpha_*}
&K_1(A)
 \end{diagram}
\end{eqnarray*}
\end{theorem}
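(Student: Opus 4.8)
The plan is to realize the stated sequence as the six-term K-theory sequence associated to a carefully chosen extension of $A \rtimes_\alpha \mathbb{Z}$, following the strategy of Pimsner and Voiculescu. The key device is the \emph{Toeplitz extension} attached to the automorphism $\alpha$. Concretely, I would represent $A$ faithfully on a Hilbert space $H$ and form the algebra $\mathcal{T}_\alpha \subseteq B(\ell^2(\mathbb{N}) \otimes H)$ generated by a copy of $A$ together with a proper isometry $S$ satisfying $S^*S = \I$ and $S a S^* = \alpha(a)$ for all $a \in A$. First I would verify that the ideal generated by the range projection $\I - SS^*$ is isomorphic to $A \otimes \mathcal{K}$, where $\mathcal{K}$ denotes the compacts on $\ell^2(\mathbb{N})$, and that the quotient $\mathcal{T}_\alpha / (A \otimes \mathcal{K})$ is exactly the crossed product $A \rtimes_\alpha \mathbb{Z}$, the image of $S$ becoming the canonical unitary implementing $\alpha$. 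This produces the short exact sequence
\begin{equation*}
 0 \longrightarrow A \otimes \mathcal{K} \longrightarrow \mathcal{T}_\alpha \longrightarrow A \rtimes_\alpha \mathbb{Z} \longrightarrow 0.
\end{equation*}

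Next I would feed this extension into the standard six-term cyclic exact sequence in K-theory, the one attached to any short exact sequence of $C^*$-algebras. By stability, $K_i(A \otimes \mathcal{K}) \cong K_i(A)$, so two of the three corners of the hexagon already read $K_i(A)$. To bring the third corner into the stated form I must compute $K_i(\mathcal{T}_\alpha)$ and show that the inclusion $A \hookrightarrow \mathcal{T}_\alpha$ induces an isomorphism $K_i(A) \cong K_i(\mathcal{T}_\alpha)$. This is the heart of the argument: the Toeplitz algebra is K-theoretically indistinguishable from its coefficient algebra $A$, a fact proved either by exhibiting an explicit homotopy retracting $\mathcal{T}_\alpha$ onto $A$ modulo the ideal, or more cleanly by the KK-equivalence of $\mathcal{T}_\alpha$ with $A$. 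Granting this, the hexagon collapses to a six-term sequence in which every corner except the two copies of $K_i(A\rtimes_\alpha\mathbb{Z})$ is a copy of $K_i(A)$.

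The remaining and most delicate step is to identify the two horizontal maps $K_i(A) \to K_i(A)$ as $\mathrm{Id}_* - \alpha_*$. After the identifications above, each such map is induced by the composite of the inclusion of the ideal $A \otimes \mathcal{K}$ with the inverse of the isomorphism $K_i(A) \cong K_i(\mathcal{T}_\alpha)$. I would trace a generating projection (respectively unitary) explicitly through these identifications: the covariance relation $S a S^* = \alpha(a)$ forces a twist by $\alpha$ when a class is transported across the shift $S$, while the untwisted inclusion contributes the identity, and the difference of the two contributions is precisely $\mathrm{Id}_* - \alpha_*$. The index and exponential boundary maps of the hexagon then supply the vertical arrows $i_*$, completing the diagram.

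The main obstacle is unquestionably the computation $K_i(\mathcal{T}_\alpha) \cong K_i(A)$ together with the precise bookkeeping that turns the connecting map into $\mathrm{Id}_* - \alpha_*$; this is the genuinely deep input, and it is the reason the result is quoted here from \cite{PV1980} rather than reproved. Everything else — constructing the extension, invoking stability, and writing down the six-term sequence — is comparatively formal once the Toeplitz machinery and the general cyclic six-term sequence are in hand.
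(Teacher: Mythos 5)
The paper does not prove this theorem: it explicitly states the Pimsner--Voiculescu sequence without proof and cites \cite[Theorem 2.4]{PV1980}, using it only as a black box to compute $K_*(A_\theta)$. So there is no in-paper argument to compare yours against; what you have written is a reconstruction of the original Pimsner--Voiculescu strategy itself, and as a high-level plan it is the right one: build the Toeplitz extension $0 \to A \otimes \mathcal{K} \to \mathcal{T}_\alpha \to A \rtimes_\alpha \mathbb{Z} \to 0$, apply the general six-term sequence, use stability and the $K$-equivalence $K_i(\mathcal{T}_\alpha) \cong K_i(A)$ to rewrite two corners, and identify the resulting endomorphism of $K_i(A)$ as $\mathrm{Id}_* - \alpha_*$. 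You also correctly isolate where the real work lies, namely the computation of $K_i(\mathcal{T}_\alpha)$ and the bookkeeping for the connecting maps, neither of which your sketch actually carries out; as written this is an outline of a proof rather than a proof, which is consistent with the paper's own decision to quote the result.

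One concrete slip worth fixing: the relations $S^*S = \I$ and $S a S^* = \alpha(a)$ for all $a$ are inconsistent with $S$ being a \emph{proper} isometry, since putting $a = \I$ in the second relation forces $SS^* = \I$ and makes $S$ unitary. The correct covariance relation in the Toeplitz algebra is $S a = \alpha(a) S$, equivalently $S a S^* = \alpha(a) S S^*$; only in the quotient $A \rtimes_\alpha \mathbb{Z}$, where the image of $S$ becomes unitary, does this collapse to the relation you wrote. This matters because the defect projection $\I - SS^*$ is exactly what generates the ideal $A \otimes \mathcal{K}$, so the failure of $SS^* = \I$ is not incidental to the construction but is the whole point of it.
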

When determining the K-theory of the quantum torus we refer to Theorem \ref{PV}
as the PV short exact sequence.  Regarding the notation used in Theorem
\ref{PV}, $i:A\rightarrow A\rtimes_{\alpha}\mathbb{Z}$ is the inclusion map, Id
the identity map and $\alpha$ any *-automorphism of the $C^*$-algebra $A$. 
Since $\alpha$ is a *-automorphism, $\alpha_*$ is a group-homomorphism, which
in turn ensutes that Id$_*-\alpha_*$ is the difference of two
group-homomorphisms and hence a group-homomorphism in its own right.

\subsection{K-Theory of the Quantum Torus}\noindent
In order to fully understand the $C^*$-algebra which is the quantum torus we need to determine its K-theory.  Determining the K-theory of the quantum torus was a difficult task, however Rieffel, Pimsner and Voiculescu succeeded in calculating it in their articles \cite{PV1980b,PV1980,Rieffel1981,Rieffel1983}.  The main tool which we will be using to find the K-theory of the quantum torus is the PV-sequence which we introduced in Theorem \ref{PV}.  
\begin{proposition}\label{K0groupoftorus}
Let $\theta$ be some irrational number.  The $K_0$ group of $A_{\theta}$ is
$\mathbb{Z}\oplus\mathbb{Z}$.
\end{proposition}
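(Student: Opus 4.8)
The plan is to exploit the crossed-product description of $A_\theta$ established in Theorem \ref{crossed_product} together with the Pimsner--Voiculescu sequence of Theorem \ref{PV}. By Theorem \ref{crossed_product} we may write $A_\theta = C(\mathbb{T}) \rtimes_\alpha \mathbb{Z}$, where $\alpha$ is generated by the irrational rotation $z \mapsto e^{2\pi i\theta}z$. Setting $A = C(\mathbb{T})$ in Theorem \ref{PV}, the whole computation reduces to knowing the K-theory of $C(\mathbb{T})$ and the action of $\alpha_*$ on it. The two facts I would invoke are $K_0(C(\mathbb{T})) \cong \mathbb{Z}$, generated by the class of the unit projection, and $K_1(C(\mathbb{T})) \cong \mathbb{Z}$, generated by the class of the canonical unitary $\iota_{\mathbb{T}}$.

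First I would show that $\alpha_*$ is the identity on both $K_0(C(\mathbb{T}))$ and $K_1(C(\mathbb{T}))$. On $K_0$ this is immediate, since $\alpha$ is unital and therefore fixes the class of $\I$, which generates the group. On $K_1$ I would observe that $\alpha(\iota_{\mathbb{T}})(z) = e^{2\pi i\theta}\iota_{\mathbb{T}}(z)$ differs from $\iota_{\mathbb{T}}$ only by the constant unitary $e^{2\pi i\theta}\I$, which is homotopic to $\I$ through the path $t \mapsto e^{2\pi i t\theta}\I$; hence $\alpha(\iota_{\mathbb{T}})$ and $\iota_{\mathbb{T}}$ determine the same $K_1$ class and $\alpha_* = \mathrm{Id}$ on $K_1$ as well. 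Equivalently, $\alpha$ is homotopic to the identity automorphism through the family of rotations, and homotopic automorphisms induce identical maps on K-theory. Consequently $\mathrm{Id}_* - \alpha_* = 0$ on both groups.

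Next I would feed this into the six-term exact sequence of Theorem \ref{PV}. With both horizontal difference maps equal to zero, exactness forces the inclusion-induced map $i_* : K_0(C(\mathbb{T})) \to K_0(A_\theta)$ to be injective (its kernel is the image of the zero map), and forces the connecting homomorphism $K_0(A_\theta) \to K_1(C(\mathbb{T}))$ to be surjective with kernel exactly $\mathrm{im}(i_*)$. This extracts the short exact sequence
\begin{equation*}
0 \longrightarrow \mathbb{Z} \xrightarrow{\ i_*\ } K_0(A_\theta) \longrightarrow \mathbb{Z} \longrightarrow 0.
\end{equation*}
Since the quotient $\mathbb{Z}$ is free, hence projective, the sequence splits, and therefore $K_0(A_\theta) \cong \mathbb{Z} \oplus \mathbb{Z}$.

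The bookkeeping in the PV sequence is harmless; the only genuinely substantive point is the claim that the irrational rotation acts trivially on $K_1(C(\mathbb{T}))$. I expect this to be the main obstacle to make fully rigorous, since it is where the geometric content enters — a rotation cannot change the winding number of the generating unitary — and it is what makes both difference maps vanish. Everything downstream is a formal consequence of exactness together with the freeness of $\mathbb{Z}$.
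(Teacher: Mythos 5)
Your proof is correct and its skeleton coincides with the paper's: write $A_\theta = C(\mathbb{T})\rtimes_\alpha\mathbb{Z}$ via Theorem \ref{crossed_product}, feed this into the Pimsner--Voiculescu sequence of Theorem \ref{PV}, show that $\mathrm{Id}_*-\alpha_*=0$ on both K-groups of $C(\mathbb{T})$, and split the resulting extension $0\to\mathbb{Z}\to K_0(A_\theta)\to\mathbb{Z}\to 0$. Where you genuinely diverge is in the justification that $\alpha_*$ is the identity. On $K_0$ you note that $\alpha$ is unital and $[\I]$ generates; on $K_1$ you observe that $\alpha(\iota_{\mathbb{T}})$ differs from $\iota_{\mathbb{T}}$ only by a constant unitary and is therefore homotopic to it --- the standard winding-number argument. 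The paper instead argues order-theoretically: a surjective semigroup endomorphism of the positive cone $\mathbb{N}_0$ of $K_0(C(\mathbb{T}))$ must be the identity, and it runs a parallel (and considerably more delicate) positivity argument on $\widetilde{K}_0(S(C(\mathbb{T})))^+$ to handle $K_1$. Your homotopy argument is more direct and sidesteps the somewhat murky identification of the positive cone of the suspension's $K_0$, at the cost of invoking homotopy invariance of $K_1$ and an explicit generator. Two smaller differences: you quote $K_1(C(\mathbb{T}))\cong\mathbb{Z}$ with generator $\iota_{\mathbb{T}}$ as known, whereas the paper derives $K_i(C(\mathbb{T}))\cong\mathbb{Z}$ from the split short exact sequence involving the suspension; and you supply the reason the final extension splits (the quotient $\mathbb{Z}$ is free, hence projective), which the paper asserts but does not spell out.
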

Let us consider the PV-short exact sequence for the $C^*$-algebra $C(\mathbb{T})$.  Using Theorem \ref{PV} we find the following six term exact sequence for $\alpha:C(\mathbb{T})\rightarrow C(\mathbb{T}):f\mapsto R_{\theta}$ where $R_{\theta}$ is rotation by $\theta$
\begin{equation*}
 \begin{diagram} 
K_0(C(\mathbb{T}))&\rTo^{\text{Id}_*-\alpha_*}&K_0(C(\mathbb{T}))&\rTo^{i_*}
&K_0(C(\mathbb{T})\rtimes_{\alpha}\mathbb{Z})\\
  \uTo&&&&\dTo\\
K_1(C(\mathbb{T})\rtimes_{\alpha}\mathbb{Z})&\lTo^{i_*}&K_1(C(\mathbb{T}))&
\lTo^{\text{Id}_*-\alpha_*}&K_1(C(\mathbb{T}))
 \end{diagram}
\end{equation*}
By Theorem \ref{crossed_product} we know that we can write
$A_{\theta}=C(\mathbb{T})\rtimes_{\alpha}\mathbb{Z}$.  Let us now substitute
this result into the above expression, then we find
\begin{equation}\label{torus_K}
 \begin{diagram}
  K_0(C(\mathbb{T}))&\rTo^{\text{Id}_*-\alpha_*}&K_0(C(\mathbb{T}))&\rTo^{i_*}&K_0(A_{\theta})\\
  \uTo&&&&\dTo\\
  K_1(A_{\theta})&\lTo^{i_*}&K_1(C(\mathbb{T}))&\lTo{\text{Id}_*-\alpha_*}&K_1(C(\mathbb{T}))
 \end{diagram}
\end{equation}
So now we are left with finding the K-theory of $C(\mathbb{T})$.  Let 
\begin{equation*}
\epsilon:C(\mathbb{T})\rightarrow \mathbb{C}:f\mapsto f(1)
\end{equation*}
and let 
\begin{equation*}
j:S\rightarrow C(\mathbb{T})
\end{equation*}
 be the inclusion, where $S$ is the suspension (see Definition \ref{cone_suspension}) of $C(\mathbb{T})$.  We follow \cite[Example 7.5.1]{murphy} to find the K-theory of $C(\mathbb{T})$.  Clearly $\text{im}(j)=\text{ker}(\epsilon)$ and we can define a *-homomorphism 
\begin{equation*}
\epsilon':\mathbb{C}\rightarrow C(\mathbb{T}):z\mapsto f_z
\end{equation*} 
where $f_z(x)=z$ for any $z\in \mathbb{C}$ and $x\in\mathbb{T}$.  In other words $f_z$ is the constant function identically equal to $z$.  This implies that we can write
\begin{equation*}
 \epsilon'(z+w)=f_{z+w}=z+w=f_z+f_w=\epsilon'(z)+\epsilon'(w).
\end{equation*}
So, $\epsilon'$ is a *-homomorphism and $\epsilon\epsilon'=\text{Id}$.  This implies that the diagram
\begin{equation*}
\begin{diagram}
 0&\rTo &S& \rTo^{j} &C(\mathbb{T})& \rTo^{\epsilon}&\mathbb{C}&\rTo 0
\end{diagram}
\end{equation*}
is a split short exact sequence of $C^*$-algebras.  By \cite[Theorem 6.5.2]{murphy} we know that for any arbitrary $C^*$-algebra $A$  
\begin{equation*}
\begin{diagram}
 0&\rTo & S\otimes_{*}A & \rTo^{j\otimes_*\text{Id}_A} & C(\mathbb{T})\otimes_{*}A & \rTo^{\epsilon\otimes_*\text{Id}_A} & \mathbb{C}\otimes_{*}A&\rTo & 0
\end{diagram}
\end{equation*}
is a split short exact sequence.  By \cite[Remarks 7.5.3, 7.5.4]{murphy} we know that for $i=0,1$ 
\begin{equation*}
\begin{diagram}
 0 & \rTo & K_i\left(S\otimes_{*}A\right) & \rTo^{j\otimes_*\text{Id}_A} & K_i\left(C(\mathbb{T})\otimes_{*}A\right) & \rTo^{\epsilon\otimes_*\text{Id}_A} & K_i\left(\mathbb{C}\otimes_{*}A\right) & \rTo & 0
\end{diagram}
\end{equation*}
is a split short exact sequence of K-groups.  Thus, by Lemma \ref{split_short_exact} we have
\begin{eqnarray*}
 K_i\left(C(\mathbb{T})\otimes_{*}A\right)&\cong&K_i\left(S\otimes_{*}A\right)\oplus K_i\left(\mathbb{C}\otimes_{*}A\right)\\
&\cong&K_i\left(S(A)\right)\oplus K_i\left(A\right)\\
&\cong&K_{i-1}\left(A\right)\oplus K_i\left(A\right)\\
&\cong&K_0\left(A\right)\oplus K_1\left(A\right).
\end{eqnarray*}
Now in particular, if we replace the $C^*$-algebra $A$ by $\mathbb{C}$ we have
\begin{eqnarray*}
 K_i\left(C(\mathbb{T})\otimes_{*}\mathbb{C}\right)&\cong&K_i\left(C(\mathbb{T})\right)\\
&\cong&K_0\left(\mathbb{C}\right)\oplus K_1\left(\mathbb{C}\right)\\
&\cong&\mathbb{Z}.
\end{eqnarray*}
Substituting the above result back into equation (\ref{torus_K}) we find 
\begin{equation}\label{torus_split}
 \begin{diagram}
  \mathbb{Z} &\rTo^{\text{Id}_*-\alpha_*} & \mathbb{Z} & \rTo^{i_*} & K_0(A_{\theta})\\
  \uTo&&&&\dTo\\
  K_0(A_{\theta})&\lTo^{i_*}& \mathbb{Z}&\lTo^{\text{Id}_*-\alpha_*}&\mathbb{Z}
 \end{diagram}.
\end{equation}
Recall that for some irrational $\theta$ we define
\begin{equation*}
 \alpha:C(\mathbb{T})\rightarrow C(\mathbb{T}):f\mapsto f\circ R_{\theta}
\end{equation*}
where $R_{\theta}$ is just rotation by $\theta$, and that we can write the
quantum torus as the crossed product
$A_{\theta}=C(\mathbb{T})\rtimes_\alpha\mathbb{Z}$ as in Theorem
\ref{crossed_product}.  On the level of $K_0$-groups $\alpha$ gives 
\begin{equation*}
 \alpha_*:K_0(A_0)^+\rightarrow K_0(A_0)^+.
\end{equation*}
Since $\alpha$ is a *-automorphism $\alpha_*$ is a semigroup automorphism.  Since $K_0(A_0)=\mathbb{Z}$ it implies that $K_0(A_0)^+=\{0,1,2,\cdots\}=\mathbb{N}_0$ because $K_0(A_0)^+$ is the only semigroup with identity such that $\mathbb{Z}=\{m-n: m,n\in K_0(A_0)^+\}$.  So in effect the automorphism $\alpha_*$ maps $\mathbb{N}_0$ to itself.  Now suppose that $\alpha_*(1)=m$, then $\alpha_*(n)=mn$.  So the only way for $\alpha_*$ to be surjective is when $m=1$, hence $\alpha_*=\text{Id}_{\mathbb{N}_0}$.  If we now extend $\alpha_*$ to $\mathbb{Z}$ then clearly $\alpha_*=\text{Id}_{\mathbb{Z}}$, or more precisely
\begin{equation*}
\alpha_*=\text{Id}_{\mathbb{Z}}:K_0(A_0)=\mathbb{Z} \rightarrow \mathbb{Z}.
\end{equation*}

We would like to extend the above reasoning to the $K_1$-groups and work with
group homomorphisms of the form
\begin{equation*}
 \alpha_*:K_1(A_{0})\rightarrow K_1(A_{0}). 
\end{equation*}
From the definition of the $K_1$-group and using the $\widetilde{K}_0$-notation for the unitization we write
\begin{equation*}
 \alpha_{*}:\widetilde{K}_0(S(A_0))\rightarrow \widetilde{K}_0(S(A_0)).
\end{equation*}
Consider $\alpha_*$ defined on $K_0\left(\widetilde{S(A_0)}\right)$ and restricted to $K_0\left(\widetilde{S(A_0)}\right)^+$
\begin{equation*}
 \widetilde{\alpha}|_{K_0\left(\widetilde{S(A_0)}\right)^+}:K_0\left(\widetilde{S(A_0)}\right)^+\rightarrow K_0\left(\widetilde{S(A_0)}\right)^+.
\end{equation*}

\begin{remark}
 Suppose we have a *-homomorphism
\begin{equation*}
 \tau:\widetilde{S(A_0)}\rightarrow \mathbb{C},
\end{equation*}
then we can write
\begin{equation*}
 \tau^{+}_{*}:K_0\left(\widetilde{S(A_0)}\right)^+\rightarrow K_0\left(\mathbb{C}\right)^+=\mathbb{N}_0.
\end{equation*}
We can extend this to
\begin{equation*}
 \tau_*:K_0\left(\widetilde{S(A_0)}\right)\rightarrow K_0\left(\mathbb{C}\right).
\end{equation*}
From \cite[p. 262]{murphy} we know that
$\widetilde{K}_0\left(S(A_0)\right)=\mathbb{Z}$.  Then from the definition of
$\widetilde{K}_0\left(S(A_0)\right)$
we have $\widetilde{K}\left(S(A_0)\right):=\ker\left(\tau_*\right)$ which is a
subgroup of $K_0\left(\widetilde{S(A_0)}\right)=\mathbb{Z}$.  For the
$C^*$-algebra $A_0$ we can regard
$\widetilde{K}_0\left(S(A_0)\right)^+=\mathbb{N}_0$.
\end{remark}
So we have to have
\begin{equation*}
 \mathbb{N}_0=\widetilde{K}_0\left(S(A_0)\right)^+\subset K_0\left(\widetilde{S(A_0)}\right)^+
\end{equation*}
since it has to contain all the non negative elements.  Let $m\in
\mathbb{N}_0$.  Then
\begin{equation*}
 \alpha_*(m)=\widetilde{\alpha}_*(m)\in K_0\left(\widetilde{S(A_0))}\right)^+
\end{equation*}
is non negative, but from \cite[p. 262]{murphy} we know that $\alpha_*(m)\in
\mathbb{Z}=K_0\left(S(A_0)\right)$ if $\alpha_*(m)\in
\mathbb{N}_0=\widetilde{K}_0\left(S(A_0)\right)^+$.  Hence, similarl to the case
of 
\begin{equation*}
 \alpha_*:K_0\left(A_0\right)\rightarrow K_0\left(A_0\right)
\end{equation*}
we find that $\alpha_*$ is also the identity homomorphism in the case of
\begin{equation*}
 \alpha_*:K_1\left(A_0\right)\rightarrow K_1\left(A_0\right).
\end{equation*}  

If we now consider equation (\ref{torus_split}) together with the above reasing considering the $\alpha_*$ homomorphisms we see that $\text{Id}_*-\alpha_*=0$ for both the $K_0$ and  $K_1$ groups and we have the following split short exact sequence of $C^*$-algebras
\begin{eqnarray*}
 0\rightarrow \mathbb{Z}\rightarrow K_i\left(A_{\theta}\right)\rightarrow \mathbb{Z}\rightarrow 0.
\end{eqnarray*}
Lemma \ref{split_short_exact} then implies that
\begin{equation*}
 K_i\left(A_{\theta}\right)=\mathbb{Z}\oplus\mathbb{Z}.
\end{equation*}
This proves Proposition \ref{K0groupoftorus}.
%% sit miskien n bewys is wat wys wat die K-teorie van die komplekse getalle is%%

\subsection{Traces and $K_0$}
In chapter 4 we will be using the unique trace of the quantum torus to define
mappings onto the K-groups of the quantum torus.  The following proposition then
naturally comes into play.
\begin{proposition}
A faithful trace on any $C^*$-algebra induces a homomorphism of the $K_0$-groups.
\end{proposition}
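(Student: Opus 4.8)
The plan is to extend the trace $\tau$ to all the matrix algebras $M_n(A)$, check that the resulting functional is constant on Murray--von Neumann equivalence classes and additive under direct sums, and then invoke the universal property of the Grothendieck construction to pass from a semigroup homomorphism on $K_0(A)^+$ to a group homomorphism on $K_0(A)$ with values in $\mathbb{R}$.

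First I would define, for each $n$, the extension $\tau_n:M_n(A)\to\mathbb{C}$ by $\tau_n\big((a_{ij})\big):=\sum_{i=1}^n\tau(a_{ii})$, summing the trace over the diagonal. A short computation shows $\tau_n$ is again tracial, and in fact the \emph{rectangular} version is what I really need: for an $m\times n$ matrix $a$ and an $n\times m$ matrix $b$,
\[
\tau_m(ab)=\sum_{i,k}\tau(a_{ik}b_{ki})=\sum_{i,k}\tau(b_{ki}a_{ik})=\tau_n(ba),
\]
the middle equality being exactly the identity $\tau(xy)=\tau(yx)$ on $A$. Only the tracial property of $\tau$ is used here; faithfulness is what additionally guarantees $\tau_n(p)>0$ for every nonzero projection $p$, so that the induced map is strictly positive on nonzero classes.

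Next I would show the assignment $[p]\mapsto\tau_n(p)$ is well defined on stable equivalence classes. If $p\sim q$ via a rectangular $u$ with $p=u^*u$ and $q=uu^*$, then the identity above gives $\tau(p)=\tau(u^*u)=\tau(uu^*)=\tau(q)$. For a stable equivalence $\I_k\oplus p\sim \I_k\oplus q$ (Definition \ref{stably_equivalent}) this yields $\tau_k(\I_k)+\tau(p)=\tau_k(\I_k)+\tau(q)$, and cancelling the common term $\tau_k(\I_k)=k\,\tau(\I)$ leaves $\tau(p)=\tau(q)$. Since moreover $\tau(p\oplus q)=\tau(p)+\tau(q)$ by the definition of $\oplus$ and of $\tau_n$, the map $[p]\mapsto\tau(p)$ is a well-defined homomorphism of abelian semigroups $K_0(A)^+\to(\mathbb{R}_{\ge 0},+)$.

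Finally I would appeal to the universal property of the Grothendieck group $G(N)$ as constructed in the text: a semigroup homomorphism from $N$ into an abelian group extends uniquely to a group homomorphism on $G(N)$, concretely $[x,y]\mapsto \tau(x)-\tau(y)$. Applying this to $K_0(A)=G\big(K_0(A)^+\big)$ produces the desired group homomorphism $K_0(A)\to\mathbb{R}$, sending $[p]-[q]\mapsto\tau(p)-\tau(q)$. For a nonunital $A$ I would run the same argument on the unitization $\widetilde{A}$ and restrict to $\widetilde{K}_0(A)=\ker(\tau_*)$ exactly as in Remark \ref{nonunitalK}. The one genuinely load-bearing step, and the place I would be most careful, is the well-definedness under equivalence: everything reduces to the rectangular matrix-level trace identity, since that is precisely what converts Murray--von Neumann (and hence stable) equivalence of projections into equality of trace values.
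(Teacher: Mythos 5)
Your proposal is correct and follows essentially the same route as the paper: extend $\tau$ to matrix algebras by summing the diagonal, use the tracial identity $\tau(u^*u)=\tau(uu^*)$ on rectangular matrices to get invariance under stable equivalence (cancelling the $\tau_k(\I_k)$ term), verify additivity under $\oplus$, and pass to $K_0(A)$ via the Grothendieck construction. Your observation that only the tracial property (not faithfulness) is actually needed for the homomorphism is accurate and matches what the paper's own proof uses.
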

\begin{proof}
 Let A be a $C^*$-algebra with faithful trace denoted by $\tau_A:A\rightarrow \mathbb{C}$.  Now we define a mapping $\hat{\tau}_A:K_0(A)\rightarrow \mathbb{C}$ by
\begin{equation}\label{regmaak}
 \hat{\tau}_A([p]):=\tau_A(p):=\sum^n_{i=1}\tau_A(p_{ii}),
\end{equation}
where $p=(p_{kl})\in P[A]$ is an $n\times n$ matrix as defined in equation
(\ref{P[A]}).  The middle term in equation \ref{regmaak} represents an
extension of the trace to $M_n(A)$.  Such an extension will be made where
convenient, and the same notation will be used. Let $[p]=[q]\in K_0(A)$.  We
would like to show that $\tau_A$ is well defined. From Definition
\ref{stably_equivalent} there exists an $m\times n$ matrix $u$ with entries in
$A$ and an integer $r$ such that 
\begin{equation}\label{uu}
 \I_r\oplus p =u^*u\  \text{  and  }\  \I_r\oplus q=uu^*.
\end{equation}
The square matrices $u^*u$ and $uu^*$ are in different matrix algebras over $A$.  Let us first consider $u^*u$
\begin{eqnarray*}
\left( 
\begin{array}{lll}
u^*_{11} & \cdots & u^*_{m1} \\
\vdots &  & \vdots\\
u^*_{1n} & \cdots & u^*_{mn}
 \end{array}
 \right)
\left( 
\begin{array}{lll}
 u_{11} & \cdots & u_{1n} \\
\vdots &  & \vdots\\
u_{m1} & \cdots & u^*_{mn}
 \end{array}
 \right)=
\left( 
\begin{array}{lll}
 \sum^{m}_{i=1}u^*_{i1}u_{i1} & \cdots & \sum^{m}_{i=1}u^*_{i1}u_{in} \\
\vdots &  & \vdots\\
\sum^{m}_{i=1}u^*_{in}u_{i1} & \cdots & \sum^{m}_{i=1}u^*_{in}u_{in}
 \end{array}
 \right).
\end{eqnarray*}
We can then write the trace of $u^*u$ as
\begin{equation*}
 \tau_{A}\left(u^*u\right)=\sum^{n}_{j=1}\sum^{m}_{i=1}\tau_{A}\left(u^*_{ij}u_{ij}\right)
\end{equation*}
and the same result holds for $uu^*$ after making use of the cyclic property of the faithful trace
\begin{equation*}
 \tau_{A}\left(u^*_{ij}u_{ij}\right)=\tau\left(u_{ij}u^*_{ij}\right).
\end{equation*}
We determine the traces of (\ref{uu})
\begin{eqnarray*}
 \tau_A(\I_r\oplus p)=\tau_A(u^*u)=\tau_A(uu^*)=\tau_A(\I_r\oplus q).
\end{eqnarray*}
Furthermore from standard properties of the trace we see that
\begin{equation*}
 \tau_A(\I_r)+\tau_A(p)=\tau_A(\I_r)+\tau_A(q).
\end{equation*}
Clearly this implies that the traces of the matrices $p$ and $q$ are the same.  Hence $\hat{\tau}_A([p])=\hat{\tau}_A([q])$ so $\hat{\tau}_{A}$ is well defined.  Now let $[p],[q]\in K_0(A)$ with $p\in M_n(A)$ and $q\in M_m(A)$ and consider
\begin{eqnarray*}
 \hat{\tau}_A([p]+[q])&=&\hat{\tau}_A([p\oplus q])\\
&=&\sum^{n+m}_{i=1}\tau_A\left((p\oplus q)_{ii}\right)\\
&=&\sum^{n}_{i=1}\tau_A((p)_{ii})+\sum^{m}_{i=1}\tau_A((q)_{ii})\\
&=&\hat{\tau}_A([p])+\hat{\tau}_A([q]).
\end{eqnarray*}
So $\hat{\tau}_A$ is a group homomorphism.
\end{proof}

\begin{lemma}
 Projections in quantum tori are determined up to unitary equivalence by their traces
\end{lemma}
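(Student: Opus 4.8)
The plan is to prove the nontrivial implication and dispatch the trivial one quickly, routing everything through the $K$-theory and the trace computed earlier in this chapter. First I would record that one direction is immediate: if $p$ and $q$ are unitarily equivalent, say $q = upu^*$ for a unitary $u \in A_\theta$, then the tracial property established in Theorem \ref{quantum_trace} gives $\tau(q) = \tau(upu^*) = \tau(u^*up) = \tau(p)$. Thus equality of traces is a necessary condition, and the whole content of the lemma is the converse: $\tau(p) = \tau(q)$ should force unitary equivalence.

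For the converse I would pass to $K_0$. The trace $\tau$ is faithful, and by the preceding proposition it induces a group homomorphism $\hat\tau : K_0(A_\theta) \to \mathbb{R}$ with $\hat\tau([p]) = \tau(p)$ on a projection $p$. The decisive structural fact, due to Pimsner--Voiculescu and Rieffel, is that $\hat\tau$ is \emph{injective}. Indeed, by Proposition \ref{K0groupoftorus} we have $K_0(A_\theta) \cong \mathbb{Z} \oplus \mathbb{Z}$, and the trace carries the two canonical generators (the class of $[\I]$ and that of a Rieffel projection) to $1$ and $\theta$ respectively, so the image of $\hat\tau$ is the subgroup $\mathbb{Z} + \mathbb{Z}\theta$ of $\mathbb{R}$. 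Since $\theta$ is irrational, $1$ and $\theta$ are linearly independent over $\mathbb{Q}$, so $\hat\tau$ is an isomorphism of $K_0(A_\theta)$ onto $\mathbb{Z} + \mathbb{Z}\theta$. Consequently $\tau(p) = \tau(q)$ yields $\hat\tau([p]) = \hat\tau([q])$, and by injectivity $[p] = [q]$ in $K_0(A_\theta)$.

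It remains to convert equality of $K_0$-classes into genuine equivalence of $p$ and $q$ inside $A_\theta$ itself, rather than after amplification. Here I would invoke the theorem of Rieffel \cite{Rieffel1981} that $A_\theta$ has stable rank one, which yields \emph{cancellation} of projections: if $r \oplus p \sim r \oplus q$ then $p \sim q$. Since $[p] = [q]$ in $K_0$ means exactly that $p$ and $q$ become Murray--von Neumann equivalent after adjoining a common projection, cancellation delivers $p \sim q$, i.e.\ a partial isometry $v \in A_\theta$ with $v^*v = p$ and $vv^* = q$. Running the identical argument on the complements --- from $\tau(\I - p) = 1 - \tau(p) = 1 - \tau(q) = \tau(\I - q)$ one gets $[\I - p] = [\I - q]$ and hence $\I - p \sim \I - q$ --- and then combining $p \sim q$ with $\I - p \sim \I - q$ produces a unitary $u \in A_\theta$ implementing $upu^* = q$, which is the desired conclusion.

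The hard part is not the bookkeeping but the cancellation step. Injectivity of $\hat\tau$ on $K_0$ is essentially immediate once the range of the trace is known, whereas cancellation rests on the genuinely nontrivial fact that $A_\theta$ has stable rank one; this is precisely where Rieffel's analysis of the irrational rotation algebra must be brought in, and it cannot be reached by the elementary arguments used elsewhere in this chapter. I would therefore cite the stable-rank-one property as an external result and build the proof on top of it, keeping the remaining manipulations — cancellation, passing to complements, and assembling the unitary from the two partial isometries — as the routine glue.
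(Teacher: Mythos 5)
Your argument is correct, but it is worth knowing that the paper does not prove this lemma at all: its entire ``proof'' is the citation ``see \cite[Corollary 2.5]{Rieffel1983}.'' What you have written is essentially a faithful reconstruction of the standard argument that sits behind that citation. The easy direction via traciality is fine; the identification of the range of $\hat\tau$ with $\mathbb{Z}+\theta\mathbb{Z}$ and the resulting injectivity of $\hat\tau$ on $K_0(A_\theta)\cong\mathbb{Z}\oplus\mathbb{Z}$ matches the paper's Lemma on the trace isomorphism; and the passage from $[p]=[q]$ to $p\sim q$ via cancellation, followed by the same argument on the complements and the assembly of the unitary $u=v+w$ (using $v=qvp$ and $w=(\I-q)w(\I-p)$ to kill the cross terms), is exactly right. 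You also correctly locate the one genuinely hard ingredient --- cancellation for projections over $A_\theta$ --- as an external input that cannot be derived from the elementary material in this chapter. The only blemish is bibliographic: the cancellation theorem for projective modules over irrational rotation algebras is the content of Rieffel's 1983 paper (the one the author actually cites for this lemma), not the 1981 paper on Morita equivalence, and historically Rieffel proved cancellation there directly rather than by first establishing stable rank one. So where the paper simply outsources the whole statement, you outsource only the cancellation step and supply the surrounding $K$-theoretic bookkeeping yourself; that is a strictly more informative presentation, and it is sound.
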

\begin{proof}
 For details of the proof see \cite[Corollary 2.5]{Rieffel1983}.
\end{proof}
\begin{lemma}\label{trace_iso}
 $K_{0}(A_{\theta})$ is mapped isomorphically to the ordered group $\mathbb{Z}+\theta\mathbb{Z}$ by the unique normalized trace on $A_{\theta}$
\end{lemma}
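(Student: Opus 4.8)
The plan is to start from the homomorphism induced by the unique normalized trace $\tau$ (which is faithful) via the preceding proposition, namely $\hat{\tau}:K_0(A_\theta)\to\mathbb{C}$ with $\hat{\tau}([p])=\tau(p)$, and then to establish two things: that the image of $\hat{\tau}$ is exactly the subgroup $\mathbb{Z}+\theta\mathbb{Z}$ of $\mathbb{R}$, and that $\hat{\tau}$ is injective. First I would note that since $\tau$ is a positive trace it sends projections to nonnegative reals, so $\hat{\tau}$ in fact takes values in $\mathbb{R}$; combined with Proposition \ref{K0groupoftorus} this reduces the problem to understanding a single homomorphism $\mathbb{Z}\oplus\mathbb{Z}\to\mathbb{R}$.

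For injectivity I would exploit the irrationality of $\theta$. Because $1$ and $\theta$ are linearly independent over $\mathbb{Q}$, the image $\mathbb{Z}+\theta\mathbb{Z}$ is free abelian of rank two, hence isomorphic to $\mathbb{Z}\oplus\mathbb{Z}$. Once the image has been identified (next step), $\hat{\tau}$ is a surjective homomorphism from $\mathbb{Z}\oplus\mathbb{Z}$ onto a group isomorphic to $\mathbb{Z}\oplus\mathbb{Z}$; since finitely generated abelian groups are Hopfian, any such surjection is automatically an isomorphism, so injectivity comes for free. Alternatively, injectivity at the level of projections follows from the lemma that projections in quantum tori are determined up to unitary equivalence by their traces, but the Hopfian route avoids having to lift that statement to the matrix algebras $M_n(A_\theta)$.

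The substantive work is computing the image. The inclusion $\mathbb{Z}\subseteq\operatorname{im}\hat{\tau}$ is immediate, since $\hat{\tau}([1_{A_\theta}])=\tau(1)=1$ by normalization. To produce the generator $\theta$ I would invoke the Powers--Rieffel construction \cite{Rieffel1981}, which yields a projection $e\in A_\theta$ with $\tau(e)=\theta$, giving $\theta\in\operatorname{im}\hat{\tau}$ and hence $\mathbb{Z}+\theta\mathbb{Z}\subseteq\operatorname{im}\hat{\tau}$. For the reverse inclusion I would track the trace through the Pimsner--Voiculescu sequence already used for Proposition \ref{K0groupoftorus}: the short exact sequence $0\to K_0(C(\mathbb{T}))\xrightarrow{i_*}K_0(A_\theta)\xrightarrow{\partial}K_1(C(\mathbb{T}))\to 0$ exhibits $K_0(A_\theta)$ as generated by $i_*[1_{\mathbb{T}}]=[1_{A_\theta}]$, of trace $1$, together with any class $g$ lifting the generator of $K_1(C(\mathbb{T}))\cong\mathbb{Z}$; one then shows $\hat{\tau}(g)\in\theta+\mathbb{Z}$, so every trace value lies in $\mathbb{Z}+\theta\mathbb{Z}$.

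The main obstacle is precisely this last computation: pinning down the trace on the second generator of $K_0(A_\theta)$. Establishing $\hat{\tau}(g)\equiv\theta \pmod{\mathbb{Z}}$ requires either the explicit analysis of the Powers--Rieffel projection or Connes' identification of the trace with the composite of $\partial$ with the canonical functional on $K_1(C(\mathbb{T}))$ through the dual trace on the crossed product. This is the genuinely deep input, and I would cite Rieffel \cite{Rieffel1981,Rieffel1983} for it rather than reprove it, consistent with the convention used throughout this chapter. Everything else---positivity, normalization, and the Hopfian deduction of injectivity---is routine once the range computation is in place.
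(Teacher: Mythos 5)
Your proposal is correct, and it is consistent with what the paper does: the paper offers no argument at all for this lemma, simply citing \cite[Proposition 1.4]{Rieffel1981} and \cite[Corollary 2.6]{PV1980b}, and your outline ultimately rests on the same external inputs (the existence of a Powers--Rieffel projection of trace $\theta$, and the range-of-the-trace computation via the Pimsner--Voiculescu sequence). What you add on top is genuinely useful soft structure: the observation that once the image is pinned down as $\mathbb{Z}+\theta\mathbb{Z}\cong\mathbb{Z}\oplus\mathbb{Z}$, injectivity is free because a surjection of finitely generated free abelian groups of equal rank is an isomorphism --- this cleanly sidesteps having to compare projections in $M_n(A_\theta)$ by unitary equivalence. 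The only point you gloss over is the word \emph{ordered} in the statement: to identify the order structures you still need that the positive cone of $K_0(A_\theta)$ maps exactly onto the nonnegative part of $\mathbb{Z}+\theta\mathbb{Z}$, which is the content of Lemma \ref{proj} (the range of the trace on projections is $(\mathbb{Z}+\theta\mathbb{Z})\cap[0,1]$) together with the stabilization to matrix algebras; positivity of $\tau$ alone gives only one inclusion. That is a small omission, and it is covered by the same references you cite.
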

\begin{proof}
For details of the proof see \cite[Proposition 1.4]{Rieffel1981} and \cite[Corollary 2.6]{PV1980b}.
\end{proof}
\begin{lemma}\label{proj}
 The range of the trace $\tau$ on projections from $A_{\theta}$ to itself is
precisely $\left(\mathbb{Z}+\theta\mathbb{Z}\right)\bigcap [0,1]$
\end{lemma}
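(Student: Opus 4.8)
The plan is to prove the equality of sets by establishing the two inclusions separately, using the fact (from the preceding proposition on faithful traces) that $\tau$ induces a group homomorphism $\hat{\tau}:K_0(A_{\theta})\rightarrow\mathbb{C}$ satisfying $\hat{\tau}([p])=\tau(p)$ for every projection $p$.

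First I would treat the inclusion ``$\subseteq$'', which is elementary. Let $p\in A_{\theta}$ be a projection. Since $p\geq 0$ and $\I-p\geq 0$, positivity of $\tau$ together with $\tau(\I)=1$ gives $0\leq\tau(p)\leq 1$, so $\tau(p)\in[0,1]$. On the other hand $p\in P[A_{\theta}]$ determines a class $[p]\in K_0(A_{\theta})$, and by Lemma \ref{trace_iso} the trace carries $K_0(A_{\theta})$ into $\mathbb{Z}+\theta\mathbb{Z}$; hence $\tau(p)=\hat{\tau}([p])\in\mathbb{Z}+\theta\mathbb{Z}$. Combining the two observations yields $\tau(p)\in\left(\mathbb{Z}+\theta\mathbb{Z}\right)\cap[0,1]$, which is the desired inclusion.

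The substantive direction is ``$\supseteq$'', where I must realise every value $t\in\left(\mathbb{Z}+\theta\mathbb{Z}\right)\cap[0,1]$ as the trace of an honest projection in $A_{\theta}$. The endpoints are immediate: $t=0$ is attained by the zero projection and $t=1$ by $\I$. For $0<t<1$, Lemma \ref{trace_iso} provides a unique class $x\in K_0(A_{\theta})$ with $\hat{\tau}(x)=t$; since $\hat{\tau}$ is an isomorphism of \emph{ordered} groups carrying $[\I]$ to $1$, the inequalities $0<t<1$ translate into $0<x<[\I]$ in $K_0(A_{\theta})$. What remains is to promote this order-theoretic statement to the existence of a projection $p\in A_{\theta}$ itself with $[p]=x$.

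I expect this last step to be the main obstacle, since a priori a class in $K_0(A_{\theta})^{+}$ is only represented by a projection in some matrix algebra $M_n(A_{\theta})$, whereas here I need a projection in the $1\times 1$ corner $A_{\theta}$. This is precisely where the irrationality of $\theta$ enters: $A_{\theta}$ enjoys cancellation and comparison of projections, so a class strictly below the class of the unit is represented by a genuine subprojection of $\I$, and the explicit projections with the required traces are Rieffel's (Powers--Rieffel) projections constructed in \cite{Rieffel1981}. Invoking that construction for each $t$ furnishes a projection $p\in A_{\theta}$ with $\tau(p)=\hat{\tau}([p])=t$, completing the reverse inclusion and hence the proof.
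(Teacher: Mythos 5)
The paper does not actually prove this lemma: its ``proof'' consists entirely of a citation to \cite[p.170--180]{Davidson} and to the original papers of Rieffel \cite{Rieffel1981} and Pimsner--Voiculescu \cite{PV1980}. Your outline is the standard argument that those sources carry out, and it is correct as far as it goes: the inclusion $\subseteq$ follows exactly as you say from positivity of $\tau$, $\tau(\I)=1$, and Lemma \ref{trace_iso}; and the inclusion $\supseteq$ is indeed the substantive half, settled by Rieffel's explicit construction of a projection in $A_{\theta}$ (not merely in a matrix algebra over it) of trace $m+n\theta$ for every such value in $[0,1]$. The only caveat is that your proof, like the paper's, ultimately defers the hard step --- the existence of the Powers--Rieffel projections --- to \cite{Rieffel1981}; you have correctly identified where the real work lies, but neither you nor the paper carries it out.
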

\begin{proof}
The details of the proof can be found in \cite[p.170-180]{Davidson}.  The
original results were published by Rieffel \cite{Rieffel1981}, Pimsner and
Voiculescu \cite{PV1980}.  
\end{proof}
The following definition follows naturally from the previous Lemma.
\begin{definition}\textsl{(Ordering of $K_0\left(A_{\theta}\right)$)}\label{posel}\newline
Identify the positive elements of $\mathbb{Z}+\theta\mathbb{Z}$ with
$K_0\left(A_{\theta}\right)^+$.  Let $g,h\in K_0\left(A_{\theta}\right)$.  Then
if
\begin{equation*}
 h-g\in K_0\left(A_{\theta}\right)^+
\end{equation*}
we have $g\leq h$.
\end{definition}

\begin{remark}\label{id_class}
 We know that the isomorphism $\psi:K_0(A_{\theta})\rightarrow \mathbb{Z}+\theta\mathbb{Z}$ is induced by the trace on the quantum torus.  We can now easily determine what element of $K_0(A_{\theta})$ will be mapped to $1$:
\begin{eqnarray*}
 \psi([\I^{(1)}_{A_{\theta}}])&=&\tau(\I^{(1)}_{A_{\theta}})\\
&=&\sum^{1}_{i=1}\tau(\I_{A_{\theta}})\\
&=&1.
\end{eqnarray*}
Where $\I^{(1)}_{A_{\theta}}$ denotes the $1\times 1$ matrix with the identity elements of $A_{\theta}$ along the diagonal.
\end{remark}
\begin{lemma}\label{group_incl}
Let $G$ and $H$ be additive subgroups of $\mathbb{R}$ both containing the number
$1$.  If $\eta:G\rightarrow H$ is an order preserving group homomorphism such
that $\eta(1)=1$, then $\eta(g)=g$ for any $g\in G$.
\end{lemma}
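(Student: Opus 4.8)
The plan is to exploit two properties of $\eta$ at once: that it is a group homomorphism fixing $1$, and that it is order preserving, i.e. it carries the positive cone $G^+=\{g\in G:g\ge 0\}$ into $H^+$. The first property pins $\eta$ down on the integers and lets me clear denominators, while the second lets me compare an arbitrary $g$ against every rational by invoking the density of $\mathbb{Q}$ in $\mathbb{R}$.

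First I would record the elementary consequences of $\eta$ being a homomorphism with $\eta(1)=1$: for every integer $m$ one has $\eta(m)=m\eta(1)=m$, and for every $g\in G$ and every $n\in\mathbb{Z}$ one has $\eta(ng)=n\eta(g)$. The point to keep in mind is that although a general rational $m/n$ need not lie in $G$, both $ng$ and $m$ do lie in $G$, and this is exactly what will let me transfer inequalities through $\eta$.

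The heart of the argument is then the following comparison. Fix $g\in G$, an integer $m$, and a positive integer $n$. If $g<m/n$, then $m-ng$ is a strictly positive element of $G$, so order preservation gives $\eta(m-ng)\ge 0$; unwinding this with the homomorphism identities yields $m-n\eta(g)\ge 0$, that is $\eta(g)\le m/n$. Symmetrically, if $g>m/n$, then $ng-m\in G^+$ and the same reasoning gives $\eta(g)\ge m/n$. In words, $\eta(g)$ lies on the same side of every rational number as $g$ does. I would close by density: supposing toward a contradiction that $\eta(g)\ne g$, say $g<\eta(g)$, choose a rational $m/n$ with $n>0$ satisfying $g<m/n<\eta(g)$; the comparison applied to $g<m/n$ forces $\eta(g)\le m/n$, contradicting $m/n<\eta(g)$. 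The case $g>\eta(g)$ is handled by the symmetric comparison, so $\eta(g)=g$ for every $g\in G$.

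The only genuine subtlety, and the step to watch, is the denominator-clearing inside the comparison: one cannot apply $\eta$ directly to $m/n$ since $m/n$ may fail to belong to $G$, so each inequality must be routed through the genuine group elements $ng$ and $m$. One must also keep strict versus non-strict inequalities straight, noting that a strict inequality $g<m/n$ only yields the non-strict conclusion $\eta(g)\le m/n$, which is nonetheless enough to contradict a strict separation.
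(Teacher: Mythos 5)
Your proof is correct and follows essentially the same route as the paper's: both arguments separate $g$ from a hypothetical $\eta(g)\neq g$ by a rational $m/n$ (equivalently, an integer $m$ strictly between $ng$ and $n\eta(g)$), clear denominators through the genuine group elements $ng$ and $m$, and use order preservation together with $\eta|_{\mathbb{Z}}=\mathrm{id}$ to reach a contradiction. Your explicit care with strict versus non-strict inequalities is a slight tightening of the paper's write-up, but the underlying idea is identical.
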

\begin{proof}
Since $\eta(1)=1$, we can write $\eta(\mathbb{Z})=\mathbb{Z}$, and so the result
is trivial for integer multiples of the identity.  Suppose that for some $g\in
G$ we have $\eta(g)>g$, in other words $\eta(g)-g>0$.  Then there exists a
natural number $n$ such that
\begin{equation*}
n\left[\eta(g)-g\right]=n\eta(g)-ng>1.
\end{equation*}
Hence there exists an integer $m$ such that
\begin{equation*}
 \eta(ng)=n\eta(g)>m>ng.
\end{equation*}
But since $m\in\mathbb{Z}$ we have $m=\eta(m)>\eta(ng)$ which implies that
\begin{equation*}
 \eta(ng)>\eta(ng)
\end{equation*}
which is a contradiction, so we cannot have $\eta(g)>g$ for some $g\in G$.

Now suppose that $\eta(g)<g$ for all $g\in G$.  Since $\eta$ is a homomorphism we can write
\begin{equation*}
 \eta(-g)>-g
\end{equation*}
which reduces to the previous argument and we once again have a contradiction.  Hence we conclude that $\eta(g)=g$ for any $g\in G$.
\end{proof}
We can extend the above result to the following more general scenario.
\begin{lemma}\label{group}
Let $G$ and $H$ be additive subgoups of $\mathbb{R}$ both containing the number $p$.  If $\eta:G\rightarrow H$ is an order preserving group homomorphism such that $\eta(p)=rp$ with $r\in\mathbb{Z}$ then $\eta(g)=rg$ for any $g\in G$. 
\end{lemma}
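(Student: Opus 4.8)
The plan is to reduce the statement to Lemma \ref{group_incl} by rescaling both the domain and the codomain so that the distinguished element $p$ becomes $1$ and $\eta$ becomes unit preserving. First I would dispose of the degenerate cases. If $p=0$ the hypothesis $\eta(p)=rp$ is vacuous, so I assume $p\neq 0$; since an additive subgroup of $\mathbb{R}$ contains $-p$ whenever it contains $p$, and $\eta(-p)=-\eta(p)=r(-p)$, I may replace $p$ by $-p$ and assume $p>0$. Because $\eta$ is order preserving and $p>0$, we get $0=\eta(0)\le\eta(p)=rp$, which forces $r\ge 0$. The case $r=0$ is then immediate: for any $g\in G$ pick $n\in\mathbb{N}$ with $-np\le g\le np$ (possible since $p>0$), and order preservation together with $\eta(\pm np)=\pm r\,np=0$ squeezes $\eta(g)$ between $0$ and $0$, giving $\eta(g)=0=rg$.

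For the main case $r>0$ I would introduce the additive subgroups $G':=\tfrac1p G$ and $H':=\tfrac{1}{rp}H$ of $\mathbb{R}$. Since $p\in G$ we have $1\in G'$, and since $\eta(p)=rp\in H$ we have $\tfrac{1}{rp}(rp)=1\in H'$, so both $G'$ and $H'$ contain $1$. Define $\phi:G'\to H'$ by $\phi(x):=\tfrac{1}{rp}\,\eta(px)$. This is well defined, because for $x\in G'$ we have $px\in G$, hence $\eta(px)\in H$ and $\tfrac{1}{rp}\eta(px)\in H'$; it is a group homomorphism because $\eta$ is; and it is order preserving because multiplication by the positive scalars $p$ and $\tfrac{1}{rp}$ preserves order. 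Finally $\phi(1)=\tfrac{1}{rp}\eta(p)=\tfrac{1}{rp}\cdot rp=1$.

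With these verifications in place, Lemma \ref{group_incl} applies to $\phi$ and yields $\phi(x)=x$ for every $x\in G'$, that is $\eta(px)=rp\,x$ for every $x\in G'$. Writing an arbitrary $g\in G$ as $g=p\cdot(g/p)$ with $g/p\in G'$ then gives $\eta(g)=r g$, which is exactly the assertion of the lemma.

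The only real obstacle is the bookkeeping of the two rescalings: one must check that $G'$ and $H'$ genuinely contain $1$, that $\phi$ lands inside $H'$, and that $\phi$ remains an order preserving homomorphism, and this is precisely where the positivity of $p$ and of $r$ established in the first paragraph is needed. The clean division by $r$ is unavailable when $r=0$, which is why that value is split off and settled directly by the squeezing argument rather than by the reduction.
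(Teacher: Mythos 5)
Your proof is correct, but it takes a different route from the paper's. The paper proves Lemma \ref{group} by rerunning the Archimedean contradiction argument of Lemma \ref{group_incl} directly with $p$ and $r$ in place of $1$: assuming $\eta(g)>rg$, it finds $n,m$ with $n\eta(g)>mp>nrg$ and derives a contradiction from $\eta(mp)=mrp$. You instead conjugate by the dilations $x\mapsto px$ and $y\mapsto y/(rp)$ to produce an order preserving homomorphism $\phi:G'\to H'$ with $\phi(1)=1$ and then simply invoke Lemma \ref{group_incl}. Your reduction buys two things: it avoids duplicating the density argument, and it forces out into the open the sign hypotheses that the paper's proof uses only implicitly --- the step in the paper that passes from $mrp\geq nr\eta(g)$ to $mp\geq n\eta(g)$ silently divides by $r$ and so needs $r>0$, and the insertion of a multiple of $p$ into the gap needs $p\neq 0$; you establish $r\geq 0$ from order preservation and dispose of $r=0$ separately (that case is essentially Lemma \ref{zeromap}). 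The one quibble is your remark that for $p=0$ the hypothesis is ``vacuous'': the hypothesis is then trivially satisfied rather than vacuous, and the conclusion genuinely fails (take $\eta$ the identity on $\mathbb{Z}$ and $r=5$), so $p\neq 0$ is a needed standing assumption of the lemma rather than a harmless normalization; this does not affect the validity of your argument, since the lemma is only applied with $p=n\geq 1$.
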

\begin{proof}
Suppose that for some $g\in G$ we have $\eta(g)>rg$, in other words we can write
$\eta(g)-rg>0$.  Then there exists a natural number $n$ such that
\begin{equation*}
 n\left[\eta(g)-rg\right]=n\eta(g)-nrg>p.
\end{equation*}
Hence there exists an integer $m$ such that
\begin{equation*}
n\eta(g)>mp>nrg.
\end{equation*}
But $\eta(mp)=m\eta(p)=mrp>\eta(nrg)=nr\eta(g)$, and hence $mp>n\eta(g)$ we
have $n\eta(g)>n\eta(g)$, which is a contradiction.

Conversely, suppose that $\eta(g)<rg$ for some $g\in G$.  Since $\eta$ is a
homomorphism, we can write this as $\eta(-g)>r(-g)$ and then the above argument
once again leads to a contradiction.  In the end we conclude that $\eta(g)=rg$
for all $g\in G$.
\end{proof}

There remains a final scenario regarding the group homomorphisms which is of importance.
\begin{lemma}\label{zeromap}
 Let $G$ and $H$ be additive subgoups of $\mathbb{R}$.  If $\eta:G\rightarrow H$ is an order preserving group homomorphism such that $\eta(1)=0$ then $\eta(g)=0$ for any $g\in G$.
\end{lemma}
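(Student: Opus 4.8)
The plan is to exploit the hypothesis $\eta(1)=0$ to collapse an order-sandwich around $\eta(g)$, which makes this the degenerate version of Lemmas \ref{group_incl} and \ref{group}. First I would record two elementary consequences of the hypotheses. Since $\eta(1)$ appears in the statement we must have $1\in G$, and as $G$ is a group this forces $\mathbb{Z}\subseteq G$; because $\eta$ is a homomorphism, $\eta(m)=m\,\eta(1)=0$ for every $m\in\mathbb{Z}$. I would also restate that $\eta$ being order preserving means precisely that $a\le b$ in $G$ implies $\eta(a)\le\eta(b)$ in $H$.

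The heart of the argument is then a single sandwiching step. Fix an arbitrary $g\in G$ and set $m:=\lfloor g\rfloor\in\mathbb{Z}\subseteq G$, so that
\begin{equation*}
 m\le g\le m+1,
\end{equation*}
with both $m$ and $m+1$ lying in $G$. Applying $\eta$ to each of the inequalities $m\le g$ and $g\le m+1$ and using order preservation gives
\begin{equation*}
 0=\eta(m)\le \eta(g)\le \eta(m+1)=0,
\end{equation*}
whence $\eta(g)=0$. As $g$ was arbitrary this proves the lemma.

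In contrast with the proofs of Lemmas \ref{group_incl} and \ref{group}, no Archimedean refinement (passing to $ng$ for large $n$) is needed here: the assumption $\eta(1)=0$ makes the two integer bounds $\eta(m)$ and $\eta(m+1)$ coincide at $0$, so the enclosing interval degenerates to a point and equality is immediate. The only point that genuinely has to be checked is the implicit one that $1\in G$, so that $\mathbb{Z}\subseteq G$ and the floor $\lfloor g\rfloor$ is a legitimate element of $G$ against which $g$ may be compared; this is the sole (and minor) obstacle. If one prefers to match the contradiction style of the earlier lemmas, the same conclusion follows by supposing $\eta(g)>0$ for some $g>0$, choosing an integer $m$ with $g\le m$ to obtain $\eta(g)\le\eta(m)=0$, a contradiction, and reducing the case $\eta(g)<0$ to this one via $\eta(-g)=-\eta(g)>0$; but the direct sandwich above is cleaner.
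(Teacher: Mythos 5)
Your proof is correct. The paper does not spell out an argument for this lemma; it simply says the proof is ``similar to those of Lemmas \ref{group_incl} and \ref{group}'', i.e.\ it intends the same contradiction-plus-Archimedean scheme used there (assume $\eta(g)>0$, multiply by a large $n$ to trap an integer, and contradict order preservation). Your primary argument is genuinely different and in fact simpler: since $\eta(1)=0$ forces $\eta$ to vanish on all of $\mathbb{Z}\subseteq G$, sandwiching $g$ between $\lfloor g\rfloor$ and $\lfloor g\rfloor+1$ and applying order preservation pins $\eta(g)$ between $0$ and $0$ directly, with no need to pass to multiples $ng$. You correctly identify the one implicit hypothesis ($1\in G$, hence $\mathbb{Z}\subseteq G$), which the paper's statement omits but which is needed for $\eta(1)$ to be meaningful, and your closing remark supplies the contradiction-style variant that matches the paper's intended template. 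What the direct sandwich buys is brevity and the observation that the degenerate case $\eta(1)=0$ needs none of the Archimedean machinery; what the paper's template buys is uniformity with the proofs of the two preceding lemmas. Either is acceptable.
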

\begin{proof}
The proof is similar to those of Lemmas \ref{group} and \ref{group_incl}.
\end{proof}

\begin{proposition}\label{group_homo}
 If $\varphi:A_{\Theta}\rightarrow A_{\theta}$ is a unital *-homomorphism between quantum tori then $\varphi_*:K_0(A_{\Theta})\rightarrow K_0(A_{\theta})$ is the inclusion map, in other words, $\varphi_*(g)=g$ for any $g\in \mathbb{Z}+\Theta\mathbb{Z}$.
\end{proposition}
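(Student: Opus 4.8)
The plan is to transport $\varphi_*$ through the trace isomorphisms of Lemma \ref{trace_iso} and then invoke the rigidity statement of Lemma \ref{group_incl}. Write $\tau_\Theta$ and $\tau_\theta$ for the unique normalized traces on $A_\Theta$ and $A_\theta$, and let $\psi_\Theta:K_0(A_\Theta)\rightarrow\mathbb{Z}+\Theta\mathbb{Z}$ and $\psi_\theta:K_0(A_\theta)\rightarrow\mathbb{Z}+\theta\mathbb{Z}$ be the order isomorphisms they induce. First I would form the composite
\begin{equation*}
 \eta:=\psi_\theta\circ\varphi_*\circ\psi_\Theta^{-1}:\mathbb{Z}+\Theta\mathbb{Z}\rightarrow\mathbb{Z}+\theta\mathbb{Z},
\end{equation*}
which is a group homomorphism between two additive subgroups of $\mathbb{R}$, each containing $1$. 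It then suffices to show that $\eta$ is order preserving and satisfies $\eta(1)=1$, since Lemma \ref{group_incl} forces $\eta$ to be the inclusion, and unwinding the identifications gives $\varphi_*(g)=g$ for every $g\in\mathbb{Z}+\Theta\mathbb{Z}$.

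Next I would verify $\eta(1)=1$. By Remark \ref{id_class} applied to $A_\Theta$, the class of the $1\times 1$ identity matrix satisfies $\psi_\Theta([\I^{(1)}_{A_\Theta}])=1$, so $\psi_\Theta^{-1}(1)=[\I^{(1)}_{A_\Theta}]$. Since $\varphi$ is unital, $\varphi(\I_{A_\Theta})=\I_{A_\theta}$, whence $\varphi_*([\I^{(1)}_{A_\Theta}])=[\varphi(\I^{(1)}_{A_\Theta})]=[\I^{(1)}_{A_\theta}]$. Applying $\psi_\theta$ and Remark \ref{id_class} for $A_\theta$ then yields $\eta(1)=1$.

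The order-preservation is the step needing the most care, and I expect it to be the main obstacle. A *-homomorphism sends projections to projections, because $p=p^2=p^*$ implies $\varphi(p)=\varphi(p)^2=\varphi(p)^*$; applied entrywise this carries $P[A_\Theta]$ into $P[A_\theta]$ and hence sends $K_0(A_\Theta)^+$ into $K_0(A_\theta)^+$. Because the order on each $K_0$ group is defined (Definition \ref{posel}) by $g\le h\iff h-g\in K_0^+$, and since $\psi_\Theta,\psi_\theta$ are order isomorphisms onto the positive cones of $\mathbb{Z}+\Theta\mathbb{Z}$ and $\mathbb{Z}+\theta\mathbb{Z}$, it follows that $\eta$ maps nonnegative elements to nonnegative elements, i.e. $\eta$ is order preserving. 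The subtlety is purely bookkeeping: one must confirm that $\varphi_*$ is genuinely well defined on the positive cone (the content of the earlier results that a *-homomorphism respects stable equivalence), and that the trace isomorphisms identify the order structures and not merely the underlying abelian groups.

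With $\eta$ order preserving and $\eta(1)=1$, Lemma \ref{group_incl} gives $\eta(g)=g$ for all $g\in\mathbb{Z}+\Theta\mathbb{Z}$. Translating back through $\psi_\Theta$ and $\psi_\theta$, this says precisely that $\varphi_*$ is the inclusion map under the standard trace identifications, which is the assertion of the proposition.
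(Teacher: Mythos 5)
Your proposal is correct and follows essentially the same route as the paper: both arguments establish that $\varphi_*$ sends the class of the identity to the class of the identity (giving $1\mapsto 1$ under the trace isomorphisms), that $\varphi_*$ preserves the positive cone (you via projections mapping to projections, the paper via realizing a positive $g$ as the trace of a projection $p$ and noting $[\varphi(p)]=[\varphi(p)^*\varphi(p)]$ has nonnegative trace), and then both conclude with Lemma \ref{group_incl}. The only difference is presentational: you make the conjugation by the trace isomorphisms $\psi_\Theta,\psi_\theta$ explicit, whereas the paper performs the same identification implicitly.
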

\begin{proof}
We have $\varphi(\I_{A_{\Theta}})=\I_{A_{\theta}}$ since $\varphi$ is unital.  We know that for any $[p]\in K_0(A_{\Theta})$, $\varphi_*$ is defined by
\begin{equation*}
 \varphi_*([p])=[\varphi(p)]
\end{equation*}
So in particular we have
\begin{equation*}
 \varphi_*([\I_{A_{\Theta}}])=[\varphi(\I_{A_{\Theta}})]=[\I_{A_{\theta}}]
\end{equation*}
Then clearly the class of the identity is sent to the class of the identity. 
Furthermore from Lemma (\ref{welldefinedhomomorphism}) we know that $\varphi_*$
is a group homomorphism.  It only remains to show that the order is preserved. 
Let $0<g \in\mathbb{Z}+\Theta\mathbb{Z}$.  Then according to Lemma \ref{proj}
there is some $n\times n$ matrix $p$ with entries in $A_{\Theta}$ which is a
projection such that $\psi([p])=\tau(p)=g$.  Furthermore, we have
\begin{equation*}
 \varphi_*([p])=[\varphi(p)]=[\varphi(p)^*\varphi(p)]\in K_0(A_{\theta})
\end{equation*}
But by Lemma \ref{trace_iso}
$[\varphi(p)^*\varphi(p)]\mapsto\tau(\varphi(p)^*\varphi(p))\geq0$.  So
$[\varphi(p)]\in K_0\left(A_{\theta}\right)^+$ and $\varphi_*$ is an order
preserving group homomorphism sending the class of the identity to the class of
the identity.  Lemma \ref{group_incl} then implies that
\begin{equation*}
 \varphi_*(g)=g.
\end{equation*}
\end{proof}

\section{Morita Equivalence}\noindent
This section follows closely the wonderful book \cite{Raeburn} by Iain Raeburn
and Dana Williams.  The aim is not to prove all the theorems, but rather to put
forth the basics of the theory.  Morita equivalence will only be used in proving
the existence of *-homomorphisms of the form
\begin{equation*}
 \varphi:A_{\Theta}\rightarrow M_n\left(A_{\theta}\right)
\end{equation*}
where $\Theta$ and $\theta$ are both irrational.
\begin{definition}(Right $A$-module)\cite[p. 8]{Raeburn}\label{module}.  \newline
Let $A$ be a $C^*$-algebra.  By a \emph{right $A$-module}, we shall mean a vector space $X$ together with the linear pairing
\begin{equation*}
 (x,a)\mapsto x\cdot a : X\times A\rightarrow X.
\end{equation*}
\end{definition}
We will often write $X_{A}$ to emphasize the fact that we are viewing $X$ as a right $A$-module.
\begin{definition}(Inner Product $A$-Module)\cite[p. 8]{Raeburn}\label{ipmodule}\newline
A right \emph{inner product $A$-module} is a (right) $A$-module $X$ with a pairing
\begin{equation*}
 \langle\cdot,\cdot\rangle_{A}:X\times X\rightarrow A
\end{equation*}
such that the following five properties hold:
\begin{enumerate}
 \item $\langle x,\lambda y +\mu z \rangle_A=\lambda\langle x, y \rangle_A+\mu\langle x, z \rangle_A$
 \item $\langle x,y\cdot a\rangle_A=\langle x,y \rangle_A a$
 \item $\langle x,y\rangle^{*}_A=\langle y,x\rangle_A$
 \item $\langle x,x\rangle_A\geq 0$ (As an element of $A$)
 \item $\langle x,x\rangle_A=0$ implies that $x=0$
\end{enumerate}
for all $x,y,z\in X$, $\lambda,\mu\in \mathbb{C}$ and $a\in A$
\end{definition}
\begin{remark}
 From conditions (1) and (3) we can show that $\langle\cdot,\cdot\rangle_A$ is conjugate linear in the first variable:
\begin{eqnarray*}
\langle \lambda x+\mu y,z\rangle_{A} &=& \langle z,\lambda x+\mu y\rangle^*_A\\
&=&\left(\lambda\langle z,x\rangle_A+\mu\langle z,y\rangle_A\right)^*\\
&=&\overline{\lambda}\langle x,z\rangle_A+\overline{\mu}\langle y,z\rangle_A 
\end{eqnarray*}
Using (2) and (3) we can also show the following:
\begin{eqnarray*}
 \langle x\cdot a,y\rangle_A&=&\langle y,x\cdot a\rangle^*_A\\
&=&\left(\langle y,x\rangle_Aa\right)^*\\
&=&a^*\langle x,y\rangle_A
\end{eqnarray*}
\end{remark}
\begin{lemma}
$I=\text{span}\{\langle x,y\rangle_A\}$ is a two-sided ideal in $A$. 
\end{lemma}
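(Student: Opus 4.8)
The plan is to verify directly the two defining conditions of a two-sided ideal: that $I$ is a linear subspace of $A$, and that it absorbs multiplication by arbitrary elements of $A$ on both sides. The first condition is immediate, since $I$ is by definition the linear span of the set $\{\langle x,y\rangle_A : x,y\in X\}$ and is therefore automatically closed under addition and scalar multiplication. Consequently the real content lies entirely in the absorption property.

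For the absorption property it suffices, by linearity, to check that both $a\langle x,y\rangle_A$ and $\langle x,y\rangle_A\, a$ lie in $I$ for every generator $\langle x,y\rangle_A$ and every $a\in A$. The right-hand case follows directly from property (2) of Definition \ref{ipmodule}: since $X$ is a right $A$-module we have $y\cdot a\in X$, and hence
\[
\langle x,y\rangle_A\, a = \langle x,\, y\cdot a\rangle_A \in I .
\]
For the left-hand case I would invoke the identity $\langle x\cdot a,\, y\rangle_A = a^*\langle x,y\rangle_A$ that was derived in the remark following Definition \ref{ipmodule} from properties (2) and (3). Replacing $a$ by $a^*$ and using $(a^*)^*=a$ then gives
\[
a\langle x,y\rangle_A = \langle x\cdot a^*,\, y\rangle_A \in I ,
\]
which again makes sense precisely because $x\cdot a^*\in X$ by the module structure.

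Finally, extending these two computations by linearity to a general element $\sum_k \lambda_k\langle x_k,y_k\rangle_A$ of $I$ yields $aI\subset I$ and $Ia\subset I$ for every $a\in A$, so $I$ is a two-sided ideal, completing the argument.

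There is essentially no serious obstacle here; the only step requiring any care is the left-multiplication case, where one must pass through the adjoint and use the conjugate-symmetry property (3) together with the right action of $A$, rather than property (2) in isolation. I would also emphasise that the assertion is purely algebraic: no closedness of $I$ in $A$ is claimed, so none needs to be established, and the proof is complete once absorption on both sides is verified.
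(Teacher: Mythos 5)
Your proof is correct and follows essentially the same route as the paper: right absorption via property (2) of Definition \ref{ipmodule}, and left absorption via the identity $a\langle x,y\rangle_A=\langle x\cdot a^*,y\rangle_A$ from the remark, with the span guaranteeing the subspace and linearity steps. Your write-up is in fact slightly more careful than the paper's, which leaves the extension by linearity implicit.
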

\begin{proof}
 Let $a\in A$ be arbitrar.  Then we know from the above remark that
\begin{equation*}
 a\langle x,y\rangle_A=\langle x\cdot a^*,y\rangle_A.
\end{equation*}
But from Definition \ref{module} we know that $x\cdot a^*\in X$ and hence
$a\langle x,y\rangle_A\in I$.  Similarly from property (2) of Definition
\ref{ipmodule} we know that $\langle x,y\rangle_Aa\in I$.  This implies that $I$
is a two sided ideal in $A$. 
\end{proof}
\begin{remark}\label{leftipmodule}
Later we will also be concerned with left $A$-modules.  They differ from Definition \ref{module} in that we now have a pairing
\begin{equation*}
 A\times X\rightarrow X:(a,x)\mapsto a\cdot x
\end{equation*}
and we will use the notation $_AX$ to emphasize that we are dealing with a left $A$-module.  For left inner product $A$-modules Definition \ref{ipmodule} also has to be modified.  The pairing $_A\langle\cdot,\cdot\rangle:X\times X\rightarrow A$ is defined to be linear in the first variable and conjugate linear in the second variable, furthermore property (2) is replaced by
\begin{equation*}
 _A\langle a\cdot x,y\rangle=a\langle x,y\rangle  
\end{equation*}
The remaining properties remain unchanged.  We define an $A-B$-\textsl{bimodule} to be both a left $A$ and a right $B$ module.
\end{remark}
\begin{remark}
From \cite[Corollary 2.7]{Raeburn} we know that if $X$ is a right Hilbert $A$-module then
\begin{equation*}
 \|\cdot\|_A:=\|\langle\cdot,\cdot\rangle_A\|^{1/2}
\end{equation*}
defines a norm on $X$.  This then leads us to the following definition:
\end{remark}
\begin{definition}(Full, Hilbert $A$-module)\cite[p. 11]{Raeburn}\label{full}\newline
A \emph{Hilbert $A$-module} is an inner product $A$-module $X$ which is complete in the norm induced by the pairing $\langle\cdot,\cdot\rangle_A$ and defined by \cite[Corollary 2.7]{Raeburn}
\begin{equation}
 \|x\|_A:=\|\langle x,x\rangle_A\|^{1/2}
\end{equation}
Furthermore, we say that a Hilbert inner product $A$-module is \emph{full} if the ideal
\begin{equation*}
 I_A=\text{span}\{\langle x,y\rangle_A:x,y\in X\}
\end{equation*}
is dense in $A$.
\end{definition}
\begin{definition}($A-B$-Imprimitivity Bimodule)\cite[p. 42]{Raeburn}\newline
Let $A$ and $B$ be $C^*$-algebras.  Then an \emph{$A-B$-imprimitivity bimodule} is an $A-B$-bimodule such that
\begin{enumerate}
 \item $X$ is a full left Hilbert $A$-module and a full right Hilbert
$B$-module,
 \item for all $x,y\in X,a\in A$ and $b\in B$ we have
\begin{eqnarray*}
 \langle a\cdot x,y\rangle_B&=&\langle x, a^*\cdot y\rangle_B\\
 _A\langle x\cdot b,y\rangle&=&_A\langle x,y\cdot b^*\rangle,
\end{eqnarray*}
 \item for all $x,y,z\in X$, we have
\begin{equation*}
 _A\langle x,y\rangle\cdot z=x\cdot\langle y,z\rangle_B.
\end{equation*}
\end{enumerate}
\end{definition}
\begin{definition}(Morita Equivalence)\cite[proposition 3.16]{Raeburn}\label{Morita}\newline
 Two $C^*$-algebras $A$ and $B$ are \emph{Morita equivalent} if there is an $A-B$-imprimitivity bimodule $X$.
\end{definition}
The next theorem is cited without proof and will be quoted when we show that Morita Equivalence is an equivalence relation on a $C^*$-algebra.
%% I have to include the definitions of the terms I am using in this theorem, see Rocco's notes
\begin{definition}(Pre-inner product)\newline
 A pre-inner product $\langle\cdot,\cdot\rangle$ has all the properties of a
normal inner product without the property that
\begin{equation*}
 \langle x,x\rangle=0 \text{ if and only if } x=0.
\end{equation*}
\end{definition}

\begin{lemma}\cite[Lemma 2.16]{Raeburn}\label{complete01}\newline
 Suppose $A_0$ is a dense *-subalgebra of a $C^*$-algebra $A$, and that $X_0$ is a right $A_0$-module.  We suppose that $X_0$ is a pre-inner product $A_0$-module with pre-inner product $\langle\cdot,\cdot \rangle_0$.  Then there is a Hilbert $A$-module $X$ and a linear map 
\begin{equation*}
 q:X_0\rightarrow X
\end{equation*}
such that $q(X_0)$ is dense, $q(x)\cdot a=q(x\cdot a)$ for all $x\in X_0, a\in A_0$, and $\langle q(x),q(y)\rangle_A=\langle x,y\rangle_0$.  We call $X$ the completion of the pre-inner product module $X_0$.
\end{lemma}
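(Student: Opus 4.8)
The plan is to build $X$ by the standard two-stage procedure: first quotient out the degeneracy of the pre-inner product to obtain a genuine inner-product module, then complete in the induced norm and extend all of the structure by continuity. The essential analytic input is a Cauchy--Schwarz inequality valid for pre-inner products, namely
\begin{equation*}
 \langle x,y\rangle_0^{*}\langle x,y\rangle_0 \leq \|\langle x,x\rangle_0\|\,\langle y,y\rangle_0
\end{equation*}
as elements of $A$. Its usual proof proceeds by applying positivity to $\langle y\cdot a-x,\,y\cdot a-x\rangle_0\geq 0$ for a suitable $a\in A_0$, and since that argument never invokes the positive-definiteness of the pairing it applies verbatim in the present pre-inner product setting. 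Taking norms in this inequality immediately yields the key estimate $\|\langle x,y\rangle_0\|\leq\|x\|_0\|y\|_0$, where I write $\|x\|_0:=\|\langle x,x\rangle_0\|^{1/2}$.

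First I would set $N:=\{x\in X_0:\langle x,x\rangle_0=0\}$. Using the estimate above one checks that $\|\cdot\|_0$ is a seminorm (the triangle inequality reduces to $\|x+y\|_0^2\leq\|x\|_0^2+2\|x\|_0\|y\|_0+\|y\|_0^2$), that $N$ coincides with $\{x:\|x\|_0=0\}$, and that $N$ is a right $A_0$-submodule: closure under addition follows from the seminorm property, while $\langle n\cdot a,n\cdot a\rangle_0=a^{*}\langle n,n\rangle_0 a=0$ gives $N\cdot A_0\subseteq N$ directly. Cauchy--Schwarz also forces $\langle x,n\rangle_0=0$ for every $x\in X_0$ and $n\in N$, so the pre-inner product descends to a well-defined and now positive-definite $A_0$-valued inner product on the quotient module $X_0/N$.

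Next I would let $X$ be the completion of $X_0/N$ in the norm $\|\cdot\|_0$, and take $q:X_0\to X$ to be the quotient map $x\mapsto x+N$ followed by the canonical inclusion, so that $q(X_0)$ is dense by construction. It remains to transport the module action and the inner product to $X$. Because $\|x\cdot a\|_0^{2}=\|a^{*}\langle x,x\rangle_0 a\|\leq\|a\|^{2}\|x\|_0^{2}$, the right action is bounded; for $a\in A$ I would choose $a_n\to a$ with $a_n\in A_0$ and set $x\cdot a:=\lim_n x\cdot a_n$ first on $q(X_0)$ and then on all of $X$, a routine $\varepsilon/3$ argument showing independence of the approximating sequence and verifying that this makes $X$ a genuine right $A$-module. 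Likewise $\|\langle x,y\rangle_0\|\leq\|x\|_0\|y\|_0$ lets me extend $\langle\cdot,\cdot\rangle_0$ to a jointly continuous $A$-valued pairing on $X$, its values now lying in $A$ rather than merely in the dense subalgebra $A_0$.

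Finally I would verify the five axioms of Definition \ref{ipmodule} for the extended pairing: each holds on the dense subspace $q(X_0)$ and passes to $X$ by continuity, while positive-definiteness comes for free since $\langle x,x\rangle_A=0$ forces $\|x\|_0=0$ and hence $x=0$. Together with the completeness of $X$ this confirms that $X$ is a Hilbert $A$-module, and the identities $q(x)\cdot a=q(x\cdot a)$ and $\langle q(x),q(y)\rangle_A=\langle x,y\rangle_0$ hold by construction. The main obstacle is not any single step but the bookkeeping involved in extending the $A_0$-action to an $A$-action: one must confirm that all the relevant limits exist, are independent of the chosen sequences in $A_0$, and remain compatible both with the extended inner product and with the positivity of $\langle x,x\rangle_A$ as an element of $A$.
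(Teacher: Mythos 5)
Your construction is correct and is the standard quotient-by-the-null-space-then-complete argument; note that the paper itself states Lemma \ref{complete01} without proof, citing \cite[Lemma 2.16]{Raeburn}, and your argument is essentially the one given in that reference. The only point worth tightening is the Cauchy--Schwarz step: the element $a=\langle x,y\rangle_0$ used in the positivity argument need not lie in $A_0$, so one should either approximate it from $A_0$ or, more cleanly, compose the pairing with states of $A$ and invoke the scalar Cauchy--Schwarz inequality; with that adjustment everything goes through.
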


\begin{lemma}\cite[Proposition 3.12]{Raeburn}\label{complete02}\newline
 Let $A$ and $B$ be $C^*$-algebras and $A_0\subset A$ and $B_0\subset B$ dense
*-subalgebras.  If $X_0$ is an $A_0-B_0$-imprimitivity bimodule.  Then there is
an $A-B$-imprimitivity bimodule $X$ and an $A_0-B_0$-imprimitivity bimodule
homomorphism
\begin{equation*}
 q:X_0\rightarrow X
\end{equation*}
such that $q(X_0)$ is dense and
\begin{equation*}
 \langle q(x),q(y)\rangle_B=\langle x,y\rangle_{B_0}, \quad _A\langle q(x),q(y)\rangle=_{A_0}\langle x,y\rangle
\end{equation*} 
for all $x,y\in X,a\in A_0$, and $b\in B$.
\end{lemma}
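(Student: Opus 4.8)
The plan is to build $X$ by completing $X_0$ \emph{once}, namely as a right Hilbert $B$-module via Lemma \ref{complete01}, and then to show that the left $A_0$-structure survives the completion and promotes to a genuine left $A$-module structure, before finally checking that the defining identities of an imprimitivity bimodule pass from $X_0$ to $X$ by continuity. First I would apply Lemma \ref{complete01} to the right $B_0$-module $X_0$ with its $B_0$-valued pre-inner product (see Definitions \ref{module} and \ref{ipmodule}). This yields a right Hilbert $B$-module $X$ together with a linear map $q:X_0\rightarrow X$ with dense range satisfying $q(x)\cdot b=q(x\cdot b)$ and $\langle q(x),q(y)\rangle_B=\langle x,y\rangle_{B_0}$ for all $x,y\in X_0$ and $b\in B_0$. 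This already supplies the right-hand half of the conclusion.

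Next I would transport the left action. For $a\in A_0$ define $L_a$ on the dense submodule $q(X_0)$ by $L_a q(x):=q(a\cdot x)$. Well-definedness follows from the Cauchy--Schwarz inequality for the $B_0$-valued pairing together with the compatibility identity $\langle a\cdot x,a\cdot x\rangle_{B_0}=\langle x,a^{*}a\cdot x\rangle_{B_0}$: if $q(x)=0$, i.e. $\langle x,x\rangle_{B_0}=0$, then Cauchy--Schwarz forces $\langle a\cdot x,a\cdot x\rangle_{B_0}=0$, so $L_a q(x)=0$. The imprimitivity compatibility $\langle a\cdot x,y\rangle_{B_0}=\langle x,a^{*}\cdot y\rangle_{B_0}$ translates into $\langle L_a q(x),q(y)\rangle_B=\langle q(x),L_{a^{*}}q(y)\rangle_B$, so each $L_a$ carries a formal adjoint $L_{a^{*}}$ on $X$. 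By the standard fact that a map on a Hilbert module admitting a formal adjoint is automatically bounded and adjointable, each $L_a$ extends uniquely to an element of the $C^{*}$-algebra $\mathcal{L}(X)$ of adjointable operators, and $a\mapsto L_a$ is a $*$-homomorphism from $A_0$ into $\mathcal{L}(X)$.

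The main obstacle is the passage from $A_0$ to $A$. Here I would argue that $a\mapsto L_a$ is norm-decreasing for the $C^{*}$-norm of $A$: since $\mathcal{L}(X)$ is a $C^{*}$-algebra and $A_0$ is a dense $*$-subalgebra of $A$, for self-adjoint $a$ the operator norm $\|L_a\|$ is controlled by the spectral radius of $a$ in $A$, yielding $\|L_a\|\le\|a\|_A$, and the general case follows from $\|L_a\|^{2}=\|L_{a^{*}a}\|$. Thus $a\mapsto L_a$ extends by continuity to a $*$-homomorphism $A\rightarrow\mathcal{L}(X)$, which is precisely a left $A$-module action on $X$ extending the $A_0$-action and satisfying $\langle a\cdot\xi,\eta\rangle_B=\langle\xi,a^{*}\cdot\eta\rangle_B$ on all of $X$.

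Finally I would install the $A$-valued inner product by setting $_A\langle q(x),q(y)\rangle:={}_{A_0}\langle x,y\rangle\in A_0\subset A$ and extending by density (using Remark \ref{leftipmodule} for the left conventions). The crucial point making this extension continuous is that on an imprimitivity bimodule the two norms coincide, $\|{}_{A_0}\langle x,x\rangle\|_A=\|\langle x,x\rangle_{B_0}\|_B=\|q(x)\|^{2}$, so completing in the $B$-norm is the same as completing in the $A$-norm and the left pairing is bounded on $q(X_0)\times q(X_0)$; this identity is also what shows $q(X_0)$ is dense for the $A$-structure. With both pairings in hand, the defining conditions of an $A$-$B$-imprimitivity bimodule — fullness of each side (Definition \ref{full}), the two compatibility identities, and the associativity identity $_A\langle\xi,\eta\rangle\cdot\zeta=\xi\cdot\langle\eta,\zeta\rangle_B$ — each hold on the dense set $q(X_0)$ by the corresponding property of $X_0$ and extend by continuity to all of $X$. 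This produces the desired imprimitivity bimodule witnessing Morita equivalence in the sense of Definition \ref{Morita}, with $q$ the required $A_0$-$B_0$-imprimitivity bimodule homomorphism.
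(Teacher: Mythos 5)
The paper itself gives no proof of this lemma; it is quoted from Raeburn--Williams [Proposition 3.12] without argument, so your attempt can only be measured against the standard proof there. Your overall architecture is the right one, and is the same as theirs: complete $X_0$ once as a right Hilbert $B$-module via Lemma \ref{complete01}, transport the left $A_0$-action and the $A_0$-valued pairing to the dense image $q(X_0)$, and extend everything by continuity using the coincidence of the two norms.

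There is, however, a genuine gap at the key technical step, namely the boundedness of the left action. You define $L_a$ only on the dense submodule $q(X_0)$ and then invoke ``a map on a Hilbert module admitting a formal adjoint is automatically bounded and adjointable.'' That fact requires the map and its formal adjoint to be defined on \emph{all} of the Hilbert module; it is proved by a closed-graph/uniform-boundedness argument and fails for densely defined operators (an unbounded self-adjoint operator restricted to its natural domain is formally self-adjoint there but unbounded). So at this point you have not shown that $L_a$ is bounded, and your subsequent claim that $\|L_a\|\le\|a\|_A$ via spectral radius both presupposes $L_a\in\mathcal{L}(X)$ and runs into the separate problem that for $\lambda\notin\sigma_A(a)$ the inverse $(a-\lambda)^{-1}$ lies in $A$ but generally not in $A_0$, so one cannot conclude $\sigma(L_a)\subseteq\sigma_A(a)$. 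The missing ingredient is the inequality $\|a\cdot x\|_{B_0}\le\|a\|_A\,\|x\|_{B_0}$ on $X_0$ itself, which is exactly what Raeburn--Williams establish first (their Proposition 3.11): from ${}_{A_0}\langle a\cdot x,a\cdot x\rangle=a\,{}_{A_0}\langle x,x\rangle\,a^*$ and the norm equality $\|{}_{A_0}\langle z,z\rangle\|_A=\|\langle z,z\rangle_{B_0}\|_B$ one gets $\|\langle a\cdot x,a\cdot x\rangle_{B_0}\|=\|a\,{}_{A_0}\langle x,x\rangle\,a^*\|\le\|a\|_A^2\,\|\langle x,x\rangle_{B_0}\|$. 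You actually state the norm equality in your last paragraph but deploy it only to extend the left inner product; using it here closes the gap, after which the extension of $a\mapsto L_a$ from $A_0$ to $A$ is immediate by continuity and the rest of your argument (fullness and the compatibility identities passing to $X$ by density) goes through.
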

In the following theorem we mention completion of an imprimitivity bimodule, this can be understood in terms of Lemmas \ref{complete01} and \ref{complete02}.
\begin{theorem}\cite[p. 48]{Raeburn}\label{relation}\newline
 Let $A,B$ and $C$ be $C^*$-algebras.  Suppose that $X$ is an $A-B$-imprimitivity bimodule and $Y$ is a $B-C$-imprimitivity bimodule.  Then $Z:=X\odot_{B}Y$ is an $A-C$-bimodule, and there are unique $A$ and $C$ valued pre-inner products $_A\langle\langle\cdot,\cdot\rangle\rangle$ and $\langle\langle\cdot,\cdot\rangle\rangle_C$ respectively on $Z$ satisfying
\begin{eqnarray*}
 \langle\langle x\otimes_{B}y,z\otimes_{B}w \rangle\rangle_{C}&=&\langle\langle z,x\rangle_{B}\cdot y,w\rangle_{C}\\
_{A}\langle\langle x\otimes_{B}y,z\otimes_{B}w \rangle\rangle&=&_{A}\langle x,z\cdot_{B}\langle w,y\rangle\rangle.
\end{eqnarray*}
With respect to these pre-inner products, $Z$ is an $A-C$-pre-imprimitivity bimodule.  The completion of $Z$ is an $A-C$-imprimitivity bimodule, which we also denote by $X\otimes_{B}Y$ and call the internal tensor product.
\end{theorem}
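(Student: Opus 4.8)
The plan is to construct all the structure on the \emph{algebraic} balanced tensor product $Z_0 := X\odot_B Y$ first, to verify by hand that $Z_0$ is an $A$--$C$-pre-imprimitivity bimodule, and then to invoke Lemma~\ref{complete02} to pass to the completion. First I would equip $Z_0$ with its $A$--$C$-bimodule structure by setting $a\cdot(x\otimes_B y):=(a\cdot x)\otimes_B y$ and $(x\otimes_B y)\cdot c:=x\otimes_B(y\cdot c)$ and extending linearly. Since the tensor product is balanced over $B$, one checks these descend to $Z_0$, i.e. that they respect $(x\cdot b)\otimes_B y = x\otimes_B(b\cdot y)$; this is immediate because the left $A$-action on $X$ commutes with the right $B$-action and the right $C$-action on $Y$ commutes with the left $B$-action. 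That the two actions commute, making $Z_0$ a genuine bimodule, is equally formal.

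Next I would define the two pairings by the stated formulas and extend them sesquilinearly in the conventions of Definition~\ref{ipmodule} and Remark~\ref{leftipmodule}. The first real point is \emph{well-definedness}: each pairing must descend from $X\times Y$, slot by slot, to the balanced product. For $\langle\langle\cdot,\cdot\rangle\rangle_C$ this amounts to checking $\langle\langle (x\cdot b)\otimes y,\,z\otimes w\rangle\rangle_C = \langle\langle x\otimes(b\cdot y),\,z\otimes w\rangle\rangle_C$, which follows from $\langle z,x\cdot b\rangle_B=\langle z,x\rangle_B\,b$ (property~(2) of Definition~\ref{ipmodule}) together with the definition of the left $B$-action on $Y$; the symmetric check handles the second variable, and the $A$-valued pairing is treated identically. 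Conjugate symmetry $\langle\langle\xi,\eta\rangle\rangle_C^*=\langle\langle\eta,\xi\rangle\rangle_C$, the module-linearity identities such as $\langle\langle\xi,\eta\cdot c\rangle\rangle_C=\langle\langle\xi,\eta\rangle\rangle_C\,c$, and the two compatibility relations linking the $A$- and $C$-valued pairings all reduce, after unwinding the formulas and rebracketing, to the corresponding imprimitivity-bimodule axioms already available for $X$ and $Y$.

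\textbf{The hard part} is \emph{positivity}. For $\xi=\sum_{i=1}^n x_i\otimes y_i$ I must show
\[
\langle\langle\xi,\xi\rangle\rangle_C=\sum_{i,j}\langle \langle x_j,x_i\rangle_B\cdot y_i,\ y_j\rangle_C\ \ge\ 0 \quad\text{in } C.
\]
The key input is the standard fact that the Gram matrix $\big(\langle x_i,x_j\rangle_B\big)_{i,j}$ is a positive element of the $C^*$-algebra $M_n(B)$. Factoring it as $\gamma^*\gamma$ with $\gamma=(\gamma_{ki})\in M_n(B)$, so that $\langle x_j,x_i\rangle_B=\sum_k\gamma_{kj}^*\gamma_{ki}$, I would substitute and apply the compatibility identity $\langle b\cdot y,w\rangle_C=\langle y,b^*\cdot w\rangle_C$ for the $B$-action on $Y$ to move each $\gamma_{kj}^*$ into the second slot, obtaining
\[
\langle\langle\xi,\xi\rangle\rangle_C=\sum_k\Big\langle \sum_i\gamma_{ki}\cdot y_i,\ \sum_j\gamma_{kj}\cdot y_j\Big\rangle_C=\sum_k\langle\eta_k,\eta_k\rangle_C\ \ge\ 0,
\]
where $\eta_k:=\sum_i\gamma_{ki}\cdot y_i\in Y$ and positivity of each term uses that $\langle\cdot,\cdot\rangle_C$ is a genuine inner product on $Y$. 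This is the one step where both module structures and their compatibility are used essentially; everything else is bookkeeping.

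Finally, with $Z_0$ shown to carry $A$- and $C$-valued pre-inner products satisfying all the imprimitivity-bimodule identities, fullness is inherited: the span of the ranges of $\langle\langle\cdot,\cdot\rangle\rangle_C$ contains $\langle\langle x\otimes y,z\otimes w\rangle\rangle_C=\langle\langle z,x\rangle_B\cdot y,w\rangle_C$, and as $x,z,y,w$ vary these are dense in $C$ by fullness of $X$ over $B$, nondegeneracy of the $B$-action on $Y$, and fullness of $Y$ over $C$ (symmetrically for the $A$-valued pairing). Applying Lemma~\ref{complete02} to the pre-imprimitivity bimodule $Z_0$ then yields an honest $A$--$C$-imprimitivity bimodule $Z$ with a dense-range bimodule map $q\colon Z_0\to Z$ intertwining the pairings; this $Z$ is the internal tensor product $X\otimes_B Y$. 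Uniqueness of the pre-inner products is immediate, since the two formulas pin them down on the dense set of simple tensors.
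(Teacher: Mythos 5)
The paper does not actually prove this theorem: it is imported verbatim from Raeburn--Williams (``cited without proof''), with Lemmas~\ref{complete01} and~\ref{complete02} supplied only so that the word ``completion'' in its statement makes sense. So there is no in-paper argument to compare yours against; what I can say is that your proposal is essentially the standard proof of the cited result (Raeburn--Williams, Proposition~3.16) and that it is correct in structure and in detail. You correctly isolate the one non-formal step, positivity of $\langle\langle\xi,\xi\rangle\rangle_C$ for $\xi=\sum_i x_i\otimes y_i$, and your factorization of the Gram matrix $\bigl(\langle x_i,x_j\rangle_B\bigr)_{i,j}=\gamma^*\gamma$ in $M_n(B)$ followed by the compatibility identity $\langle b\cdot y,w\rangle_C=\langle y,b^*\cdot w\rangle_C$ is exactly how the reference handles it. Two points deserve to be made explicit rather than waved at: first, positivity of the Gram matrix in $M_n(B)$ is itself a nontrivial lemma about Hilbert modules (it is the real content of the positivity step and should be cited or proved, e.g.\ via $\sum_{i,j}b_i^*\langle x_i,x_j\rangle_B b_j=\bigl\langle\sum_i x_i\cdot b_i,\sum_j x_j\cdot b_j\bigr\rangle_B\geq 0$ together with the characterization of positive matrices over a $C^*$-algebra); second, the nondegeneracy of the $B$-action on $Y$ used in your fullness argument needs a sentence of justification --- it follows from fullness of $_B\langle\cdot,\cdot\rangle$ on $Y$, the axiom $_B\langle y,y'\rangle\cdot z=y\cdot\langle y',z\rangle_C$, and the standard density $\overline{Y\cdot C}=Y$. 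With those two facts pinned down, your reduction of everything else to bookkeeping plus an appeal to Lemma~\ref{complete02} for the completion is exactly right, and the uniqueness claim is indeed immediate from the formulas determining the pairings on simple tensors.
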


\begin{proposition}\cite[Proposition 3.18]{Raeburn}\newline
Morita equivalence is an equivalence relation on $C^*$-algebras. 
\end{proposition}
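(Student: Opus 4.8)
The plan is to verify the three defining properties of an equivalence relation—reflexivity, symmetry, and transitivity—by exhibiting the appropriate imprimitivity bimodules in each case, in the sense of Definition \ref{Morita}.

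First I would establish reflexivity by showing that any $C^*$-algebra $A$ is Morita equivalent to itself. The natural candidate for an $A$-$A$-imprimitivity bimodule is $X=A$ itself, with left and right actions given by multiplication in $A$, and with inner products defined by ${}_A\langle a,b\rangle:=ab^*$ and $\langle a,b\rangle_A:=a^*b$. The five axioms of Definition \ref{ipmodule} (and their left-handed analogues from Remark \ref{leftipmodule}) follow directly from the involution axioms; the positivity requirement is exactly the statement that $a^*a\geq 0$ in a $C^*$-algebra. The compatibility condition ${}_A\langle a,b\rangle\cdot c = a\cdot\langle b,c\rangle_A$ reduces to the associativity $(ab^*)c=a(b^*c)$ of multiplication. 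The one genuinely $C^*$-algebraic point is fullness (Definition \ref{full}): I would invoke an approximate identity $\{e_\lambda\}$ for $A$ to show that $\overline{\text{span}}\{a^*b:a,b\in A\}$ is dense in $A$, since $\langle e_\lambda,a\rangle_A=e_\lambda^* a=e_\lambda a\to a$, and symmetrically for the left inner product.

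Next I would prove symmetry. Suppose $X$ is an $A$-$B$-imprimitivity bimodule. I would construct the \emph{conjugate module} $\widetilde{X}$, which equals $X$ as an additive group but carries the conjugate complex structure, and equip it with a $B$-$A$-bimodule structure via $b\cdot\widetilde{x}\cdot a:=\widetilde{a^*\cdot x\cdot b^*}$ together with inner products ${}_B\langle\widetilde{x},\widetilde{y}\rangle:=\langle x,y\rangle_B$ and $\langle\widetilde{x},\widetilde{y}\rangle_A:={}_A\langle x,y\rangle$. Checking that $\widetilde{X}$ satisfies all the imprimitivity axioms amounts to translating each axiom for $X$ through the conjugation, using the symmetry relations $\langle a\cdot x,y\rangle_B=\langle x,a^*\cdot y\rangle_B$ and ${}_A\langle x\cdot b,y\rangle={}_A\langle x,y\cdot b^*\rangle$; fullness of $\widetilde{X}$ on each side is immediate since the ranges of the two inner products are unchanged. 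This exhibits $B$ as Morita equivalent to $A$.

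Finally, transitivity is handed to us directly by Theorem \ref{relation}: given an $A$-$B$-imprimitivity bimodule $X$ and a $B$-$C$-imprimitivity bimodule $Y$, the completed internal tensor product $X\otimes_B Y$ is an $A$-$C$-imprimitivity bimodule, so $A$ is Morita equivalent to $C$. I expect the main obstacle to lie not in transitivity—where the hard analytic work has already been absorbed into Theorem \ref{relation}—but in the bookkeeping of the symmetry step, namely verifying that the conjugate module's inner products are genuinely conjugate-linear in the correct variables and that the compatibility condition survives conjugation. Once these three constructions are in place, the three equivalence-relation properties follow and the proposition is proved.
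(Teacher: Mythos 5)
Your proof is correct and follows essentially the same route as the paper: reflexivity via the identity bimodule $_AA_A$, symmetry via the conjugate (dual) module $\widetilde{X}$ with the swapped inner products, and transitivity via the internal tensor product $X\otimes_B Y$ of Theorem \ref{relation}. Your explicit verification of fullness for $_AA_A$ using an approximate identity is a detail the paper leaves as routine, but the overall argument is the same.
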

\begin{proof}
 To prove transitivity we use the internal tensor product.  If $_AX_{B}$ and
$_BY_C$ are Morita equivalences, then according to Theorem \ref{relation} we see
that $_A(X\otimes_BY)_C$ implements a Morita equivalence between $A$ and $C$. 
It is routine to show that for any $C^*$-algebra $A$, $_AA_A$ is an
$A-A$-imprimitivity bimodule \cite[Example 3.5]{Raeburn} and so the relation is
reflexive.  To observe the symmetric nature of the relation we introduce the
dual module.  If $X$ is an $A-B$-imprimitivity bimodule, let $\widetilde{X}$ be
the conjugate vector space, so that there is by definition an additive bijection
\begin{equation*}
 \varphi:X\rightarrow\widetilde{X}
\end{equation*}
such that
\begin{equation*}
 \varphi(\lambda\cdot x)=\overline{\lambda}\cdot\varphi(x).
\end{equation*}
Then $\widetilde{X}$ is a $B-A$-imprimitivity bimodule with
\begin{eqnarray*}
  b\cdot\varphi(x)&=&\varphi(x\cdot b^*) \\
  _B\langle\varphi(x),\varphi(y)\rangle&=&\langle x,y\rangle_{B} \\
 \varphi(x)\cdot a&=&\varphi(a^*\cdot x)\\
 \langle\varphi(x),\varphi(y)\rangle_{A}&=&_A\langle x,y\rangle
\end{eqnarray*}
for $x,y\in X$, $a\in A$ and $b\in B$.
\end{proof}\noindent
Now we will look at a particular case which will be of of interest in the next chapter.  Consider the vector space
\begin{equation*}
F(A):=\{(a_1,\dots,a_n):a_i\in A, 1\leq i\leq n\}
\end{equation*} 
consisting of row vectors with entries in the $C^*$-algebra $A$.  
\begin{lemma}\label{left_Amod}
 $F(A)$ is a full left Hilbert $A$-module.
\end{lemma}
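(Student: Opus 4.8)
The plan is to endow $F(A)$ with the evident left $A$-module structure together with a natural $A$-valued pairing, verify the axioms of a left inner product module listed in Remark \ref{leftipmodule}, establish completeness by comparing the induced norm with a standard norm on the direct sum $A^n$, and finally deduce fullness.

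First I would define the left action and the inner product by
\begin{equation*}
 a\cdot(a_1,\dots,a_n):=(aa_1,\dots,aa_n), \quad {}_A\langle(a_1,\dots,a_n),(b_1,\dots,b_n)\rangle:=\sum_{i=1}^n a_ib_i^*.
\end{equation*}
Then the five properties are all routine: linearity in the first variable and the identity ${}_A\langle a\cdot x,y\rangle=a\,{}_A\langle x,y\rangle$ follow from distributivity of multiplication; the symmetry ${}_A\langle x,y\rangle^*={}_A\langle y,x\rangle$ is immediate from $(a_ib_i^*)^*=b_ia_i^*$; positivity holds because ${}_A\langle x,x\rangle=\sum_i a_ia_i^*$ is a sum of positive elements; and definiteness follows since $\sum_i a_ia_i^*=0$ forces each $a_ia_i^*=0$ (as $0\le a_ja_j^*\le\sum_i a_ia_i^*=0$), whence $\|a_i\|^2=\|a_ia_i^*\|=0$ and $x=0$.

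The main work is completeness. Writing $\|x\|_A^2=\|\sum_i a_ia_i^*\|$, I would prove the two-sided estimate
\begin{equation*}
 \max_{1\le i\le n}\|a_i\|^2 \le \|x\|_A^2 \le \sum_{i=1}^n\|a_i\|^2.
\end{equation*}
The upper bound is just the triangle inequality. The lower bound is the delicate step: for each fixed $j$ one has $a_ja_j^*\le\sum_i a_ia_i^*$ because the difference $\sum_{i\ne j}a_ia_i^*$ is positive, and in a $C^*$-algebra $0\le b\le c$ implies $\|b\|\le\|c\|$, giving $\|a_j\|^2=\|a_ja_j^*\|\le\|x\|_A^2$. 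These inequalities show $\|\cdot\|_A$ is equivalent to the maximum norm on $A^n$; since $A$ is complete, $A^n$ is complete in the maximum norm and hence in $\|\cdot\|_A$, so $F(A)$ is a Hilbert $A$-module.

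Finally, for fullness I would exhibit enough inner products. When $A$ is unital (as for every quantum torus), taking $x=(a,0,\dots,0)$ and $y=(\I,0,\dots,0)$ gives ${}_A\langle x,y\rangle=a$, so $I_A=A$ and density is immediate. For a general $C^*$-algebra one instead uses a self-adjoint approximate identity $(e_\lambda)$: then ${}_A\langle (a,0,\dots,0),(e_\lambda,0,\dots,0)\rangle=ae_\lambda\to a$, so every element of $A$ lies in $\overline{I_A}$. I expect the lower norm estimate in the completeness step to be the only point requiring genuine care, the remaining verifications being formal.
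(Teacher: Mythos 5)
Your proposal is correct and follows essentially the same route as the paper: the same left action, the same pairing ${}_A\langle \overline{a},\overline{b}\rangle=\sum_i a_ib_i^*$, the same verification of the five axioms, and the same fullness argument (unit in the unital case, approximate identity otherwise). The one place you genuinely improve on the paper is completeness: the paper disposes of this by citing the fact that finite-dimensional normed spaces are complete, which does not literally apply since $F(A)=A^n$ is infinite-dimensional over $\mathbb{C}$ whenever $A$ is (e.g.\ for $A=A_\theta$). Your two-sided estimate
\begin{equation*}
 \max_{1\le i\le n}\|a_i\|^2 \;\le\; \Bigl\|\sum_{i=1}^n a_ia_i^*\Bigr\| \;\le\; \sum_{i=1}^n\|a_i\|^2,
\end{equation*}
with the lower bound coming from $a_ja_j^*\le\sum_i a_ia_i^*$ and monotonicity of the norm on positive elements, shows the induced norm is equivalent to the maximum norm on $A^n$ and so reduces completeness to completeness of $A$; this is the argument the paper should have given.
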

\begin{proof}
Consider the pairing $(A,F(A))\rightarrow F(A)$ given by
\be
(c,\overline{a})\mapsto c\cdot \overline{a}=(ca_1,\dots,ca_n) 
\ee
where $\overline{a}\in F(A)$ and $c\in A$.  We define the left inner product by
\begin{equation}\label{leftA}
_A\langle \overline{a},\overline{b}\rangle:=\overline{a}\overline{b}^*
\end{equation}
where $\overline{b}^*$ denotes the conjugate transpose of the row vector $\overline{b}$ and $\overline{a}\overline{b}^*$ denotes the scalar product of the two vectors.  Now we show that equation (\ref{leftA}) satisfies the left version of Definition \ref{ipmodule} (see Remark \ref{leftipmodule}):
\begin{eqnarray*}
_A\langle \lambda\overline{a}+\mu\overline{b},\overline{c}\rangle&=&\left(\lambda\overline{a}+\mu\overline{b}\right)\overline{c}^*\\
&=&\lambda\overline{a}\overline{c}^*+\mu\overline{b}\overline{c}^*\\
&=&\lambda_A\langle\overline{a},\overline{c}\rangle+\mu_A\langle\overline{b},\overline{c}\rangle
\end{eqnarray*}
for $\overline{a},\overline{b}$ and $\overline{c}$ in $F(A)$ and $\lambda, \mu$ in $\mathbb{C}$.  We have linearity in the first variable, so property (1) is satisfied.  Let $\overline{a},\overline{b}\in F(A)$ and $c\in A$, then
\begin{eqnarray*}
_A\langle c\cdot\overline{a},\overline{b}\rangle=\left(c\overline{a}\right)\overline{b}^*
=c\overline{a}\overline{b}^*
=c_A\langle\overline{a},\overline{b}\rangle \\
_A\langle \overline{a},\overline{b}\rangle^*=\left(\overline{a}\overline{b}^*\right)^*=\overline{b}\overline{a}^*=~ _A\langle \overline{b},\overline{a}\rangle\\
_A\langle \overline{a},\overline{a}\rangle=\overline{a} ~ \overline{a}^*=\sum^{n}_{i=1}a_ia^{*}_{i}\geq0
\end{eqnarray*}
But $\sum^{n}_{i=1}a_ia^*_i=0$ if and only if each $a_i=0$ for $1\leq i \leq n$
since each $a_ia_i^*$ is positive for any $a_i\in A$ and the sum of positive
elements of $A$ remains positive.  Hence if $\sum^{n}_{i=1}a_ia^*_i=0$ the
result follows.  So the five properties of Definition \ref{ipmodule} are
satisfied.
Now we consider the ideal $I_A$, as defined in Definition \ref{full} and show
that it is dense in $A$.
\begin{equation*}
I_A=\{_A\langle \overline{a},\overline{b}\rangle=\overline{a}\overline{b}^*=\sum^{n}_{i=1}a_ib^*_i:a_i,b_i\in A\}
\end{equation*}
But the span of elements of the form $a_ib^*_i$ is dense in $A$.  This can
easily be seen for both the unital and non-unital case.  In the former we can
choose $b=\I$ and in the latter we can choose $b=u_{\lambda}$ where
$u_{\lambda}$ is an approximate identity as defined in \cite[p. 77]{murphy}. 
Then we can write any element of $A$ as a linear combination of elements of the
form $ab^*$.  From \cite[p. 73]{Kreyszig} we know that finite dimensional normed
spaces are complete.  We know that $_A\langle\cdot,\cdot\rangle$ induces a norm
on $F(A)$.  Furthermore $F(A)$ is complete in this norm.  This shows that $F(A)$
is a full left Hilbert $A$-module.
\end{proof}
\begin{remark}
 The interested reader can have a look at \cite[Examples 2.10, 2.14]{Raeburn}.  The first shows an example of a right Hilbert $A$-module which is not full and the second deals with Hilbert modules of direct sums.
\end{remark}

\begin{lemma}\label{right_Amod}
$F(A)$ is a full right Hilbert $M_n(A)$-module. 
\end{lemma}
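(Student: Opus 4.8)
The plan is to endow $F(A)$ with the evident right action of $M_n(A)$ coming from row-vector/matrix multiplication, together with the $M_n(A)$-valued inner product given by the outer product of row vectors, and then to verify Definitions \ref{ipmodule} and \ref{full}. First I would define the pairing $F(A)\times M_n(A)\to F(A)$ by $\overline{a}\cdot T:=\overline{a}\,T$, so that the $j$-th entry of $\overline{a}\cdot T$ is $\sum_{i=1}^n a_i t_{ij}$ for $T=(t_{ij})\in M_n(A)$; the module axioms are immediate from matrix arithmetic over $A$. For the inner product I would set
\[
 \langle \overline{a},\overline{b}\rangle_{M_n(A)} := \overline{a}^{\,*}\overline{b},
\]
the $n\times n$ matrix whose $(i,j)$ entry is $a_i^* b_j$, where $\overline{a}^{\,*}$ denotes the column vector obtained by conjugate-transposing $\overline{a}$.

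Next I would check the five conditions of Definition \ref{ipmodule} entrywise. Linearity in the second variable, the compatibility $\langle \overline{a},\overline{b}\cdot T\rangle_{M_n(A)}=\langle \overline{a},\overline{b}\rangle_{M_n(A)}T$, and the conjugate-symmetry $\langle \overline{a},\overline{b}\rangle_{M_n(A)}^*=\langle \overline{b},\overline{a}\rangle_{M_n(A)}$ all follow by comparing $(i,j)$ entries. The one step needing a small idea is positivity: to see $\overline{a}^{\,*}\overline{a}\geq 0$ in $M_n(A)$, I would introduce the matrix $b\in M_n(A)$ whose first row is $\overline{a}$ and whose remaining rows vanish; then $b^*b$ has $(i,j)$ entry $a_i^* a_j$, so $\langle \overline{a},\overline{a}\rangle_{M_n(A)}=b^*b\geq 0$, and the same identity forces each $a_i=0$ whenever the inner product vanishes, giving condition (5).

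For fullness I would show that $I_{M_n(A)}=\text{span}\{\overline{a}^{\,*}\overline{b}:\overline{a},\overline{b}\in F(A)\}$ is dense in $M_n(A)$. Taking $\overline{a}$ supported in coordinate $k$ and $\overline{b}$ supported in coordinate $l$ produces the matrix with single nonzero entry $a^*b$ in position $(k,l)$; since the span of $\{a^*b:a,b\in A\}$ is dense in $A$ (choosing $b=\I$ in the unital case and an approximate identity $u_\lambda$ otherwise, exactly as in Lemma \ref{left_Amod}), these matrices have dense span inside each $A\,E_{kl}$, and summing over the positions $(k,l)$ yields density in all of $M_n(A)$.

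Finally, for completeness I would observe that the norm induced by $\langle\cdot,\cdot\rangle_{M_n(A)}$ coincides with the left-module norm of Lemma \ref{left_Amod}: with $b$ as above, the $C^*$-identity gives $\|b^*b\|=\|b\|^2=\|bb^*\|$, while $bb^*$ has single nonzero entry $\sum_i a_i a_i^*$, so $\|\langle \overline{a},\overline{a}\rangle_{M_n(A)}\|=\|\overline{a}^{\,*}\overline{a}\|=\|\sum_i a_i a_i^*\|=\|{}_A\langle \overline{a},\overline{a}\rangle\|$. Hence the two norms agree and $F(A)$ is complete in the new norm by Lemma \ref{left_Amod}. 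The main obstacle here is really bookkeeping: keeping the index conventions for row-times-matrix multiplication straight so that condition (2) and the fullness computation line up correctly; the positivity and completeness claims, by contrast, both reduce cleanly to the single trick of embedding $\overline{a}$ as the first row of an element of $M_n(A)$.
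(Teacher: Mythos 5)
Your proposal is correct and follows the same overall architecture as the paper's proof: the same right action by row-vector/matrix multiplication, the same inner product $\langle\overline{a},\overline{b}\rangle_{M_n(A)}=\overline{a}^{\,*}\overline{b}$, and the same fullness argument via coordinate-supported vectors producing matrices with a single entry whose span is dense. Where you diverge is at the two analytic steps. For positivity the paper passes through a faithful representation $\pi$ of $A$ on a Hilbert space $H$, forms the induced representation of $M_n(A)$ on $H^n$, and verifies $\langle h,\varphi(\overline{a}^{\,*}\overline{a})h\rangle=\|\sum_i\varphi(a_i)h_i\|^2\geq 0$; your observation that $\overline{a}^{\,*}\overline{a}=b^*b$ for $b\in M_n(A)$ the matrix with first row $\overline{a}$ is purely algebraic, shorter, and gives condition (5) for free since $b^*b=0$ forces $b=0$. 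For completeness the paper simply appeals to completeness of finite-dimensional normed spaces, while you identify the induced norm with the left-module norm of Lemma \ref{left_Amod} via the $C^*$-identity $\|b^*b\|=\|bb^*\|$; both routes work (either way the norm is equivalent to the product norm on $A^n$), but your identity is a cleaner justification and makes the compatibility of the two module structures explicit, which is exactly what is needed downstream for the imprimitivity-bimodule statement.
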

\begin{proof}
 Consider the pairing $(F(A),M_n(A))\rightarrow F(A)$ defined by
\be
(\overline{a},M)\mapsto \overline{a}M
\ee
where $\overline{a}\in F(A)$ and $M\in M_n(A)$ and where the product is defined as in normal matrix operation on vectors.  We define the right inner product by
\begin{equation}\label{rightA}
 \langle\overline{a},\overline{b}\rangle_{M_n(A)}:=\overline{a}^*\overline{b}
\end{equation}
We now show that equation (\ref{rightA}) satisfies the properties of Definition \ref{ipmodule}:
\begin{eqnarray*}
\langle \overline{a},\lambda\overline{b}+\mu\overline{c}\rangle_{M_n(A)}&=&\overline{a}^*\left(\lambda\overline{b}+\mu\overline{c}\right)\\
&=&\lambda\overline{a}^*\overline{b}+\mu\overline{a}^*\overline{c}\\
&=&\lambda\langle\overline{a},\overline{b}\rangle_{M_n(A)}+\mu\langle\overline{b},\overline{c}\rangle_{M_n(A)}
\end{eqnarray*}
We have linearity in the second variable, so property (1) is satisfied. 
Moreover
\begin{eqnarray*}
\langle\overline{a},\overline{b}\cdot M\rangle_{M_n(A)}=\overline{a}^*\overline{b}M
=\langle\overline{a},\overline{b}\rangle_{M_n(A)} M \\
\langle \overline{a},\overline{b}\rangle^*_{M_n(A)}=\left(\overline{a}^*\overline{b}\right)^*=\overline{b}^*\overline{a}=\langle \overline{b},\overline{a}\rangle_{M_n(A)}.
\end{eqnarray*}
We next show that 
\begin{equation*}
 \langle \overline{a},\overline{a}\rangle_{M_n(A)}=\overline{a}^*\overline{a}=
\left(
\begin{array}{lll}
a_1a^*_1 &\dots  &a_1a^*_n \\
\vdots & \ddots & \vdots\\
a_na^*_1 & \dots & a_na^*_n
\end{array}
\right)
\geq 0
\end{equation*}
Let $H^n=\oplus^{n}_{i=1}H$ with $H$ a Hilbert space.  Let $\pi:A\rightarrow H$ be a faithful representation on $H$.  Then we can define the representation
\begin{equation*}
\varphi:M_n(A)\rightarrow H^n:a_{ij}\mapsto \pi(a_{ij}) 
\end{equation*}
where $a_{ij}$ is the entry of the matrix in the $(i,j)$ position.  Since $\pi$
is faithful by assumption this implies that $\varphi$ must also be faithful. 
Furthermore, let $h\in H^n$ be such that 
\begin{equation*}
h=\left(
\begin{array}{l}
  h_1\\
 \vdots\\
 h_n
\end{array}
\right).
\end{equation*}
So if we can show that $\langle h,\varphi(\langle\overline{a},\overline{a}\rangle_{M_n(A)})h\rangle\geq0$ then $\varphi(\langle\overline{a},\overline{a}\rangle_{M_n(A)})$ is positive, in other words we make use of the standard characterization of positive operators on Hilbert spaces, which in turn implies that $\langle\overline{a},\overline{a}\rangle_{M_n(A)}\geq0$.  Consider the following
\begin{eqnarray*}
 \langle h,\varphi(\langle\overline{a},\overline{a}\rangle_{M_n(A)}h)\rangle&=&
\langle h,\left(\begin{array}{l}\varphi(a_1)^*\\ \vdots\\ \varphi(a_n)^* \end{array}\right)\left(\varphi(a_1),\dots,\varphi(a_n)\right)\left(
\begin{array}{l} h_1\\ \vdots\\ h_n \end{array} \right) \rangle\\
&=&
\langle h,\left(\begin{array}{l}\varphi(a_1)^*\\ \vdots\\ \varphi(a_n)^* \end{array}\right)\sum^{n}_{i=1}\varphi(a_i)h_i \rangle\\
&=&
\sum^{n}_{i,j=1}\langle h_j,\varphi(a_j)^*\varphi(a_i)h_i \rangle\\
&=&\sum^{n}_{i,j=1}\langle \varphi(a_j)h_j,\varphi(a_i)h_i \rangle.
\end{eqnarray*}
But we also have
\begin{eqnarray*}
 \|\sum^{n}_{i=1}\varphi(a_i)h_i\|^2&=& \sum^{n}_{i,j=1}\langle\varphi(a_j)h_j,\varphi(a_i)h_i\rangle
\end{eqnarray*}
which implies that we can write
\begin{equation*}
 \langle h,\varphi(\langle\overline{a},\overline{a}\rangle_{M_n(A)})h\rangle=\|\sum^{n}_{i=1}\varphi(a_i)h_i\|^2\geq0.
\end{equation*}
If $\langle\overline{a} ,\overline{a}\rangle_{M_n(A)}=0$ (the $n\times n$ matrix
with only zero entries) then $\overline{a}=\overline{0}$.  Conversely, if
$\overline{a}=\overline{0}$, then clearly
$\overline{a}^*\overline{a}=0=\langle\overline{a}
,\overline{a}\rangle_{M_n(A)}$.  So the five properties of Definition
\ref{ipmodule} are satisfied.  We know that finite dimensional normed spaces are
complete, so the norm induced by $\langle\cdot,\cdot\rangle_{M_n(A)}$ is
complete.  It only remains to verify that $F(A)$ is full.  Consider the ideal
\be
I_{M_{n}(A)}=\left\{\langle\overline{a},\overline{b}\rangle_{M_n(A)}:\overline{a},\overline{b}\in F(A)
\right\}
\ee
Let $b_j$ be an approximate identity \cite[p. 77]{murphy} of the $C^*$-algebra
$A$ and choose two vectors in $F(A)$, $\overline{a}$ and $\overline{b}$ with
entries $a_i$ and $b_j$ at positions $i$ and $j$ respectively and zero's
everywhere else.  Using these vectors we can create matrices with a single entry
of $a_ib^*_j$ at position $(i,j)$ and zero's everywhere.  It is then clear that
the span of such matrices is dense in $M_n(A)$.  This concludes the proof.
\end{proof}
\begin{proposition}\label{Morita_matrix}
 Let $A$ be a $C^*$-algebra.  $A$ is Morita equivalent to $M_n(A)$.
\end{proposition}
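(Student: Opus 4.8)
The plan is to use the module $X := F(A)$ of row vectors over $A$ as the candidate $A$--$M_n(A)$-imprimitivity bimodule and to verify directly the three conditions in the definition of an imprimitivity bimodule. The first condition is already in hand: Lemma \ref{left_Amod} exhibits $F(A)$ as a full left Hilbert $A$-module under the pairing $_A\langle \overline{a},\overline{b}\rangle=\overline{a}\,\overline{b}^*$ of (\ref{leftA}), and Lemma \ref{right_Amod} exhibits $F(A)$ as a full right Hilbert $M_n(A)$-module under $\langle \overline{a},\overline{b}\rangle_{M_n(A)}=\overline{a}^*\overline{b}$ of (\ref{rightA}). What remains is to check the compatibility conditions relating the two module structures and the two inner products, after which Definition \ref{Morita} delivers the conclusion.

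First I would record that $F(A)$ really is an $A$--$M_n(A)$-bimodule: the left action $a\cdot\overline{x}=(ax_1,\dots,ax_n)$ and the right action $\overline{x}\cdot M$ (ordinary row-vector-times-matrix multiplication) commute, $(a\cdot\overline{x})\cdot M=a\cdot(\overline{x}\cdot M)$, simply by associativity of multiplication in $A$. Next I would verify the two adjoint-type identities by comparing matrix and scalar entries. For $a\in A$, both $\langle a\cdot\overline{x},\overline{y}\rangle_{M_n(A)}$ and $\langle \overline{x},a^*\cdot\overline{y}\rangle_{M_n(A)}$ have $(i,j)$ entry equal to $x_i^*a^*y_j$, so they coincide; and for $M\in M_n(A)$, expanding the sums shows that $_A\langle \overline{x}\cdot M,\overline{y}\rangle$ and $_A\langle \overline{x},\overline{y}\cdot M^*\rangle$ both reduce to $\sum_{i,k}x_iM_{ik}y_k^*$.

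The heart of the matter is the associativity condition $_A\langle \overline{x},\overline{y}\rangle\cdot\overline{z}=\overline{x}\cdot\langle \overline{y},\overline{z}\rangle_{M_n(A)}$. I would prove it componentwise: the left-hand side is $\bigl(\sum_k x_k y_k^*\bigr)\cdot\overline{z}$, whose $j$-th entry is $\sum_k x_k y_k^* z_j$, whereas the right-hand side is the row vector $\overline{x}$ multiplied into the matrix $\langle \overline{y},\overline{z}\rangle_{M_n(A)}$, whose $(i,j)$ entry by (\ref{rightA}) is $y_i^* z_j$, giving $j$-th entry $\sum_i x_i y_i^* z_j$. Renaming the summation index shows the two agree. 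Having checked all three conditions, $F(A)$ is an $A$--$M_n(A)$-imprimitivity bimodule, and by Definition \ref{Morita} the algebras $A$ and $M_n(A)$ are Morita equivalent.

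Because the substantive analytic content---positivity and definiteness of the two inner products, completeness in the induced norms, and density of the inner-product ideals establishing fullness---has already been discharged in Lemmas \ref{left_Amod} and \ref{right_Amod}, there is no serious obstacle left in this proposition. The only genuine care required is the bookkeeping of conjugate transposes and the placement of the involutions, so that the $A$-valued and $M_n(A)$-valued inner products dovetail correctly in the associativity identity; a mismatched index or misplaced $*$ is the one thing that would derail the verification, so that is where I would concentrate attention.
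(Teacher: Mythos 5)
Your proposal is correct and takes essentially the same route as the paper: both use $F(A)$ with the pairings of Lemmas \ref{left_Amod} and \ref{right_Amod} as the $A$--$M_n(A)$-imprimitivity bimodule. The paper's proof is a one-liner that declares the result ``clear from the above two lemmas,'' whereas you explicitly verify the bimodule compatibility identities and the associativity condition $_A\langle \overline{x},\overline{y}\rangle\cdot\overline{z}=\overline{x}\cdot\langle \overline{y},\overline{z}\rangle_{M_n(A)}$, which is exactly the bookkeeping the paper leaves implicit; your index computations check out.
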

\begin{proof}
 This is clear from the above two lemmas.
\end{proof}
The result of Lemma \ref{Morita_matrix} will be used in chapter 4 when we consider homomorphisms from a particular quantum torus to the matrix algebra with entries in another quantum torus.

\section{The Connection Between Morita \\Equivalence and $K_0$-groups}\noindent
The next theorem is of great importance since it establishes the fact that when two $C^*$-algebras are Morita equivalent they will have the same $K_0$-groups which we will use in proving the existence of *-homomorphims between different quantum tori.
\begin{definition}(\emph{Strictly Positive Element})\cite[p. 337]{Brown1977}\newline
A positive element $e$ of a $C^*$-algebra $A$ is called \emph{strictly positive} if $\varphi(e) > 0$ for every state $\varphi$ of $A$.
\end{definition}

\begin{theorem}\label{morita_stably}\cite[Theorem 1.2]{Rieffel1977}\newline
 Let $B$ and $E$ be unital $C^*$-algebras.  $B$ and $E$ are stably isomorphic if and only if then they are Morita equivalent.
\end{theorem}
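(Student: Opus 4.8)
Throughout write the two given unital $C^*$-algebras as $A$ and $B$ (the $B$ and $E$ of the statement). The plan is to prove the two implications separately; the implication \emph{stably isomorphic} $\Rightarrow$ \emph{Morita equivalent} is routine given the tools already assembled, whereas the converse is the substantial one.

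For the easy direction, the decisive observation is that $\mathbb{C}$ is Morita equivalent to $K:=K(H)$ for a separable infinite-dimensional Hilbert space $H$: the space $H$ is itself a $K$--$\mathbb{C}$-imprimitivity bimodule, with right $\mathbb{C}$-valued inner product the ordinary inner product on $H$ and left $K$-valued inner product ${}_K\langle\xi,\eta\rangle$ the rank-one operator $\zeta\mapsto\langle\eta,\zeta\rangle\xi$; both inner products are full because the finite-rank operators are dense in $K$. Forming the internal tensor product (Theorem \ref{relation}) with the trivial equivalence ${}_AA_A$ yields an $A$--$(A\otimes K)$-imprimitivity bimodule, so every $C^*$-algebra is Morita equivalent to its stabilization $A\otimes K$. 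If now $A\otimes K\cong B\otimes K$, then transporting the bimodule ${}_{A\otimes K}(A\otimes K)_{A\otimes K}$ along this isomorphism shows that $A\otimes K$ and $B\otimes K$ are Morita equivalent, and transitivity (the proposition that Morita equivalence is an equivalence relation) produces
\[
A \;\sim\; A\otimes K \;\sim\; B\otimes K \;\sim\; B.
\]

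For the converse I would pass to the \emph{linking algebra} of an $A$--$B$-imprimitivity bimodule $X$ (with conjugate bimodule $\widetilde X$), namely
\[
L \;=\; \begin{pmatrix} A & X \\ \widetilde X & B \end{pmatrix},
\]
with product and involution built from the two module actions and the two inner products, so that the off-diagonal multiplications recover ${}_A\langle\cdot,\cdot\rangle$ and $\langle\cdot,\cdot\rangle_B$. One verifies that $L$ is a $C^*$-algebra and that the diagonal idempotent $p=\left(\begin{smallmatrix}\I_A&0\\0&0\end{smallmatrix}\right)$ satisfies $pLp=A$, $(\I-p)L(\I-p)=B$ and $pL(\I-p)=X$. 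Because $A$ and $B$ are unital, the module actions are unital, so $\left(\begin{smallmatrix}\I_A&0\\0&\I_B\end{smallmatrix}\right)$ is a unit for $L$; in particular $L$ is unital and its unit is a strictly positive element, whence $L$ is $\sigma$-unital. The fullness of the two inner products of $X$ says exactly that $A$ and $B$ are \emph{full corners} of $L$, i.e. $\overline{LpL}=L=\overline{L(\I-p)L}$.

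The analytic heart is then Brown's stabilization theorem: a full corner $qLq$ of a $\sigma$-unital $C^*$-algebra $L$ satisfies $qLq\otimes K\cong L\otimes K$. Applying this to $p$ and to $\I-p$ gives
\[
A\otimes K \;\cong\; L\otimes K \;\cong\; B\otimes K,
\]
which is the asserted stable isomorphism. The main obstacle is Brown's theorem itself: from a strictly positive element of the full corner one must manufacture a partial isometry in the multiplier algebra of $L\otimes K$ that implements the absorption of $L$ into $qLq\otimes K$, and this nontrivial construction is precisely what the citation to \cite{Rieffel1977} (equivalently the Brown--Green--Rieffel theorem) supplies; I would invoke it rather than reprove it.
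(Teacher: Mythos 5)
Your proposal is correct in substance but differs from the paper's argument on the forward direction, and it contains one technical slip worth fixing. The paper proves ``stably isomorphic $\Rightarrow$ Morita equivalent'' by exhibiting $B\cong B\otimes p$ (for $p$ a rank-one projection) directly as a \emph{full corner} $\mathcal{P}(B\otimes K)\mathcal{P}$ of $B\otimes K$, and then invoking the fact that a full corner is Morita equivalent to the ambient algebra; you instead derive $A\sim A\otimes K$ from the Morita equivalence $\mathbb{C}\sim K$ implemented by $H$ and a tensoring argument. Both routes are standard, but the tensor product you need is the \emph{external} tensor product of imprimitivity bimodules (producing an $(A\otimes K)$--$(A\otimes\mathbb{C})$-bimodule from ${}_AA_A$ and ${}_KH_{\mathbb{C}}$), not the internal tensor product of Theorem \ref{relation}: that theorem requires the right coefficient algebra of the first module to coincide with the left coefficient algebra of the second, and here they are $A$ and $K$ respectively, so the construction as you cite it does not apply. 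This is a fixable citation error rather than a failure of strategy, but as written the step does not go through with the tools named. On the converse, the paper simply defers to \cite[Theorem 5.55]{Raeburn}, whereas you sketch the actual mechanism --- the linking algebra $L$, unitality (hence $\sigma$-unitality) of $L$ coming from the unitality of $B$ and $E$, fullness of the two corners, and Brown's stabilization theorem --- before deferring the analytic core to the citation; this is more informative than the paper's treatment and correctly identifies where the unitality hypothesis is used.
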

\begin{proof}
As usual, let $K$ denote the algebra of compact operators on some separable infinite dimensional Hilbert space and let $p$ be a rank-one projection in $K$.  We can construct the mappings
\begin{eqnarray*}
&& \psi:B\otimes p\rightarrow B:(b\otimes p)\mapsto b\\
&& \phi:B\rightarrow B\otimes p:b\mapsto b\otimes p.
\end{eqnarray*}
They are both *-homomorphisms and we clearly have $\psi\phi=$Id and similarly $\phi\psi=$Id.  So $B$ is isomorphic to $B\otimes p$.  Now we define $\mathcal{P}:=\I\otimes p$.  
\begin{eqnarray*}
 \mathcal{P}^2=\left(\I\otimes p\right)\left(\I\otimes p\right)=\I\otimes p\\
 \mathcal{P}^*=\left(\I\otimes p\right)^*=\I\otimes p
\end{eqnarray*}
So $\mathcal{P}$ is a projection in $B\otimes p$, and we find that for any
$b\in B$ and $k\in K$ we have
\begin{eqnarray*}
 \left(\I\otimes p\right)\left(b\otimes k\right)\left(\I\otimes p\right)&=&\left(\I\otimes p\right)\left(b\otimes kp\right)\\
&=&b\otimes pkp\\
&=&b\otimes \varphi(k)p
\end{eqnarray*}
where $\varphi$ is the state on $K$ defined by $\varphi(k)=\langle
\zeta,k\zeta\rangle$ with $\zeta$ a unit vector in the range of $p$.  Then 
$\varphi(k)p=pkp$.  Now since $\varphi(k)p$ is simply a scalar multiple of $p$
it follows that $\mathcal{P}\left(B\otimes_* K\right) \mathcal{P}=B\otimes p$. 
This implies that $B\otimes p$ is a corner of $B\otimes K$.  We will now show
that it is a full corner which in turn implies that $B$ is Morita equivalent to
$B\otimes K$ \cite[p. 50]{Raeburn}.  It is enough to show that
span$\{\left(B\otimes K\right)\mathcal{P}\left(B\otimes K\right)\}$ is dense in
$B\otimes K$.  Let $b_i,b_j\in B$ and $k_l,k_m\in K$.  Then
\begin{eqnarray*}
 \left(b_i\otimes k_l\right)\mathcal{P}\left(b_j\otimes k_m\right)&=&\left(b_i\otimes k_l\right)\left(b_j\otimes pk_m\right)\\
&=&b_ib_j\otimes k_lpk_m.
\end{eqnarray*}
But the span of elements of the form $b_ib_j$ is dense in $B$ and similarly the span of elements of the form $k_ipk_j$ is dense in $K$.  So we can conclude that span$\{\left(B\otimes K\right)\mathcal{P}\left(B\otimes K\right)\}$ is dense in $B\otimes K$ and $B$ is Morita equivalent to $B\otimes K$.  Similarly we can also show that $E$ is Morita equivalent to $E\otimes K$.  Hence if $B\otimes K\cong E\otimes K$ then $B$ and $E$ will be Morita equivalent.  

For the converse, see \cite[Theorem 5.55]{Raeburn}
\end{proof}
By Proposition \ref{isoK} we know that when two $C^*$-algebras are stably isomorphic they will have isomorphic $K_0$-groups.
\chapter{Existence Theorems}\label{chapter_5}\noindent
\begin{center}
 \begin{quotation}
  \textsl{``Can the existence of a mathematical entity be proved without defining it?''\newline-Jacques Salomon Hadamard (1865-1963)}
 \end{quotation}
\begin{quotation}
 \textsl{``Nothing exists except atoms and empty space; everything else is opinion.''\newline-Democritus (460-370 B. C)}
\end{quotation}
\end{center}
The theory developed in the second chapter made use of the existence of certain
maps between the different quantum tori.  Up until now we have only been
concerned about the $M_n(\mathbb{C})$ case and in this chapter we extend the
theory to the quantum torus.  In this chapter we state theorems and give proofs
of the existence of *-homomorphisms between different quantum tori.  We also
investigate under which circumstances these *-homomorphisms become
*-isomorphisms.  These existence proofs are of fundamental importance since
without the existence of such *-homomorphisms we cannot create a noncommutative
$\sigma$-model.  This section forms the most technical part of this thesis and
combines results from K-theory and Morita equivalence to prove the existence of
the *-homomorphisms.
\section{The Action of GL$(2,\mathbb{Z})$ on Irrationals}\noindent
Let $GL\left(2,\mathbb{Z}\right)$ be the group of all $2\times 2$ matrices with integer entries and determinant $\pm 1$.  The action of an element of GL$(2,\mathbb{Z})$ on an irrational number $\theta$ is defined by 
\begin{equation}\label{GLaction}
 g\theta=
\left(
\begin{array}{ll}
a & b\\
c & d
\end{array}
\right)\theta:=\frac{a\theta+b}{c\theta +d}
\end{equation}
\begin{lemma}
If $g\in$ GL$(2,\mathbb{Z})$ and $\theta$ is irrational, then $g\theta$ is a group action of GL$\left(2,\mathbb{Z}\right)$ on $\mathbb{R}/\mathbb{Q}$.
\end{lemma}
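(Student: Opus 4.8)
The plan is to verify directly that the formula (\ref{GLaction}) satisfies the two defining axioms of a group action, after first checking that it is well defined as a map of the irrationals into themselves. For well-definedness I would begin by showing that the denominator $c\theta+d$ never vanishes: if $c\theta+d=0$ with $c,d\in\mathbb{Z}$, then either $c\neq 0$, forcing $\theta=-d/c\in\mathbb{Q}$, or $c=0$, in which case $ad-bc=\pm1$ forces $ad=\pm1$ and hence $d\neq 0$; in either case the hypothesis that $\theta$ is irrational is contradicted. Thus $g\theta$ is a genuine real number for every irrational $\theta$ and every $g\in GL(2,\mathbb{Z})$.

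Next I would show that $g\theta$ is again irrational, so that the action really maps $\mathbb{R}/\mathbb{Q}$ into itself. I would argue by contradiction: suppose $g\theta=r\in\mathbb{Q}$. Cross-multiplying gives $a\theta+b=r(c\theta+d)$, which rearranges to $(a-rc)\theta=rd-b$. If $a-rc\neq 0$ then $\theta=(rd-b)/(a-rc)\in\mathbb{Q}$, a contradiction. If instead $a-rc=0$, then the right-hand side $rd-b$ must also vanish, so $a=rc$ and $b=rd$; but then $\det g=ad-bc=rcd-rdc=0$, contradicting $\det g=\pm1$. Hence $g\theta\in\mathbb{R}/\mathbb{Q}$.

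The identity axiom is immediate: for $\I=\left(\begin{smallmatrix}1&0\\0&1\end{smallmatrix}\right)$ one has $\I\theta=(\theta+0)/(0+\theta\cdot 0+1)=\theta$. For the compatibility axiom I would take $g=\left(\begin{smallmatrix}a&b\\c&d\end{smallmatrix}\right)$ and $h=\left(\begin{smallmatrix}a'&b'\\c'&d'\end{smallmatrix}\right)$, substitute $h\theta$ into the formula for $g$, clear the common denominator $c'\theta+d'$, and simplify to obtain
\begin{equation*}
 g(h\theta)=\frac{(aa'+bc')\theta+(ab'+bd')}{(ca'+dc')\theta+(cb'+dd')}.
\end{equation*}
Recognising these coefficients as precisely the entries of the matrix product $gh$, the right-hand side equals $(gh)\theta$, which is the required identity $g(h\theta)=(gh)\theta$.

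The only genuine subtlety — and the step I expect to require the most care — is the bookkeeping of nonvanishing denominators during this substitution: the intermediate division by $c'\theta+d'$ and the reappearance of the final denominator $(ca'+dc')\theta+(cb'+dd')$ must both be justified, but each is covered by the nonvanishing argument of the first paragraph applied to $h$ and to $gh\in GL(2,\mathbb{Z})$ respectively (noting that $GL(2,\mathbb{Z})$ is closed under multiplication). Together the two verified axioms, combined with the well-definedness and invariance of $\mathbb{R}/\mathbb{Q}$, establish that (\ref{GLaction}) is a group action of $GL(2,\mathbb{Z})$ on $\mathbb{R}/\mathbb{Q}$.
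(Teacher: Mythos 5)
Your proposal is correct and follows essentially the same route as the paper: irrationality of $g\theta$ is obtained by contradiction using $\det g=\pm1$, and compatibility is verified by direct substitution and comparison with the entries of the matrix product. You are in fact slightly more complete than the paper's own proof, which omits the identity axiom and the explicit check that the denominators $c\theta+d$ never vanish.
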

\begin{proof}
 Suppose $g$ is of the same form as in equation (\ref{GLaction}), furthermore suppose that
\begin{equation*}
 \frac{a\theta+b}{c\theta +d}=\frac{x}{y}
\end{equation*}
for some integers $x$ and $y$.  Then we can write
\begin{eqnarray*}
 \theta\left(ay-cx\right)&=&dx-by\\
\theta&=&\frac{dx-by}{ay-cx}.
\end{eqnarray*}
for $ay-cx\neq0$.  So then $\theta$ is rational and we have a contradiction.  When $ay-cx=0$ it implies that $dx-by=0$ and we can write
\begin{equation*}
 \frac{x}{y}=\frac{a}{c}=\frac{b}{d}.
\end{equation*}
Hence $ad-bc=0$.  But this is a contradiction since
\begin{equation*}
 \left|
\begin{array}{cc}
 a & b\\
 c & d
\end{array}
\right|=\pm 1.
\end{equation*}
Hence for any irrational $\theta$ we find that $g\theta$ is also irrational.

Let $k=\left(
\begin{array}{ll}
a & b\\
c & d
\end{array}
\right)$ and $l=\left(
\begin{array}{ll}
e & f\\
g & h
\end{array}
\right)$ and consider the calculations
\begin{eqnarray*}
\left(
\begin{array}{ll}
e & f\\
g & h
\end{array}
\right)\left(
 \left(
\begin{array}{ll}
a & b\\
c & d
\end{array}
\right)\theta\right)&=&\left(
\begin{array}{ll}
e & f\\
g & h
\end{array}
\right)\frac{a\theta+b}{c\theta+d}\\
&=&\frac{(ae+cf)\theta+(eb+df)}{(ga+ch)\theta+(gb+dh)}.
\end{eqnarray*}
And similarly we also calculate
\begin{eqnarray*}
\left(
\left(
\begin{array}{ll}
e & f\\
g & h
\end{array}
\right)
 \left(
\begin{array}{ll}
a & b\\
c & d
\end{array}
\right)\right)\theta&=&
\left(
\begin{array}{ll}
ae+fc & eb+fd \\
ga+hc & gb+hd
\end{array}
\right)\theta\\
&=&\frac{(ae+cf)\theta+(eb+df)}{(ga+ch)\theta+(gb+dh)}.
\end{eqnarray*}
Hence $k\left(l\theta\right)=\left(kl\right)\theta$.  This concludes the proof.
\end{proof}
These results will be used when we consider homomorphisms of the form 
\begin{equation*}
 \varphi:A_{\Theta}\rightarrow M_{n}(A_{\theta}).
\end{equation*}

\begin{definition}(Orbit)\newline
 Let $G$ be a group acting on a set $X$.  The \textsl{orbit} of a point $x$ in $X$ is the set of elements of $X$ to which $x$ can be moved by the elements of $G$.  The \text{orbit} of $x$ is denoted by $Gx$ where
\begin{equation*}
 Gx=\{g x: g\in G\}.
\end{equation*}
\end{definition}

It is a familiar result that the matrices 
\begin{equation}\label{genGL}
\left(
\begin{array}{ll}
0&-1\\
1&0
\end{array}
\right)
, \quad
\left(
\begin{array}{ll}
 1&1\\
0&1
\end{array}
\right)
\end{equation}
generate GL$(2,\mathbb{Z})$ \cite[Appendix B]{kurosh}.  For $\theta$ irrational we have
\begin{eqnarray*}
 \left(
\begin{array}{ll}
0&-1\\
1&0
\end{array}
\right)\theta&=&-\frac{1}{\theta}\\
\left(
\begin{array}{ll}
 1&1\\
0&1
\end{array}
\right)\theta&=&\theta +1.
\end{eqnarray*}

\begin{proposition}
 If $\alpha$ and $\beta$ are irrational numbers which are in the same orbit of the action of GL$(2,\mathbb{Z})$, then the quantum tori $A_{\alpha}$ and $A_{\beta}$ are Morita equivalent.
\end{proposition}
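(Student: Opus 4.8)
The plan is to reduce the general statement to the two generators of $GL(2,\mathbb{Z})$ and to exploit the fact, established earlier, that Morita equivalence is an equivalence relation, and hence in particular transitive and symmetric. Recall from equation (\ref{genGL}) that $GL(2,\mathbb{Z})$ is generated by the two matrices $S = \begin{pmatrix} 0 & -1 \\ 1 & 0 \end{pmatrix}$ and $T = \begin{pmatrix} 1 & 1 \\ 0 & 1 \end{pmatrix}$, acting on an irrational $\theta$ by $S\theta = -1/\theta$ and $T\theta = \theta + 1$. Since $\alpha$ and $\beta$ lie in the same orbit, there is some $g \in GL(2,\mathbb{Z})$ with $\beta = g\alpha$, and we may factor $g = h_1 \cdots h_k$ with each $h_j \in \{S, S^{-1}, T, T^{-1}\}$. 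This produces a finite chain of irrationals $\alpha = \theta_0, \theta_1, \ldots, \theta_k = \beta$ in which each $\theta_j$ is obtained from $\theta_{j-1}$ by applying a single generator or its inverse. Because Morita equivalence is transitive and symmetric, it suffices to prove that $A_\theta$ and $A_{g\theta}$ are Morita equivalent when $g$ is one of the two generators $S$ and $T$.

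First I would dispose of the generator $T$. Here $T\theta = \theta + 1$, and Lemma \ref{A_iso} already gives $A_\theta \cong A_{\theta+1}$ as $C^*$-algebras. Any $*$-isomorphism $A_\theta \to A_{\theta+1}$ turns $A_{\theta+1}$ into an $A_\theta$--$A_{\theta+1}$-imprimitivity bimodule in the evident way (letting $A_\theta$ act through the isomorphism), so isomorphic algebras are in particular Morita equivalent; thus $A_\theta$ and $A_{T\theta}$ are Morita equivalent.

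The substantive case is the generator $S$, where $S\theta = -1/\theta$, and this is exactly the point where I would invoke Rieffel's construction. The task is to produce an explicit $A_\theta$--$A_{-1/\theta}$-imprimitivity bimodule, which Rieffel obtains by equipping a suitable dense subspace (the Schwartz space $\mathcal{S}(\mathbb{R})$) with two compatible $C^*$-valued inner products, on which the two algebras act by rescaled translation and modulation operators coming from the Heisenberg relations, and then completing as in Lemma \ref{complete02}. As announced in the introduction, these results of \cite{Rieffel1981} are used as a black box, so I would simply cite the relevant proposition there to conclude that $A_\theta$ and $A_{-1/\theta}$ are Morita equivalent.

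Combining the two generator cases with transitivity along the chain $\theta_0, \ldots, \theta_k$ then yields that $A_\alpha$ and $A_\beta$ are Morita equivalent, completing the argument. The main obstacle is entirely concentrated in the $S$-step: the $T$-step and the reduction to generators are formal, but the Morita equivalence $A_\theta \sim A_{-1/\theta}$ genuinely requires Rieffel's bimodule, whose construction lies outside the self-contained material developed in this thesis and is therefore quoted rather than reproved.
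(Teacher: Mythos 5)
Your proof is correct and follows essentially the same route as the paper's: reduce to the two generators of $\mathrm{GL}(2,\mathbb{Z})$, handle $\theta\mapsto\theta+1$ via Lemma \ref{A_iso} and $\theta\mapsto-1/\theta$ by citing Rieffel's imprimitivity bimodule from \cite{Rieffel1981}. Your write-up is in fact slightly more careful than the paper's, since you make explicit the factorization of $g$ into a word in the generators and their inverses and the appeal to transitivity and symmetry of Morita equivalence along the resulting chain, which the paper leaves implicit.
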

\begin{proof}
If two irrationals $\alpha$ and $\beta$ are in the same orbit of the action of GL$(2,\mathbb{Z})$ then we can write
\begin{equation*}
\beta=g\alpha
\end{equation*}
for some $g\in$GL$\left(2,\mathbb{Z}\right)$.  Now since GL$(2,\mathbb{Z})$ is generated by the matrices in equation (\ref{genGL}) we only need to consider the case where 
\begin{equation*}
g=\left(
\begin{array}{ll}
0&-1\\
1&0
\end{array}
\right)
\quad \text{and} \quad h=\left(
\begin{array}{ll}
 1&1\\
0&1
\end{array}
\right).
\end{equation*}
So we have two cases
\begin{equation*}
 \beta=\left(
\begin{array}{ll}
0&-1\\
1&0
\end{array}
\right)\alpha=-\frac{1}{\alpha}
\end{equation*}
and
\begin{equation*}
 \beta=\left(
\begin{array}{ll}
 1&1\\
0&1
\end{array}
\right)\alpha=\alpha+1.
\end{equation*}
From \cite[p. 420-1]{Rieffel1981} we know that $A_{\alpha}$ is Morita equivalent to $A_{\alpha^{-1}}$.  By Lemma \ref{A_iso} we know that $A_{\alpha}$ is Morita equivalent to $A_{\alpha+1}$.  Hence, $A_{\alpha}$ is Morita equivalent to $A_{\beta}$.
\end{proof}

\section{Classification of the Homomorphisms}\noindent
Now we are ready to look at the homomorphisms between quantum tori and also under which circumstances these homomorphisms are in fact isomorphisms.  The following subsection is the first stepping stone in the theory.
\subsection{Unital *-Homomorphism Between Noncommutative Tori}
The first case we consider is the simplest and most natural.  Here we will show under what circumstances unital *-homomorphisms exist between different noncommutative tori.  This case has a very clear physical interpretation from the point of view of string theory as explained in the introduction and is the most important result of this chapter.

\begin{theorem}\cite[Theorem 2.1]{MR}\label{stelling2.1}\newline
Fix $\Theta$ and $\theta$ in $(0,1)$, both irrational.  There exists an injective unital *-homomorphism $\varphi:A_{\Theta}\rightarrow A_{\theta}$ if and only if $\Theta=c\theta+d$ for some $c,d\in \mathbb{Z}, c\neq0$.  Such a *-homomorphism $\varphi$ can be chosen to be an isomorphism onto its image if and only if $c=\pm1$.
\end{theorem}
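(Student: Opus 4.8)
The plan is to treat the two ``iff'' statements separately, handing the arithmetic obstruction to $K$-theory and the construction to the universal property, and then deducing the surjectivity clause by applying the first part to the inverse map.

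First I would prove necessity. Assuming an injective unital *-homomorphism $\varphi\colon A_{\Theta}\to A_{\theta}$, I would pass to $K_0$. By Lemma \ref{trace_iso} the unique normalized traces identify $K_0(A_{\Theta})\cong\mathbb{Z}+\Theta\mathbb{Z}$ and $K_0(A_{\theta})\cong\mathbb{Z}+\theta\mathbb{Z}$ as ordered groups, and by Proposition \ref{group_homo} the induced homomorphism $\varphi_*$ is the inclusion, i.e.\ $\varphi_*(g)=g$ for all $g\in\mathbb{Z}+\Theta\mathbb{Z}$. Since $\varphi_*$ takes values in $K_0(A_{\theta})\cong\mathbb{Z}+\theta\mathbb{Z}$, choosing $g=\Theta$ forces $\Theta\in\mathbb{Z}+\theta\mathbb{Z}$, that is $\Theta=c\theta+d$ with $c,d\in\mathbb{Z}$; and $c\neq0$ because an irrational $\Theta$ cannot equal the integer $d$.

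Next I would construct $\varphi$ for the converse. Given $\Theta=c\theta+d$ with $c\neq0$, the crucial point is $e^{2\pi i\Theta}=e^{2\pi i c\theta}$, so that the unitaries $u':=U$ and $v':=V^{c}$ of $A_{\theta}=C^*(U,V)$ obey $u'v'=e^{2\pi i c\theta}v'u'=e^{2\pi i\Theta}v'u'$, which is precisely the defining relation (\ref{commutation_relation}) of $A_{\Theta}$. As $\Theta$ is irrational, the universal property (Theorem \ref{universal}) then furnishes a *-isomorphism of $A_{\Theta}$ onto $C^*(u',v')\subseteq A_{\theta}$ carrying the generators to $U$ and $V^{c}$; this is the required injective unital *-homomorphism, injectivity being automatic since Theorem \ref{universal} already delivers an isomorphism onto the image.

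Finally I would settle the $c=\pm1$ clause, reading ``isomorphism onto its image'' as the assertion that $\varphi$ can be chosen surjective, hence a *-isomorphism $A_{\Theta}\cong A_{\theta}$. When $c=\pm1$ one has $v'=V$ or $v'=V^{*}$, so $C^*(u',v')=C^*(U,V)=A_{\theta}$ and the $\varphi$ above is already onto (for $c=1$ this also follows from $A_{\Theta}=A_{\theta+d}\cong A_{\theta}$ via Lemma \ref{A_iso}). For the reverse implication I would observe that if some unital $\varphi$ is a *-isomorphism, then $\varphi^{-1}$ is again an injective unital *-homomorphism, so necessity applied to it gives $\theta=c'\Theta+d'$ with $c',d'\in\mathbb{Z}$; substituting $\Theta=c\theta+d$ yields $\theta=c'c\,\theta+(c'd+d')$, and irrationality of $\theta$ forces $c'c=1$, whence $c=\pm1$. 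The main thing to watch will be the sign convention in the commutation relation when matching the pair $(U,V^{c})$ to the definition of $A_{\Theta}$, together with the (only apparent) subtlety in the word ``isomorphism,'' which is meaningful here solely as surjectivity since an injective *-homomorphism is automatically an isomorphism onto its closed range. Notably, no step needs the Morita-theoretic results of Rieffel, which are instead reserved for the $A_{\Theta}\to M_n(A_{\theta})$ theorems later in the chapter.
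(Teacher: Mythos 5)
Your proposal is correct and follows essentially the same route as the paper: necessity via the trace isomorphism of $K_0$-groups (Lemma \ref{trace_iso}, Proposition \ref{group_homo}), sufficiency by exhibiting a pair of unitaries in $A_{\theta}$ satisfying the $A_{\Theta}$ relation and invoking the universal property (the paper sends $U\mapsto \tilde{U}^{c}$, $V\mapsto\tilde{V}$ rather than your $U\mapsto U$, $V\mapsto V^{c}$, which is an immaterial swap), and the forward half of the $c=\pm1$ clause by noting the images generate all of $A_{\theta}$. The one genuine divergence is the reverse half of that clause: the paper shows $\varphi_{*}(\mathbb{Z}+\Theta\mathbb{Z})=\mathbb{Z}+c\theta\mathbb{Z}$ is a proper subgroup of $\mathbb{Z}+\theta\mathbb{Z}$ when $|c|\neq1$ so $\varphi$ cannot be surjective, whereas you apply the necessity statement to $\varphi^{-1}$ and deduce $cc'=1$ from irrationality of $\theta$ --- a slightly cleaner argument that sidesteps the paper's loose claim that surjectivity of $\varphi$ forces surjectivity of $\varphi_{*}$.
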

\begin{proof}
 Suppose there exists a unital *-homomorphism
\begin{equation*}
 \varphi:A_{\Theta}\rightarrow A_{\theta}.
\end{equation*}
 By Lemma \ref{trace_iso}, $K_0(A_{\Theta})$ is mapped isomorphically to
$\mathbb{Z}+\Theta\mathbb{Z}$.  From Lemma \ref{proj} we know the range of the
trace on projections from the quantum torus to itself is
$\left(\mathbb{Z}+\Theta\mathbb{Z}\right)\bigcap[0,1]$ and Lemma
\ref{group_homo} shows that $\varphi$ induces an order preserving map
$\varphi_*$ of the $K_0$-groups sending the class of the identity to the class
of the identity.

 Remark \ref{id_class} shows us that we can associate the number $1\in\mathbb{Z}+\Theta\mathbb{Z}$ with the $1\times 1$ identity matrix $\I^{(1)}_{A_{\Theta}}$.  By Proposition \ref{group_homo} we know that we can view the map $\varphi_*$ as the inclusion of the subgroup $K_0(A_{\Theta})$ into $K_0(A_{\theta})$ with 1 going to 1.  So we can now view $\varphi_*$ as an inclusion and write
\begin{equation}
 \varphi_*:\mathbb{Z}+\Theta\mathbb{Z}\rightarrow\mathbb{Z}+\theta\mathbb{Z}.
\end{equation}
So if we regard $\Theta$ as being the generator of the group $K_0(A_{\Theta})\cong\mathbb{Z}+\Theta\mathbb{Z}$ we see by the inclusion that we can write 
\begin{equation*}
 \Theta=c\theta+d
\end{equation*}
for some $c,d\in\mathbb{Z}$.

Conversely by Lemma \ref{A_iso} we see that $A_{c\theta+d}\cong A_{c\theta}$, so we can say that $A_{c\theta+d}$ is generated by the unitaries $U$ and $V$ satisfying the commutation relation
\begin{equation*}
 UV=e^{2\pi ic\theta}VU.
\end{equation*}
While $A_{\theta}$ is generated by unitaries $\tilde{U}$ and $\tilde{V}$ satisfying 
\begin{equation*}
 \tilde{U}\tilde{V}=e^{2\pi i\theta}\tilde{V}\tilde{U}
\end{equation*}
Now we define the homomorphism $\varphi$ to act on the generating unitaries in the following way:
\begin{eqnarray*}
 \varphi(U)&=&\tilde{U}^c\\
 \varphi(V)&=&\tilde{V}
\end{eqnarray*}
Since we have
\begin{equation*}
 \varphi(\I_{A_{c\theta}})=\varphi(V^*V)=\tilde{V}^*\tilde{V}^*=\I_{A_{\theta}}
\end{equation*}
and
\begin{eqnarray*}
 \varphi(U^*)&=&\left(\tilde{U}^*\right)^c=\left(\tilde{U}^c\right)^*\\
 \varphi(V^*)&=&\tilde{V}^*
\end{eqnarray*}
we see that $\varphi$ is indeed a unital *-homomorphism.  

Furthermore we can calculate
\begin{eqnarray*}
 \varphi(UV)&=&\varphi(e^{2\pi i c\theta}VU)\\
&=&e^{2\pi i c\theta}\varphi(V)\varphi(U)\\
&=&e^{2\pi i c\theta}\tilde{V}^c\tilde{U}
\end{eqnarray*}
Here two special cases arise naturally for us to evaluate.  They are
respectively when $c=+1,-1$.  The equation above reduces to
\begin{eqnarray*}
 \varphi(U)\varphi(V)&=&e^{2\pi i\theta}\varphi(V)\varphi(U)\\
 \varphi(V)\varphi(U)&=&e^{2\pi i\theta}\varphi(U)\varphi(V)
\end{eqnarray*}
for the respective cases.  From the universal property of the irrational
rotation algebra we can see that in the case where $c=\pm1$ the images of $U$
and $V$ generate the quantum torus $A_{\theta}$ and the *-homomorphism is
bijective. 

We know that if $\varphi$ is surjective then $\varphi_*$ will also be
surjective.  Consider $c\neq\pm1$.  Since $\varphi_*$ is the inclusion map we
have
\begin{eqnarray*}
 \varphi_*\left(\mathbb{Z}+\Theta\mathbb{Z}\right)&=&\mathbb{Z}+\Theta\mathbb{Z}\\
&=&\mathbb{Z}+\left(c\theta +d\right)\mathbb{Z}\\
&=&\mathbb{Z}+c\theta\mathbb{Z}
\end{eqnarray*}
which is strictly included in $\mathbb{Z}+\theta\mathbb{Z}$.  So if $|c|\neq1$ then $\varphi$ cannot be surjective.  So by a contrapositive argument we can see when $\varphi_*$ is not surjective it implies that $\varphi$ cannot be surjective.  We conclude that indeed the constructed homomorphism can be chosen to be an isomorphism if and only if $c=\pm 1$.
\end{proof}

\subsection{Unital *-Homomorphisms to Matrix Algebras over $A_{\theta}$}
We would like to generalize the results of Theorem \ref{stelling2.1} to the case
where the target space becomes the set of all $n\times n$ matrices with entries
in $A_{\theta}$.  This case has a more distorted physical interpretation in the
sense that in the classical limit we can think of the parameter space as being
replaced by $\mathbb{T}^2\times \{1,\cdots, n\}$ since
$M_n\left(A_{\theta}\right)\cong A_{\theta}\otimes M_n(\mathbb{C})$.  However,
the application to string theory is not clear.  These results are of a more
mathematical nature with applications in the general theory of $\sigma$-models
and therefore from the point of view of physics does not form such a cardinal
part of this thesis.

We are interested in unital *-homomorphisms of the form
\begin{equation}
 \varphi:A_{\Theta}\rightarrow M_n\left(A_{\theta}\right).
\end{equation}

\begin{remark}(Reducing the nonunital to the unital case)\newline
Let $A$ be a $C^*$-algebra and consider the nonunital *-homomorphism
\begin{equation*}
 \varphi:A_{\Theta}\rightarrow M_n(A_{\theta}).
\end{equation*}
Let $\I_{A_{\Theta}}$ be the unit of $A_{\Theta}$ then we have
\begin{equation*}
 \varphi(\I_{A_{\Theta}})=\varphi(\I_{A_{\Theta}}\I^*_{A_{\Theta}})=\varphi(\I^2_{A_{\Theta}})
\end{equation*}
or equivalently
\begin{equation*}
 \varphi(\I_{A_{\Theta}})=\varphi(\I_{A_{\Theta}})^*=\varphi(\I_{A_{\Theta}})^2.
\end{equation*}
So we may write
\begin{equation*}
 \varphi(\I_{A_{\Theta}})=p
\end{equation*}
where $p$ is some projection in $M_n\left(A_{\theta}\right)$.  Clearly $\varphi\left(A_{\Theta}\right)\subset pM_n\left(A_{\theta}\right)p$ and it can be shown that $pM_n(A_{\theta})p$ is a unital $C^*$-algebra for any projection $p$ in $M_n(A_{\theta})$.  From \cite[Corollary 2.6]{Rieffel1983} we see that since $pM_n(A_{\theta})p$ is unital and Morita equivalent to $A_{\theta}$ we may write
\begin{equation*}
 pM_n(A_{\theta})p\cong M_k(A_{\theta})
\end{equation*}
for some integer $k$.  Then putting everything together we find that the *-homomorphism $\varphi$ reduces to
\begin{equation*}
\varphi:A_{\Theta}\rightarrow M_k(A_{\theta}) 
\end{equation*}
and is unital.  So we are then once again reduced to the unital case.
\end{remark}

\begin{lemma}\label{mor}
Given that $\Theta=\frac{c\theta+d}{n}$ with $\theta$ irrational, we can embed
$A_{\Theta}$ into a matrix algebra over $A_{\theta}$. 
\end{lemma}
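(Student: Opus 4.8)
The plan is to produce the embedding directly by exhibiting two unitaries in $M_n(A_\theta)$ that satisfy the defining commutation relation of $A_\Theta$, and then to invoke the universal property of the quantum torus. Since $\Theta$ is irrational, Theorem \ref{universal} guarantees that any pair of unitaries $W,Z$ in a $C^*$-algebra satisfying $WZ=e^{2\pi i\Theta}ZW$ generates a copy of $A_\Theta$; more precisely it yields a $*$-isomorphism of $A_\Theta$ onto $C^*(W,Z)$, hence an injective unital $*$-homomorphism into the ambient algebra. Thus the whole problem reduces to finding such a pair $W,Z\in M_n(A_\theta)$ whose commutation phase is $e^{2\pi i\Theta}=e^{2\pi i(c\theta+d)/n}$.

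First I would set $\zeta:=e^{2\pi i\Theta}=e^{2\pi i(c\theta+d)/n}$ and, writing $U,V$ for the generating unitaries of $A_\theta$, build $W$ and $Z$ out of $U,V$ and the clock-and-shift matrices introduced in Section \ref{finite_representation}. Concretely, regard $M_n(A_\theta)$ as acting on the free module $A_\theta^{\,n}$ of column vectors $e_0,\dots,e_{n-1}$ by the usual left matrix multiplication, and let $W$ be the ``twisted cyclic shift'' defined by $We_j=e_{j+1}$ for $0\le j\le n-2$ and $We_{n-1}=U^{c}e_0$, so that $W$ is an $n$-th root of $U^{c}$ spread across the matrix algebra. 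Let $Z=\operatorname{diag}\bigl(V,\zeta^{-1}V,\dots,\zeta^{-(n-1)}V\bigr)$ be the clock-twisted diagonal whose entries are the scalar multiples $\zeta^{-j}V$. Both matrices are manifestly unitary, being products of the cyclic permutation unitary with unitary diagonals.

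The key computation is then to verify $WZ=\zeta ZW$ slot by slot. On the ``bulk'' indices $0\le j\le n-2$ the identity reduces to $\zeta^{-j}V=\zeta\cdot\zeta^{-(j+1)}V$, which holds automatically, whereas on the single ``wrap-around'' index $j=n-1$ it reduces to $\zeta^{-(n-1)}U^{c}V=\zeta\,VU^{c}$. Using the commutation relation (\ref{commutation_relation}) in the form $U^{c}V=e^{2\pi ic\theta}VU^{c}$, this last condition becomes exactly $\zeta^{\,n}=e^{2\pi ic\theta}$, and here the hypothesis enters precisely: $\zeta^{\,n}=e^{2\pi i(c\theta+d)}=e^{2\pi ic\theta}$ because $d\in\mathbb{Z}$. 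Hence the bulk and wrap-around contributions agree, $WZ=\zeta ZW$, and applying Theorem \ref{universal} to $W$ and $Z$ delivers the embedding $A_\Theta\hookrightarrow M_n(A_\theta)$.

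The step I expect to be the main obstacle is arranging the fractional coefficient $c/n$ of $\theta$ in the commutation phase. A naive tensor-product ansatz of the form $W=U^{a}\otimes(\cdot)$, $Z=V^{b}\otimes(\cdot)$ only ever produces an integer multiple of $\theta$ together with an $n$-th root of unity, and so can never reach $\Theta=(c\theta+d)/n$. The resolution, and the delicate point to get right, is the twisted $n$-th-root construction above: by letting $U^{c}$ occupy only the wrap-around slot of $W$, its full phase $e^{2\pi ic\theta}$ is effectively averaged over the $n$ steps of the shift, and the consistency between the bulk and wrap-around computations forces exactly the relation $\zeta^{\,n}=e^{2\pi ic\theta}$ that the hypothesis $\Theta=(c\theta+d)/n$ supplies. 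Once this pair is in hand, the remaining verifications, namely unitarity and the reduction through the universal property, are routine.
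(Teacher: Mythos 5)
Your construction is correct, and it takes a genuinely different route from the paper. The paper argues indirectly through the chain $A_{\frac{c\theta+d}{n}}\sim A_{\frac{n}{c\theta+d}}\hookrightarrow A_{\frac{1}{c\theta+d}}\sim A_{c\theta+d}\cong A_{c\theta}\hookrightarrow A_{\theta}$, using Rieffel's Morita-equivalence results to convert each equivalence into an embedding into a full corner of a matrix algebra; the price is that the target comes out as $M_l(A_{\theta})$ for some $l$ that is not controlled to equal $n$, the map is not a priori unital, and unitality has to be restored later (in Theorem \ref{stelling2.3}) by conjugating projections with equal traces. Your twisted clock-and-shift pair $W,Z$ instead produces an explicit \emph{unital} embedding directly into $M_n(A_{\theta})$ with the $n$ of the hypothesis: the slot-by-slot check is exactly as you describe, the bulk columns are automatically consistent, and the wrap-around column forces $\zeta^{\,n}=e^{2\pi ic\theta}$, which is precisely what $\Theta=(c\theta+d)/n$ with $d\in\mathbb{Z}$ supplies. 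This is more elementary (no appeal to \cite{Rieffel1981}) and in fact delivers a stronger conclusion than the lemma needs. The one point you should state explicitly is that invoking Theorem \ref{universal} requires $\Theta$ to be irrational, i.e.\ $c\neq 0$; the lemma as printed omits this hypothesis, but it is present in the downstream application, and the paper's own proof depends on it just as much.
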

\begin{proof}
 By \cite[p. 421]{Rieffel1981} we know that $A_{\frac{c\theta+d}{n}}$ is Morita
equivalent to $A_{\frac{n}{c\theta+d}}$.  According to Theorem \ref{stelling2.1}
we can embed $A_{\frac{n}{c\theta+d}}$ unitally into $A_{\frac{1}{c\theta+d}}$
if and only if $\frac{n}{c\theta+d}=x\frac{1}{c\theta+d}+y$ for some integers
$x$ and $y$ such that $x\neq 0$.  By setting $x=n$ and $y=0$ the conditions of
Theorem \ref{stelling2.1} are satisfied and we embed $A_{\frac{n}{c\theta+d}}$
unitally into $A_{\frac{1}{c\theta+d}}$, which in turn is Morita equivalent to
$A_{c\theta+d}$.  By Lemma \ref{A_iso} we can see that $A_{c\theta+d}\cong
A_{c\theta}$, which once again according to Theorem \ref{stelling2.1} can be
embedded unitally into $A_{\theta}$.  If we denote Morita equivalence by $\sim$
and let $\cong$ denote that two $C^*$-algebras are *-isomorphic, then
schematically we can write the procedure as follows:
\begin{equation*}
 A_{\frac{c\theta+d}{n}}  \sim  A_{\frac{n}{c\theta+d}}  \hookrightarrow   A_{\frac{1}{c\theta+d}} \sim  A_{c\theta+d}  \cong  A_{c\theta}  \hookrightarrow  A_{\theta}.
\end{equation*}
Since all the $C^*$-algebras we are dealing with above are unital we can embed  $A_{\frac{c\theta+d}{n}}$ into a full corner of the algebra of $k\times k$ matrices over $A_{\frac{n}{c\theta+d}}$ \cite[Proposition 2.1]{Rieffel1981}.  $M_k(A_{\frac{n}{c\theta+d}})$ embeds unitally into $M_k(A_{\frac{1}{c\theta+d}})$ which in turn embeds into a full corner of the $l\times l$ matrices over $A_{c\theta+d}$.  $M_l(A_{c\theta+d})$ is isomorphic to $M_l(A_{c\theta})$ which finally embeds unitally into $M_l(A_{\theta})$.  Finally we have embedded $A_{\Theta}$ into $M_l(A_{\theta})$.
\end{proof}

\begin{remark}\label{mor_mul}
In the theorem that follows we will be investigating unital homomorphisms of the form
\begin{equation*}
 \varphi:A_{\Theta}\rightarrow M_n(A_{\theta})
\end{equation*}
where we will find that
\begin{equation*}
 \Theta=\frac{c\theta+d}{n}.
\end{equation*}
With $c,d$ and $n$ integers.  We want to investigate the respective K-theories involved.  For that purpose we give a schematic representation of the argument.
\begin{equation*}
\begin{diagram}
A_{\frac{c\theta+d}{n}}    & \rTo^{\eta_1} & M_k\left(A_{\frac{n}{c\theta+d}}\right)& \rTo^{\eta_2} & M_k\left(A_{\frac{1}{c\theta+d}}\right)\\
                           &               &                                        &               & \dTo_{\eta_3} \\
M_l\left(A_{\theta}\right) & \lTo{\eta_4}  & M_l\left(A_{c\theta}\right)            & \cong         & M_l\left(A_{c\theta+d}\right)   
\end{diagram}
\end{equation*}
where the $\eta_j$ mappings implement the Morita equivalence by embedding into full corners of the corresponding matrix algebras.  If we define
\begin{eqnarray*}
 \varphi&:&A_{\Theta}\rightarrow M_l(A_{\theta})
\end{eqnarray*}
to be the composition map 
\begin{equation*}
 \varphi:=\eta_1\circ\eta_2\circ\eta_3\circ\eta_4
\end{equation*}
then, on the level of the $K_0$-groups, we can write
\begin{equation*}
 \varphi_{*}=\eta_{1*}\circ\eta_{2*}\circ\eta_{3*}\circ\eta_{4*}
\end{equation*}
and here $\varphi_*$ is just multiplication by $n$. % This can be seen by
%\begin{equation*}
% \frac{n}{c\theta+d}\times\left(c\theta+d\right)=n
%\end{equation*}
%as it shown in Lemma \ref{mor}.
\end{remark}

\begin{lemma}\label{mult}
 Let $\theta$ be some irrational number.  On the level of the $K_0$-groups $K_0(A_{\frac{\theta}{n}})$ and $K_0(A_{\frac{n}{\theta}})$, the Morita equivalence of $A_{\frac{\theta}{n}}$ and $A_{\frac{n}{\theta}}$ is implemented through multiplication by $\frac{n}{\theta}$.
\end{lemma}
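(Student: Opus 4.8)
The plan is to identify both $K_0$-groups with dense ordered subgroups of $\mathbb{R}$ via the unique normalized trace, to observe that the Morita equivalence induces an \emph{order} isomorphism between them, and then to pin that isomorphism down as multiplication by a single positive scalar whose value is forced to be $n/\theta$.

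First I would use Lemma \ref{trace_iso} to identify $K_0(A_{\theta/n})$ with the ordered group $\mathbb{Z}+\frac{\theta}{n}\mathbb{Z}$ and $K_0(A_{n/\theta})$ with $\mathbb{Z}+\frac{n}{\theta}\mathbb{Z}$, both viewed as dense subgroups of $\mathbb{R}$ with the inherited order; by Lemma \ref{proj} the positive cones correspond to the non-negative elements of these subgroups. Since $\theta$ is irrational, so are $\theta/n$ and $n/\theta=(\theta/n)^{-1}$, and Rieffel's result \cite[p.420-1]{Rieffel1981} gives that $A_{\theta/n}$ is Morita equivalent to $A_{n/\theta}$. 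By Theorem \ref{morita_stably} together with Proposition \ref{isoK}, this equivalence produces an isomorphism $\mu:K_0(A_{\theta/n})\to K_0(A_{n/\theta})$; because the equivalence carries classes of projections to classes of projections, both $\mu$ and $\mu^{-1}$ send positive cone to positive cone, so $\mu$ is an order isomorphism.

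Next I would show that any order isomorphism between dense subgroups of $\mathbb{R}$ is multiplication by a positive real constant, extending the reasoning of Lemmas \ref{group_incl} and \ref{group} to a non-integer multiplier. Fixing $g_0>0$ in $\mathbb{Z}+\frac{\theta}{n}\mathbb{Z}$ and setting $\lambda:=\mu(g_0)/g_0$, for any other positive $g$ one compares the ratios $\mu(g)/\mu(g_0)$ and $g/g_0$ by squeezing a rational $p/q$ between them: order preservation applied to the positive element $p g_0-q g$ (respectively $q g-p g_0$) forces $\mu(p g_0-q g)=p\mu(g_0)-q\mu(g)>0$, which contradicts the chosen inequality unless the two ratios coincide. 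Hence $\mu(g)=\lambda g$ for all $g$, so $\mu$ is multiplication by some $\lambda>0$ with $\lambda\!\left(\mathbb{Z}+\frac{\theta}{n}\mathbb{Z}\right)=\mathbb{Z}+\frac{n}{\theta}\mathbb{Z}$.

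Finally I would determine $\lambda$. The relation $\lambda\!\left(\mathbb{Z}+\frac{\theta}{n}\mathbb{Z}\right)=\mathbb{Z}+\frac{n}{\theta}\mathbb{Z}$ is satisfied by $\lambda=n/\theta$, and it pins $\lambda$ down uniquely among positive reals except for multiplication by a positive order-automorphism of $\mathbb{Z}+\frac{n}{\theta}\mathbb{Z}$, a group which is trivial unless $\theta$ is a quadratic irrational. To select $\lambda=n/\theta$ in every case I would appeal to the trace-scaling of the specific Rieffel bimodule implementing $A_{\theta/n}\sim A_{n/\theta}$: realizing the equivalence as a full corner $A_{n/\theta}\cong p\,M_k(A_{\theta/n})\,p$ and tracking the class of a projection of trace $\theta/n$ (which exists by Lemma \ref{proj}), Rieffel's computation \cite{Rieffel1981} yields the scaling factor $\tfrac{1}{c(\theta/n)+d}$ for the matrix $\bigl(\begin{smallmatrix}0&1\\1&0\end{smallmatrix}\bigr)$ carrying $\theta/n$ to $n/\theta$, namely $\tfrac{1}{\theta/n}=n/\theta$. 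I expect this extraction of the exact scalar — as opposed to its determination only up to a quadratic unit — to be the main obstacle: the order-theoretic argument settles the generic case, while the quadratic case genuinely relies on Rieffel's explicit trace formula.
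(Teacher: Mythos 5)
Your proposal follows the same core strategy as the paper's proof: identify both $K_0$-groups with $\mathbb{Z}+\frac{\theta}{n}\mathbb{Z}$ and $\mathbb{Z}+\frac{n}{\theta}\mathbb{Z}$ via the traces, observe that the Morita equivalence induces a positive scalar rescaling between them, and read off the scalar as $n/\theta$. The differences are in the two intermediate steps. Where you establish that the induced order isomorphism of dense subgroups of $\mathbb{R}$ must be multiplication by a constant via a rational-squeeze argument, the paper gets the same conclusion more cheaply from Rieffel's Propositions 2.2--2.3: the equivalence bimodule induces a trace $\hat{\tau}$ on $A_{n/\theta}$, which by uniqueness of the normalized trace must be a scalar multiple $r$ of the canonical one, so the map on $K_0$ is automatically multiplication by a scalar. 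Your squeeze argument is correct and more self-contained, but longer. More importantly, you correctly identify a real weakness in the final step that the paper glosses over: from $\lambda\left(\mathbb{Z}+\frac{\theta}{n}\mathbb{Z}\right)=\mathbb{Z}+\frac{n}{\theta}\mathbb{Z}$ alone, the paper asserts that ``the only way equality can be realized'' is $\lambda=n/\theta$, but this is false as stated when $\theta$ is a quadratic irrational, since the group $\mathbb{Z}+\frac{n}{\theta}\mathbb{Z}$ can then admit nontrivial positive order-automorphisms (units). Your resolution --- extracting the exact scaling factor $\frac{1}{c(\theta/n)+d}=n/\theta$ from Rieffel's explicit bimodule computation rather than from the set equality --- is the right repair, and is in fact what the paper's own citations of \cite[Propositions 2.2, 2.3, Corollary 2.6]{Rieffel1981} implicitly provide. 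So your proof is correct and, on this last point, more careful than the one in the text.
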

\begin{proof}
 We know that when two quantum tori are Morita equivalent their $K_0$-groups are
isomorphic and these groups are induced by the unique traces on the respective
quantum tori.  Since the $C^*$-algebras $A_{\frac{\theta}{n}}$ and
$A_{\frac{n}{\theta}}$ are unital and Morita equivalent, say with equivalence
bimodule $X$, then the canonical trace $\tau$ on $A_{\frac{\theta}{n}}$ can be
induced by $X$ to give a trace $\hat{\tau}$ on $A_{\frac{n}{\theta}}$
\cite[Proposition 2.2]{Rieffel1981}.  If we denote the linking algebra of the
equivalence bimodule $X$ by $A$ then the trace on $A_{\frac{\theta}{n}}$ has a
unique extension to a trace on $A$ and the restriction of this trace to
$A_{\frac{n}{\theta}}$ is $\hat{\tau}$ \cite[Proposition 2.3]{Rieffel1981} which
in general is a non normalized trace.  Rieffel shows in \cite[Corollary
2.6]{Rieffel1981} that the ranges of $\tau_*$ and $\hat{\tau}_*$ are the same,
in other words, on the $K_0$-groups we have
\begin{eqnarray*}
 \tau_*\left(K_0(A_{\frac{\theta}{n}})\right)&=&\hat{\tau}_*\left(K_0(A_{\frac{n}{\theta}})\right)\\
 \mathbb{Z}+\frac{\theta}{n}\mathbb{Z}&=&\hat{\tau}_*\left(K_0(A_{\frac{n}{\theta}})\right) 
\end{eqnarray*}
Since $\hat{\tau}$ is the non-normalized trace on $A_{\frac{n}{\theta}}$ it will only differ by a scalar multiple from the normalized trace.  We can then write
\begin{equation}
 \mathbb{Z}+\frac{\theta}{n}\mathbb{Z}=\frac{1}{r}\left(\mathbb{Z}+\frac{n}{\theta}\mathbb{Z}\right)\label{equality}
\end{equation}
where $r$ is the proportionality factor.  The only way equality can be realized in (\ref{equality}) is if the Morita equivalence is associated with multiplication by $r=\frac{n}{\theta}$ since
\begin{equation*}
 \frac{n}{\theta}\left(\mathbb{Z}+\frac{\theta}{n}\mathbb{Z}\right)=\mathbb{Z}+\frac{n}{\theta}\mathbb{Z}.
\end{equation*}
This concludes the proof.
\end{proof}

\begin{theorem}\cite[Theorem 2.3]{MR}\label{stelling2.3}\newline
Fix $\Theta$ and $\theta$ in $(0,1)$ both irrational and $n\in \mathbb{N},n\geq1$.  There is a unital *-homomorphism
\begin{equation*}
 \varphi:A_{\Theta}\rightarrow M_n(A_{\theta})
\end{equation*}
 if and only if $n\Theta=c\theta +d$ for some $c,d\in\mathbb{Z}$ and $c\neq0$.  Such a *-homomorphism can be chosen to be an isomorphism onto its image if and only if $n=1$ and $c=\pm1$.
\end{theorem}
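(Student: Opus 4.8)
The plan is to reduce the whole statement to ordered $K_0$-theory, exactly as in the $n=1$ case (Theorem \ref{stelling2.1}), but now keeping track of the fact that the class of the identity of $M_n(A_\theta)$ sits at height $n$ rather than $1$. Throughout I would use that $A_\Theta$ is simple, so that every unital $*$-homomorphism out of it is automatically faithful; this is what lets me treat ``isomorphism onto its image'' as the genuine content of surjectivity onto the full target $M_n(A_\theta)$.

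For the forward implication, suppose a unital $*$-homomorphism $\varphi:A_\Theta\to M_n(A_\theta)$ is given. First I would record that $M_n(A_\theta)$ is Morita equivalent to $A_\theta$ (Proposition \ref{Morita_matrix}), hence stably isomorphic (Theorem \ref{morita_stably}), so that $K_0(M_n(A_\theta))\cong K_0(A_\theta)$ (Proposition \ref{isoK}); under the trace identification of Lemma \ref{trace_iso} this group is $\mathbb{Z}+\theta\mathbb{Z}$, but with the normalization coming from the extended trace $\hat\tau$ on $M_n(A_\theta)$ the class $[\I_{M_n(A_\theta)}]$ is sent to $n=\sum_{i=1}^{n}\tau(\I_{A_\theta})$. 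The induced map $\varphi_*$ is an order-preserving group homomorphism (by the argument of Lemma \ref{welldefinedhomomorphism} together with the positivity reasoning in Proposition \ref{group_homo}) sending $[\I_{A_\Theta}]\mapsto[\I_{M_n(A_\theta)}]$ by unitality. Transporting through the two trace isomorphisms yields an order-preserving homomorphism $\eta:\mathbb{Z}+\Theta\mathbb{Z}\to\mathbb{Z}+\theta\mathbb{Z}$ with $\eta(1)=n$, and Lemma \ref{group} (with $p=1$, $r=n$) then forces $\eta(g)=ng$ for all $g$. In particular $n\Theta=\eta(\Theta)\in\mathbb{Z}+\theta\mathbb{Z}$, i.e. $n\Theta=c\theta+d$; and $c\neq 0$, since otherwise $\Theta=d/n$ would be rational.

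For the converse, assume $n\Theta=c\theta+d$ with $c\neq 0$, so $\Theta=(c\theta+d)/n$. Here I would invoke Lemma \ref{mor}, which produces the embedding of $A_\Theta$ into a matrix algebra over $A_\theta$ along the chain $A_{(c\theta+d)/n}\sim A_{n/(c\theta+d)}\hookrightarrow A_{1/(c\theta+d)}\sim A_{c\theta+d}\cong A_{c\theta}\hookrightarrow A_\theta$ of Morita equivalences and (generally nonunital) corner embeddings. Using the reduction preceding that lemma, $\varphi(\I_{A_\Theta})$ is a projection $p$ and $pM_l(A_\theta)p\cong M_m(A_\theta)$ is again a matrix algebra over $A_\theta$ by \cite[Corollary 2.6]{Rieffel1983}, so I can replace the embedding by a genuinely unital $*$-homomorphism into $M_m(A_\theta)$. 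To pin down $m=n$ I would combine Remark \ref{mor_mul} and Lemma \ref{mult}: tracking the normalizations of the successive Morita equivalences on $K_0$ shows the composite induces multiplication by $n$, while unitality into $M_m(A_\theta)$ forces $\varphi_*(1)=m$; comparing the two gives $m=n$, so the target is exactly $M_n(A_\theta)$.

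Finally, for the isomorphism clause, if $n=1$ and $c=\pm1$ then Theorem \ref{stelling2.1} already supplies an isomorphism $A_\Theta\to A_\theta=M_1(A_\theta)$. Conversely, an isomorphism would make $\varphi_*$ surjective; but $\varphi_*$ is multiplication by $n$, with image $n(\mathbb{Z}+\Theta\mathbb{Z})=n\mathbb{Z}+(c\theta+d)\mathbb{Z}$, whose only rational elements are $n\mathbb{Z}$ (as $c\theta+d$ is irrational). Surjectivity onto $\mathbb{Z}+\theta\mathbb{Z}$ requires $1$ to lie in this image, hence $n=1$, and then $\Theta=c\theta+d$ with the $n=1$ surjectivity criterion of Theorem \ref{stelling2.1} giving $c=\pm1$. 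The step I expect to be the real obstacle is the converse: not the mere existence of \emph{some} embedding into a matrix algebra over $A_\theta$, but controlling the normalization factors introduced by the chain of Morita equivalences so that the matrix size emerges as precisely $n$, and doing so compatibly with the passage from the nonunital corner embedding to a truly unital homomorphism.
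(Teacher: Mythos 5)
Your proposal is correct and follows essentially the same route as the paper's proof: the forward direction via the ordered $K_0$-groups and the trace normalization sending $[\I_{M_n(A_\theta)}]$ to $n$ (so that $\varphi_*$ becomes multiplication by $n$ and $n\Theta\in\mathbb{Z}+\theta\mathbb{Z}$), and the converse via the chain of Morita equivalences of Lemma \ref{mor} together with the $K_0$ bookkeeping of Remark \ref{mor_mul} and Lemma \ref{mult}. The only real deviation is in the final clause, where you deduce $n=1$ directly from surjectivity of $\varphi_*$ (multiplication by $n$ cannot hit $1\in\mathbb{Z}+\theta\mathbb{Z}$ unless $n=1$) before invoking Theorem \ref{stelling2.1}, whereas the paper appeals to Rieffel's classification of isomorphisms between matrix algebras over irrational rotation algebras; your version is slightly more self-contained but reaches the same conclusion.
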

\begin{proof}
 Suppose there is a unital *-homomorphism
\begin{equation*}
 \varphi:A_{\Theta}\rightarrow M_n(A_{\theta}).
\end{equation*}
 By Lemma \ref{Morita_matrix} we know that $M_n(A_{\theta})$ is Morita equivalent to $A_{\theta}$.  Theorem \ref{morita_stably} shows that two unital $C^*$-algebras are Morita equivalent if and only if they are stably isomorphic and by Proposition \ref{isoK} we know that 
\begin{equation*}
K_0(M_n(A_{\theta}))\cong K_0(A_{\theta})\cong \mathbb{Z}+\theta\mathbb{Z}
\end{equation*}
Proposition \ref{group_homo} shows that the order on $K_0(M_n(A_{\theta}))$ is the same as on $\mathbb{Z}+\theta\mathbb{Z}$.  Let $\I^{(n)}_{A_{\Theta}}$ be the $n\times n$ matrix with the identity of $A_{\Theta}$ along the diagonal and zeros elsewhere.
\begin{eqnarray*}
 \psi([\I^{(n)}_{A_{\Theta}}])&=&\tau(\I^{(n)}_{A_{\Theta}})\\
&=&\sum^n_{i=1}\tau(\I_{A_{\Theta}})\\
&=&n.
\end{eqnarray*}
So the class of the identity is represented by the number $n$ when we are dealing with $K_0(M_n(A_{\theta}))$.  By Lemma \ref{group} we can regard $\varphi_*$ as multiplication by $n$.  If we again regard $\Theta$ as the generator of the group $K_0(A_{\Theta})$ and apply the map $\varphi_*$ to $\Theta$ we find that
\begin{equation*}
 n\Theta=c\theta+d
\end{equation*}
for some $c,d\in\mathbb{Z}$.  By Lemma \ref{mor} we know that we can embed $A_{\Theta}$ into a matrix algebra over $A_{\theta}$.  In the end we have a non-zero *-homomorphism
\begin{equation*}
 \varphi:A_{\Theta}\rightarrow M_{l}(A_{\theta}).
\end{equation*}
This *-homomorphism will in general not be unital and $l\neq n$.  By Lemma \ref{mult} and Remark \ref{mor_mul} we see that the Morita equivalence is implemented by multiplication by $n$ on the level of the $K_0$-groups.  

By \cite[Corollary 2.5]{Rieffel1983} we know that if we have two projections, say $p$ and $q$ in $M_n(A_{\theta})$ with $\theta$ irrational that have the same trace then they are unitarily equivalent which means we can write
\begin{equation*}
 p=\mathcal{U}q\mathcal{U}^*
\end{equation*}
for some unitary matrix $\mathcal{U}$ in $M_n(A_{\theta})$.  But we know that projections in $M_n(A_{\theta})$ that have the same trace will be mapped to the same element in $K_0(A_{\theta})$.  Hence, projections are determined up to unitary equivalence by their classes in $K_0(A_{\theta})$ which implies that we can conjugate by a unitary and arrange for $\varphi$ to map $A_{\Theta}$ unitally to $M_n(A_{\theta})$.  Finally in \cite[Theorem 3]{Rieffel1981} we see that if we have two matrix algebras $M_m(A_{\alpha})$ and $M_n(A_{\beta})$ with $\alpha$ and $\beta$ irrational then the only way $M_m(A_{\alpha})$ can be isomorphic to $M_n(A_{\beta})$ is when $n=m$ and $\alpha =\beta$.  So we conclude that $A_{\Theta}$ can only be isomorphic to $M_n(A_{\theta})$ when $n=1$.
\end{proof}

\chapter{Outlook}\noindent
\vspace{-2cm}
\begin{center}
 \begin{quotation}
  \textsl{``Every end is a new beginning.''}
 \end{quotation}
\end{center}
\vspace{2cm}
In constructing our noncommutative $\sigma$-model we used one of the most basic noncommutative spaces, the noncommutative torus.  For more general theories we would like to consider noncommutative $\sigma$-models on other noncommutative spaces.  One of the first generalizations to look at might be in extending the results to higher dimensional quantum tori as defined in section \ref{highT}.  We could also consider *-homomorphisms between quantum tori of different ``dimension'' to build a more general theory of noncommutative $\sigma$-models.

Recent consideration in T-duality suggest that we should not just consider spacetimes which are noncommutative tori, but bundles of noncommutative tori over some base space, such as the $C^*$-algebra of the discrete Heisenberg group \cite{Lowe2003,MR2006}.  A theory of noncommutative principle torus bundles has been developed by \cite{Echterhof2009}.  The following definition sufficiently describes such torus bundles:
\begin{definition}(\textsl{Noncommutative Torus Bundle Algebra})\newline
 Let $Z$ be a compact space and let $\Theta: Z \rightarrow \mathbb{T}$ be a continuous function from $Z$ to the circle group. We define the noncommutative torus bundle algebra associated to $(Z,\Theta)$ to be the universal $C^*$-algebra $A = A(Z,\Theta)$ generated over a central copy of $C(Z)$ (continuous functions vanishing on the base space, $Z$) by two unitaries $u$ and $v$, which can be thought of as continuous functions from $Z$ to the unitaries on a fixed Hilbert space $H$, satisfying the commutation rule
\begin{equation*}
 u(z)v(z) = \Theta(z)v(z)u(z).
\end{equation*}
\end{definition}

The theory we have developed in this thesis can be seen as a noncommutative free field theory.  To include interactions we have to modify the Polyakov action with another well known correction called the Wess-Zumino term.  Lots of work has been done on noncommutative Wess-Zumino theory also known as Wess-Zumino-Witten theory, however as of yet there is no established theory where parameter space and world-time are replaced by noncommutative $C^*$-algebras.  Vargese Mathai and Jonathan Rosenberg have started to develop the theory \cite{MR}.

In their paper Vargese Mathai and Johnathan Rosenberg explore a physical model using the theory that this thesis is based on.  They determine the partition function
\begin{equation*}
 Z=\frac{\int d[\varphi]e^{-iS(\varphi)}}{\int d[\varphi]}.
\end{equation*}
However this integral is much too difficult to evaluate even in the commutative
case.  To gain some ground they over-simplify by considering a semi-classical
approximation.  The partition function is then approximated by a sum of the form
\begin{equation*}
 Z\approx \sum e^{-iS(\varphi)}.
\end{equation*}

The study of noncommutative path integrals is still in its infancy with the first steps only recently taking place to give a mathematically rigorous description of these objects which are ``taken for granted'' in quantum field theory and string theory.  We conclude this thesis by stating that there is great room for improving the mathematical foundations of quantum field theory and string theory.  Such endeavors can be seen as a course for future study.
\vspace{1cm}
\begin{center}
\huge{- The End -}
\end{center}

\appendix

\chapter{Regarding Exact Sequences}\label{A}
Exact and short exact sequences play an essential role in the K-theory of $C^*$-algebras and was used in connection with K-theory in Chapter 3.
\begin{definition}(Exact Sequence of $C^*$-algebras)\index{Exact sequence}\newline
 Let $J,A$ and $B$ be $C^*$-algebras.  Suppose that $j:J\rightarrow A$ and $\pi:A\rightarrow B$ are *-homomorphisms such that Im$(j)=\text{ker}(\pi)$ then the sequence
\begin{equation*}
\begin{diagram}
 J& \rTo^{j} &A&\rTo^{\pi} &B
\end{diagram}
\end{equation*}
is \emph{exact}.
\end{definition}
\begin{definition}(Short Exact Sequence of $C^*$-algebras)\cite[p. 211]{murphy}\label{shortexact}\newline
 Let $J,A$ and $B$ be $C^*$-algebras.  Suppose that $j:J\rightarrow A$ is an injective *-homomorphism and that $\pi:A\rightarrow B$ is a surjective *-homomorphism, and that Im$(j)=\text{ker}(\pi)$.  Then 
\begin{equation*}
\begin{diagram}
0&\rTo& J &\rTo^{j}& A & \rTo^{\pi} &B &\rTo &0 & 
\end{diagram}
\end{equation*}
is a \emph{short exact sequence} of $C^*$-algebras.
\end{definition}
Most of the time we will be working with a surjective *-homomorphism $\pi:A\rightarrow B$ of $C^*$-algebras.  We define $J:=\text{ker}(\pi)$ and let $j:J\rightarrow A$ be the inclusion map.  This clearly satisfies the conditions mentioned in definition \ref{shortexact}.
\begin{definition}(Split Short Exact Sequence of $C^*$-algebras)\newline
 The short exact sequence 
\begin{equation}\label{split}
\begin{diagram}
0&\rTo& J&\rTo^{j}&A&\rTo^{\pi}&B&\rTo &0 
\end{diagram}
\end{equation}
is said to \emph{split} if there is a *-homomorphism $\psi:B\rightarrow A$ such that $\pi\psi=\I_{B}$.  Where $\I_{B}$ denotes the identity in $B$.
\end{definition}
We sometimes write equation \ref{split} as
\begin{equation*}
\begin{diagram}
0 & \rTo & J & \rTo^{j} & A & \rTo^{\pi}_{\psi} & B & \rTo & 0 
\end{diagram}
\end{equation*}
to emphasize that we are dealing with a split short exact sequence.  The next lemma is important in determining the K-theory of the quantum torus:
\begin{lemma}\label{split_short_exact}
 If
\begin{equation*}
\begin{diagram}
0& \rTo &J& \rTo^{j} &A& \rTo^{\pi}_{\psi} &B& \rTo &0 
\end{diagram}
\end{equation*}
defines a split short exact sequence of $C^*$-algebras then $A$ is isomorphic to $J\oplus B$.
\end{lemma}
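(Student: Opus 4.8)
The plan is to construct an explicit candidate isomorphism and then verify the three requirements in turn — linearity with the involution, bijectivity, and multiplicativity — isolating the multiplicative property as the decisive point. Identifying $J$ with its image $j(J)=\ker(\pi)$ inside $A$, I would define
\[
\Phi: A \longrightarrow J\oplus B,\qquad \Phi(a):=\bigl(a-\psi(\pi(a)),\,\pi(a)\bigr),
\]
observing first that $\Phi$ is well defined because $\pi\bigl(a-\psi(\pi(a))\bigr)=\pi(a)-\pi(a)=0$, so the first component genuinely lies in $\ker(\pi)$. Linearity is immediate, and the involution is respected since $\psi$ and $\pi$ are $*$-homomorphisms, giving $\Phi(a^*)=\bigl((a-\psi\pi(a))^*,\pi(a)^*\bigr)=\Phi(a)^*$.

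Next I would read off bijectivity directly from the splitting identity $\pi\psi=\mathrm{id}_B$. For injectivity, $\Phi(a)=0$ forces $\pi(a)=0$ and then $a=\psi(\pi(a))=\psi(0)=0$. For surjectivity, given $(x,b)$ with $x\in\ker(\pi)$, the element $a:=x+\psi(b)$ satisfies $\pi(a)=b$ and $a-\psi\pi(a)=x$, hence $\Phi(a)=(x,b)$. The same computation exhibits the internal decomposition $A=\ker(\pi)+\psi(B)$ with $\ker(\pi)\cap\psi(B)=\{0\}$, so every $a$ splits uniquely into a $J$-part and a $\psi(B)$-part. Once multiplicativity is secured, boundedness and isometry come for free, because an injective $*$-homomorphism between $C^*$-algebras is automatically isometric; so no separate norm estimate is needed.

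The hard part — and the step I expect to be the genuine obstacle — is multiplicativity, precisely because here the $C^*$ product, not merely the additive structure, must be respected. Writing $e=\psi\pi(a)$, $f=\psi\pi(b)$ and $j_a=a-e,\ j_b=b-f\in J$, a direct expansion of the first coordinate yields
\[
\bigl(ab-\psi\pi(ab)\bigr)-\bigl(a-\psi\pi(a)\bigr)\bigl(b-\psi\pi(b)\bigr)=j_a f + e\,j_b,
\]
while the second coordinate matches automatically since $\pi$ is multiplicative. Thus $\Phi(ab)=\Phi(a)\Phi(b)$ holds exactly when the cross terms $j_a f + e\,j_b$ vanish. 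Because $J$ is a two-sided ideal these cross terms already lie in $J$, so the entire difficulty is concentrated in showing they are in fact zero — equivalently, that the complementary subalgebra $\psi(B)$ annihilates $J$ on both sides, $J\cdot\psi(B)=\psi(B)\cdot J=\{0\}$, which is the condition for the internal sum $A=\ker(\pi)+\psi(B)$ to be a true $C^*$-direct sum rather than a semidirect one. I would therefore devote the core of the argument to establishing this annihilation from the available hypotheses (it holds, for instance, when $\psi(1_B)$ is a central projection, and it is automatic in the additive settings where the lemma is applied in Chapter~3, namely the split sequences of $K$-groups); with that relation in hand the cross terms collapse and $\Phi$ is the desired $*$-isomorphism.
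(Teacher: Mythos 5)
Your decomposition is exactly the paper's: the paper writes $a = j(x) + \psi\pi(a)$ with $j(x) = a - \psi\pi(a) \in \ker(\pi) = \mathrm{Im}(j)$, checks $j(J)\cap\psi(B)=\{0\}$, and concludes $A\cong J\oplus B$ --- and that is the \emph{entire} proof in the paper. So your first two paragraphs (the map $\Phi(a)=(a-\psi\pi(a),\pi(a))$, its linearity, $*$-compatibility, and bijectivity) reproduce the paper's argument in explicit form. The divergence is your third paragraph, and there your diagnosis is right: $\Phi$ is multiplicative if and only if the cross terms $j_a f + e\,j_b$ vanish, i.e.\ $J\cdot\psi(B)=\psi(B)\cdot J=\{0\}$, and the paper never addresses this at all. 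Its proof establishes only a direct sum of linear subspaces and silently treats that as an isomorphism of $C^*$-algebras.

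However, the remaining step of your plan --- "establishing this annihilation from the available hypotheses" --- cannot succeed, because the lemma as stated is false at the level of $C^*$-algebras. Take $A=C([0,1])$, $B=\mathbb{C}$, $\pi(f)=f(0)$, and $\psi(\lambda)$ the constant function $\lambda$; this is a split short exact sequence with $J=\{f: f(0)=0\}\cong C_0((0,1])$. But $A$ is unital while $J\oplus\mathbb{C}$ is not (a direct sum is unital only if both summands are), so no $*$-isomorphism exists; alternatively, $C([0,1])$ has no nontrivial projections since $[0,1]$ is connected, whereas $(0,1)$ is one in $J\oplus\mathbb{C}$. The correct repair is to weaken the conclusion: a split short exact sequence gives $A\cong J\oplus B$ as Banach spaces (your $\Phi$ does this), or --- what the paper actually uses --- as abelian groups, since in Chapter~3 the lemma is applied exclusively to split exact sequences of $K_0$- and $K_1$-groups, where the splitting yields a direct sum for purely additive reasons and no multiplicative structure is in play. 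With the conclusion so weakened, your first two paragraphs already constitute a complete proof, and one more careful than the paper's, which contains the same gap you found but passes over it without comment.
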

\begin{proof}
 Let $a\in A$ then
\begin{eqnarray*}
 \pi(a-(\psi\circ\pi)(a))&=&\pi(a)-\pi((\psi\circ\pi)(a))\\
&=&\pi(a)-(\pi\psi)(\pi(a))\\
&=&\pi(a)-\pi(a)\\
&=&0
\end{eqnarray*}
This implies that $a-(\psi\circ\pi)(a)\in$ker$(\pi)$.  So then we can write
\begin{equation*}
 j(x)=a-(\psi\circ\pi)(a)
\end{equation*}
for some $x\in J$, since Im$(j)=$ker$(\pi)$.  This implies that we can write
\begin{equation*}
 A=j(J)+\psi(B)
\end{equation*}
and we can show that this is in fact a direct sum.  Let $a\in j(A)\bigcap\psi(B)$, so we can write $a=j(x)=\psi(b)$ for some $x\in J$ and $b\in B$.
\begin{equation*}
 b=(\pi\circ\psi)(b)=(\pi\circ j)(x)=\pi(a)=0
\end{equation*}
We know that $\psi$ is injective, so $\psi(b)=\psi(0)=0$ which implies that
$a=0$.  $j$ is injecive by definition so we have $x=0$.  So the only element in
the intersection is 0.  This implies that we can write
\begin{equation*}
 A\cong J\oplus B
\end{equation*}
which concludes the proof.
\end{proof}

\backmatter

\bibliographystyle{amsplain}
\thispagestyle{empty}
\bibliography{verwysings}
\vspace{1cm}
\address{Department of Physics\\NW1 5-62\\University of Pretoria}\newline
\email{mauritzvdworm@gmail.com}
  
\printindex

\end{document}